\theoremstyle{plain}
\renewcommand{\(}{  \big(   }
\renewcommand{\)}{  \big)   }
\newcommand{\R}{  \mathbb{R}   }
\newcommand{\eps}{\varepsilon}
\newcommand{\e}{  \text{e}   }
\newcommand{\wt}{  \widetilde   }
\newcommand{\Z}{  \mathbb{Z}   }
\newcommand{\N}{  \mathbb{N}   }
\newcommand{\Pc}{  \mathcal{P}   }
\newcommand{\A}{  \mathcal{A}   }
\newcommand{\Q}{  \mathbb{Q}   }
\newcommand{\T}{  \S^{1} }
\newcommand{\Tc}{  \mathbb T}
\newcommand{\Mc}{  \mathcal{M}   }
\newcommand{\Rc}{  \mathcal{R}_{0}   }
\newcommand{\Ac}{  \mathcal{A}   }
\newcommand{\dis}{\displaystyle}
\newcommand{\Om}{  \Omega   }
\newcommand{\om}{  \omega   }
\newcommand{\ov}{  \overline  }
\renewcommand{\phi}{  \varphi  }
\newcommand{\wh}{  \widehat   }
\newcommand{\cp}[1]{  \big\{\,{ #1 }\, \big\}  }
\renewcommand{\S}{  \mathbb{S}  }
\numberwithin{equation}{section}
 \author{ Beno\^it Gr\'ebert}
\address{Laboratoire de Math\'ematiques J. Leray, Universit\'e de Nantes, UMR CNRS 6629\\
2, rue de la Houssini\`ere \\
44322 Nantes Cedex 03, France.}
\email{benoit.grebert@univ-nantes.fr}
\author{ Laurent Thomann }
\address{Laboratoire de Math\'ematiques J. Leray, Universit\'e de Nantes, UMR CNRS 6629\\
2, rue de la Houssini\`ere \\
44322 Nantes Cedex 03, France.}
\email{laurent.thomann@univ-nantes.fr}
\title[Resonant dynamics for NLS]
{Resonant dynamics  for the quintic non linear Schr\"odinger equation}
\begin{document}
%
%
%
%
\ifx\figforTeXisloaded\relax\endinput\else\global\let\figforTeXisloaded=\relax\fi
\message{version 1.8.4}
\catcode`\@=11
\ifx\ctr@ln@m\undefined\else%
    \immediate\write16{*** Fig4TeX WARNING : \string\ctr@ln@m\space already defined.}\fi
\def\ctr@ln@m#1{\ifx#1\undefined\else%
    \immediate\write16{*** Fig4TeX WARNING : \string#1 already defined.}\fi}
\ctr@ln@m\ctr@ld@f
\def\ctr@ld@f#1#2{\ctr@ln@m#2#1#2}
\ctr@ld@f\def\ctr@ln@w#1#2{\ctr@ln@m#2\csname#1\endcsname#2}
{\catcode`\/=0 \catcode`/\=12 /ctr@ld@f/gdef/BS@{\}}
\ctr@ld@f\def\ctr@lcsn@m#1{\expandafter\ifx\csname#1\endcsname\relax\else%
    \immediate\write16{*** Fig4TeX WARNING : \BS@\expandafter\string#1\space already defined.}\fi}
\ctr@ld@f\edef\colonc@tcode{\the\catcode`\:}
\ctr@ld@f\edef\semicolonc@tcode{\the\catcode`\;}
\ctr@ld@f\def\t@stc@tcodech@nge{{\let\c@tcodech@nged=\z@%
    \ifnum\colonc@tcode=\the\catcode`\:\else\let\c@tcodech@nged=\@ne\fi%
    \ifnum\semicolonc@tcode=\the\catcode`\;\else\let\c@tcodech@nged=\@ne\fi%
    \ifx\c@tcodech@nged\@ne%
    \immediate\write16{}
    \immediate\write16{!!!=============================================================!!!}
    \immediate\write16{ Fig4TeX WARNING :}
    \immediate\write16{ The category code of some characters has been changed, which will}
    \immediate\write16{ result in an error (message "Runaway argument?").}
    \immediate\write16{ This probably comes from another package that changed the category}
    \immediate\write16{ code after Fig4TeX was loaded. If that proves to be exact, the}
    \immediate\write16{ solution is to exchange the loading commands on top of your file}
    \immediate\write16{ so that Fig4TeX is loaded last. For example, in LaTeX, we should}
    \immediate\write16{ say :}
    \immediate\write16{\BS@ usepackage[french]{babel}}
    \immediate\write16{\BS@ usepackage{fig4tex}}
    \immediate\write16{!!!=============================================================!!!}
    \immediate\write16{}
    \fi}}
\ctr@ld@f\def\FigforTeX{F\kern-.05em i\kern-.05em g\kern-.1em\raise-.14em\hbox{4}\kern-.19em\TeX}
\ctr@ln@w{newdimen}\epsil@n\epsil@n=0.00005pt
\ctr@ln@w{newdimen}\Cepsil@n\Cepsil@n=0.005pt
\ctr@ln@w{newdimen}\dcq@\dcq@=254pt
\ctr@ln@w{newdimen}\PI@\PI@=3.141592pt
\ctr@ln@w{newdimen}\DemiPI@deg\DemiPI@deg=90pt
\ctr@ln@w{newdimen}\PI@deg\PI@deg=180pt
\ctr@ln@w{newdimen}\DePI@deg\DePI@deg=360pt
\ctr@ld@f\chardef\t@n=10
\ctr@ld@f\chardef\c@nt=100
\ctr@ld@f\chardef\@lxxiv=74
\ctr@ld@f\chardef\@xci=91
\ctr@ld@f\mathchardef\@nMnCQn=9949
\ctr@ld@f\chardef\@vi=6
\ctr@ld@f\chardef\@xxx=30
\ctr@ld@f\chardef\@lvi=56
\ctr@ld@f\chardef\@@lxxi=71
\ctr@ld@f\chardef\@lxxxv=85
\ctr@ld@f\mathchardef\@@mmmmlxviii=4068
\ctr@ld@f\mathchardef\@ccclx=360
\ctr@ld@f\mathchardef\@dccxx=720
\ctr@ln@w{newcount}\p@rtent \ctr@ln@w{newcount}\f@ctech \ctr@ln@w{newcount}\result@tent
\ctr@ln@w{newdimen}\v@lmin \ctr@ln@w{newdimen}\v@lmax \ctr@ln@w{newdimen}\v@leur
\ctr@ln@w{newdimen}\result@t\ctr@ln@w{newdimen}\result@@t
\ctr@ln@w{newdimen}\mili@u \ctr@ln@w{newdimen}\c@rre \ctr@ln@w{newdimen}\delt@
\ctr@ld@f\def\degT@rd{0.017453 }  
\ctr@ld@f\def\rdT@deg{57.295779 } 
\ctr@ln@m\v@leurseule
{\catcode`p=12 \catcode`t=12 \gdef\v@leurseule#1pt{#1}}
\ctr@ld@f\def\repdecn@mb#1{\expandafter\v@leurseule\the#1\space}
\ctr@ld@f\def\arct@n#1(#2,#3){{\v@lmin=#2\v@lmax=#3%
    \maxim@m{\mili@u}{-\v@lmin}{\v@lmin}\maxim@m{\c@rre}{-\v@lmax}{\v@lmax}%
    \delt@=\mili@u\m@ech\mili@u%
    \ifdim\c@rre>\@nMnCQn\mili@u\divide\v@lmax\tw@\c@lATAN\v@leur(\z@,\v@lmax)
    \else%
    \maxim@m{\mili@u}{-\v@lmin}{\v@lmin}\maxim@m{\c@rre}{-\v@lmax}{\v@lmax}%
    \m@ech\c@rre%
    \ifdim\mili@u>\@nMnCQn\c@rre\divide\v@lmin\tw@
    \maxim@m{\mili@u}{-\v@lmin}{\v@lmin}\c@lATAN\v@leur(\mili@u,\z@)%
    \else\c@lATAN\v@leur(\delt@,\v@lmax)\fi\fi%
    \ifdim\v@lmin<\z@\v@leur=-\v@leur\ifdim\v@lmax<\z@\advance\v@leur-\PI@%
    \else\advance\v@leur\PI@\fi\fi%
    \global\result@t=\v@leur}#1=\result@t}
\ctr@ld@f\def\m@ech#1{\ifdim#1>1.646pt\divide\mili@u\t@n\divide\c@rre\t@n\m@ech#1\fi}
\ctr@ld@f\def\c@lATAN#1(#2,#3){{\v@lmin=#2\v@lmax=#3\v@leur=\z@\delt@=\tw@ pt%
    \un@iter{0.785398}{\v@lmax<}%
    \un@iter{0.463648}{\v@lmax<}%
    \un@iter{0.244979}{\v@lmax<}%
    \un@iter{0.124355}{\v@lmax<}%
    \un@iter{0.062419}{\v@lmax<}%
    \un@iter{0.031240}{\v@lmax<}%
    \un@iter{0.015624}{\v@lmax<}%
    \un@iter{0.007812}{\v@lmax<}%
    \un@iter{0.003906}{\v@lmax<}%
    \un@iter{0.001953}{\v@lmax<}%
    \un@iter{0.000976}{\v@lmax<}%
    \un@iter{0.000488}{\v@lmax<}%
    \un@iter{0.000244}{\v@lmax<}%
    \un@iter{0.000122}{\v@lmax<}%
    \un@iter{0.000061}{\v@lmax<}%
    \un@iter{0.000030}{\v@lmax<}%
    \un@iter{0.000015}{\v@lmax<}%
    \global\result@t=\v@leur}#1=\result@t}
\ctr@ld@f\def\un@iter#1#2{%
    \divide\delt@\tw@\edef\dpmn@{\repdecn@mb{\delt@}}%
    \mili@u=\v@lmin%
    \ifdim#2\z@%
      \advance\v@lmin-\dpmn@\v@lmax\advance\v@lmax\dpmn@\mili@u%
      \advance\v@leur-#1pt%
    \else%
      \advance\v@lmin\dpmn@\v@lmax\advance\v@lmax-\dpmn@\mili@u%
      \advance\v@leur#1pt%
    \fi}
\ctr@ld@f\def\c@ssin#1#2#3{\expandafter\ifx\csname COS@\number#3\endcsname\relax\c@lCS{#3pt}%
    \expandafter\xdef\csname COS@\number#3\endcsname{\repdecn@mb\result@t}%
    \expandafter\xdef\csname SIN@\number#3\endcsname{\repdecn@mb\result@@t}\fi%
    \edef#1{\csname COS@\number#3\endcsname}\edef#2{\csname SIN@\number#3\endcsname}}
\ctr@ld@f\def\c@lCS#1{{\mili@u=#1\p@rtent=\@ne%
    \relax\ifdim\mili@u<\z@\red@ng<-\else\red@ng>+\fi\f@ctech=\p@rtent%
    \relax\ifdim\mili@u<\z@\mili@u=-\mili@u\f@ctech=-\f@ctech\fi\c@@lCS}}
\ctr@ld@f\def\c@@lCS{\v@lmin=\mili@u\c@rre=-\mili@u\advance\c@rre\DemiPI@deg\v@lmax=\c@rre%
    \mili@u\@@lxxi\mili@u\divide\mili@u\@@mmmmlxviii%
    \edef\v@larg{\repdecn@mb{\mili@u}}\mili@u=-\v@larg\mili@u%
    \edef\v@lmxde{\repdecn@mb{\mili@u}}%
    \c@rre\@@lxxi\c@rre\divide\c@rre\@@mmmmlxviii%
    \edef\v@largC{\repdecn@mb{\c@rre}}\c@rre=-\v@largC\c@rre%
    \edef\v@lmxdeC{\repdecn@mb{\c@rre}}%
    \fctc@s\mili@u\v@lmin\global\result@t\p@rtent\v@leur%
    \let\t@mp=\v@larg\let\v@larg=\v@largC\let\v@largC=\t@mp%
    \let\t@mp=\v@lmxde\let\v@lmxde=\v@lmxdeC\let\v@lmxdeC=\t@mp%
    \fctc@s\c@rre\v@lmax\global\result@@t\f@ctech\v@leur}
\ctr@ld@f\def\fctc@s#1#2{\v@leur=#1\relax\ifdim#2<\@lxxxv\p@\cosser@h\else\sinser@t\fi}
\ctr@ld@f\def\cosser@h{\advance\v@leur\@lvi\p@\divide\v@leur\@lvi%
    \v@leur=\v@lmxde\v@leur\advance\v@leur\@xxx\p@%
    \v@leur=\v@lmxde\v@leur\advance\v@leur\@ccclx\p@%
    \v@leur=\v@lmxde\v@leur\advance\v@leur\@dccxx\p@\divide\v@leur\@dccxx}
\ctr@ld@f\def\sinser@t{\v@leur=\v@lmxdeC\p@\advance\v@leur\@vi\p@%
    \v@leur=\v@largC\v@leur\divide\v@leur\@vi}
\ctr@ld@f\def\red@ng#1#2{\relax\ifdim\mili@u#1#2\DemiPI@deg\advance\mili@u#2-\PI@deg%
    \p@rtent=-\p@rtent\red@ng#1#2\fi}
\ctr@ld@f\def\pr@c@lCS#1#2#3{\ctr@lcsn@m{COS@\number#3 }%
    \expandafter\xdef\csname COS@\number#3\endcsname{#1}%
    \expandafter\xdef\csname SIN@\number#3\endcsname{#2}}
\pr@c@lCS{1}{0}{0}
\pr@c@lCS{0.7071}{0.7071}{45}\pr@c@lCS{0.7071}{-0.7071}{-45}
\pr@c@lCS{0}{1}{90}          \pr@c@lCS{0}{-1}{-90}
\pr@c@lCS{-1}{0}{180}        \pr@c@lCS{-1}{0}{-180}
\pr@c@lCS{0}{-1}{270}        \pr@c@lCS{0}{1}{-270}
\ctr@ld@f\def\invers@#1#2{{\v@leur=#2\maxim@m{\v@lmax}{-\v@leur}{\v@leur}%
    \f@ctech=\@ne\m@inv@rs%
    \multiply\v@leur\f@ctech\edef\v@lv@leur{\repdecn@mb{\v@leur}}%
    \p@rtentiere{\p@rtent}{\v@leur}\v@lmin=\p@\divide\v@lmin\p@rtent%
    \inv@rs@\multiply\v@lmax\f@ctech\global\result@t=\v@lmax}#1=\result@t}
\ctr@ld@f\def\m@inv@rs{\ifdim\v@lmax<\p@\multiply\v@lmax\t@n\multiply\f@ctech\t@n\m@inv@rs\fi}
\ctr@ld@f\def\inv@rs@{\v@lmax=-\v@lmin\v@lmax=\v@lv@leur\v@lmax%
    \advance\v@lmax\tw@ pt\v@lmax=\repdecn@mb{\v@lmin}\v@lmax%
    \delt@=\v@lmax\advance\delt@-\v@lmin\ifdim\delt@<\z@\delt@=-\delt@\fi%
    \ifdim\delt@>\epsil@n\v@lmin=\v@lmax\inv@rs@\fi}
\ctr@ld@f\def\minim@m#1#2#3{\relax\ifdim#2<#3#1=#2\else#1=#3\fi}
\ctr@ld@f\def\maxim@m#1#2#3{\relax\ifdim#2>#3#1=#2\else#1=#3\fi}
\ctr@ld@f\def\p@rtentiere#1#2{#1=#2\divide#1by65536 }
\ctr@ld@f\def\r@undint#1#2{{\v@leur=#2\divide\v@leur\t@n\p@rtentiere{\p@rtent}{\v@leur}%
    \v@leur=\p@rtent pt\global\result@t=\t@n\v@leur}#1=\result@t}
\ctr@ld@f\def\sqrt@#1#2{{\v@leur=#2%
    \minim@m{\v@lmin}{\p@}{\v@leur}\maxim@m{\v@lmax}{\p@}{\v@leur}%
    \f@ctech=\@ne\m@sqrt@\sqrt@@%
    \mili@u=\v@lmin\advance\mili@u\v@lmax\divide\mili@u\tw@\multiply\mili@u\f@ctech%
    \global\result@t=\mili@u}#1=\result@t}
\ctr@ld@f\def\m@sqrt@{\ifdim\v@leur>\dcq@\divide\v@leur\c@nt\v@lmax=\v@leur%
    \multiply\f@ctech\t@n\m@sqrt@\fi}
\ctr@ld@f\def\sqrt@@{\mili@u=\v@lmin\advance\mili@u\v@lmax\divide\mili@u\tw@%
    \c@rre=\repdecn@mb{\mili@u}\mili@u%
    \ifdim\c@rre<\v@leur\v@lmin=\mili@u\else\v@lmax=\mili@u\fi%
    \delt@=\v@lmax\advance\delt@-\v@lmin\ifdim\delt@>\epsil@n\sqrt@@\fi}
\ctr@ld@f\def\extrairelepremi@r#1\de#2{\expandafter\lepremi@r#2@#1#2}
\ctr@ld@f\def\lepremi@r#1,#2@#3#4{\def#3{#1}\def#4{#2}\ignorespaces}
\ctr@ld@f\def\@cfor#1:=#2\do#3{%
  \edef\@fortemp{#2}%
  \ifx\@fortemp\empty\else\@cforloop#2,\@nil,\@nil\@@#1{#3}\fi}
\ctr@ln@m\@nextwhile
\ctr@ld@f\def\@cforloop#1,#2\@@#3#4{%
  \def#3{#1}%
  \ifx#3\Fig@nnil\let\@nextwhile=\Fig@fornoop\else#4\relax\let\@nextwhile=\@cforloop\fi%
  \@nextwhile#2\@@#3{#4}}

\ctr@ld@f\def\@ecfor#1:=#2\do#3{%
  \def\@@cfor{\@cfor#1:=}%
  \edef\@@@cfor{#2}%
  \expandafter\@@cfor\@@@cfor\do{#3}}
\ctr@ld@f\def\Fig@nnil{\@nil}
\ctr@ld@f\def\Fig@fornoop#1\@@#2#3{}
\ctr@ln@m\list@@rg
\ctr@ld@f\def\trtlis@rg#1#2{\def\list@@rg{#1}%
    \@ecfor\p@rv@l:=\list@@rg\do{\expandafter#2\p@rv@l|}}
\ctr@ln@w{newbox}\b@xvisu
\ctr@ln@w{newtoks}\let@xte
\ctr@ln@w{newif}\ifitis@K
\ctr@ln@w{newcount}\s@mme
\ctr@ln@w{newcount}\l@mbd@un \ctr@ln@w{newcount}\l@mbd@de
\ctr@ln@w{newcount}\superc@ntr@l\superc@ntr@l=\@ne        
\ctr@ln@w{newcount}\typec@ntr@l\typec@ntr@l=\superc@ntr@l 
\ctr@ln@w{newdimen}\v@lX  \ctr@ln@w{newdimen}\v@lY  \ctr@ln@w{newdimen}\v@lZ
\ctr@ln@w{newdimen}\v@lXa \ctr@ln@w{newdimen}\v@lYa \ctr@ln@w{newdimen}\v@lZa
\ctr@ln@w{newdimen}\unit@\unit@=\p@ 
\ctr@ld@f\def\unit@util{pt}
\ctr@ld@f\def\ptT@ptps{0.996264 }
\ctr@ld@f\def\ptpsT@pt{1.00375 }
\ctr@ld@f\def\ptT@unit@{1} 
\ctr@ld@f\def\setunit@#1{\def\unit@util{#1}\setunit@@#1:\invers@{\result@t}{\unit@}%
    \edef\ptT@unit@{\repdecn@mb\result@t}}
\ctr@ld@f\def\setunit@@#1#2:{\ifcat#1a\unit@=\@ne#1#2\else\unit@=#1#2\fi}
\ctr@ld@f\def\d@fm@cdim#1#2{{\v@leur=#2\v@leur=\ptT@unit@\v@leur\xdef#1{\repdecn@mb\v@leur}}}
\ctr@ln@w{newif}\ifBdingB@x\BdingB@xtrue
\ctr@ln@w{newdimen}\c@@rdXmin \ctr@ln@w{newdimen}\c@@rdYmin  
\ctr@ln@w{newdimen}\c@@rdXmax \ctr@ln@w{newdimen}\c@@rdYmax
\ctr@ld@f\def\b@undb@x#1#2{\ifBdingB@x%
    \relax\ifdim#1<\c@@rdXmin\global\c@@rdXmin=#1\fi%
    \relax\ifdim#2<\c@@rdYmin\global\c@@rdYmin=#2\fi%
    \relax\ifdim#1>\c@@rdXmax\global\c@@rdXmax=#1\fi%
    \relax\ifdim#2>\c@@rdYmax\global\c@@rdYmax=#2\fi\fi}
\ctr@ld@f\def\b@undb@xP#1{{\Figg@tXY{#1}\b@undb@x{\v@lX}{\v@lY}}}
\ctr@ld@f\def\ellBB@x#1;#2,#3(#4,#5,#6){{\s@uvc@ntr@l\et@tellBB@x%
    \setc@ntr@l{2}\figptell-2::#1;#2,#3(#4,#6)\b@undb@xP{-2}%
    \figptell-2::#1;#2,#3(#5,#6)\b@undb@xP{-2}%
    \c@ssin{\C@}{\S@}{#6}\v@lmin=\C@ pt\v@lmax=\S@ pt%
    \mili@u=#3\v@lmin\delt@=#2\v@lmax\arct@n\v@leur(\delt@,\mili@u)%
    \mili@u=-#3\v@lmax\delt@=#2\v@lmin\arct@n\c@rre(\delt@,\mili@u)%
    \v@leur=\rdT@deg\v@leur\advance\v@leur-\DePI@deg%
    \c@rre=\rdT@deg\c@rre\advance\c@rre-\DePI@deg%
    \v@lmin=#4pt\v@lmax=#5pt%
    \loop\ifdim\v@leur<\v@lmax\ifdim\v@leur>\v@lmin%
    \edef\@ngle{\repdecn@mb\v@leur}\figptell-2::#1;#2,#3(\@ngle,#6)%
    \b@undb@xP{-2}\fi\advance\v@leur\PI@deg\repeat%
    \loop\ifdim\c@rre<\v@lmax\ifdim\c@rre>\v@lmin%
    \edef\@ngle{\repdecn@mb\c@rre}\figptell-2::#1;#2,#3(\@ngle,#6)%
    \b@undb@xP{-2}\fi\advance\c@rre\PI@deg\repeat%
    \resetc@ntr@l\et@tellBB@x}\ignorespaces}
\ctr@ld@f\def\initb@undb@x{\c@@rdXmin=\maxdimen\c@@rdYmin=\maxdimen%
    \c@@rdXmax=-\maxdimen\c@@rdYmax=-\maxdimen}
\ctr@ld@f\def\c@ntr@lnum#1{%
    \relax\ifnum\typec@ntr@l=\@ne%
    \ifnum#1<\z@%
    \immediate\write16{*** Forbidden point number (#1). Abort.}\end\fi\fi%
    \set@bjc@de{#1}}
\ctr@ln@m\objc@de
\ctr@ld@f\def\set@bjc@de#1{\edef\objc@de{@BJ\ifnum#1<\z@ M\romannumeral-#1\else\romannumeral#1\fi}}
\s@mme=\m@ne\loop\ifnum\s@mme>-19
  \set@bjc@de{\s@mme}\ctr@lcsn@m\objc@de\ctr@lcsn@m{\objc@de T}
\advance\s@mme\m@ne\repeat
\s@mme=\@ne\loop\ifnum\s@mme<6
  \set@bjc@de{\s@mme}\ctr@lcsn@m\objc@de\ctr@lcsn@m{\objc@de T}
\advance\s@mme\@ne\repeat
\ctr@ld@f\def\setc@ntr@l#1{\ifnum\superc@ntr@l>#1\typec@ntr@l=\superc@ntr@l%
    \else\typec@ntr@l=#1\fi}
\ctr@ld@f\def\resetc@ntr@l#1{\global\superc@ntr@l=#1\setc@ntr@l{#1}}
\ctr@ld@f\def\s@uvc@ntr@l#1{\edef#1{\the\superc@ntr@l}}
\ctr@ln@m\c@lproscal
\ctr@ld@f\def\c@lproscalDD#1[#2,#3]{{\Figg@tXY{#2}%
    \edef\Xu@{\repdecn@mb{\v@lX}}\edef\Yu@{\repdecn@mb{\v@lY}}\Figg@tXY{#3}%
    \global\result@t=\Xu@\v@lX\global\advance\result@t\Yu@\v@lY}#1=\result@t}
\ctr@ld@f\def\c@lproscalTD#1[#2,#3]{{\Figg@tXY{#2}\edef\Xu@{\repdecn@mb{\v@lX}}%
    \edef\Yu@{\repdecn@mb{\v@lY}}\edef\Zu@{\repdecn@mb{\v@lZ}}%
    \Figg@tXY{#3}\global\result@t=\Xu@\v@lX\global\advance\result@t\Yu@\v@lY%
    \global\advance\result@t\Zu@\v@lZ}#1=\result@t}
\ctr@ld@f\def\c@lprovec#1{%
    \det@rmC\v@lZa(\v@lX,\v@lY,\v@lmin,\v@lmax)%
    \det@rmC\v@lXa(\v@lY,\v@lZ,\v@lmax,\v@leur)%
    \det@rmC\v@lYa(\v@lZ,\v@lX,\v@leur,\v@lmin)%
    \Figv@ctCreg#1(\v@lXa,\v@lYa,\v@lZa)}
\ctr@ld@f\def\det@rm#1[#2,#3]{{\Figg@tXY{#2}\Figg@tXYa{#3}%
    \delt@=\repdecn@mb{\v@lX}\v@lYa\advance\delt@-\repdecn@mb{\v@lY}\v@lXa%
    \global\result@t=\delt@}#1=\result@t}
\ctr@ld@f\def\det@rmC#1(#2,#3,#4,#5){{\global\result@t=\repdecn@mb{#2}#5%
    \global\advance\result@t-\repdecn@mb{#3}#4}#1=\result@t}
\ctr@ld@f\def\getredf@ctDD#1(#2,#3){{\maxim@m{\v@lXa}{-#2}{#2}\maxim@m{\v@lYa}{-#3}{#3}%
    \maxim@m{\v@lXa}{\v@lXa}{\v@lYa}
    \ifdim\v@lXa>\@xci pt\divide\v@lXa\@xci%
    \p@rtentiere{\p@rtent}{\v@lXa}\advance\p@rtent\@ne\else\p@rtent=\@ne\fi%
    \global\result@tent=\p@rtent}#1=\result@tent\ignorespaces}
\ctr@ld@f\def\getredf@ctTD#1(#2,#3,#4){{\maxim@m{\v@lXa}{-#2}{#2}\maxim@m{\v@lYa}{-#3}{#3}%
    \maxim@m{\v@lZa}{-#4}{#4}\maxim@m{\v@lXa}{\v@lXa}{\v@lYa}%
    \maxim@m{\v@lXa}{\v@lXa}{\v@lZa}
    \ifdim\v@lXa>\@lxxiv pt\divide\v@lXa\@lxxiv%
    \p@rtentiere{\p@rtent}{\v@lXa}\advance\p@rtent\@ne\else\p@rtent=\@ne\fi%
    \global\result@tent=\p@rtent}#1=\result@tent\ignorespaces}
\ctr@ld@f\def\FigptintercircB@zDD#1:#2:#3,#4[#5,#6,#7,#8]{{\s@uvc@ntr@l\et@tfigptintercircB@zDD%
    \setc@ntr@l{2}\figvectPDD-1[#5,#8]\Figg@tXY{-1}\getredf@ctDD\f@ctech(\v@lX,\v@lY)%
    \mili@u=#4\unit@\divide\mili@u\f@ctech\c@rre=\repdecn@mb{\mili@u}\mili@u%
    \figptBezierDD-5::#3[#5,#6,#7,#8]%
    \v@lmin=#3\p@\v@lmax=\v@lmin\advance\v@lmax0.1\p@%
    \loop\edef\T@{\repdecn@mb{\v@lmax}}\figptBezierDD-2::\T@[#5,#6,#7,#8]%
    \figvectPDD-1[-5,-2]\n@rmeucCDD{\delt@}{-1}\ifdim\delt@<\c@rre\v@lmin=\v@lmax%
    \advance\v@lmax0.1\p@\repeat%
    \loop\mili@u=\v@lmin\advance\mili@u\v@lmax%
    \divide\mili@u\tw@\edef\T@{\repdecn@mb{\mili@u}}\figptBezierDD-2::\T@[#5,#6,#7,#8]%
    \figvectPDD-1[-5,-2]\n@rmeucCDD{\delt@}{-1}\ifdim\delt@>\c@rre\v@lmax=\mili@u%
    \else\v@lmin=\mili@u\fi\v@leur=\v@lmax\advance\v@leur-\v@lmin%
    \ifdim\v@leur>\epsil@n\repeat\figptcopyDD#1:#2/-2/%
    \resetc@ntr@l\et@tfigptintercircB@zDD}\ignorespaces}
\ctr@ln@m\figptinterlines
\ctr@ld@f\def\inters@cDD#1:#2[#3,#4;#5,#6]{{\s@uvc@ntr@l\et@tinters@cDD%
    \setc@ntr@l{2}\vecunit@{-1}{#4}\vecunit@{-2}{#6}%
    \Figg@tXY{-1}\setc@ntr@l{1}\Figg@tXYa{#3}%
    \edef\A@{\repdecn@mb{\v@lX}}\edef\B@{\repdecn@mb{\v@lY}}%
    \v@lmin=\B@\v@lXa\advance\v@lmin-\A@\v@lYa%
    \Figg@tXYa{#5}\setc@ntr@l{2}\Figg@tXY{-2}%
    \edef\C@{\repdecn@mb{\v@lX}}\edef\D@{\repdecn@mb{\v@lY}}%
    \v@lmax=\D@\v@lXa\advance\v@lmax-\C@\v@lYa%
    \delt@=\A@\v@lY\advance\delt@-\B@\v@lX%
    \invers@{\v@leur}{\delt@}\edef\v@ldelta{\repdecn@mb{\v@leur}}%
    \v@lXa=\A@\v@lmax\advance\v@lXa-\C@\v@lmin%
    \v@lYa=\B@\v@lmax\advance\v@lYa-\D@\v@lmin%
    \v@lXa=\v@ldelta\v@lXa\v@lYa=\v@ldelta\v@lYa%
    \setc@ntr@l{1}\Figp@intregDD#1:{#2}(\v@lXa,\v@lYa)%
    \resetc@ntr@l\et@tinters@cDD}\ignorespaces}
\ctr@ld@f\def\inters@cTD#1:#2[#3,#4;#5,#6]{{\s@uvc@ntr@l\et@tinters@cTD%
    \setc@ntr@l{2}\figvectNVTD-1[#4,#6]\figvectNVTD-2[#6,-1]\figvectPTD-1[#3,#5]%
    \r@pPSTD\v@leur[-2,-1,#4]\edef\v@lcoef{\repdecn@mb{\v@leur}}%
    \figpttraTD#1:{#2}=#3/\v@lcoef,#4/\resetc@ntr@l\et@tinters@cTD}\ignorespaces}
\ctr@ld@f\def\r@pPSTD#1[#2,#3,#4]{{\Figg@tXY{#2}\edef\Xu@{\repdecn@mb{\v@lX}}%
    \edef\Yu@{\repdecn@mb{\v@lY}}\edef\Zu@{\repdecn@mb{\v@lZ}}%
    \Figg@tXY{#3}\v@lmin=\Xu@\v@lX\advance\v@lmin\Yu@\v@lY\advance\v@lmin\Zu@\v@lZ%
    \Figg@tXY{#4}\v@lmax=\Xu@\v@lX\advance\v@lmax\Yu@\v@lY\advance\v@lmax\Zu@\v@lZ%
    \invers@{\v@leur}{\v@lmax}\global\result@t=\repdecn@mb{\v@leur}\v@lmin}%
    #1=\result@t}
\ctr@ln@m\n@rminf
\ctr@ld@f\def\n@rminfDD#1#2{{\Figg@tXY{#2}\maxim@m{\v@lX}{\v@lX}{-\v@lX}%
    \maxim@m{\v@lY}{\v@lY}{-\v@lY}\maxim@m{\global\result@t}{\v@lX}{\v@lY}}%
    #1=\result@t}
\ctr@ld@f\def\n@rminfTD#1#2{{\Figg@tXY{#2}\maxim@m{\v@lX}{\v@lX}{-\v@lX}%
    \maxim@m{\v@lY}{\v@lY}{-\v@lY}\maxim@m{\v@lZ}{\v@lZ}{-\v@lZ}%
    \maxim@m{\v@lX}{\v@lX}{\v@lY}\maxim@m{\global\result@t}{\v@lX}{\v@lZ}}%
    #1=\result@t}
\ctr@ld@f\def\n@rmeucCDD#1#2{\Figg@tXY{#2}\divide\v@lX\f@ctech\divide\v@lY\f@ctech%
    #1=\repdecn@mb{\v@lX}\v@lX\v@lX=\repdecn@mb{\v@lY}\v@lY\advance#1\v@lX}
\ctr@ld@f\def\n@rmeucCTD#1#2{\Figg@tXY{#2}%
    \divide\v@lX\f@ctech\divide\v@lY\f@ctech\divide\v@lZ\f@ctech%
    #1=\repdecn@mb{\v@lX}\v@lX\v@lX=\repdecn@mb{\v@lY}\v@lY\advance#1\v@lX%
    \v@lX=\repdecn@mb{\v@lZ}\v@lZ\advance#1\v@lX}
\ctr@ln@m\n@rmeucSV
\ctr@ld@f\def\n@rmeucSVDD#1#2{{\Figg@tXY{#2}%
    \v@lXa=\repdecn@mb{\v@lX}\v@lX\v@lYa=\repdecn@mb{\v@lY}\v@lY%
    \advance\v@lXa\v@lYa\sqrt@{\global\result@t}{\v@lXa}}#1=\result@t}
\ctr@ld@f\def\n@rmeucSVTD#1#2{{\Figg@tXY{#2}\v@lXa=\repdecn@mb{\v@lX}\v@lX%
    \v@lYa=\repdecn@mb{\v@lY}\v@lY\v@lZa=\repdecn@mb{\v@lZ}\v@lZ%
    \advance\v@lXa\v@lYa\advance\v@lXa\v@lZa\sqrt@{\global\result@t}{\v@lXa}}#1=\result@t}
\ctr@ln@m\n@rmeuc
\ctr@ld@f\def\n@rmeucDD#1#2{{\Figg@tXY{#2}\getredf@ctDD\f@ctech(\v@lX,\v@lY)%
    \divide\v@lX\f@ctech\divide\v@lY\f@ctech%
    \v@lXa=\repdecn@mb{\v@lX}\v@lX\v@lYa=\repdecn@mb{\v@lY}\v@lY%
    \advance\v@lXa\v@lYa\sqrt@{\global\result@t}{\v@lXa}%
    \global\multiply\result@t\f@ctech}#1=\result@t}
\ctr@ld@f\def\n@rmeucTD#1#2{{\Figg@tXY{#2}\getredf@ctTD\f@ctech(\v@lX,\v@lY,\v@lZ)%
    \divide\v@lX\f@ctech\divide\v@lY\f@ctech\divide\v@lZ\f@ctech%
    \v@lXa=\repdecn@mb{\v@lX}\v@lX%
    \v@lYa=\repdecn@mb{\v@lY}\v@lY\v@lZa=\repdecn@mb{\v@lZ}\v@lZ%
    \advance\v@lXa\v@lYa\advance\v@lXa\v@lZa\sqrt@{\global\result@t}{\v@lXa}%
    \global\multiply\result@t\f@ctech}#1=\result@t}
\ctr@ln@m\vecunit@
\ctr@ld@f\def\vecunit@DD#1#2{{\Figg@tXY{#2}\getredf@ctDD\f@ctech(\v@lX,\v@lY)%
    \divide\v@lX\f@ctech\divide\v@lY\f@ctech%
    \Figv@ctCreg#1(\v@lX,\v@lY)\n@rmeucSV{\v@lYa}{#1}%
    \invers@{\v@lXa}{\v@lYa}\edef\v@lv@lXa{\repdecn@mb{\v@lXa}}%
    \v@lX=\v@lv@lXa\v@lX\v@lY=\v@lv@lXa\v@lY%
    \Figv@ctCreg#1(\v@lX,\v@lY)\multiply\v@lYa\f@ctech\global\result@t=\v@lYa}}
\ctr@ld@f\def\vecunit@TD#1#2{{\Figg@tXY{#2}\getredf@ctTD\f@ctech(\v@lX,\v@lY,\v@lZ)%
    \divide\v@lX\f@ctech\divide\v@lY\f@ctech\divide\v@lZ\f@ctech%
    \Figv@ctCreg#1(\v@lX,\v@lY,\v@lZ)\n@rmeucSV{\v@lYa}{#1}%
    \invers@{\v@lXa}{\v@lYa}\edef\v@lv@lXa{\repdecn@mb{\v@lXa}}%
    \v@lX=\v@lv@lXa\v@lX\v@lY=\v@lv@lXa\v@lY\v@lZ=\v@lv@lXa\v@lZ%
    \Figv@ctCreg#1(\v@lX,\v@lY,\v@lZ)\multiply\v@lYa\f@ctech\global\result@t=\v@lYa}}
\ctr@ld@f\def\vecunitC@TD[#1,#2]{\Figg@tXYa{#1}\Figg@tXY{#2}%
    \advance\v@lX-\v@lXa\advance\v@lY-\v@lYa\advance\v@lZ-\v@lZa\c@lvecunitTD}
\ctr@ld@f\def\vecunitCV@TD#1{\Figg@tXY{#1}\c@lvecunitTD}
\ctr@ld@f\def\c@lvecunitTD{\getredf@ctTD\f@ctech(\v@lX,\v@lY,\v@lZ)%
    \divide\v@lX\f@ctech\divide\v@lY\f@ctech\divide\v@lZ\f@ctech%
    \v@lXa=\repdecn@mb{\v@lX}\v@lX%
    \v@lYa=\repdecn@mb{\v@lY}\v@lY\v@lZa=\repdecn@mb{\v@lZ}\v@lZ%
    \advance\v@lXa\v@lYa\advance\v@lXa\v@lZa\sqrt@{\v@lYa}{\v@lXa}%
    \invers@{\v@lXa}{\v@lYa}\edef\v@lv@lXa{\repdecn@mb{\v@lXa}}%
    \v@lX=\v@lv@lXa\v@lX\v@lY=\v@lv@lXa\v@lY\v@lZ=\v@lv@lXa\v@lZ}
\ctr@ln@m\figgetangle
\ctr@ld@f\def\figgetangleDD#1[#2,#3,#4]{\ifps@cri{\s@uvc@ntr@l\et@tfiggetangleDD\setc@ntr@l{2}%
    \figvectPDD-1[#2,#3]\figvectPDD-2[#2,#4]\vecunit@{-1}{-1}%
    \c@lproscalDD\delt@[-2,-1]\figvectNVDD-1[-1]\c@lproscalDD\v@leur[-2,-1]%
    \arct@n\v@lmax(\delt@,\v@leur)\v@lmax=\rdT@deg\v@lmax%
    \ifdim\v@lmax<\z@\advance\v@lmax\DePI@deg\fi\xdef#1{\repdecn@mb{\v@lmax}}%
    \resetc@ntr@l\et@tfiggetangleDD}\ignorespaces\fi}
\ctr@ld@f\def\figgetangleTD#1[#2,#3,#4,#5]{\ifps@cri{\s@uvc@ntr@l\et@tfiggetangleTD\setc@ntr@l{2}%
    \figvectPTD-1[#2,#3]\figvectPTD-2[#2,#5]\figvectNVTD-3[-1,-2]%
    \figvectPTD-2[#2,#4]\figvectNVTD-4[-3,-1]%
    \vecunit@{-1}{-1}\c@lproscalTD\delt@[-2,-1]\c@lproscalTD\v@leur[-2,-4]%
    \arct@n\v@lmax(\delt@,\v@leur)\v@lmax=\rdT@deg\v@lmax%
    \ifdim\v@lmax<\z@\advance\v@lmax\DePI@deg\fi\xdef#1{\repdecn@mb{\v@lmax}}%
    \resetc@ntr@l\et@tfiggetangleTD}\ignorespaces\fi}    
\ctr@ld@f\def\figgetdist#1[#2,#3]{\ifps@cri{\s@uvc@ntr@l\et@tfiggetdist\setc@ntr@l{2}%
    \figvectP-1[#2,#3]\n@rmeuc{\v@lX}{-1}\v@lX=\ptT@unit@\v@lX\xdef#1{\repdecn@mb{\v@lX}}%
    \resetc@ntr@l\et@tfiggetdist}\ignorespaces\fi}
\ctr@ld@f\def\Figg@tT#1{\c@ntr@lnum{#1}%
    {\expandafter\expandafter\expandafter\extr@ctT\csname\objc@de\endcsname:%
     \ifnum\B@@ltxt=\z@\ptn@me{#1}\else\csname\objc@de T\endcsname\fi}}
\ctr@ld@f\def\extr@ctT#1,#2,#3/#4:{\def\B@@ltxt{#3}}
\ctr@ld@f\def\Figg@tXY#1{\c@ntr@lnum{#1}%
    \expandafter\expandafter\expandafter\extr@ctC\csname\objc@de\endcsname:}
\ctr@ln@m\extr@ctC
\ctr@ld@f\def\extr@ctCDD#1/#2,#3,#4:{\v@lX=#2\v@lY=#3}
\ctr@ld@f\def\extr@ctCTD#1/#2,#3,#4:{\v@lX=#2\v@lY=#3\v@lZ=#4}
\ctr@ld@f\def\Figg@tXYa#1{\c@ntr@lnum{#1}%
    \expandafter\expandafter\expandafter\extr@ctCa\csname\objc@de\endcsname:}
\ctr@ln@m\extr@ctCa
\ctr@ld@f\def\extr@ctCaDD#1/#2,#3,#4:{\v@lXa=#2\v@lYa=#3}
\ctr@ld@f\def\extr@ctCaTD#1/#2,#3,#4:{\v@lXa=#2\v@lYa=#3\v@lZa=#4}
\ctr@ln@m\t@xt@
\ctr@ld@f\def\figinit#1{\t@stc@tcodech@nge\initpr@lim\Figinit@#1,:\initpss@ttings\ignorespaces}
\ctr@ld@f\def\Figinit@#1,#2:{\setunit@{#1}\def\t@xt@{#2}\ifx\t@xt@\empty\else\Figinit@@#2:\fi}
\ctr@ld@f\def\Figinit@@#1#2:{\if#12 \else\Figs@tproj{#1}\initTD@\fi}
\ctr@ln@w{newif}\ifTr@isDim
\ctr@ld@f\def\UnD@fined{UNDEFINED}
\ctr@ld@f\def\ifundefined#1{\expandafter\ifx\csname#1\endcsname\relax}
\ctr@ln@m\@utoFN
\ctr@ln@m\@utoFInDone
\ctr@ln@m\disob@unit
\ctr@ld@f\def\initpr@lim{\initb@undb@x\figsetmark{}\figsetptname{$A_{##1}$}\def\Sc@leFact{1}%
    \initDD@\figsetroundcoord{yes}\ps@critrue\expandafter\setupd@te\defaultupdate:%
    \edef\disob@unit{\UnD@fined}\edef\t@rgetpt{\UnD@fined}\gdef\@utoFInDone{1}\gdef\@utoFN{0}}
\ctr@ld@f\def\initDD@{\Tr@isDimfalse%
    \ifPDFm@ke%
     \let\Ps@rcerc=\Ps@rcercBz%
     \let\Ps@rell=\Ps@rellBz%
    \fi
    \let\c@lDCUn=\c@lDCUnDD%
    \let\c@lDCDeux=\c@lDCDeuxDD%
    \let\c@ldefproj=\relax%
    \let\c@lproscal=\c@lproscalDD%
    \let\c@lprojSP=\relax%
    \let\extr@ctC=\extr@ctCDD%
    \let\extr@ctCa=\extr@ctCaDD%
    \let\extr@ctCF=\extr@ctCFDD%
    \let\Figp@intreg=\Figp@intregDD%
    \let\Figpts@xes=\Figpts@xesDD%
    \let\n@rmeucSV=\n@rmeucSVDD\let\n@rmeuc=\n@rmeucDD\let\n@rminf=\n@rminfDD%
    \let\pr@dMatV=\pr@dMatVDD%
    \let\ps@xes=\ps@xesDD%
    \let\vecunit@=\vecunit@DD%
    \let\figcoord=\figcoordDD%
    \let\figgetangle=\figgetangleDD%
    \let\figpt=\figptDD%
    \let\figptBezier=\figptBezierDD%
    \let\figptbary=\figptbaryDD%
    \let\figptcirc=\figptcircDD%
    \let\figptcircumcenter=\figptcircumcenterDD%
    \let\figptcopy=\figptcopyDD%
    \let\figptcurvcenter=\figptcurvcenterDD%
    \let\figptell=\figptellDD%
    \let\figptendnormal=\figptendnormalDD%
    \let\figptinterlineplane=\figptinterlineplaneDD%
    \let\figptinterlines=\inters@cDD%
    \let\figptorthocenter=\figptorthocenterDD%
    \let\figptorthoprojline=\figptorthoprojlineDD%
    \let\figptorthoprojplane=\figptorthoprojplaneDD%
    \let\figptrot=\figptrotDD%
    \let\figptscontrol=\figptscontrolDD%
    \let\figptsintercirc=\figptsintercircDD%
    \let\figptsinterlinell=\figptsinterlinellDD%
    \let\figptsorthoprojline=\figptsorthoprojlineDD%
    \let\figptorthoprojplane=\figptorthoprojplaneDD%
    \let\figptsrot=\figptsrotDD%
    \let\figptssym=\figptssymDD%
    \let\figptstra=\figptstraDD%
    \let\figptsym=\figptsymDD%
    \let\figpttraC=\figpttraCDD%
    \let\figpttra=\figpttraDD%
    \let\figptvisilimSL=\figptvisilimSLDD%
    \let\figsetobdist=\figsetobdistDD%
    \let\figsettarget=\figsettargetDD%
    \let\figsetview=\figsetviewDD%
    \let\figvectDBezier=\figvectDBezierDD%
    \let\figvectN=\figvectNDD%
    \let\figvectNV=\figvectNVDD%
    \let\figvectP=\figvectPDD%
    \let\figvectU=\figvectUDD%
    \let\psarccircP=\psarccircPDD%
    \let\psarccirc=\psarccircDD%
    \let\psarcell=\psarcellDD%
    \let\psarcellPA=\psarcellPADD%
    \let\psarrowBezier=\psarrowBezierDD%
    \let\psarrowcircP=\psarrowcircPDD%
    \let\psarrowcirc=\psarrowcircDD%
    \let\psarrowhead=\psarrowheadDD%
    \let\psarrow=\psarrowDD%
    \let\psBezier=\psBezierDD%
    \let\pscirc=\pscircDD%
    \let\pscurve=\pscurveDD%
    \let\psnormal=\psnormalDD%
    }
\ctr@ld@f\def\initTD@{\Tr@isDimtrue\initb@undb@xTD\newt@rgetptfalse\newdis@bfalse%
    \let\c@lDCUn=\c@lDCUnTD%
    \let\c@lDCDeux=\c@lDCDeuxTD%
    \let\c@ldefproj=\c@ldefprojTD%
    \let\c@lproscal=\c@lproscalTD%
    \let\extr@ctC=\extr@ctCTD%
    \let\extr@ctCa=\extr@ctCaTD%
    \let\extr@ctCF=\extr@ctCFTD%
    \let\Figp@intreg=\Figp@intregTD%
    \let\Figpts@xes=\Figpts@xesTD%
    \let\n@rmeucSV=\n@rmeucSVTD\let\n@rmeuc=\n@rmeucTD\let\n@rminf=\n@rminfTD%
    \let\pr@dMatV=\pr@dMatVTD%
    \let\ps@xes=\ps@xesTD%
    \let\vecunit@=\vecunit@TD%
    \let\figcoord=\figcoordTD%
    \let\figgetangle=\figgetangleTD%
    \let\figpt=\figptTD%
    \let\figptBezier=\figptBezierTD%
    \let\figptbary=\figptbaryTD%
    \let\figptcirc=\figptcircTD%
    \let\figptcircumcenter=\figptcircumcenterTD%
    \let\figptcopy=\figptcopyTD%
    \let\figptcurvcenter=\figptcurvcenterTD%
    \let\figptinterlineplane=\figptinterlineplaneTD%
    \let\figptinterlines=\inters@cTD%
    \let\figptorthocenter=\figptorthocenterTD%
    \let\figptorthoprojline=\figptorthoprojlineTD%
    \let\figptorthoprojplane=\figptorthoprojplaneTD%
    \let\figptrot=\figptrotTD%
    \let\figptscontrol=\figptscontrolTD%
    \let\figptsintercirc=\figptsintercircTD%
    \let\figptsorthoprojline=\figptsorthoprojlineTD%
    \let\figptsorthoprojplane=\figptsorthoprojplaneTD%
    \let\figptsrot=\figptsrotTD%
    \let\figptssym=\figptssymTD%
    \let\figptstra=\figptstraTD%
    \let\figptsym=\figptsymTD%
    \let\figpttraC=\figpttraCTD%
    \let\figpttra=\figpttraTD%
    \let\figptvisilimSL=\figptvisilimSLTD%
    \let\figsetobdist=\figsetobdistTD%
    \let\figsettarget=\figsettargetTD%
    \let\figsetview=\figsetviewTD%
    \let\figvectDBezier=\figvectDBezierTD%
    \let\figvectN=\figvectNTD%
    \let\figvectNV=\figvectNVTD%
    \let\figvectP=\figvectPTD%
    \let\figvectU=\figvectUTD%
    \let\psarccircP=\psarccircPTD%
    \let\psarccirc=\psarccircTD%
    \let\psarcell=\psarcellTD%
    \let\psarcellPA=\psarcellPATD%
    \let\psarrowBezier=\psarrowBezierTD%
    \let\psarrowcircP=\psarrowcircPTD%
    \let\psarrowcirc=\psarrowcircTD%
    \let\psarrowhead=\psarrowheadTD%
    \let\psarrow=\psarrowTD%
    \let\psBezier=\psBezierTD%
    \let\pscirc=\pscircTD%
    \let\pscurve=\pscurveTD%
    }
\ctr@ld@f\def\un@v@ilable#1{\immediate\write16{*** The macro #1 is not available in the current context.}}
\ctr@ld@f\def\figinsert#1{{\def\t@xt@{#1}\relax%
    \ifx\t@xt@\empty\ifnum\@utoFInDone>\z@\Figinsert@\DefGIfilen@me,:\fi%
    \else\expandafter\FiginsertNu@#1 :\fi}\ignorespaces}
\ctr@ld@f\def\FiginsertNu@#1 #2:{\def\t@xt@{#1}\relax\ifx\t@xt@\empty\def\t@xt@{#2}%
    \ifx\t@xt@\empty\ifnum\@utoFInDone>\z@\Figinsert@\DefGIfilen@me,:\fi%
    \else\FiginsertNu@#2:\fi\else\expandafter\FiginsertNd@#1 #2:\fi}
\ctr@ld@f\def\FiginsertNd@#1#2:{\ifcat#1a\Figinsert@#1#2,:\else%
    \ifnum\@utoFInDone>\z@\Figinsert@\DefGIfilen@me,#1#2,:\fi\fi}
\ctr@ln@m\Sc@leFact
\ctr@ld@f\def\Figinsert@#1,#2:{\def\t@xt@{#2}\ifx\t@xt@\empty\xdef\Sc@leFact{1}\else%
    \X@rgdeux@#2\xdef\Sc@leFact{\@rgdeux}\fi%
    \Figdisc@rdLTS{#1}{\t@xt@}\@psfgetbb{\t@xt@}%
    \v@lX=\@psfllx\p@\v@lX=\ptpsT@pt\v@lX\v@lX=\Sc@leFact\v@lX%
    \v@lY=\@psflly\p@\v@lY=\ptpsT@pt\v@lY\v@lY=\Sc@leFact\v@lY%
    \b@undb@x{\v@lX}{\v@lY}%
    \v@lX=\@psfurx\p@\v@lX=\ptpsT@pt\v@lX\v@lX=\Sc@leFact\v@lX%
    \v@lY=\@psfury\p@\v@lY=\ptpsT@pt\v@lY\v@lY=\Sc@leFact\v@lY%
    \b@undb@x{\v@lX}{\v@lY}%
    \ifPDFm@ke\Figinclud@PDF{\t@xt@}{\Sc@leFact}\else%
    \v@lX=\c@nt pt\v@lX=\Sc@leFact\v@lX\edef\F@ct{\repdecn@mb{\v@lX}}%
    \ifx\TeXturesonMacOSltX\special{postscriptfile #1 vscale=\F@ct\space hscale=\F@ct}%
    \else\includegraphics{#1}\fi\fi%
    \message{[\t@xt@]}\ignorespaces}
\ctr@ld@f\def\Figdisc@rdLTS#1#2{\expandafter\Figdisc@rdLTS@#1 :#2}
\ctr@ld@f\def\Figdisc@rdLTS@#1 #2:#3{\def#3{#1}\relax\ifx#3\empty\expandafter\Figdisc@rdLTS@#2:#3\fi}
\ctr@ld@f\def\figinsertE#1{\FiginsertE@#1,:\ignorespaces}
\ctr@ld@f\def\FiginsertE@#1,#2:{{\def\t@xt@{#2}\ifx\t@xt@\empty\xdef\Sc@leFact{1}\else%
    \X@rgdeux@#2\xdef\Sc@leFact{\@rgdeux}\fi%
    \Figdisc@rdLTS{#1}{\t@xt@}\pdfximage{\t@xt@}%
    \setbox\Gb@x=\hbox{\pdfrefximage\pdflastximage}%
    \v@lX=\z@\v@lY=-\Sc@leFact\dp\Gb@x\b@undb@x{\v@lX}{\v@lY}%
    \advance\v@lX\Sc@leFact\wd\Gb@x\advance\v@lY\Sc@leFact\dp\Gb@x%
    \advance\v@lY\Sc@leFact\ht\Gb@x\b@undb@x{\v@lX}{\v@lY}%
    \v@lX=\Sc@leFact\wd\Gb@x\pdfximage width \v@lX {\t@xt@}%
    \rlap{\pdfrefximage\pdflastximage}\message{[\t@xt@]}}\ignorespaces}
\ctr@ld@f\def\X@rgdeux@#1,{\edef\@rgdeux{#1}}
\ctr@ln@m\figpt
\ctr@ld@f\def\figptDD#1:#2(#3,#4){\ifps@cri\c@ntr@lnum{#1}%
    {\v@lX=#3\unit@\v@lY=#4\unit@\Fig@dmpt{#2}{\z@}}\ignorespaces\fi}
\ctr@ld@f\def\Fig@dmpt#1#2{\def\t@xt@{#1}\ifx\t@xt@\empty\def\B@@ltxt{\z@}%
    \else\expandafter\gdef\csname\objc@de T\endcsname{#1}\def\B@@ltxt{\@ne}\fi%
    \expandafter\xdef\csname\objc@de\endcsname{\ifitis@vect@r\C@dCl@svect%
    \else\C@dCl@spt\fi,\z@,\B@@ltxt/\the\v@lX,\the\v@lY,#2}}
\ctr@ld@f\def\C@dCl@spt{P}
\ctr@ld@f\def\C@dCl@svect{V}
\ctr@ln@m\c@@rdYZ
\ctr@ln@m\c@@rdY
\ctr@ld@f\def\figptTD#1:#2(#3,#4){\ifps@cri\c@ntr@lnum{#1}%
    \def\c@@rdYZ{#4,0,0}\extrairelepremi@r\c@@rdY\de\c@@rdYZ%
    \extrairelepremi@r\c@@rdZ\de\c@@rdYZ%
    {\v@lX=#3\unit@\v@lY=\c@@rdY\unit@\v@lZ=\c@@rdZ\unit@\Fig@dmpt{#2}{\the\v@lZ}%
    \b@undb@xTD{\v@lX}{\v@lY}{\v@lZ}}\ignorespaces\fi}
\ctr@ln@m\Figp@intreg
\ctr@ld@f\def\Figp@intregDD#1:#2(#3,#4){\c@ntr@lnum{#1}%
    {\result@t=#4\v@lX=#3\v@lY=\result@t\Fig@dmpt{#2}{\z@}}\ignorespaces}
\ctr@ld@f\def\Figp@intregTD#1:#2(#3,#4){\c@ntr@lnum{#1}%
    \def\c@@rdYZ{#4,\z@,\z@}\extrairelepremi@r\c@@rdY\de\c@@rdYZ%
    \extrairelepremi@r\c@@rdZ\de\c@@rdYZ%
    {\v@lX=#3\v@lY=\c@@rdY\v@lZ=\c@@rdZ\Fig@dmpt{#2}{\the\v@lZ}%
    \b@undb@xTD{\v@lX}{\v@lY}{\v@lZ}}\ignorespaces}
\ctr@ln@m\figptBezier
\ctr@ld@f\def\figptBezierDD#1:#2:#3[#4,#5,#6,#7]{\ifps@cri{\s@uvc@ntr@l\et@tfigptBezierDD%
    \FigptBezier@#3[#4,#5,#6,#7]\Figp@intregDD#1:{#2}(\v@lX,\v@lY)%
    \resetc@ntr@l\et@tfigptBezierDD}\ignorespaces\fi}
\ctr@ld@f\def\figptBezierTD#1:#2:#3[#4,#5,#6,#7]{\ifps@cri{\s@uvc@ntr@l\et@tfigptBezierTD%
    \FigptBezier@#3[#4,#5,#6,#7]\Figp@intregTD#1:{#2}(\v@lX,\v@lY,\v@lZ)%
    \resetc@ntr@l\et@tfigptBezierTD}\ignorespaces\fi}
\ctr@ld@f\def\FigptBezier@#1[#2,#3,#4,#5]{\setc@ntr@l{2}%
    \edef\T@{#1}\v@leur=\p@\advance\v@leur-#1pt\edef\UNmT@{\repdecn@mb{\v@leur}}%
    \figptcopy-4:/#2/\figptcopy-3:/#3/\figptcopy-2:/#4/\figptcopy-1:/#5/%
    \l@mbd@un=-4 \l@mbd@de=-\thr@@\p@rtent=\m@ne\c@lDecast%
    \l@mbd@un=-4 \l@mbd@de=-\thr@@\p@rtent=-\tw@\c@lDecast%
    \l@mbd@un=-4 \l@mbd@de=-\thr@@\p@rtent=-\thr@@\c@lDecast\Figg@tXY{-4}}
\ctr@ln@m\c@lDCUn
\ctr@ld@f\def\c@lDCUnDD#1#2{\Figg@tXY{#1}\v@lX=\UNmT@\v@lX\v@lY=\UNmT@\v@lY%
    \Figg@tXYa{#2}\advance\v@lX\T@\v@lXa\advance\v@lY\T@\v@lYa%
    \Figp@intregDD#1:(\v@lX,\v@lY)}
\ctr@ld@f\def\c@lDCUnTD#1#2{\Figg@tXY{#1}\v@lX=\UNmT@\v@lX\v@lY=\UNmT@\v@lY\v@lZ=\UNmT@\v@lZ%
    \Figg@tXYa{#2}\advance\v@lX\T@\v@lXa\advance\v@lY\T@\v@lYa\advance\v@lZ\T@\v@lZa%
    \Figp@intregTD#1:(\v@lX,\v@lY,\v@lZ)}
\ctr@ld@f\def\c@lDecast{\relax\ifnum\l@mbd@un<\p@rtent\c@lDCUn{\l@mbd@un}{\l@mbd@de}%
    \advance\l@mbd@un\@ne\advance\l@mbd@de\@ne\c@lDecast\fi}
\ctr@ld@f\def\figptmap#1:#2=#3/#4/#5/{\ifps@cri{\s@uvc@ntr@l\et@tfigptmap%
    \setc@ntr@l{2}\figvectP-1[#4,#3]\Figg@tXY{-1}%
    \pr@dMatV/#5/\figpttra#1:{#2}=#4/1,-1/%
    \resetc@ntr@l\et@tfigptmap}\ignorespaces\fi}
\ctr@ln@m\pr@dMatV
\ctr@ld@f\def\pr@dMatVDD/#1,#2;#3,#4/{\v@lXa=#1\v@lX\advance\v@lXa#2\v@lY%
    \v@lYa=#3\v@lX\advance\v@lYa#4\v@lY\Figv@ctCreg-1(\v@lXa,\v@lYa)}
\ctr@ld@f\def\pr@dMatVTD/#1,#2,#3;#4,#5,#6;#7,#8,#9/{%
    \v@lXa=#1\v@lX\advance\v@lXa#2\v@lY\advance\v@lXa#3\v@lZ%
    \v@lYa=#4\v@lX\advance\v@lYa#5\v@lY\advance\v@lYa#6\v@lZ%
    \v@lZa=#7\v@lX\advance\v@lZa#8\v@lY\advance\v@lZa#9\v@lZ%
    \Figv@ctCreg-1(\v@lXa,\v@lYa,\v@lZa)}
\ctr@ln@m\figptbary
\ctr@ld@f\def\figptbaryDD#1:#2[#3;#4]{\ifps@cri{\edef\list@num{#3}\extrairelepremi@r\p@int\de\list@num%
    \s@mme=\z@\@ecfor\c@ef:=#4\do{\advance\s@mme\c@ef}%
    \edef\listec@ef{#4,0}\extrairelepremi@r\c@ef\de\listec@ef%
    \Figg@tXY{\p@int}\divide\v@lX\s@mme\divide\v@lY\s@mme%
    \multiply\v@lX\c@ef\multiply\v@lY\c@ef%
    \@ecfor\p@int:=\list@num\do{\extrairelepremi@r\c@ef\de\listec@ef%
           \Figg@tXYa{\p@int}\divide\v@lXa\s@mme\divide\v@lYa\s@mme%
           \multiply\v@lXa\c@ef\multiply\v@lYa\c@ef%
           \advance\v@lX\v@lXa\advance\v@lY\v@lYa}%
    \Figp@intregDD#1:{#2}(\v@lX,\v@lY)}\ignorespaces\fi}
\ctr@ld@f\def\figptbaryTD#1:#2[#3;#4]{\ifps@cri{\edef\list@num{#3}\extrairelepremi@r\p@int\de\list@num%
    \s@mme=\z@\@ecfor\c@ef:=#4\do{\advance\s@mme\c@ef}%
    \edef\listec@ef{#4,0}\extrairelepremi@r\c@ef\de\listec@ef%
    \Figg@tXY{\p@int}\divide\v@lX\s@mme\divide\v@lY\s@mme\divide\v@lZ\s@mme%
    \multiply\v@lX\c@ef\multiply\v@lY\c@ef\multiply\v@lZ\c@ef%
    \@ecfor\p@int:=\list@num\do{\extrairelepremi@r\c@ef\de\listec@ef%
           \Figg@tXYa{\p@int}\divide\v@lXa\s@mme\divide\v@lYa\s@mme\divide\v@lZa\s@mme%
           \multiply\v@lXa\c@ef\multiply\v@lYa\c@ef\multiply\v@lZa\c@ef%
           \advance\v@lX\v@lXa\advance\v@lY\v@lYa\advance\v@lZ\v@lZa}%
    \Figp@intregTD#1:{#2}(\v@lX,\v@lY,\v@lZ)}\ignorespaces\fi}
\ctr@ld@f\def\figptbaryR#1:#2[#3;#4]{\ifps@cri{%
    \v@leur=\z@\@ecfor\c@ef:=#4\do{\maxim@m{\v@lmax}{\c@ef pt}{-\c@ef pt}%
    \ifdim\v@lmax>\v@leur\v@leur=\v@lmax\fi}%
    \ifdim\v@leur<\p@\f@ctech=\@M\else\ifdim\v@leur<\t@n\p@\f@ctech=\@m\else%
    \ifdim\v@leur<\c@nt\p@\f@ctech=\c@nt\else\ifdim\v@leur<\@m\p@\f@ctech=\t@n\else%
    \f@ctech=\@ne\fi\fi\fi\fi%
    \def\listec@ef{0}%
    \@ecfor\c@ef:=#4\do{\sc@lec@nvRI{\c@ef pt}\edef\listec@ef{\listec@ef,\the\s@mme}}%
    \extrairelepremi@r\c@ef\de\listec@ef\figptbary#1:#2[#3;\listec@ef]}\ignorespaces\fi}
\ctr@ld@f\def\sc@lec@nvRI#1{\v@leur=#1\p@rtentiere{\s@mme}{\v@leur}\advance\v@leur-\s@mme\p@%
    \multiply\v@leur\f@ctech\p@rtentiere{\p@rtent}{\v@leur}%
    \multiply\s@mme\f@ctech\advance\s@mme\p@rtent}
\ctr@ln@m\figptcirc
\ctr@ld@f\def\figptcircDD#1:#2:#3;#4(#5){\ifps@cri{\s@uvc@ntr@l\et@tfigptcircDD%
    \c@lptellDD#1:{#2}:#3;#4,#4(#5)\resetc@ntr@l\et@tfigptcircDD}\ignorespaces\fi}
\ctr@ld@f\def\figptcircTD#1:#2:#3,#4,#5;#6(#7){\ifps@cri{\s@uvc@ntr@l\et@tfigptcircTD%
    \setc@ntr@l{2}\c@lExtAxes#3,#4,#5(#6)\figptellP#1:{#2}:#3,-4,-5(#7)%
    \resetc@ntr@l\et@tfigptcircTD}\ignorespaces\fi}
\ctr@ln@m\figptcircumcenter
\ctr@ld@f\def\figptcircumcenterDD#1:#2[#3,#4,#5]{\ifps@cri{\s@uvc@ntr@l\et@tfigptcircumcenterDD%
    \setc@ntr@l{2}\figvectNDD-5[#3,#4]\figptbaryDD-3:[#3,#4;1,1]%
                  \figvectNDD-6[#4,#5]\figptbaryDD-4:[#4,#5;1,1]%
    \resetc@ntr@l{2}\inters@cDD#1:{#2}[-3,-5;-4,-6]%
    \resetc@ntr@l\et@tfigptcircumcenterDD}\ignorespaces\fi}
\ctr@ld@f\def\figptcircumcenterTD#1:#2[#3,#4,#5]{\ifps@cri{\s@uvc@ntr@l\et@tfigptcircumcenterTD%
    \setc@ntr@l{2}\figvectNTD-1[#3,#4,#5]%
    \figvectPTD-3[#3,#4]\figvectNVTD-5[-1,-3]\figptbaryTD-3:[#3,#4;1,1]%
    \figvectPTD-4[#4,#5]\figvectNVTD-6[-1,-4]\figptbaryTD-4:[#4,#5;1,1]%
    \resetc@ntr@l{2}\inters@cTD#1:{#2}[-3,-5;-4,-6]%
    \resetc@ntr@l\et@tfigptcircumcenterTD}\ignorespaces\fi}
\ctr@ln@m\figptcopy
\ctr@ld@f\def\figptcopyDD#1:#2/#3/{\ifps@cri{\Figg@tXY{#3}%
    \Figp@intregDD#1:{#2}(\v@lX,\v@lY)}\ignorespaces\fi}
\ctr@ld@f\def\figptcopyTD#1:#2/#3/{\ifps@cri{\Figg@tXY{#3}%
    \Figp@intregTD#1:{#2}(\v@lX,\v@lY,\v@lZ)}\ignorespaces\fi}
\ctr@ln@m\figptcurvcenter
\ctr@ld@f\def\figptcurvcenterDD#1:#2:#3[#4,#5,#6,#7]{\ifps@cri{\s@uvc@ntr@l\et@tfigptcurvcenterDD%
    \setc@ntr@l{2}\c@lcurvradDD#3[#4,#5,#6,#7]\edef\Sprim@{\repdecn@mb{\result@t}}%
    \figptBezierDD-1::#3[#4,#5,#6,#7]\figpttraDD#1:{#2}=-1/\Sprim@,-5/%
    \resetc@ntr@l\et@tfigptcurvcenterDD}\ignorespaces\fi}
\ctr@ld@f\def\figptcurvcenterTD#1:#2:#3[#4,#5,#6,#7]{\ifps@cri{\s@uvc@ntr@l\et@tfigptcurvcenterTD%
    \setc@ntr@l{2}\figvectDBezierTD -5:1,#3[#4,#5,#6,#7]%
    \figvectDBezierTD -6:2,#3[#4,#5,#6,#7]\vecunit@TD{-5}{-5}%
    \edef\Sprim@{\repdecn@mb{\result@t}}\figvectNVTD-1[-6,-5]%
    \figvectNVTD-5[-5,-1]\c@lproscalTD\v@leur[-6,-5]%
    \invers@{\v@leur}{\v@leur}\v@leur=\Sprim@\v@leur\v@leur=\Sprim@\v@leur%
    \figptBezierTD-1::#3[#4,#5,#6,#7]\edef\Sprim@{\repdecn@mb{\v@leur}}%
    \figpttraTD#1:{#2}=-1/\Sprim@,-5/\resetc@ntr@l\et@tfigptcurvcenterTD}\ignorespaces\fi}
\ctr@ld@f\def\c@lcurvradDD#1[#2,#3,#4,#5]{{\figvectDBezierDD -5:1,#1[#2,#3,#4,#5]%
    \figvectDBezierDD -6:2,#1[#2,#3,#4,#5]\vecunit@DD{-5}{-5}%
    \edef\Sprim@{\repdecn@mb{\result@t}}\figvectNVDD-5[-5]\c@lproscalDD\v@leur[-6,-5]%
    \invers@{\v@leur}{\v@leur}\v@leur=\Sprim@\v@leur\v@leur=\Sprim@\v@leur%
    \global\result@t=\v@leur}}
\ctr@ln@m\figptell
\ctr@ld@f\def\figptellDD#1:#2:#3;#4,#5(#6,#7){\ifps@cri{\s@uvc@ntr@l\et@tfigptell%
    \c@lptellDD#1::#3;#4,#5(#6)\figptrotDD#1:{#2}=#1/#3,#7/%
    \resetc@ntr@l\et@tfigptell}\ignorespaces\fi}
\ctr@ld@f\def\c@lptellDD#1:#2:#3;#4,#5(#6){\c@ssin{\C@}{\S@}{#6}\v@lmin=\C@ pt\v@lmax=\S@ pt%
    \v@lmin=#4\v@lmin\v@lmax=#5\v@lmax%
    \edef\Xc@mp{\repdecn@mb{\v@lmin}}\edef\Yc@mp{\repdecn@mb{\v@lmax}}%
    \setc@ntr@l{2}\figvectC-1(\Xc@mp,\Yc@mp)\figpttraDD#1:{#2}=#3/1,-1/}
\ctr@ld@f\def\figptellP#1:#2:#3,#4,#5(#6){\ifps@cri{\s@uvc@ntr@l\et@tfigptellP%
    \setc@ntr@l{2}\figvectP-1[#3,#4]\figvectP-2[#3,#5]%
    \v@leur=#6pt\c@lptellP{#3}{-1}{-2}\figptcopy#1:{#2}/-3/%
    \resetc@ntr@l\et@tfigptellP}\ignorespaces\fi}
\ctr@ln@m\@ngle
\ctr@ld@f\def\c@lptellP#1#2#3{\edef\@ngle{\repdecn@mb\v@leur}\c@ssin{\C@}{\S@}{\@ngle}%
    \figpttra-3:=#1/\C@,#2/\figpttra-3:=-3/\S@,#3/}
\ctr@ln@m\figptendnormal
\ctr@ld@f\def\figptendnormalDD#1:#2:#3,#4[#5,#6]{\ifps@cri{\s@uvc@ntr@l\et@tfigptendnormal%
    \Figg@tXYa{#5}\Figg@tXY{#6}%
    \advance\v@lX-\v@lXa\advance\v@lY-\v@lYa%
    \setc@ntr@l{2}\Figv@ctCreg-1(\v@lX,\v@lY)\vecunit@{-1}{-1}\Figg@tXY{-1}%
    \delt@=#3\unit@\maxim@m{\delt@}{\delt@}{-\delt@}\edef\l@ngueur{\repdecn@mb{\delt@}}%
    \v@lX=\l@ngueur\v@lX\v@lY=\l@ngueur\v@lY%
    \delt@=\p@\advance\delt@-#4pt\edef\l@ngueur{\repdecn@mb{\delt@}}%
    \figptbaryR-1:[#5,#6;#4,\l@ngueur]\Figg@tXYa{-1}%
    \advance\v@lXa\v@lY\advance\v@lYa-\v@lX%
    \setc@ntr@l{1}\Figp@intregDD#1:{#2}(\v@lXa,\v@lYa)\resetc@ntr@l\et@tfigptendnormal}%
    \ignorespaces\fi}
\ctr@ld@f\def\figptexcenter#1:#2[#3,#4,#5]{\ifps@cri{\let@xte={-}%
    \Figptexinsc@nter#1:#2[#3,#4,#5]}\ignorespaces\fi}
\ctr@ld@f\def\figptincenter#1:#2[#3,#4,#5]{\ifps@cri{\let@xte={}%
    \Figptexinsc@nter#1:#2[#3,#4,#5]}\ignorespaces\fi}
\ctr@ld@f\let\figptinscribedcenter=\figptincenter
\ctr@ld@f\def\Figptexinsc@nter#1:#2[#3,#4,#5]{%
    \figgetdist\LA@[#4,#5]\figgetdist\LB@[#3,#5]\figgetdist\LC@[#3,#4]%
    \figptbaryR#1:{#2}[#3,#4,#5;\the\let@xte\LA@,\LB@,\LC@]}
\ctr@ln@m\figptinterlineplane
\ctr@ld@f\def\figptinterlineplaneDD{\un@v@ilable{figptinterlineplane}}
\ctr@ld@f\def\figptinterlineplaneTD#1:#2[#3,#4;#5,#6]{\ifps@cri{\s@uvc@ntr@l\et@tfigptinterlineplane%
    \setc@ntr@l{2}\figvectPTD-1[#3,#5]\vecunit@TD{-2}{#6}%
    \r@pPSTD\v@leur[-2,-1,#4]\edef\v@lcoef{\repdecn@mb{\v@leur}}%
    \figpttraTD#1:{#2}=#3/\v@lcoef,#4/\resetc@ntr@l\et@tfigptinterlineplane}\ignorespaces\fi}
\ctr@ln@m\figptorthocenter
\ctr@ld@f\def\figptorthocenterDD#1:#2[#3,#4,#5]{\ifps@cri{\s@uvc@ntr@l\et@tfigptorthocenterDD%
    \setc@ntr@l{2}\figvectNDD-3[#3,#4]\figvectNDD-4[#4,#5]%
    \resetc@ntr@l{2}\inters@cDD#1:{#2}[#5,-3;#3,-4]%
    \resetc@ntr@l\et@tfigptorthocenterDD}\ignorespaces\fi}
\ctr@ld@f\def\figptorthocenterTD#1:#2[#3,#4,#5]{\ifps@cri{\s@uvc@ntr@l\et@tfigptorthocenterTD%
    \setc@ntr@l{2}\figvectNTD-1[#3,#4,#5]%
    \figvectPTD-2[#3,#4]\figvectNVTD-3[-1,-2]%
    \figvectPTD-2[#4,#5]\figvectNVTD-4[-1,-2]%
    \resetc@ntr@l{2}\inters@cTD#1:{#2}[#5,-3;#3,-4]%
    \resetc@ntr@l\et@tfigptorthocenterTD}\ignorespaces\fi}
\ctr@ln@m\figptorthoprojline
\ctr@ld@f\def\figptorthoprojlineDD#1:#2=#3/#4,#5/{\ifps@cri{\s@uvc@ntr@l\et@tfigptorthoprojlineDD%
    \setc@ntr@l{2}\figvectPDD-3[#4,#5]\figvectNVDD-4[-3]\resetc@ntr@l{2}%
    \inters@cDD#1:{#2}[#3,-4;#4,-3]\resetc@ntr@l\et@tfigptorthoprojlineDD}\ignorespaces\fi}
\ctr@ld@f\def\figptorthoprojlineTD#1:#2=#3/#4,#5/{\ifps@cri{\s@uvc@ntr@l\et@tfigptorthoprojlineTD%
    \setc@ntr@l{2}\figvectPTD-1[#4,#3]\figvectPTD-2[#4,#5]\vecunit@TD{-2}{-2}%
    \c@lproscalTD\v@leur[-1,-2]\edef\v@lcoef{\repdecn@mb{\v@leur}}%
    \figpttraTD#1:{#2}=#4/\v@lcoef,-2/\resetc@ntr@l\et@tfigptorthoprojlineTD}\ignorespaces\fi}
\ctr@ln@m\figptorthoprojplane
\ctr@ld@f\def\figptorthoprojplaneDD{\un@v@ilable{figptorthoprojplane}}
\ctr@ld@f\def\figptorthoprojplaneTD#1:#2=#3/#4,#5/{\ifps@cri{\s@uvc@ntr@l\et@tfigptorthoprojplane%
    \setc@ntr@l{2}\figvectPTD-1[#3,#4]\vecunit@TD{-2}{#5}%
    \c@lproscalTD\v@leur[-1,-2]\edef\v@lcoef{\repdecn@mb{\v@leur}}%
    \figpttraTD#1:{#2}=#3/\v@lcoef,-2/\resetc@ntr@l\et@tfigptorthoprojplane}\ignorespaces\fi}
\ctr@ld@f\def\figpthom#1:#2=#3/#4,#5/{\ifps@cri{\s@uvc@ntr@l\et@tfigpthom%
    \setc@ntr@l{2}\figvectP-1[#4,#3]\figpttra#1:{#2}=#4/#5,-1/%
    \resetc@ntr@l\et@tfigpthom}\ignorespaces\fi}
\ctr@ln@m\figptrot
\ctr@ld@f\def\figptrotDD#1:#2=#3/#4,#5/{\ifps@cri{\s@uvc@ntr@l\et@tfigptrotDD%
    \c@ssin{\C@}{\S@}{#5}\setc@ntr@l{2}\figvectPDD-1[#4,#3]\Figg@tXY{-1}%
    \v@lXa=\C@\v@lX\advance\v@lXa-\S@\v@lY%
    \v@lYa=\S@\v@lX\advance\v@lYa\C@\v@lY%
    \Figv@ctCreg-1(\v@lXa,\v@lYa)\figpttraDD#1:{#2}=#4/1,-1/%
    \resetc@ntr@l\et@tfigptrotDD}\ignorespaces\fi}
\ctr@ld@f\def\figptrotTD#1:#2=#3/#4,#5,#6/{\ifps@cri{\s@uvc@ntr@l\et@tfigptrotTD%
    \c@ssin{\C@}{\S@}{#5}%
    \setc@ntr@l{2}\figptorthoprojplaneTD-3:=#4/#3,#6/\figvectPTD-2[-3,#3]%
    \n@rmeucTD\v@leur{-2}\ifdim\v@leur<\Cepsil@n\Figg@tXYa{#3}\else%
    \edef\v@lcoef{\repdecn@mb{\v@leur}}\figvectNVTD-1[#6,-2]%
    \Figg@tXYa{-1}\v@lXa=\v@lcoef\v@lXa\v@lYa=\v@lcoef\v@lYa\v@lZa=\v@lcoef\v@lZa%
    \v@lXa=\S@\v@lXa\v@lYa=\S@\v@lYa\v@lZa=\S@\v@lZa\Figg@tXY{-2}%
    \advance\v@lXa\C@\v@lX\advance\v@lYa\C@\v@lY\advance\v@lZa\C@\v@lZ%
    \Figg@tXY{-3}\advance\v@lXa\v@lX\advance\v@lYa\v@lY\advance\v@lZa\v@lZ\fi%
    \Figp@intregTD#1:{#2}(\v@lXa,\v@lYa,\v@lZa)\resetc@ntr@l\et@tfigptrotTD}\ignorespaces\fi}
\ctr@ln@m\figptsym
\ctr@ld@f\def\figptsymDD#1:#2=#3/#4,#5/{\ifps@cri{\s@uvc@ntr@l\et@tfigptsymDD%
    \resetc@ntr@l{2}\figptorthoprojlineDD-5:=#3/#4,#5/\figvectPDD-2[#3,-5]%
    \figpttraDD#1:{#2}=#3/2,-2/\resetc@ntr@l\et@tfigptsymDD}\ignorespaces\fi}
\ctr@ld@f\def\figptsymTD#1:#2=#3/#4,#5/{\ifps@cri{\s@uvc@ntr@l\et@tfigptsymTD%
    \resetc@ntr@l{2}\figptorthoprojplaneTD-3:=#3/#4,#5/\figvectPTD-2[#3,-3]%
    \figpttraTD#1:{#2}=#3/2,-2/\resetc@ntr@l\et@tfigptsymTD}\ignorespaces\fi}
\ctr@ln@m\figpttra
\ctr@ld@f\def\figpttraDD#1:#2=#3/#4,#5/{\ifps@cri{\Figg@tXYa{#5}\v@lXa=#4\v@lXa\v@lYa=#4\v@lYa%
    \Figg@tXY{#3}\advance\v@lX\v@lXa\advance\v@lY\v@lYa%
    \Figp@intregDD#1:{#2}(\v@lX,\v@lY)}\ignorespaces\fi}
\ctr@ld@f\def\figpttraTD#1:#2=#3/#4,#5/{\ifps@cri{\Figg@tXYa{#5}\v@lXa=#4\v@lXa\v@lYa=#4\v@lYa%
    \v@lZa=#4\v@lZa\Figg@tXY{#3}\advance\v@lX\v@lXa\advance\v@lY\v@lYa%
    \advance\v@lZ\v@lZa\Figp@intregTD#1:{#2}(\v@lX,\v@lY,\v@lZ)}\ignorespaces\fi}
\ctr@ln@m\figpttraC
\ctr@ld@f\def\figpttraCDD#1:#2=#3/#4,#5/{\ifps@cri{\v@lXa=#4\unit@\v@lYa=#5\unit@%
    \Figg@tXY{#3}\advance\v@lX\v@lXa\advance\v@lY\v@lYa%
    \Figp@intregDD#1:{#2}(\v@lX,\v@lY)}\ignorespaces\fi}
\ctr@ld@f\def\figpttraCTD#1:#2=#3/#4,#5,#6/{\ifps@cri{\v@lXa=#4\unit@\v@lYa=#5\unit@\v@lZa=#6\unit@%
    \Figg@tXY{#3}\advance\v@lX\v@lXa\advance\v@lY\v@lYa\advance\v@lZ\v@lZa%
    \Figp@intregTD#1:{#2}(\v@lX,\v@lY,\v@lZ)}\ignorespaces\fi}
\ctr@ld@f\def\figptsaxes#1:#2(#3){\ifps@cri{\an@lys@xes#3,:\ifx\t@xt@\empty%
    \ifTr@isDim\Figpts@xes#1:#2(0,#3,0,#3,0,#3)\else\Figpts@xes#1:#2(0,#3,0,#3)\fi%
    \else\Figpts@xes#1:#2(#3)\fi}\ignorespaces\fi}
\ctr@ln@m\Figpts@xes
\ctr@ld@f\def\Figpts@xesDD#1:#2(#3,#4,#5,#6){%
    \s@mme=#1\figpttraC\the\s@mme:$x$=#2/#4,0/%
    \advance\s@mme\@ne\figpttraC\the\s@mme:$y$=#2/0,#6/}
\ctr@ld@f\def\Figpts@xesTD#1:#2(#3,#4,#5,#6,#7,#8){%
    \s@mme=#1\figpttraC\the\s@mme:$x$=#2/#4,0,0/%
    \advance\s@mme\@ne\figpttraC\the\s@mme:$y$=#2/0,#6,0/%
    \advance\s@mme\@ne\figpttraC\the\s@mme:$z$=#2/0,0,#8/}
\ctr@ld@f\def\figptsmap#1=#2/#3/#4/{\ifps@cri{\s@uvc@ntr@l\et@tfigptsmap%
    \setc@ntr@l{2}\def\list@num{#2}\s@mme=#1%
    \@ecfor\p@int:=\list@num\do{\figvectP-1[#3,\p@int]\Figg@tXY{-1}%
    \pr@dMatV/#4/\figpttra\the\s@mme:=#3/1,-1/\advance\s@mme\@ne}%
    \resetc@ntr@l\et@tfigptsmap}\ignorespaces\fi}
\ctr@ln@m\figptscontrol
\ctr@ld@f\def\figptscontrolDD#1[#2,#3,#4,#5]{\ifps@cri{\s@uvc@ntr@l\et@tfigptscontrolDD\setc@ntr@l{2}%
    \v@lX=\z@\v@lY=\z@\Figtr@nptDD{-5}{#2}\Figtr@nptDD{2}{#5}%
    \divide\v@lX\@vi\divide\v@lY\@vi%
    \Figtr@nptDD{3}{#3}\Figtr@nptDD{-1.5}{#4}\Figp@intregDD-1:(\v@lX,\v@lY)%
    \v@lX=\z@\v@lY=\z@\Figtr@nptDD{2}{#2}\Figtr@nptDD{-5}{#5}%
    \divide\v@lX\@vi\divide\v@lY\@vi\Figtr@nptDD{-1.5}{#3}\Figtr@nptDD{3}{#4}%
    \s@mme=#1\advance\s@mme\@ne\Figp@intregDD\the\s@mme:(\v@lX,\v@lY)%
    \figptcopyDD#1:/-1/\resetc@ntr@l\et@tfigptscontrolDD}\ignorespaces\fi}
\ctr@ld@f\def\figptscontrolTD#1[#2,#3,#4,#5]{\ifps@cri{\s@uvc@ntr@l\et@tfigptscontrolTD\setc@ntr@l{2}%
    \v@lX=\z@\v@lY=\z@\v@lZ=\z@\Figtr@nptTD{-5}{#2}\Figtr@nptTD{2}{#5}%
    \divide\v@lX\@vi\divide\v@lY\@vi\divide\v@lZ\@vi%
    \Figtr@nptTD{3}{#3}\Figtr@nptTD{-1.5}{#4}\Figp@intregTD-1:(\v@lX,\v@lY,\v@lZ)%
    \v@lX=\z@\v@lY=\z@\v@lZ=\z@\Figtr@nptTD{2}{#2}\Figtr@nptTD{-5}{#5}%
    \divide\v@lX\@vi\divide\v@lY\@vi\divide\v@lZ\@vi\Figtr@nptTD{-1.5}{#3}\Figtr@nptTD{3}{#4}%
    \s@mme=#1\advance\s@mme\@ne\Figp@intregTD\the\s@mme:(\v@lX,\v@lY,\v@lZ)%
    \figptcopyTD#1:/-1/\resetc@ntr@l\et@tfigptscontrolTD}\ignorespaces\fi}
\ctr@ld@f\def\Figtr@nptDD#1#2{\Figg@tXYa{#2}\v@lXa=#1\v@lXa\v@lYa=#1\v@lYa%
    \advance\v@lX\v@lXa\advance\v@lY\v@lYa}
\ctr@ld@f\def\Figtr@nptTD#1#2{\Figg@tXYa{#2}\v@lXa=#1\v@lXa\v@lYa=#1\v@lYa\v@lZa=#1\v@lZa%
    \advance\v@lX\v@lXa\advance\v@lY\v@lYa\advance\v@lZ\v@lZa}
\ctr@ld@f\def\figptscontrolcurve#1,#2[#3]{\ifps@cri{\s@uvc@ntr@l\et@tfigptscontrolcurve%
    \def\list@num{#3}\extrairelepremi@r\Ak@\de\list@num%
    \extrairelepremi@r\Ai@\de\list@num\extrairelepremi@r\Aj@\de\list@num%
    \s@mme=#1\figptcopy\the\s@mme:/\Ai@/%
    \setc@ntr@l{2}\figvectP -1[\Ak@,\Aj@]%
    \@ecfor\Ak@:=\list@num\do{\advance\s@mme\@ne\figpttra\the\s@mme:=\Ai@/\curv@roundness,-1/%
       \figvectP -1[\Ai@,\Ak@]\advance\s@mme\@ne\figpttra\the\s@mme:=\Aj@/-\curv@roundness,-1/%
       \advance\s@mme\@ne\figptcopy\the\s@mme:/\Aj@/%
       \edef\Ai@{\Aj@}\edef\Aj@{\Ak@}}\advance\s@mme-#1\divide\s@mme\thr@@%
       \xdef#2{\the\s@mme}%
    \resetc@ntr@l\et@tfigptscontrolcurve}\ignorespaces\fi}
\ctr@ln@m\figptsintercirc
\ctr@ld@f\def\figptsintercircDD#1[#2,#3;#4,#5]{\ifps@cri{\s@uvc@ntr@l\et@tfigptsintercircDD%
    \setc@ntr@l{2}\let\c@lNVintc=\c@lNVintcDD\Figptsintercirc@#1[#2,#3;#4,#5]%
    \resetc@ntr@l\et@tfigptsintercircDD}\ignorespaces\fi}
\ctr@ld@f\def\figptsintercircTD#1[#2,#3;#4,#5;#6]{\ifps@cri{\s@uvc@ntr@l\et@tfigptsintercircTD%
    \setc@ntr@l{2}\let\c@lNVintc=\c@lNVintcTD\vecunitC@TD[#2,#6]%
    \Figv@ctCreg-3(\v@lX,\v@lY,\v@lZ)\Figptsintercirc@#1[#2,#3;#4,#5]%
    \resetc@ntr@l\et@tfigptsintercircTD}\ignorespaces\fi}
\ctr@ld@f\def\Figptsintercirc@#1[#2,#3;#4,#5]{\figvectP-1[#2,#4]%
    \vecunit@{-1}{-1}\delt@=\result@t\f@ctech=\result@tent%
    \s@mme=#1\advance\s@mme\@ne\figptcopy#1:/#2/\figptcopy\the\s@mme:/#4/%
    \ifdim\delt@=\z@\else%
    \v@lmin=#3\unit@\v@lmax=#5\unit@\v@leur=\v@lmin\advance\v@leur\v@lmax%
    \ifdim\v@leur>\delt@%
    \v@leur=\v@lmin\advance\v@leur-\v@lmax\maxim@m{\v@leur}{\v@leur}{-\v@leur}%
    \ifdim\v@leur<\delt@%
    \divide\v@lmin\f@ctech\divide\v@lmax\f@ctech\divide\delt@\f@ctech%
    \v@lmin=\repdecn@mb{\v@lmin}\v@lmin\v@lmax=\repdecn@mb{\v@lmax}\v@lmax%
    \invers@{\v@leur}{\delt@}\advance\v@lmax-\v@lmin%
    \v@lmax=-\repdecn@mb{\v@leur}\v@lmax\advance\delt@\v@lmax\delt@=.5\delt@%
    \v@lmax=\delt@\multiply\v@lmax\f@ctech%
    \edef\t@ille{\repdecn@mb{\v@lmax}}\figpttra-2:=#2/\t@ille,-1/%
    \delt@=\repdecn@mb{\delt@}\delt@\advance\v@lmin-\delt@%
    \sqrt@{\v@leur}{\v@lmin}\multiply\v@leur\f@ctech\edef\t@ille{\repdecn@mb{\v@leur}}%
    \c@lNVintc\figpttra#1:=-2/-\t@ille,-1/\figpttra\the\s@mme:=-2/\t@ille,-1/\fi\fi\fi}
\ctr@ld@f\def\c@lNVintcDD{\Figg@tXY{-1}\Figv@ctCreg-1(-\v@lY,\v@lX)} 
\ctr@ld@f\def\c@lNVintcTD{{\Figg@tXY{-3}\v@lmin=\v@lX\v@lmax=\v@lY\v@leur=\v@lZ%
    \Figg@tXY{-1}\c@lprovec{-3}\vecunit@{-3}{-3}
    \Figg@tXY{-1}\v@lmin=\v@lX\v@lmax=\v@lY%
    \v@leur=\v@lZ\Figg@tXY{-3}\c@lprovec{-1}}} 
\ctr@ln@m\figptsinterlinell
\ctr@ld@f\def\figptsinterlinellDD#1[#2,#3,#4,#5;#6,#7]{\ifps@cri{\s@uvc@ntr@l\et@tfigptsinterlinellDD%
    \figptcopy#1:/#6/\s@mme=#1\advance\s@mme\@ne\figptcopy\the\s@mme:/#7/%
    \v@lmin=#3\unit@\v@lmax=#4\unit@
    \setc@ntr@l{2}\figptbaryDD-4:[#6,#7;1,1]\figptsrotDD-3=-4,#7/#2,-#5/
    \Figg@tXY{-3}\Figg@tXYa{#2}\advance\v@lX-\v@lXa\advance\v@lY-\v@lYa
    \figvectP-1[-3,-2]\Figg@tXYa{-1}\figvectP-3[-4,#7]\Figptsint@rLE{#1}
    \resetc@ntr@l\et@tfigptsinterlinellDD}\ignorespaces\fi}
\ctr@ld@f\def\figptsinterlinellP#1[#2,#3,#4;#5,#6]{\ifps@cri{\s@uvc@ntr@l\et@tfigptsinterlinellP%
    \figptcopy#1:/#5/\s@mme=#1\advance\s@mme\@ne\figptcopy\the\s@mme:/#6/\setc@ntr@l{2}%
    \figvectP-1[#2,#3]\vecunit@{-1}{-1}\v@lmin=\result@t
    \figvectP-2[#2,#4]\vecunit@{-2}{-2}\v@lmax=\result@t
    \figptbary-4:[#5,#6;1,1]
    \figvectP-3[#2,-4]\c@lproscal\v@lX[-3,-1]\c@lproscal\v@lY[-3,-2]
    \figvectP-3[-4,#6]\c@lproscal\v@lXa[-3,-1]\c@lproscal\v@lYa[-3,-2]
    \Figptsint@rLE{#1}\resetc@ntr@l\et@tfigptsinterlinellP}\ignorespaces\fi}
\ctr@ld@f\def\Figptsint@rLE#1{%
    \getredf@ctDD\f@ctech(\v@lmin,\v@lmax)%
    \getredf@ctDD\p@rtent(\v@lX,\v@lY)\ifnum\p@rtent>\f@ctech\f@ctech=\p@rtent\fi%
    \getredf@ctDD\p@rtent(\v@lXa,\v@lYa)\ifnum\p@rtent>\f@ctech\f@ctech=\p@rtent\fi%
    \divide\v@lmin\f@ctech\divide\v@lmax\f@ctech\divide\v@lX\f@ctech\divide\v@lY\f@ctech%
    \divide\v@lXa\f@ctech\divide\v@lYa\f@ctech%
    \c@rre=\repdecn@mb\v@lXa\v@lmax\mili@u=\repdecn@mb\v@lYa\v@lmin%
    \getredf@ctDD\f@ctech(\c@rre,\mili@u)%
    \c@rre=\repdecn@mb\v@lX\v@lmax\mili@u=\repdecn@mb\v@lY\v@lmin%
    \getredf@ctDD\p@rtent(\c@rre,\mili@u)\ifnum\p@rtent>\f@ctech\f@ctech=\p@rtent\fi%
    \divide\v@lmin\f@ctech\divide\v@lmax\f@ctech\divide\v@lX\f@ctech\divide\v@lY\f@ctech%
    \divide\v@lXa\f@ctech\divide\v@lYa\f@ctech%
    \v@lmin=\repdecn@mb{\v@lmin}\v@lmin\v@lmax=\repdecn@mb{\v@lmax}\v@lmax%
    \edef\G@xde{\repdecn@mb\v@lmin}\edef\P@xde{\repdecn@mb\v@lmax}%
    \c@rre=-\v@lmax\v@leur=\repdecn@mb\v@lY\v@lY\advance\c@rre\v@leur\c@rre=\G@xde\c@rre%
    \v@leur=\repdecn@mb\v@lX\v@lX\v@leur=\P@xde\v@leur\advance\c@rre\v@leur
    \v@lmin=\repdecn@mb\v@lYa\v@lmin\v@lmax=\repdecn@mb\v@lXa\v@lmax%
    \mili@u=\repdecn@mb\v@lX\v@lmax\advance\mili@u\repdecn@mb\v@lY\v@lmin
    \v@lmax=\repdecn@mb\v@lXa\v@lmax\advance\v@lmax\repdecn@mb\v@lYa\v@lmin
    \ifdim\v@lmax>\epsil@n%
    \maxim@m{\v@leur}{\c@rre}{-\c@rre}\maxim@m{\v@lmin}{\mili@u}{-\mili@u}%
    \maxim@m{\v@leur}{\v@leur}{\v@lmin}\maxim@m{\v@lmin}{\v@lmax}{-\v@lmax}%
    \maxim@m{\v@leur}{\v@leur}{\v@lmin}\p@rtentiere{\p@rtent}{\v@leur}\advance\p@rtent\@ne%
    \divide\c@rre\p@rtent\divide\mili@u\p@rtent\divide\v@lmax\p@rtent%
    \delt@=\repdecn@mb{\mili@u}\mili@u\v@leur=\repdecn@mb{\v@lmax}\c@rre%
    \advance\delt@-\v@leur\ifdim\delt@<\z@\else\sqrt@\delt@\delt@%
    \invers@\v@lmax\v@lmax\edef\Uns@rAp{\repdecn@mb\v@lmax}%
    \v@leur=-\mili@u\advance\v@leur-\delt@\v@leur=\Uns@rAp\v@leur%
    \edef\t@ille{\repdecn@mb\v@leur}\figpttra#1:=-4/\t@ille,-3/\s@mme=#1\advance\s@mme\@ne%
    \v@leur=-\mili@u\advance\v@leur\delt@\v@leur=\Uns@rAp\v@leur%
    \edef\t@ille{\repdecn@mb\v@leur}\figpttra\the\s@mme:=-4/\t@ille,-3/\fi\fi}
\ctr@ln@m\figptsorthoprojline
\ctr@ld@f\def\figptsorthoprojlineDD#1=#2/#3,#4/{\ifps@cri{\s@uvc@ntr@l\et@tfigptsorthoprojlineDD%
    \setc@ntr@l{2}\figvectPDD-3[#3,#4]\figvectNVDD-4[-3]\resetc@ntr@l{2}%
    \def\list@num{#2}\s@mme=#1\@ecfor\p@int:=\list@num\do{%
    \inters@cDD\the\s@mme:[\p@int,-4;#3,-3]\advance\s@mme\@ne}%
    \resetc@ntr@l\et@tfigptsorthoprojlineDD}\ignorespaces\fi}
\ctr@ld@f\def\figptsorthoprojlineTD#1=#2/#3,#4/{\ifps@cri{\s@uvc@ntr@l\et@tfigptsorthoprojlineTD%
    \setc@ntr@l{2}\figvectPTD-2[#3,#4]\vecunit@TD{-2}{-2}%
    \def\list@num{#2}\s@mme=#1\@ecfor\p@int:=\list@num\do{%
    \figvectPTD-1[#3,\p@int]\c@lproscalTD\v@leur[-1,-2]%
    \edef\v@lcoef{\repdecn@mb{\v@leur}}\figpttraTD\the\s@mme:=#3/\v@lcoef,-2/%
    \advance\s@mme\@ne}\resetc@ntr@l\et@tfigptsorthoprojlineTD}\ignorespaces\fi}
\ctr@ln@m\figptsorthoprojplane
\ctr@ld@f\def\figptsorthoprojplaneDD{\un@v@ilable{figptsorthoprojplane}}
\ctr@ld@f\def\figptsorthoprojplaneTD#1=#2/#3,#4/{\ifps@cri{\s@uvc@ntr@l\et@tfigptsorthoprojplane%
    \setc@ntr@l{2}\vecunit@TD{-2}{#4}%
    \def\list@num{#2}\s@mme=#1\@ecfor\p@int:=\list@num\do{\figvectPTD-1[\p@int,#3]%
    \c@lproscalTD\v@leur[-1,-2]\edef\v@lcoef{\repdecn@mb{\v@leur}}%
    \figpttraTD\the\s@mme:=\p@int/\v@lcoef,-2/\advance\s@mme\@ne}%
    \resetc@ntr@l\et@tfigptsorthoprojplane}\ignorespaces\fi}
\ctr@ld@f\def\figptshom#1=#2/#3,#4/{\ifps@cri{\s@uvc@ntr@l\et@tfigptshom%
    \setc@ntr@l{2}\def\list@num{#2}\s@mme=#1%
    \@ecfor\p@int:=\list@num\do{\figvectP-1[#3,\p@int]%
    \figpttra\the\s@mme:=#3/#4,-1/\advance\s@mme\@ne}%
    \resetc@ntr@l\et@tfigptshom}\ignorespaces\fi}
\ctr@ln@m\figptsrot
\ctr@ld@f\def\figptsrotDD#1=#2/#3,#4/{\ifps@cri{\s@uvc@ntr@l\et@tfigptsrotDD%
    \c@ssin{\C@}{\S@}{#4}\setc@ntr@l{2}\def\list@num{#2}\s@mme=#1%
    \@ecfor\p@int:=\list@num\do{\figvectPDD-1[#3,\p@int]\Figg@tXY{-1}%
    \v@lXa=\C@\v@lX\advance\v@lXa-\S@\v@lY%
    \v@lYa=\S@\v@lX\advance\v@lYa\C@\v@lY%
    \Figv@ctCreg-1(\v@lXa,\v@lYa)\figpttraDD\the\s@mme:=#3/1,-1/\advance\s@mme\@ne}%
    \resetc@ntr@l\et@tfigptsrotDD}\ignorespaces\fi}
\ctr@ld@f\def\figptsrotTD#1=#2/#3,#4,#5/{\ifps@cri{\s@uvc@ntr@l\et@tfigptsrotTD%
    \c@ssin{\C@}{\S@}{#4}%
    \setc@ntr@l{2}\def\list@num{#2}\s@mme=#1%
    \@ecfor\p@int:=\list@num\do{\figptorthoprojplaneTD-3:=#3/\p@int,#5/%
    \figvectPTD-2[-3,\p@int]%
    \figvectNVTD-1[#5,-2]\n@rmeucTD\v@leur{-2}\edef\v@lcoef{\repdecn@mb{\v@leur}}%
    \Figg@tXYa{-1}\v@lXa=\v@lcoef\v@lXa\v@lYa=\v@lcoef\v@lYa\v@lZa=\v@lcoef\v@lZa%
    \v@lXa=\S@\v@lXa\v@lYa=\S@\v@lYa\v@lZa=\S@\v@lZa\Figg@tXY{-2}%
    \advance\v@lXa\C@\v@lX\advance\v@lYa\C@\v@lY\advance\v@lZa\C@\v@lZ%
    \Figg@tXY{-3}\advance\v@lXa\v@lX\advance\v@lYa\v@lY\advance\v@lZa\v@lZ%
    \Figp@intregTD\the\s@mme:(\v@lXa,\v@lYa,\v@lZa)\advance\s@mme\@ne}%
    \resetc@ntr@l\et@tfigptsrotTD}\ignorespaces\fi}
\ctr@ln@m\figptssym
\ctr@ld@f\def\figptssymDD#1=#2/#3,#4/{\ifps@cri{\s@uvc@ntr@l\et@tfigptssymDD%
    \setc@ntr@l{2}\figvectPDD-3[#3,#4]\Figg@tXY{-3}\Figv@ctCreg-4(-\v@lY,\v@lX)%
    \resetc@ntr@l{2}\def\list@num{#2}\s@mme=#1%
    \@ecfor\p@int:=\list@num\do{\inters@cDD-5:[#3,-3;\p@int,-4]\figvectPDD-2[\p@int,-5]%
    \figpttraDD\the\s@mme:=\p@int/2,-2/\advance\s@mme\@ne}%
    \resetc@ntr@l\et@tfigptssymDD}\ignorespaces\fi}
\ctr@ld@f\def\figptssymTD#1=#2/#3,#4/{\ifps@cri{\s@uvc@ntr@l\et@tfigptssymTD%
    \setc@ntr@l{2}\vecunit@TD{-2}{#4}\def\list@num{#2}\s@mme=#1%
    \@ecfor\p@int:=\list@num\do{\figvectPTD-1[\p@int,#3]%
    \c@lproscalTD\v@leur[-1,-2]\v@leur=2\v@leur\edef\v@lcoef{\repdecn@mb{\v@leur}}%
    \figpttraTD\the\s@mme:=\p@int/\v@lcoef,-2/\advance\s@mme\@ne}%
    \resetc@ntr@l\et@tfigptssymTD}\ignorespaces\fi}
\ctr@ln@m\figptstra
\ctr@ld@f\def\figptstraDD#1=#2/#3,#4/{\ifps@cri{\Figg@tXYa{#4}\v@lXa=#3\v@lXa\v@lYa=#3\v@lYa%
    \def\list@num{#2}\s@mme=#1\@ecfor\p@int:=\list@num\do{\Figg@tXY{\p@int}%
    \advance\v@lX\v@lXa\advance\v@lY\v@lYa%
    \Figp@intregDD\the\s@mme:(\v@lX,\v@lY)\advance\s@mme\@ne}}\ignorespaces\fi}
\ctr@ld@f\def\figptstraTD#1=#2/#3,#4/{\ifps@cri{\Figg@tXYa{#4}\v@lXa=#3\v@lXa\v@lYa=#3\v@lYa%
    \v@lZa=#3\v@lZa\def\list@num{#2}\s@mme=#1\@ecfor\p@int:=\list@num\do{\Figg@tXY{\p@int}%
    \advance\v@lX\v@lXa\advance\v@lY\v@lYa\advance\v@lZ\v@lZa%
    \Figp@intregTD\the\s@mme:(\v@lX,\v@lY,\v@lZ)\advance\s@mme\@ne}}\ignorespaces\fi}
\ctr@ln@m\figptvisilimSL
\ctr@ld@f\def\figptvisilimSLDD{\un@v@ilable{figptvisilimSL}}
\ctr@ld@f\def\figptvisilimSLTD#1:#2[#3,#4;#5,#6]{\ifps@cri{\s@uvc@ntr@l\et@tfigptvisilimSLTD%
    \setc@ntr@l{2}\figvectP-1[#3,#4]\n@rminf{\delt@}{-1}%
    \ifcase\curr@ntproj\v@lX=\cxa@\p@\v@lY=-\p@\v@lZ=\cxb@\p@
    \Figv@ctCreg-2(\v@lX,\v@lY,\v@lZ)\figvectP-3[#5,#6]\figvectNV-1[-2,-3]%
    \or\figvectP-1[#5,#6]\vecunitCV@TD{-1}\v@lmin=\v@lX\v@lmax=\v@lY
    \v@leur=\v@lZ\v@lX=\cza@\p@\v@lY=\czb@\p@\v@lZ=\czc@\p@\c@lprovec{-1}%
    \or\c@ley@pt{-2}\figvectN-1[#5,#6,-2]\fi
    \edef\Ai@{#3}\edef\Aj@{#4}\figvectP-2[#5,\Ai@]\c@lproscal\v@leur[-1,-2]%
    \ifdim\v@leur>\z@\p@rtent=\@ne\else\p@rtent=\m@ne\fi%
    \figvectP-2[#5,\Aj@]\c@lproscal\v@leur[-1,-2]%
    \ifdim\p@rtent\v@leur>\z@\figptcopy#1:#2/#3/%
    \message{*** \BS@ figptvisilimSL: points are on the same side.}\else%
    \figptcopy-3:/#3/\figptcopy-4:/#4/%
    \loop\figptbary-5:[-3,-4;1,1]\figvectP-2[#5,-5]\c@lproscal\v@leur[-1,-2]%
    \ifdim\p@rtent\v@leur>\z@\figptcopy-3:/-5/\else\figptcopy-4:/-5/\fi%
    \divide\delt@\tw@\ifdim\delt@>\epsil@n\repeat%
    \figptbary#1:#2[-3,-4;1,1]\fi\resetc@ntr@l\et@tfigptvisilimSLTD}\ignorespaces\fi}
\ctr@ld@f\def\c@ley@pt#1{\t@stp@r\ifitis@K\v@lX=\cza@\p@\v@lY=\czb@\p@\v@lZ=\czc@\p@%
    \Figv@ctCreg-1(\v@lX,\v@lY,\v@lZ)\Figp@intreg-2:(\wd\Bt@rget,\ht\Bt@rget,\dp\Bt@rget)%
    \figpttra#1:=-2/-\disob@intern,-1/\else\end\fi}
\ctr@ld@f\def\t@stp@r{\itis@Ktrue\ifnewt@rgetpt\else\itis@Kfalse%
    \message{*** \BS@ figptvisilimXX: target point undefined.}\fi\ifnewdis@b\else%
    \itis@Kfalse\message{*** \BS@ figptvisilimXX: observation distance undefined.}\fi%
    \ifitis@K\else\message{*** This macro must be called after \BS@ psbeginfig or after
    having set the missing parameter(s) with \BS@ figset proj()}\fi}
\ctr@ld@f\def\figscan#1(#2,#3){{\s@uvc@ntr@l\et@tfigscan\@psfgetbb{#1}\if@psfbbfound\else%
    \def\@psfllx{0}\def\@psflly{20}\def\@psfurx{540}\def\@psfury{640}\fi\figscan@{#2}{#3}%
    \resetc@ntr@l\et@tfigscan}\ignorespaces}
\ctr@ld@f\def\figscan@#1#2{%
    \unit@=\@ne bp\setc@ntr@l{2}\figsetmark{}%
    \def\minst@p{20pt}%
    \v@lX=\@psfllx\p@\v@lX=\Sc@leFact\v@lX\r@undint\v@lX\v@lX%
    \v@lY=\@psflly\p@\v@lY=\Sc@leFact\v@lY\ifdim\v@lY>\z@\r@undint\v@lY\v@lY\fi%
    \delt@=\@psfury\p@\delt@=\Sc@leFact\delt@%
    \advance\delt@-\v@lY\v@lXa=\@psfurx\p@\v@lXa=\Sc@leFact\v@lXa\v@leur=\minst@p%
    \edef\valv@lY{\repdecn@mb{\v@lY}}\edef\LgTr@it{\the\delt@}%
    \loop\ifdim\v@lX<\v@lXa\edef\valv@lX{\repdecn@mb{\v@lX}}%
    \figptDD -1:(\valv@lX,\valv@lY)\figwriten -1:\hbox{\vrule height\LgTr@it}(0)%
    \ifdim\v@leur<\minst@p\else\figsetmark{\raise-8bp\hbox{$\scriptscriptstyle\triangle$}}%
    \figwrites -1:\@ffichnb{0}{\valv@lX}(6)\v@leur=\z@\figsetmark{}\fi%
    \advance\v@leur#1pt\advance\v@lX#1pt\repeat%
    \def\minst@p{10pt}%
    \v@lX=\@psfllx\p@\v@lX=\Sc@leFact\v@lX\ifdim\v@lX>\z@\r@undint\v@lX\v@lX\fi%
    \v@lY=\@psflly\p@\v@lY=\Sc@leFact\v@lY\r@undint\v@lY\v@lY%
    \delt@=\@psfurx\p@\delt@=\Sc@leFact\delt@%
    \advance\delt@-\v@lX\v@lYa=\@psfury\p@\v@lYa=\Sc@leFact\v@lYa\v@leur=\minst@p%
    \edef\valv@lX{\repdecn@mb{\v@lX}}\edef\LgTr@it{\the\delt@}%
    \loop\ifdim\v@lY<\v@lYa\edef\valv@lY{\repdecn@mb{\v@lY}}%
    \figptDD -1:(\valv@lX,\valv@lY)\figwritee -1:\vbox{\hrule width\LgTr@it}(0)%
    \ifdim\v@leur<\minst@p\else\figsetmark{$\triangleright$\kern4bp}%
    \figwritew -1:\@ffichnb{0}{\valv@lY}(6)\v@leur=\z@\figsetmark{}\fi%
    \advance\v@leur#2pt\advance\v@lY#2pt\repeat}
\ctr@ld@f\let\figscanI=\figscan
\ctr@ld@f\def\figscan@E#1(#2,#3){{\s@uvc@ntr@l\et@tfigscan@E%
    \Figdisc@rdLTS{#1}{\t@xt@}\pdfximage{\t@xt@}%
    \setbox\Gb@x=\hbox{\pdfrefximage\pdflastximage}%
    \edef\@psfllx{0}\v@lY=-\dp\Gb@x\edef\@psflly{\repdecn@mb{\v@lY}}%
    \edef\@psfurx{\repdecn@mb{\wd\Gb@x}}%
    \v@lY=\dp\Gb@x\advance\v@lY\ht\Gb@x\edef\@psfury{\repdecn@mb{\v@lY}}%
    \figscan@{#2}{#3}\resetc@ntr@l\et@tfigscan@E}\ignorespaces}
\ctr@ld@f\def\figshowpts[#1,#2]{{\figsetmark{$\bullet$}\figsetptname{\bf ##1}%
    \p@rtent=#2\relax\ifnum\p@rtent<\z@\p@rtent=\z@\fi%
    \s@mme=#1\relax\ifnum\s@mme<\z@\s@mme=\z@\fi%
    \loop\ifnum\s@mme<\p@rtent\pt@rvect{\s@mme}%
    \ifitis@K\figwriten{\the\s@mme}:(4pt)\fi\advance\s@mme\@ne\repeat%
    \pt@rvect{\s@mme}\ifitis@K\figwriten{\the\s@mme}:(4pt)\fi}\ignorespaces}
\ctr@ld@f\def\pt@rvect#1{\set@bjc@de{#1}%
    \expandafter\expandafter\expandafter\inqpt@rvec\csname\objc@de\endcsname:}
\ctr@ld@f\def\inqpt@rvec#1#2:{\if#1\C@dCl@spt\itis@Ktrue\else\itis@Kfalse\fi}
\ctr@ld@f\def\figshowsettings{{%
    \immediate\write16{====================================================================}%
    \immediate\write16{ Current settings about:}%
    \immediate\write16{ --- GENERAL ---}%
    \immediate\write16{Scale factor and Unit = \unit@util\space (\the\unit@)
     \space -> \BS@ figinit{ScaleFactorUnit}}%
    \immediate\write16{Update mode = \ifpsupdatem@de yes\else no\fi
     \space-> \BS@ psset(update=yes/no) or \BS@ pssetdefault(update=yes/no)}%
    \immediate\write16{ --- PRINTING ---}%
    \immediate\write16{Implicit point name = \ptn@me{i} \space-> \BS@ figsetptname{Name}}%
    \immediate\write16{Point marker = \the\c@nsymb \space -> \BS@ figsetmark{Mark}}%
    \immediate\write16{Print rounded coordinates = \ifr@undcoord yes\else no\fi
     \space-> \BS@ figsetroundcoord{yes/no}}%
    \immediate\write16{ --- GRAPHICAL (general) ---}%
    \immediate\write16{First-level (or primary) settings:}%
    \immediate\write16{ Color = \curr@ntcolor \space-> \BS@ psset(color=ColorDefinition)}%
    \immediate\write16{ Filling mode = \iffillm@de yes\else no\fi
     \space-> \BS@ psset(fillmode=yes/no)}%
    \immediate\write16{ Line join = \curr@ntjoin \space-> \BS@ psset(join=miter/round/bevel)}%
    \immediate\write16{ Line style = \curr@ntdash \space-> \BS@ psset(dash=Index/Pattern)}%
    \immediate\write16{ Line width = \curr@ntwidth
     \space-> \BS@ psset(width=real in PostScript units)}%
    \immediate\write16{Second-level (or secondary) settings:}%
    \immediate\write16{ Color = \sec@ndcolor \space-> \BS@ psset second(color=ColorDefinition)}%
    \immediate\write16{ Line style = \curr@ntseconddash
     \space-> \BS@ psset second(dash=Index/Pattern)}%
    \immediate\write16{ Line width = \curr@ntsecondwidth
     \space-> \BS@ psset second(width=real in PostScript units)}%
    \immediate\write16{Third-level (or ternary) settings:}%
    \immediate\write16{ Color = \th@rdcolor \space-> \BS@ psset third(color=ColorDefinition)}%
    \immediate\write16{ --- GRAPHICAL (specific) ---}%
    \immediate\write16{Arrow-head:}%
    \immediate\write16{ (half-)Angle = \@rrowheadangle
     \space-> \BS@ psset arrowhead(angle=real in degrees)}%
    \immediate\write16{ Filling mode = \if@rrowhfill yes\else no\fi
     \space-> \BS@ psset arrowhead(fillmode=yes/no)}%
    \immediate\write16{ "Outside" = \if@rrowhout yes\else no\fi
     \space-> \BS@ psset arrowhead(out=yes/no)}%
    \immediate\write16{ Length = \@rrowheadlength
     \if@rrowratio\space(not active)\else\space(active)\fi
     \space-> \BS@ psset arrowhead(length=real in user coord.)}%
    \immediate\write16{ Ratio = \@rrowheadratio
     \if@rrowratio\space(active)\else\space(not active)\fi
     \space-> \BS@ psset arrowhead(ratio=real in [0,1])}%
    \immediate\write16{Curve: Roundness = \curv@roundness
     \space-> \BS@ psset curve(roundness=real in [0,0.5])}%
    \immediate\write16{Mesh: Diagonal = \c@ntrolmesh
     \space-> \BS@ psset mesh(diag=integer in {-1,0,1})}%
    \immediate\write16{Flow chart:}%
    \immediate\write16{ Arrow position = \@rrowp@s
     \space-> \BS@ psset flowchart(arrowposition=real in [0,1])}%
    \immediate\write16{ Arrow reference point = \ifcase\@rrowr@fpt start\else end\fi
     \space-> \BS@ psset flowchart(arrowrefpt = start/end)}%
    \immediate\write16{ Line type = \ifcase\fclin@typ@ curve\else polygon\fi
     \space-> \BS@ psset flowchart(line=polygon/curve)}%
    \immediate\write16{ Padding = (\Xp@dd, \Yp@dd)
     \space-> \BS@ psset flowchart(padding = real in user coord.)}%
    \immediate\write16{\space\space\space\space(or
     \BS@ psset flowchart(xpadding=real, ypadding=real) )}%
    \immediate\write16{ Radius = \fclin@r@d
     \space-> \BS@ psset flowchart(radius=positive real in user coord.)}%
    \immediate\write16{ Shape = \fcsh@pe
     \space-> \BS@ psset flowchart(shape = rectangle, ellipse or lozenge)}%
    \immediate\write16{ Thickness = \thickn@ss
     \space-> \BS@ psset flowchart(thickness = real in user coord.)}%
    \ifTr@isDim%
    \immediate\write16{ --- 3D to 2D PROJECTION ---}%
    \immediate\write16{Projection : \typ@proj \space-> \BS@ figinit{ScaleFactorUnit, ProjType}}%
    \immediate\write16{Longitude (psi) = \v@lPsi \space-> \BS@ figset proj(psi=real in degrees)}%
    \ifcase\curr@ntproj\immediate\write16{Depth coeff. (Lambda)
     \space = \v@lTheta \space-> \BS@ figset proj(lambda=real in [0,1])}%
    \else\immediate\write16{Latitude (theta)
     \space = \v@lTheta \space-> \BS@ figset proj(theta=real in degrees)}%
    \fi%
    \ifnum\curr@ntproj=\tw@%
    \immediate\write16{Observation distance = \disob@unit
     \space-> \BS@ figset proj(dist=real in user coord.)}%
    \immediate\write16{Target point = \t@rgetpt \space-> \BS@ figset proj(targetpt=pt number)}%
     \v@lX=\ptT@unit@\wd\Bt@rget\v@lY=\ptT@unit@\ht\Bt@rget\v@lZ=\ptT@unit@\dp\Bt@rget%
    \immediate\write16{ Its coordinates are
     (\repdecn@mb{\v@lX}, \repdecn@mb{\v@lY}, \repdecn@mb{\v@lZ})}%
    \fi%
    \fi%
    \immediate\write16{====================================================================}%
    \ignorespaces}}
\ctr@ln@w{newif}\ifitis@vect@r
\ctr@ld@f\def\figvectC#1(#2,#3){{\itis@vect@rtrue\figpt#1:(#2,#3)}\ignorespaces}
\ctr@ld@f\def\Figv@ctCreg#1(#2,#3){{\itis@vect@rtrue\Figp@intreg#1:(#2,#3)}\ignorespaces}
\ctr@ln@m\figvectDBezier
\ctr@ld@f\def\figvectDBezierDD#1:#2,#3[#4,#5,#6,#7]{\ifps@cri{\s@uvc@ntr@l\et@tfigvectDBezierDD%
    \FigvectDBezier@#2,#3[#4,#5,#6,#7]\v@lX=\c@ef\v@lX\v@lY=\c@ef\v@lY%
    \Figv@ctCreg#1(\v@lX,\v@lY)\resetc@ntr@l\et@tfigvectDBezierDD}\ignorespaces\fi}
\ctr@ld@f\def\figvectDBezierTD#1:#2,#3[#4,#5,#6,#7]{\ifps@cri{\s@uvc@ntr@l\et@tfigvectDBezierTD%
    \FigvectDBezier@#2,#3[#4,#5,#6,#7]\v@lX=\c@ef\v@lX\v@lY=\c@ef\v@lY\v@lZ=\c@ef\v@lZ%
    \Figv@ctCreg#1(\v@lX,\v@lY,\v@lZ)\resetc@ntr@l\et@tfigvectDBezierTD}\ignorespaces\fi}
\ctr@ld@f\def\FigvectDBezier@#1,#2[#3,#4,#5,#6]{\setc@ntr@l{2}%
    \edef\T@{#2}\v@leur=\p@\advance\v@leur-#2pt\edef\UNmT@{\repdecn@mb{\v@leur}}%
    \ifnum#1=\tw@\def\c@ef{6}\else\def\c@ef{3}\fi%
    \figptcopy-4:/#3/\figptcopy-3:/#4/\figptcopy-2:/#5/\figptcopy-1:/#6/%
    \l@mbd@un=-4 \l@mbd@de=-\thr@@\p@rtent=\m@ne\c@lDecast%
    \ifnum#1=\tw@\c@lDCDeux{-4}{-3}\c@lDCDeux{-3}{-2}\c@lDCDeux{-4}{-3}\else%
    \l@mbd@un=-4 \l@mbd@de=-\thr@@\p@rtent=-\tw@\c@lDecast%
    \c@lDCDeux{-4}{-3}\fi\Figg@tXY{-4}}
\ctr@ln@m\c@lDCDeux
\ctr@ld@f\def\c@lDCDeuxDD#1#2{\Figg@tXY{#2}\Figg@tXYa{#1}%
    \advance\v@lX-\v@lXa\advance\v@lY-\v@lYa\Figp@intregDD#1:(\v@lX,\v@lY)}
\ctr@ld@f\def\c@lDCDeuxTD#1#2{\Figg@tXY{#2}\Figg@tXYa{#1}\advance\v@lX-\v@lXa%
    \advance\v@lY-\v@lYa\advance\v@lZ-\v@lZa\Figp@intregTD#1:(\v@lX,\v@lY,\v@lZ)}
\ctr@ln@m\figvectN
\ctr@ld@f\def\figvectNDD#1[#2,#3]{\ifps@cri{\Figg@tXYa{#2}\Figg@tXY{#3}%
    \advance\v@lX-\v@lXa\advance\v@lY-\v@lYa%
    \Figv@ctCreg#1(-\v@lY,\v@lX)}\ignorespaces\fi}
\ctr@ld@f\def\figvectNTD#1[#2,#3,#4]{\ifps@cri{\vecunitC@TD[#2,#4]\v@lmin=\v@lX\v@lmax=\v@lY%
    \v@leur=\v@lZ\vecunitC@TD[#2,#3]\c@lprovec{#1}}\ignorespaces\fi}
\ctr@ln@m\figvectNV
\ctr@ld@f\def\figvectNVDD#1[#2]{\ifps@cri{\Figg@tXY{#2}\Figv@ctCreg#1(-\v@lY,\v@lX)}\ignorespaces\fi}
\ctr@ld@f\def\figvectNVTD#1[#2,#3]{\ifps@cri{\vecunitCV@TD{#3}\v@lmin=\v@lX\v@lmax=\v@lY%
    \v@leur=\v@lZ\vecunitCV@TD{#2}\c@lprovec{#1}}\ignorespaces\fi}
\ctr@ln@m\figvectP
\ctr@ld@f\def\figvectPDD#1[#2,#3]{\ifps@cri{\Figg@tXYa{#2}\Figg@tXY{#3}%
    \advance\v@lX-\v@lXa\advance\v@lY-\v@lYa%
    \Figv@ctCreg#1(\v@lX,\v@lY)}\ignorespaces\fi}
\ctr@ld@f\def\figvectPTD#1[#2,#3]{\ifps@cri{\Figg@tXYa{#2}\Figg@tXY{#3}%
    \advance\v@lX-\v@lXa\advance\v@lY-\v@lYa\advance\v@lZ-\v@lZa%
    \Figv@ctCreg#1(\v@lX,\v@lY,\v@lZ)}\ignorespaces\fi}
\ctr@ln@m\figvectU
\ctr@ld@f\def\figvectUDD#1[#2]{\ifps@cri{\n@rmeuc\v@leur{#2}\invers@\v@leur\v@leur%
    \delt@=\repdecn@mb{\v@leur}\unit@\edef\v@ldelt@{\repdecn@mb{\delt@}}%
    \Figg@tXY{#2}\v@lX=\v@ldelt@\v@lX\v@lY=\v@ldelt@\v@lY%
    \Figv@ctCreg#1(\v@lX,\v@lY)}\ignorespaces\fi}
\ctr@ld@f\def\figvectUTD#1[#2]{\ifps@cri{\n@rmeuc\v@leur{#2}\invers@\v@leur\v@leur%
    \delt@=\repdecn@mb{\v@leur}\unit@\edef\v@ldelt@{\repdecn@mb{\delt@}}%
    \Figg@tXY{#2}\v@lX=\v@ldelt@\v@lX\v@lY=\v@ldelt@\v@lY\v@lZ=\v@ldelt@\v@lZ%
    \Figv@ctCreg#1(\v@lX,\v@lY,\v@lZ)}\ignorespaces\fi}
\ctr@ld@f\def\figvisu#1#2#3{\c@ldefproj\initb@undb@x\xdef\figforTeXFigno{\figforTeXnextFigno}%
    \s@mme=\figforTeXnextFigno\advance\s@mme\@ne\xdef\figforTeXnextFigno{\number\s@mme}%
    \setbox\b@xvisu=\hbox{\ifnum\@utoFN>\z@\figinsert{}\gdef\@utoFInDone{0}\fi\ignorespaces#3}%
    \gdef\@utoFInDone{1}\gdef\@utoFN{0}%
    \v@lXa=-\c@@rdYmin\v@lYa=\c@@rdYmax\advance\v@lYa-\c@@rdYmin%
    \v@lX=\c@@rdXmax\advance\v@lX-\c@@rdXmin%
    \setbox#1=\hbox{#2}\v@lY=-\v@lX\maxim@m{\v@lX}{\v@lX}{\wd#1}%
    \advance\v@lY\v@lX\divide\v@lY\tw@\advance\v@lY-\c@@rdXmin%
    \setbox#1=\vbox{\parindent0mm\hsize=\v@lX\vskip\v@lYa%
    \rlap{\hskip\v@lY\smash{\raise\v@lXa\box\b@xvisu}}%
    \def\t@xt@{#2}\ifx\t@xt@\empty\else\medskip\centerline{#2}\fi}\wd#1=\v@lX}
\ctr@ld@f\def\figDecrementFigno{{\xdef\figforTeXnextFigno{\figforTeXFigno}%
    \s@mme=\figforTeXFigno\advance\s@mme\m@ne\xdef\figforTeXFigno{\number\s@mme}}}
\ctr@ln@w{newbox}\Bt@rget\setbox\Bt@rget=\null
\ctr@ln@w{newbox}\BminTD@\setbox\BminTD@=\null
\ctr@ln@w{newbox}\BmaxTD@\setbox\BmaxTD@=\null
\ctr@ln@w{newif}\ifnewt@rgetpt\ctr@ln@w{newif}\ifnewdis@b
\ctr@ld@f\def\b@undb@xTD#1#2#3{%
    \relax\ifdim#1<\wd\BminTD@\global\wd\BminTD@=#1\fi%
    \relax\ifdim#2<\ht\BminTD@\global\ht\BminTD@=#2\fi%
    \relax\ifdim#3<\dp\BminTD@\global\dp\BminTD@=#3\fi%
    \relax\ifdim#1>\wd\BmaxTD@\global\wd\BmaxTD@=#1\fi%
    \relax\ifdim#2>\ht\BmaxTD@\global\ht\BmaxTD@=#2\fi%
    \relax\ifdim#3>\dp\BmaxTD@\global\dp\BmaxTD@=#3\fi}
\ctr@ld@f\def\c@ldefdisob{{\ifdim\wd\BminTD@<\maxdimen\v@leur=\wd\BmaxTD@\advance\v@leur-\wd\BminTD@%
    \delt@=\ht\BmaxTD@\advance\delt@-\ht\BminTD@\maxim@m{\v@leur}{\v@leur}{\delt@}%
    \delt@=\dp\BmaxTD@\advance\delt@-\dp\BminTD@\maxim@m{\v@leur}{\v@leur}{\delt@}%
    \v@leur=5\v@leur\else\v@leur=800pt\fi\c@ldefdisob@{\v@leur}}}
\ctr@ln@m\disob@intern
\ctr@ln@m\disob@
\ctr@ln@m\divf@ctproj
\ctr@ld@f\def\c@ldefdisob@#1{{\v@leur=#1\ifdim\v@leur<\p@\v@leur=800pt\fi%
    \xdef\disob@intern{\repdecn@mb{\v@leur}}%
    \delt@=\ptT@unit@\v@leur\xdef\disob@unit{\repdecn@mb{\delt@}}%
    \f@ctech=\@ne\loop\ifdim\v@leur>\t@n pt\divide\v@leur\t@n\multiply\f@ctech\t@n\repeat%
    \xdef\disob@{\repdecn@mb{\v@leur}}\xdef\divf@ctproj{\the\f@ctech}}%
    \global\newdis@btrue}
\ctr@ln@m\t@rgetpt
\ctr@ld@f\def\c@ldeft@rgetpt{\newt@rgetpttrue\def\t@rgetpt{CenterBoundBox}{%
    \delt@=\wd\BmaxTD@\advance\delt@-\wd\BminTD@\divide\delt@\tw@%
    \v@leur=\wd\BminTD@\advance\v@leur\delt@\global\wd\Bt@rget=\v@leur%
    \delt@=\ht\BmaxTD@\advance\delt@-\ht\BminTD@\divide\delt@\tw@%
    \v@leur=\ht\BminTD@\advance\v@leur\delt@\global\ht\Bt@rget=\v@leur%
    \delt@=\dp\BmaxTD@\advance\delt@-\dp\BminTD@\divide\delt@\tw@%
    \v@leur=\dp\BminTD@\advance\v@leur\delt@\global\dp\Bt@rget=\v@leur}}
\ctr@ln@m\c@ldefproj
\ctr@ld@f\def\c@ldefprojTD{\ifnewt@rgetpt\else\c@ldeft@rgetpt\fi\ifnewdis@b\else\c@ldefdisob\fi}
\ctr@ld@f\def\c@lprojcav{
    \v@lZa=\cxa@\v@lY\advance\v@lX\v@lZa%
    \v@lZa=\cxb@\v@lY\v@lY=\v@lZ\advance\v@lY\v@lZa\ignorespaces}
\ctr@ln@m\v@lcoef
\ctr@ld@f\def\c@lprojrea{
    \advance\v@lX-\wd\Bt@rget\advance\v@lY-\ht\Bt@rget\advance\v@lZ-\dp\Bt@rget%
    \v@lZa=\cza@\v@lX\advance\v@lZa\czb@\v@lY\advance\v@lZa\czc@\v@lZ%
    \divide\v@lZa\divf@ctproj\advance\v@lZa\disob@ pt\invers@{\v@lZa}{\v@lZa}%
    \v@lZa=\disob@\v@lZa\edef\v@lcoef{\repdecn@mb{\v@lZa}}%
    \v@lXa=\cxa@\v@lX\advance\v@lXa\cxb@\v@lY\v@lXa=\v@lcoef\v@lXa%
    \v@lY=\cyb@\v@lY\advance\v@lY\cya@\v@lX\advance\v@lY\cyc@\v@lZ%
    \v@lY=\v@lcoef\v@lY\v@lX=\v@lXa\ignorespaces}
\ctr@ld@f\def\c@lprojort{
    \v@lXa=\cxa@\v@lX\advance\v@lXa\cxb@\v@lY%
    \v@lY=\cyb@\v@lY\advance\v@lY\cya@\v@lX\advance\v@lY\cyc@\v@lZ%
    \v@lX=\v@lXa\ignorespaces}
\ctr@ld@f\def\Figptpr@j#1:#2/#3/{{\Figg@tXY{#3}\superc@lprojSP%
    \Figp@intregDD#1:{#2}(\v@lX,\v@lY)}\ignorespaces}
\ctr@ln@m\figsetobdist
\ctr@ld@f\def\figsetobdistDD{\un@v@ilable{figsetobdist}}
\ctr@ld@f\def\figsetobdistTD(#1){{\ifcurr@ntPS%
    \immediate\write16{*** \BS@ figsetobdist is ignored inside a
     \BS@ psbeginfig-\BS@ psendfig block.}%
    \else\v@leur=#1\unit@\c@ldefdisob@{\v@leur}\fi}\ignorespaces}
\ctr@ln@m\c@lprojSP
\ctr@ln@m\curr@ntproj
\ctr@ln@m\typ@proj
\ctr@ln@m\superc@lprojSP
\ctr@ld@f\def\Figs@tproj#1{%
    \if#13 \d@faultproj\else\if#1c\d@faultproj%
    \else\if#1o\xdef\curr@ntproj{1}\xdef\typ@proj{orthogonal}%
         \figsetviewTD(\def@ultpsi,\def@ulttheta)%
         \global\let\c@lprojSP=\c@lprojort\global\let\superc@lprojSP=\c@lprojort%
    \else\if#1r\xdef\curr@ntproj{2}\xdef\typ@proj{realistic}%
         \figsetviewTD(\def@ultpsi,\def@ulttheta)%
         \global\let\c@lprojSP=\c@lprojrea\global\let\superc@lprojSP=\c@lprojrea%
    \else\d@faultproj\message{*** Unknown projection. Cavalier projection assumed.}%
    \fi\fi\fi\fi}
\ctr@ld@f\def\d@faultproj{\xdef\curr@ntproj{0}\xdef\typ@proj{cavalier}\figsetviewTD(\def@ultpsi,0.5)%
         \global\let\c@lprojSP=\c@lprojcav\global\let\superc@lprojSP=\c@lprojcav}
\ctr@ln@m\figsettarget
\ctr@ld@f\def\figsettargetDD{\un@v@ilable{figsettarget}}
\ctr@ld@f\def\figsettargetTD[#1]{{\ifcurr@ntPS%
    \immediate\write16{*** \BS@ figsettarget is ignored inside a
     \BS@ psbeginfig-\BS@ psendfig block.}%
    \else\global\newt@rgetpttrue\xdef\t@rgetpt{#1}\Figg@tXY{#1}\global\wd\Bt@rget=\v@lX%
    \global\ht\Bt@rget=\v@lY\global\dp\Bt@rget=\v@lZ\fi}\ignorespaces}
\ctr@ln@m\figsetview
\ctr@ld@f\def\figsetviewDD{\un@v@ilable{figsetview}}
\ctr@ld@f\def\figsetviewTD(#1){\ifcurr@ntPS%
     \immediate\write16{*** \BS@ figsetview is ignored inside a
     \BS@ psbeginfig-\BS@ psendfig block.}\else\Figsetview@#1,:\fi\ignorespaces}
\ctr@ld@f\def\Figsetview@#1,#2:{{\xdef\v@lPsi{#1}\def\t@xt@{#2}%
    \ifx\t@xt@\empty\def\@rgdeux{\v@lTheta}\else\X@rgdeux@#2\fi%
    \c@ssin{\costhet@}{\sinthet@}{#1}\v@lmin=\costhet@ pt\v@lmax=\sinthet@ pt%
    \ifcase\curr@ntproj%
    \v@leur=\@rgdeux\v@lmin\xdef\cxa@{\repdecn@mb{\v@leur}}%
    \v@leur=\@rgdeux\v@lmax\xdef\cxb@{\repdecn@mb{\v@leur}}\v@leur=\@rgdeux pt%
    \relax\ifdim\v@leur>\p@\message{*** Lambda too large ! See \BS@ figset proj() !}\fi%
    \else%
    \v@lmax=-\v@lmax\xdef\cxa@{\repdecn@mb{\v@lmax}}\xdef\cxb@{\costhet@}%
    \ifx\t@xt@\empty\edef\@rgdeux{\def@ulttheta}\fi\c@ssin{\C@}{\S@}{\@rgdeux}%
    \v@lmax=-\S@ pt%
    \v@leur=\v@lmax\v@leur=\costhet@\v@leur\xdef\cya@{\repdecn@mb{\v@leur}}%
    \v@leur=\v@lmax\v@leur=\sinthet@\v@leur\xdef\cyb@{\repdecn@mb{\v@leur}}%
    \xdef\cyc@{\C@}\v@lmin=-\C@ pt%
    \v@leur=\v@lmin\v@leur=\costhet@\v@leur\xdef\cza@{\repdecn@mb{\v@leur}}%
    \v@leur=\v@lmin\v@leur=\sinthet@\v@leur\xdef\czb@{\repdecn@mb{\v@leur}}%
    \xdef\czc@{\repdecn@mb{\v@lmax}}\fi%
    \xdef\v@lTheta{\@rgdeux}}}
\ctr@ld@f\def\def@ultpsi{40}
\ctr@ld@f\def\def@ulttheta{25}
\ctr@ln@m\l@debut
\ctr@ln@m\n@mref
\ctr@ld@f\def\figset#1(#2){\def\t@xt@{#1}\ifx\t@xt@\empty\trtlis@rg{#2}{\Figsetwr@te}
    \else\keln@mde#1|%
    \def\n@mref{pr}\ifx\l@debut\n@mref\ifcurr@ntPS
     \immediate\write16{*** \BS@ figset proj(...) is ignored inside a
     \BS@ psbeginfig-\BS@ psendfig block.}\else\trtlis@rg{#2}{\Figsetpr@j}\fi\else%
    \def\n@mref{wr}\ifx\l@debut\n@mref\trtlis@rg{#2}{\Figsetwr@te}\else
    \immediate\write16{*** Unknown keyword: \BS@ figset #1(...)}%
    \fi\fi\fi\ignorespaces}
\ctr@ld@f\def\Figsetpr@j#1=#2|{\keln@mtr#1|%
    \def\n@mref{dep}\ifx\l@debut\n@mref\Figsetd@p{#2}\else
    \def\n@mref{dis}\ifx\l@debut\n@mref%
     \ifnum\curr@ntproj=\tw@\figsetobdist(#2)\else\Figset@rr\fi\else
    \def\n@mref{lam}\ifx\l@debut\n@mref\Figsetd@p{#2}\else
    \def\n@mref{lat}\ifx\l@debut\n@mref\Figsetth@{#2}\else
    \def\n@mref{lon}\ifx\l@debut\n@mref\figsetview(#2)\else
    \def\n@mref{psi}\ifx\l@debut\n@mref\figsetview(#2)\else
    \def\n@mref{tar}\ifx\l@debut\n@mref%
     \ifnum\curr@ntproj=\tw@\figsettarget[#2]\else\Figset@rr\fi\else
    \def\n@mref{the}\ifx\l@debut\n@mref\Figsetth@{#2}\else
    \immediate\write16{*** Unknown attribute: \BS@ figset proj(..., #1=...).}%
    \fi\fi\fi\fi\fi\fi\fi\fi}
\ctr@ld@f\def\Figsetd@p#1{\ifnum\curr@ntproj=\z@\figsetview(\v@lPsi,#1)\else\Figset@rr\fi}
\ctr@ld@f\def\Figsetth@#1{\ifnum\curr@ntproj=\z@\Figset@rr\else\figsetview(\v@lPsi,#1)\fi}
\ctr@ld@f\def\Figset@rr{\message{*** \BS@ figset proj(): Attribute "\n@mref" ignored, incompatible
    with current projection}}
\ctr@ld@f\def\initb@undb@xTD{\wd\BminTD@=\maxdimen\ht\BminTD@=\maxdimen\dp\BminTD@=\maxdimen%
    \wd\BmaxTD@=-\maxdimen\ht\BmaxTD@=-\maxdimen\dp\BmaxTD@=-\maxdimen}
\ctr@ln@w{newbox}\Gb@x      
\ctr@ln@w{newbox}\Gb@xSC    
\ctr@ln@w{newtoks}\c@nsymb  
\ctr@ln@w{newif}\ifr@undcoord\ctr@ln@w{newif}\ifunitpr@sent
\ctr@ld@f\def\unssqrttw@{0.707106 }
\ctr@ld@f\def\figAst{\raise-1.15ex\hbox{$\ast$}}
\ctr@ld@f\def\figBullet{\raise-1.15ex\hbox{$\bullet$}}
\ctr@ld@f\def\figCirc{\raise-1.15ex\hbox{$\circ$}}
\ctr@ld@f\def\figDiamond{\raise-1.15ex\hbox{$\diamond$}}%
\ctr@ld@f\def\boxit#1#2{\leavevmode\hbox{\vrule\vbox{\hrule\vglue#1%
    \vtop{\hbox{\kern#1{#2}\kern#1}\vglue#1\hrule}}\vrule}}
\ctr@ld@f\def\centertext#1#2{\vbox{\hsize#1\parindent0cm%
    \leftskip=0pt plus 1fil\rightskip=0pt plus 1fil\parfillskip=0pt{#2}}}
\ctr@ld@f\def\lefttext#1#2{\vbox{\hsize#1\parindent0cm\rightskip=0pt plus 1fil#2}}
\ctr@ld@f\def\c@nterpt{\ignorespaces%
    \kern-.5\wd\Gb@xSC%
    \raise-.5\ht\Gb@xSC\rlap{\hbox{\raise.5\dp\Gb@xSC\hbox{\copy\Gb@xSC}}}%
    \kern .5\wd\Gb@xSC\ignorespaces}
\ctr@ld@f\def\b@undb@xSC#1#2{{\v@lXa=#1\v@lYa=#2%
    \v@leur=\ht\Gb@xSC\advance\v@leur\dp\Gb@xSC%
    \advance\v@lXa-.5\wd\Gb@xSC\advance\v@lYa-.5\v@leur\b@undb@x{\v@lXa}{\v@lYa}%
    \advance\v@lXa\wd\Gb@xSC\advance\v@lYa\v@leur\b@undb@x{\v@lXa}{\v@lYa}}}
\ctr@ln@m\Dist@n
\ctr@ln@m\l@suite
\ctr@ld@f\def\@keldist#1#2{\edef\Dist@n{#2}\y@tiunit{\Dist@n}%
    \ifunitpr@sent#1=\Dist@n\else#1=\Dist@n\unit@\fi}
\ctr@ld@f\def\y@tiunit#1{\unitpr@sentfalse\expandafter\y@tiunit@#1:}
\ctr@ld@f\def\y@tiunit@#1#2:{\ifcat#1a\unitpr@senttrue\else\def\l@suite{#2}%
    \ifx\l@suite\empty\else\y@tiunit@#2:\fi\fi}
\ctr@ln@m\figcoord
\ctr@ld@f\def\figcoordDD#1{{\v@lX=\ptT@unit@\v@lX\v@lY=\ptT@unit@\v@lY%
    \ifr@undcoord\ifcase#1\v@leur=0.5pt\or\v@leur=0.05pt\or\v@leur=0.005pt%
    \or\v@leur=0.0005pt\else\v@leur=\z@\fi%
    \ifdim\v@lX<\z@\advance\v@lX-\v@leur\else\advance\v@lX\v@leur\fi%
    \ifdim\v@lY<\z@\advance\v@lY-\v@leur\else\advance\v@lY\v@leur\fi\fi%
    (\@ffichnb{#1}{\repdecn@mb{\v@lX}},\ifmmode\else\thinspace\fi%
    \@ffichnb{#1}{\repdecn@mb{\v@lY}})}}
\ctr@ld@f\def\@ffichnb#1#2{{\def\@@ffich{\@ffich#1(}\edef\n@mbre{#2}%
    \expandafter\@@ffich\n@mbre)}}
\ctr@ld@f\def\@ffich#1(#2.#3){{#2\ifnum#1>\z@.\fi\def\dig@ts{#3}\s@mme=\z@%
    \loop\ifnum\s@mme<#1\expandafter\@ffichdec\dig@ts:\advance\s@mme\@ne\repeat}}
\ctr@ld@f\def\@ffichdec#1#2:{\relax#1\def\dig@ts{#20}}
\ctr@ld@f\def\figcoordTD#1{{\v@lX=\ptT@unit@\v@lX\v@lY=\ptT@unit@\v@lY\v@lZ=\ptT@unit@\v@lZ%
    \ifr@undcoord\ifcase#1\v@leur=0.5pt\or\v@leur=0.05pt\or\v@leur=0.005pt%
    \or\v@leur=0.0005pt\else\v@leur=\z@\fi%
    \ifdim\v@lX<\z@\advance\v@lX-\v@leur\else\advance\v@lX\v@leur\fi%
    \ifdim\v@lY<\z@\advance\v@lY-\v@leur\else\advance\v@lY\v@leur\fi%
    \ifdim\v@lZ<\z@\advance\v@lZ-\v@leur\else\advance\v@lZ\v@leur\fi\fi%
    (\@ffichnb{#1}{\repdecn@mb{\v@lX}},\ifmmode\else\thinspace\fi%
     \@ffichnb{#1}{\repdecn@mb{\v@lY}},\ifmmode\else\thinspace\fi%
     \@ffichnb{#1}{\repdecn@mb{\v@lZ}})}}
\ctr@ld@f\def\figsetroundcoord#1{\expandafter\Figsetr@undcoord#1:\ignorespaces}
\ctr@ld@f\def\Figsetr@undcoord#1#2:{\if#1n\r@undcoordfalse\else\r@undcoordtrue\fi}
\ctr@ld@f\def\Figsetwr@te#1=#2|{\keln@mun#1|%
    \def\n@mref{m}\ifx\l@debut\n@mref\figsetmark{#2}\else
    \immediate\write16{*** Unknown attribute: \BS@ figset (..., #1=...)}%
    \fi}
\ctr@ld@f\def\figsetmark#1{\c@nsymb={#1}\setbox\Gb@xSC=\hbox{\the\c@nsymb}\ignorespaces}
\ctr@ln@m\ptn@me
\ctr@ld@f\def\figsetptname#1{\def\ptn@me##1{#1}\ignorespaces}
\ctr@ld@f\def\FigWrit@L#1:#2(#3,#4){\ignorespaces\@keldist\v@leur{#3}\@keldist\delt@{#4}%
    \C@rp@r@m\def\list@num{#1}\@ecfor\p@int:=\list@num\do{\FigWrit@pt{\p@int}{#2}}}
\ctr@ld@f\def\FigWrit@pt#1#2{\FigWp@r@m{#1}{#2}\Vc@rrect\figWp@si%
    \ifdim\wd\Gb@xSC>\z@\b@undb@xSC{\v@lX}{\v@lY}\fi\figWBB@x}
\ctr@ld@f\def\FigWp@r@m#1#2{\Figg@tXY{#1}%
    \setbox\Gb@x=\hbox{\def\t@xt@{#2}\ifx\t@xt@\empty\Figg@tT{#1}\else#2\fi}\c@lprojSP}
\ctr@ld@f\let\Vc@rrect=\relax
\ctr@ld@f\let\C@rp@r@m=\relax
\ctr@ld@f\def\figwrite[#1]#2{{\ignorespaces\def\list@num{#1}\@ecfor\p@int:=\list@num\do{%
    \setbox\Gb@x=\hbox{\def\t@xt@{#2}\ifx\t@xt@\empty\Figg@tT{\p@int}\else#2\fi}%
    \Figwrit@{\p@int}}}\ignorespaces}
\ctr@ld@f\def\Figwrit@#1{\Figg@tXY{#1}\c@lprojSP%
    \rlap{\kern\v@lX\raise\v@lY\hbox{\unhcopy\Gb@x}}\v@leur=\v@lY%
    \advance\v@lY\ht\Gb@x\b@undb@x{\v@lX}{\v@lY}\advance\v@lX\wd\Gb@x%
    \v@lY=\v@leur\advance\v@lY-\dp\Gb@x\b@undb@x{\v@lX}{\v@lY}}
\ctr@ld@f\def\figwritec[#1]#2{{\ignorespaces\def\list@num{#1}%
    \@ecfor\p@int:=\list@num\do{\Figwrit@c{\p@int}{#2}}}\ignorespaces}
\ctr@ld@f\def\Figwrit@c#1#2{\FigWp@r@m{#1}{#2}%
    \rlap{\kern\v@lX\raise\v@lY\hbox{\rlap{\kern-.5\wd\Gb@x%
    \raise-.5\ht\Gb@x\hbox{\raise.5\dp\Gb@x\hbox{\unhcopy\Gb@x}}}}}%
    \v@leur=\ht\Gb@x\advance\v@leur\dp\Gb@x%
    \advance\v@lX-.5\wd\Gb@x\advance\v@lY-.5\v@leur\b@undb@x{\v@lX}{\v@lY}%
    \advance\v@lX\wd\Gb@x\advance\v@lY\v@leur\b@undb@x{\v@lX}{\v@lY}}
\ctr@ld@f\def\figwritep[#1]{{\ignorespaces\def\list@num{#1}\setbox\Gb@x=\hbox{\c@nterpt}%
    \@ecfor\p@int:=\list@num\do{\Figwrit@{\p@int}}}\ignorespaces}
\ctr@ld@f\def\figwritew#1:#2(#3){\figwritegcw#1:{#2}(#3,0pt)}
\ctr@ld@f\def\figwritee#1:#2(#3){\figwritegce#1:{#2}(#3,0pt)}
\ctr@ld@f\def\figwriten#1:#2(#3){{\def\Vc@rrect{\v@lZ=\v@leur\advance\v@lZ\dp\Gb@x}%
    \Figwrit@NS#1:{#2}(#3)}\ignorespaces}
\ctr@ld@f\def\figwrites#1:#2(#3){{\def\Vc@rrect{\v@lZ=-\v@leur\advance\v@lZ-\ht\Gb@x}%
    \Figwrit@NS#1:{#2}(#3)}\ignorespaces}
\ctr@ld@f\def\Figwrit@NS#1:#2(#3){\let\figWp@si=\FigWp@siNS\let\figWBB@x=\FigWBB@xNS%
    \FigWrit@L#1:{#2}(#3,0pt)}
\ctr@ld@f\def\FigWp@siNS{\rlap{\kern\v@lX\raise\v@lY\hbox{\rlap{\kern-.5\wd\Gb@x%
    \raise\v@lZ\hbox{\unhcopy\Gb@x}}\c@nterpt}}}
\ctr@ld@f\def\FigWBB@xNS{\advance\v@lY\v@lZ%
    \advance\v@lY-\dp\Gb@x\advance\v@lX-.5\wd\Gb@x\b@undb@x{\v@lX}{\v@lY}%
    \advance\v@lY\ht\Gb@x\advance\v@lY\dp\Gb@x%
    \advance\v@lX\wd\Gb@x\b@undb@x{\v@lX}{\v@lY}}
\ctr@ld@f\def\figwritenw#1:#2(#3){{\let\figWp@si=\FigWp@sigW\let\figWBB@x=\FigWBB@xgWE%
    \def\C@rp@r@m{\v@leur=\unssqrttw@\v@leur\delt@=\v@leur%
    \ifdim\delt@=\z@\delt@=\epsil@n\fi}\let@xte={-}\FigWrit@L#1:{#2}(#3,0pt)}\ignorespaces}
\ctr@ld@f\def\figwritesw#1:#2(#3){{\let\figWp@si=\FigWp@sigW\let\figWBB@x=\FigWBB@xgWE%
    \def\C@rp@r@m{\v@leur=\unssqrttw@\v@leur\delt@=-\v@leur%
    \ifdim\delt@=\z@\delt@=-\epsil@n\fi}\let@xte={-}\FigWrit@L#1:{#2}(#3,0pt)}\ignorespaces}
\ctr@ld@f\def\figwritene#1:#2(#3){{\let\figWp@si=\FigWp@sigE\let\figWBB@x=\FigWBB@xgWE%
    \def\C@rp@r@m{\v@leur=\unssqrttw@\v@leur\delt@=\v@leur%
    \ifdim\delt@=\z@\delt@=\epsil@n\fi}\let@xte={}\FigWrit@L#1:{#2}(#3,0pt)}\ignorespaces}
\ctr@ld@f\def\figwritese#1:#2(#3){{\let\figWp@si=\FigWp@sigE\let\figWBB@x=\FigWBB@xgWE%
    \def\C@rp@r@m{\v@leur=\unssqrttw@\v@leur\delt@=-\v@leur%
    \ifdim\delt@=\z@\delt@=-\epsil@n\fi}\let@xte={}\FigWrit@L#1:{#2}(#3,0pt)}\ignorespaces}
\ctr@ld@f\def\figwritegw#1:#2(#3,#4){{\let\figWp@si=\FigWp@sigW\let\figWBB@x=\FigWBB@xgWE%
    \let@xte={-}\FigWrit@L#1:{#2}(#3,#4)}\ignorespaces}
\ctr@ld@f\def\figwritege#1:#2(#3,#4){{\let\figWp@si=\FigWp@sigE\let\figWBB@x=\FigWBB@xgWE%
    \let@xte={}\FigWrit@L#1:{#2}(#3,#4)}\ignorespaces}
\ctr@ld@f\def\FigWp@sigW{\v@lXa=\z@\v@lYa=\ht\Gb@x\advance\v@lYa\dp\Gb@x%
    \ifdim\delt@>\z@\relax%
    \rlap{\kern\v@lX\raise\v@lY\hbox{\rlap{\kern-\wd\Gb@x\kern-\v@leur%
          \raise\delt@\hbox{\raise\dp\Gb@x\hbox{\unhcopy\Gb@x}}}\c@nterpt}}%
    \else\ifdim\delt@<\z@\relax\v@lYa=-\v@lYa%
    \rlap{\kern\v@lX\raise\v@lY\hbox{\rlap{\kern-\wd\Gb@x\kern-\v@leur%
          \raise\delt@\hbox{\raise-\ht\Gb@x\hbox{\unhcopy\Gb@x}}}\c@nterpt}}%
    \else\v@lXa=-.5\v@lYa%
    \rlap{\kern\v@lX\raise\v@lY\hbox{\rlap{\kern-\wd\Gb@x\kern-\v@leur%
          \raise-.5\ht\Gb@x\hbox{\raise.5\dp\Gb@x\hbox{\unhcopy\Gb@x}}}\c@nterpt}}%
    \fi\fi}
\ctr@ld@f\def\FigWp@sigE{\v@lXa=\z@\v@lYa=\ht\Gb@x\advance\v@lYa\dp\Gb@x%
    \ifdim\delt@>\z@\relax%
    \rlap{\kern\v@lX\raise\v@lY\hbox{\c@nterpt\kern\v@leur%
          \raise\delt@\hbox{\raise\dp\Gb@x\hbox{\unhcopy\Gb@x}}}}%
    \else\ifdim\delt@<\z@\relax\v@lYa=-\v@lYa%
    \rlap{\kern\v@lX\raise\v@lY\hbox{\c@nterpt\kern\v@leur%
          \raise\delt@\hbox{\raise-\ht\Gb@x\hbox{\unhcopy\Gb@x}}}}%
    \else\v@lXa=-.5\v@lYa%
    \rlap{\kern\v@lX\raise\v@lY\hbox{\c@nterpt\kern\v@leur%
          \raise-.5\ht\Gb@x\hbox{\raise.5\dp\Gb@x\hbox{\unhcopy\Gb@x}}}}%
    \fi\fi}
\ctr@ld@f\def\FigWBB@xgWE{\advance\v@lY\delt@%
    \advance\v@lX\the\let@xte\v@leur\advance\v@lY\v@lXa\b@undb@x{\v@lX}{\v@lY}%
    \advance\v@lX\the\let@xte\wd\Gb@x\advance\v@lY\v@lYa\b@undb@x{\v@lX}{\v@lY}}
\ctr@ld@f\def\figwritegcw#1:#2(#3,#4){{\let\figWp@si=\FigWp@sigcW\let\figWBB@x=\FigWBB@xgcWE%
    \let@xte={-}\FigWrit@L#1:{#2}(#3,#4)}\ignorespaces}
\ctr@ld@f\def\figwritegce#1:#2(#3,#4){{\let\figWp@si=\FigWp@sigcE\let\figWBB@x=\FigWBB@xgcWE%
    \let@xte={}\FigWrit@L#1:{#2}(#3,#4)}\ignorespaces}
\ctr@ld@f\def\FigWp@sigcW{\rlap{\kern\v@lX\raise\v@lY\hbox{\rlap{\kern-\wd\Gb@x\kern-\v@leur%
     \raise-.5\ht\Gb@x\hbox{\raise\delt@\hbox{\raise.5\dp\Gb@x\hbox{\unhcopy\Gb@x}}}}%
     \c@nterpt}}}
\ctr@ld@f\def\FigWp@sigcE{\rlap{\kern\v@lX\raise\v@lY\hbox{\c@nterpt\kern\v@leur%
    \raise-.5\ht\Gb@x\hbox{\raise\delt@\hbox{\raise.5\dp\Gb@x\hbox{\unhcopy\Gb@x}}}}}}
\ctr@ld@f\def\FigWBB@xgcWE{\v@lZ=\ht\Gb@x\advance\v@lZ\dp\Gb@x%
    \advance\v@lX\the\let@xte\v@leur\advance\v@lY\delt@\advance\v@lY.5\v@lZ%
    \b@undb@x{\v@lX}{\v@lY}%
    \advance\v@lX\the\let@xte\wd\Gb@x\advance\v@lY-\v@lZ\b@undb@x{\v@lX}{\v@lY}}
\ctr@ld@f\def\figwritebn#1:#2(#3){{\def\Vc@rrect{\v@lZ=\v@leur}\Figwrit@NS#1:{#2}(#3)}\ignorespaces}
\ctr@ld@f\def\figwritebs#1:#2(#3){{\def\Vc@rrect{\v@lZ=-\v@leur}\Figwrit@NS#1:{#2}(#3)}\ignorespaces}
\ctr@ld@f\def\figwritebw#1:#2(#3){{\let\figWp@si=\FigWp@sibW\let\figWBB@x=\FigWBB@xbWE%
    \let@xte={-}\FigWrit@L#1:{#2}(#3,0pt)}\ignorespaces}
\ctr@ld@f\def\figwritebe#1:#2(#3){{\let\figWp@si=\FigWp@sibE\let\figWBB@x=\FigWBB@xbWE%
    \let@xte={}\FigWrit@L#1:{#2}(#3,0pt)}\ignorespaces}
\ctr@ld@f\def\FigWp@sibW{\rlap{\kern\v@lX\raise\v@lY\hbox{\rlap{\kern-\wd\Gb@x\kern-\v@leur%
          \hbox{\unhcopy\Gb@x}}\c@nterpt}}}
\ctr@ld@f\def\FigWp@sibE{\rlap{\kern\v@lX\raise\v@lY\hbox{\c@nterpt\kern\v@leur%
          \hbox{\unhcopy\Gb@x}}}}
\ctr@ld@f\def\FigWBB@xbWE{\v@lZ=\ht\Gb@x\advance\v@lZ\dp\Gb@x%
    \advance\v@lX\the\let@xte\v@leur\advance\v@lY\ht\Gb@x\b@undb@x{\v@lX}{\v@lY}%
    \advance\v@lX\the\let@xte\wd\Gb@x\advance\v@lY-\v@lZ\b@undb@x{\v@lX}{\v@lY}}
\ctr@ln@w{newread}\frf@g  \ctr@ln@w{newwrite}\fwf@g
\ctr@ln@w{newif}\ifcurr@ntPS
\ctr@ln@w{newif}\ifps@cri
\ctr@ln@w{newif}\ifUse@llipse
\ctr@ln@w{newif}\ifpsdebugmode \psdebugmodefalse 
\ctr@ln@w{newif}\ifPDFm@ke
\ifx\pdfliteral\undefined\else\ifnum\pdfoutput>\z@\PDFm@ketrue\fi\fi
\ctr@ld@f\def\initPDF@rDVI{%
\ifPDFm@ke
 \let\figscan=\figscan@E
 \let\newGr@FN=\newGr@FNPDF
 \ctr@ld@f\def\c@mcurveto{c}
 \ctr@ld@f\def\c@mfill{f}
 \ctr@ld@f\def\c@mgsave{q}
 \ctr@ld@f\def\c@mgrestore{Q}
 \ctr@ld@f\def\c@mlineto{l}
 \ctr@ld@f\def\c@mmoveto{m}
 \ctr@ld@f\def\c@msetgray{g}     \ctr@ld@f\def\c@msetgrayStroke{G}
 \ctr@ld@f\def\c@msetcmykcolor{k}\ctr@ld@f\def\c@msetcmykcolorStroke{K}
 \ctr@ld@f\def\c@msetrgbcolor{rg}\ctr@ld@f\def\c@msetrgbcolorStroke{RG}
 \ctr@ld@f\def\d@fprimarC@lor{\curr@ntcolor\space\curr@ntcolorc@md%
               \space\curr@ntcolor\space\curr@ntcolorc@mdStroke}
 \ctr@ld@f\def\d@fsecondC@lor{\sec@ndcolor\space\sec@ndcolorc@md%
               \space\sec@ndcolor\space\sec@ndcolorc@mdStroke}
 \ctr@ld@f\def\d@fthirdC@lor{\th@rdcolor\space\th@rdcolorc@md%
              \space\th@rdcolor\space\th@rdcolorc@mdStroke}
 \ctr@ld@f\def\c@msetdash{d}
 \ctr@ld@f\def\c@msetlinejoin{j}
 \ctr@ld@f\def\c@msetlinewidth{w}
 \ctr@ld@f\def\f@gclosestroke{\immediate\write\fwf@g{s}}
 \ctr@ld@f\def\f@gfill{\immediate\write\fwf@g{\fillc@md}}
 \ctr@ld@f\def\f@gnewpath{}
 \ctr@ld@f\def\f@gstroke{\immediate\write\fwf@g{S}}
\else
 \let\figinsertE=\figinsert
 \let\newGr@FN=\newGr@FNDVI
 \ctr@ld@f\def\c@mcurveto{curveto}
 \ctr@ld@f\def\c@mfill{fill}
 \ctr@ld@f\def\c@mgsave{gsave}
 \ctr@ld@f\def\c@mgrestore{grestore}
 \ctr@ld@f\def\c@mlineto{lineto}
 \ctr@ld@f\def\c@mmoveto{moveto}
 \ctr@ld@f\def\c@msetgray{setgray}          \ctr@ld@f\def\c@msetgrayStroke{}
 \ctr@ld@f\def\c@msetcmykcolor{setcmykcolor}\ctr@ld@f\def\c@msetcmykcolorStroke{}
 \ctr@ld@f\def\c@msetrgbcolor{setrgbcolor}  \ctr@ld@f\def\c@msetrgbcolorStroke{}
 \ctr@ld@f\def\d@fprimarC@lor{\curr@ntcolor\space\curr@ntcolorc@md}
 \ctr@ld@f\def\d@fsecondC@lor{\sec@ndcolor\space\sec@ndcolorc@md}
 \ctr@ld@f\def\d@fthirdC@lor{\th@rdcolor\space\th@rdcolorc@md}
 \ctr@ld@f\def\c@msetdash{setdash}
 \ctr@ld@f\def\c@msetlinejoin{setlinejoin}
 \ctr@ld@f\def\c@msetlinewidth{setlinewidth}
 \ctr@ld@f\def\f@gclosestroke{\immediate\write\fwf@g{closepath\space stroke}}
 \ctr@ld@f\def\f@gfill{\immediate\write\fwf@g{\fillc@md}}
 \ctr@ld@f\def\f@gnewpath{\immediate\write\fwf@g{newpath}}
 \ctr@ld@f\def\f@gstroke{\immediate\write\fwf@g{stroke}}
\fi}
\ctr@ld@f\def\c@pypsfile#1#2{\c@pyfil@{\immediate\write#1}{#2}}
\ctr@ld@f\def\Figinclud@PDF#1#2{\openin\frf@g=#1\pdfliteral{q #2 0 0 #2 0 0 cm}%
    \c@pyfil@{\pdfliteral}{\frf@g}\pdfliteral{Q}\closein\frf@g}
\ctr@ln@w{newif}\ifmored@ta
\ctr@ln@m\bl@nkline
\ctr@ld@f\def\c@pyfil@#1#2{\def\bl@nkline{\par}{\catcode`\%=12
    \loop\ifeof#2\mored@tafalse\else\mored@tatrue\immediate\read#2 to\tr@c
    \ifx\tr@c\bl@nkline\else#1{\tr@c}\fi\fi\ifmored@ta\repeat}}
\ctr@ld@f\def\keln@mun#1#2|{\def\l@debut{#1}\def\l@suite{#2}}
\ctr@ld@f\def\keln@mde#1#2#3|{\def\l@debut{#1#2}\def\l@suite{#3}}
\ctr@ld@f\def\keln@mtr#1#2#3#4|{\def\l@debut{#1#2#3}\def\l@suite{#4}}
\ctr@ld@f\def\keln@mqu#1#2#3#4#5|{\def\l@debut{#1#2#3#4}\def\l@suite{#5}}
\ctr@ld@f\let\@psffilein=\frf@g 
\ctr@ln@w{newif}\if@psffileok    
\ctr@ln@w{newif}\if@psfbbfound   
\ctr@ln@w{newif}\if@psfverbose   
\@psfverbosetrue
\ctr@ln@m\@psfllx \ctr@ln@m\@psflly
\ctr@ln@m\@psfurx \ctr@ln@m\@psfury
\ctr@ln@m\resetcolonc@tcode
\ctr@ld@f\def\@psfgetbb#1{\global\@psfbbfoundfalse%
\global\def\@psfllx{0}\global\def\@psflly{0}%
\global\def\@psfurx{30}\global\def\@psfury{30}%
\openin\@psffilein=#1\relax
\ifeof\@psffilein\errmessage{I couldn't open #1, will ignore it}\else
   \edef\resetcolonc@tcode{\catcode`\noexpand\:\the\catcode`\:\relax}%
   {\@psffileoktrue \chardef\other=12
    \def\do##1{\catcode`##1=\other}\dospecials \catcode`\ =10 \resetcolonc@tcode
    \loop
       \read\@psffilein to \@psffileline
       \ifeof\@psffilein\@psffileokfalse\else
          \expandafter\@psfaux\@psffileline:. \\%
       \fi
   \if@psffileok\repeat
   \if@psfbbfound\else
    \if@psfverbose\message{No bounding box comment in #1; using defaults}\fi\fi
   }\closein\@psffilein\fi}%
\ctr@ln@m\@psfbblit
\ctr@ln@m\@psfpercent
{\catcode`\%=12 \global\let\@psfpercent=
\ctr@ln@m\@psfaux
\long\def\@psfaux#1#2:#3\\{\ifx#1\@psfpercent
   \def\testit{#2}\ifx\testit\@psfbblit
      \@psfgrab #3 . . . \\%
      \@psffileokfalse
      \global\@psfbbfoundtrue
   \fi\else\ifx#1\par\else\@psffileokfalse\fi\fi}%
\ctr@ld@f\def\@psfempty{}%
\ctr@ld@f\def\@psfgrab #1 #2 #3 #4 #5\\{%
\global\def\@psfllx{#1}\ifx\@psfllx\@psfempty
      \@psfgrab #2 #3 #4 #5 .\\\else
   \global\def\@psflly{#2}%
   \global\def\@psfurx{#3}\global\def\@psfury{#4}\fi}%
\ctr@ld@f\def\PSwrit@cmd#1#2#3{{\Figg@tXY{#1}\c@lprojSP\b@undb@x{\v@lX}{\v@lY}%
    \v@lX=\ptT@ptps\v@lX\v@lY=\ptT@ptps\v@lY%
    \immediate\write#3{\repdecn@mb{\v@lX}\space\repdecn@mb{\v@lY}\space#2}}}
\ctr@ld@f\def\PSwrit@cmdS#1#2#3#4#5{{\Figg@tXY{#1}\c@lprojSP\b@undb@x{\v@lX}{\v@lY}%
    \global\result@t=\v@lX\global\result@@t=\v@lY%
    \v@lX=\ptT@ptps\v@lX\v@lY=\ptT@ptps\v@lY%
    \immediate\write#3{\repdecn@mb{\v@lX}\space\repdecn@mb{\v@lY}\space#2}}%
    \edef#4{\the\result@t}\edef#5{\the\result@@t}}
\ctr@ld@f\def\psaltitude#1[#2,#3,#4]{{\ifcurr@ntPS\ifps@cri%
    \PSc@mment{psaltitude Square Dim=#1, Triangle=[#2 / #3,#4]}%
    \s@uvc@ntr@l\et@tpsaltitude\resetc@ntr@l{2}\figptorthoprojline-5:=#2/#3,#4/%
    \figvectP -1[#3,#4]\n@rminf{\v@leur}{-1}\vecunit@{-3}{-1}%
    \figvectP -1[-5,#3]\n@rminf{\v@lmin}{-1}\figvectP -2[-5,#4]\n@rminf{\v@lmax}{-2}%
    \ifdim\v@lmin<\v@lmax\s@mme=#3\else\v@lmax=\v@lmin\s@mme=#4\fi%
    \figvectP -4[-5,#2]\vecunit@{-4}{-4}\delt@=#1\unit@%
    \edef\t@ille{\repdecn@mb{\delt@}}\figpttra-1:=-5/\t@ille,-3/%
    \figptstra-3=-5,-1/\t@ille,-4/\psline[#2,-5]\s@uvdash{\typ@dash}%
    \pssetdash{\defaultdash}\psline[-1,-2,-3]\pssetdash{\typ@dash}%
    \ifdim\v@leur<\v@lmax\Pss@tsecondSt\psline[-5,\the\s@mme]\Psrest@reSt\fi%
    \PSc@mment{End psaltitude}\resetc@ntr@l\et@tpsaltitude\fi\fi}}
\ctr@ld@f\def\Ps@rcerc#1;#2(#3,#4){\ellBB@x#1;#2,#2(#3,#4,0)%
    \f@gnewpath{\delt@=#2\unit@\delt@=\ptT@ptps\delt@%
    \BdingB@xfalse%
    \PSwrit@cmd{#1}{\repdecn@mb{\delt@}\space #3\space #4\space arc}{\fwf@g}}}
\ctr@ln@m\psarccirc
\ctr@ld@f\def\psarccircDD#1;#2(#3,#4){\ifcurr@ntPS\ifps@cri%
    \PSc@mment{psarccircDD Center=#1 ; Radius=#2 (Ang1=#3, Ang2=#4)}%
    \iffillm@de\Ps@rcerc#1;#2(#3,#4)%
    \f@gfill%
    \else\Ps@rcerc#1;#2(#3,#4)\f@gstroke\fi%
    \PSc@mment{End psarccircDD}\fi\fi}
\ctr@ld@f\def\psarccircTD#1,#2,#3;#4(#5,#6){{\ifcurr@ntPS\ifps@cri\s@uvc@ntr@l\et@tpsarccircTD%
    \PSc@mment{psarccircTD Center=#1,P1=#2,P2=#3 ; Radius=#4 (Ang1=#5, Ang2=#6)}%
    \setc@ntr@l{2}\c@lExtAxes#1,#2,#3(#4)\psarcellPATD#1,-4,-5(#5,#6)%
    \PSc@mment{End psarccircTD}\resetc@ntr@l\et@tpsarccircTD\fi\fi}}
\ctr@ld@f\def\c@lExtAxes#1,#2,#3(#4){%
    \figvectPTD-5[#1,#2]\vecunit@{-5}{-5}\figvectNTD-4[#1,#2,#3]\vecunit@{-4}{-4}%
    \figvectNVTD-3[-4,-5]\delt@=#4\unit@\edef\r@yon{\repdecn@mb{\delt@}}%
    \figpttra-4:=#1/\r@yon,-5/\figpttra-5:=#1/\r@yon,-3/}
\ctr@ln@m\psarccircP
\ctr@ld@f\def\psarccircPDD#1;#2[#3,#4]{{\ifcurr@ntPS\ifps@cri\s@uvc@ntr@l\et@tpsarccircPDD%
    \PSc@mment{psarccircPDD Center=#1; Radius=#2, [P1=#3, P2=#4]}%
    \Ps@ngleparam#1;#2[#3,#4]\ifdim\v@lmin>\v@lmax\advance\v@lmax\DePI@deg\fi%
    \edef\@ngdeb{\repdecn@mb{\v@lmin}}\edef\@ngfin{\repdecn@mb{\v@lmax}}%
    \psarccirc#1;\r@dius(\@ngdeb,\@ngfin)%
    \PSc@mment{End psarccircPDD}\resetc@ntr@l\et@tpsarccircPDD\fi\fi}}
\ctr@ld@f\def\psarccircPTD#1;#2[#3,#4,#5]{{\ifcurr@ntPS\ifps@cri\s@uvc@ntr@l\et@tpsarccircPTD%
    \PSc@mment{psarccircPTD Center=#1; Radius=#2, [P1=#3, P2=#4, P3=#5]}%
    \setc@ntr@l{2}\c@lExtAxes#1,#3,#5(#2)\psarcellPP#1,-4,-5[#3,#4]%
    \PSc@mment{End psarccircPTD}\resetc@ntr@l\et@tpsarccircPTD\fi\fi}}
\ctr@ld@f\def\Ps@ngleparam#1;#2[#3,#4]{\setc@ntr@l{2}%
    \figvectPDD-1[#1,#3]\vecunit@{-1}{-1}\Figg@tXY{-1}\arct@n\v@lmin(\v@lX,\v@lY)%
    \figvectPDD-2[#1,#4]\vecunit@{-2}{-2}\Figg@tXY{-2}\arct@n\v@lmax(\v@lX,\v@lY)%
    \v@lmin=\rdT@deg\v@lmin\v@lmax=\rdT@deg\v@lmax%
    \v@leur=#2pt\maxim@m{\mili@u}{-\v@leur}{\v@leur}%
    \edef\r@dius{\repdecn@mb{\mili@u}}}
\ctr@ld@f\def\Ps@rcercBz#1;#2(#3,#4){\Ps@rellBz#1;#2,#2(#3,#4,0)}
\ctr@ld@f\def\Ps@rellBz#1;#2,#3(#4,#5,#6){%
    \ellBB@x#1;#2,#3(#4,#5,#6)\BdingB@xfalse%
    \c@lNbarcs{#4}{#5}\v@leur=#4pt\setc@ntr@l{2}\figptell-13::#1;#2,#3(#4,#6)%
    \f@gnewpath\PSwrit@cmd{-13}{\c@mmoveto}{\fwf@g}%
    \s@mme=\z@\bcl@rellBz#1;#2,#3(#6)\BdingB@xtrue}
\ctr@ld@f\def\bcl@rellBz#1;#2,#3(#4){\relax%
    \ifnum\s@mme<\p@rtent\advance\s@mme\@ne%
    \advance\v@leur\delt@\edef\@ngle{\repdecn@mb\v@leur}\figptell-14::#1;#2,#3(\@ngle,#4)%
    \advance\v@leur\delt@\edef\@ngle{\repdecn@mb\v@leur}\figptell-15::#1;#2,#3(\@ngle,#4)%
    \advance\v@leur\delt@\edef\@ngle{\repdecn@mb\v@leur}\figptell-16::#1;#2,#3(\@ngle,#4)%
    \figptscontrolDD-18[-13,-14,-15,-16]%
    \PSwrit@cmd{-18}{}{\fwf@g}\PSwrit@cmd{-17}{}{\fwf@g}%
    \PSwrit@cmd{-16}{\c@mcurveto}{\fwf@g}%
    \figptcopyDD-13:/-16/\bcl@rellBz#1;#2,#3(#4)\fi}
\ctr@ld@f\def\Ps@rell#1;#2,#3(#4,#5,#6){\ellBB@x#1;#2,#3(#4,#5,#6)%
    \f@gnewpath{\v@lmin=#2\unit@\v@lmin=\ptT@ptps\v@lmin%
    \v@lmax=#3\unit@\v@lmax=\ptT@ptps\v@lmax\BdingB@xfalse%
    \PSwrit@cmd{#1}%
    {#6\space\repdecn@mb{\v@lmin}\space\repdecn@mb{\v@lmax}\space #4\space #5\space ellipse}{\fwf@g}}%
    \global\Use@llipsetrue}
\ctr@ln@m\psarcell
\ctr@ld@f\def\psarcellDD#1;#2,#3(#4,#5,#6){{\ifcurr@ntPS\ifps@cri%
    \PSc@mment{psarcellDD Center=#1 ; XRad=#2, YRad=#3 (Ang1=#4, Ang2=#5, Inclination=#6)}%
    \iffillm@de\Ps@rell#1;#2,#3(#4,#5,#6)%
    \f@gfill%
    \else\Ps@rell#1;#2,#3(#4,#5,#6)\f@gstroke\fi%
    \PSc@mment{End psarcellDD}\fi\fi}}
\ctr@ld@f\def\psarcellTD#1;#2,#3(#4,#5,#6){{\ifcurr@ntPS\ifps@cri\s@uvc@ntr@l\et@tpsarcellTD%
    \PSc@mment{psarcellTD Center=#1 ; XRad=#2, YRad=#3 (Ang1=#4, Ang2=#5, Inclination=#6)}%
    \setc@ntr@l{2}\figpttraC -8:=#1/#2,0,0/\figpttraC -7:=#1/0,#3,0/%
    \figvectC -4(0,0,1)\figptsrot -8=-8,-7/#1,#6,-4/\psarcellPATD#1,-8,-7(#4,#5)%
    \PSc@mment{End psarcellTD}\resetc@ntr@l\et@tpsarcellTD\fi\fi}}
\ctr@ln@m\psarcellPA
\ctr@ld@f\def\psarcellPADD#1,#2,#3(#4,#5){{\ifcurr@ntPS\ifps@cri\s@uvc@ntr@l\et@tpsarcellPADD%
    \PSc@mment{psarcellPADD Center=#1,PtAxis1=#2,PtAxis2=#3 (Ang1=#4, Ang2=#5)}%
    \setc@ntr@l{2}\figvectPDD-1[#1,#2]\vecunit@DD{-1}{-1}\v@lX=\ptT@unit@\result@t%
    \edef\XR@d{\repdecn@mb{\v@lX}}\Figg@tXY{-1}\arct@n\v@lmin(\v@lX,\v@lY)%
    \v@lmin=\rdT@deg\v@lmin\edef\Inclin@{\repdecn@mb{\v@lmin}}%
    \figgetdist\YR@d[#1,#3]\psarcellDD#1;\XR@d,\YR@d(#4,#5,\Inclin@)%
    \PSc@mment{End psarcellPADD}\resetc@ntr@l\et@tpsarcellPADD\fi\fi}}
\ctr@ld@f\def\psarcellPATD#1,#2,#3(#4,#5){{\ifcurr@ntPS\ifps@cri\s@uvc@ntr@l\et@tpsarcellPATD%
    \PSc@mment{psarcellPATD Center=#1,PtAxis1=#2,PtAxis2=#3 (Ang1=#4, Ang2=#5)}%
    \iffillm@de\Ps@rellPATD#1,#2,#3(#4,#5)%
    \f@gfill%
    \else\Ps@rellPATD#1,#2,#3(#4,#5)\f@gstroke\fi%
    \PSc@mment{End psarcellPATD}\resetc@ntr@l\et@tpsarcellPATD\fi\fi}}
\ctr@ld@f\def\Ps@rellPATD#1,#2,#3(#4,#5){\let\c@lprojSP=\relax%
    \setc@ntr@l{2}\figvectPTD-1[#1,#2]\figvectPTD-2[#1,#3]\c@lNbarcs{#4}{#5}%
    \v@leur=#4pt\c@lptellP{#1}{-1}{-2}\Figptpr@j-5:/-3/%
    \f@gnewpath\PSwrit@cmdS{-5}{\c@mmoveto}{\fwf@g}{\X@un}{\Y@un}%
    \edef\C@nt@r{#1}\s@mme=\z@\bcl@rellPATD}
\ctr@ld@f\def\bcl@rellPATD{\relax%
    \ifnum\s@mme<\p@rtent\advance\s@mme\@ne%
    \advance\v@leur\delt@\c@lptellP{\C@nt@r}{-1}{-2}\Figptpr@j-4:/-3/%
    \advance\v@leur\delt@\c@lptellP{\C@nt@r}{-1}{-2}\Figptpr@j-6:/-3/%
    \advance\v@leur\delt@\c@lptellP{\C@nt@r}{-1}{-2}\Figptpr@j-3:/-3/%
    \v@lX=\z@\v@lY=\z@\Figtr@nptDD{-5}{-5}\Figtr@nptDD{2}{-3}%
    \divide\v@lX\@vi\divide\v@lY\@vi%
    \Figtr@nptDD{3}{-4}\Figtr@nptDD{-1.5}{-6}\v@lmin=\v@lX\v@lmax=\v@lY%
    \v@lX=\z@\v@lY=\z@\Figtr@nptDD{2}{-5}\Figtr@nptDD{-5}{-3}%
    \divide\v@lX\@vi\divide\v@lY\@vi\Figtr@nptDD{-1.5}{-4}\Figtr@nptDD{3}{-6}%
    \BdingB@xfalse%
    \Figp@intregDD-4:(\v@lmin,\v@lmax)\PSwrit@cmdS{-4}{}{\fwf@g}{\X@de}{\Y@de}%
    \Figp@intregDD-4:(\v@lX,\v@lY)\PSwrit@cmdS{-4}{}{\fwf@g}{\X@tr}{\Y@tr}%
    \BdingB@xtrue\PSwrit@cmdS{-3}{\c@mcurveto}{\fwf@g}{\X@qu}{\Y@qu}%
    \B@zierBB@x{1}{\Y@un}(\X@un,\X@de,\X@tr,\X@qu)%
    \B@zierBB@x{2}{\X@un}(\Y@un,\Y@de,\Y@tr,\Y@qu)%
    \edef\X@un{\X@qu}\edef\Y@un{\Y@qu}\figptcopyDD-5:/-3/\bcl@rellPATD\fi}
\ctr@ld@f\def\c@lNbarcs#1#2{%
    \delt@=#2pt\advance\delt@-#1pt\maxim@m{\v@lmax}{\delt@}{-\delt@}%
    \v@leur=\v@lmax\divide\v@leur45 \p@rtentiere{\p@rtent}{\v@leur}\advance\p@rtent\@ne%
    \s@mme=\p@rtent\multiply\s@mme\thr@@\divide\delt@\s@mme}
\ctr@ld@f\def\psarcellPP#1,#2,#3[#4,#5]{{\ifcurr@ntPS\ifps@cri\s@uvc@ntr@l\et@tpsarcellPP%
    \PSc@mment{psarcellPP Center=#1,PtAxis1=#2,PtAxis2=#3 [Point1=#4, Point2=#5]}%
    \setc@ntr@l{2}\figvectP-2[#1,#3]\vecunit@{-2}{-2}\v@lmin=\result@t%
    \invers@{\v@lmax}{\v@lmin}%
    \figvectP-1[#1,#2]\vecunit@{-1}{-1}\v@leur=\result@t%
    \v@leur=\repdecn@mb{\v@lmax}\v@leur\edef\AsB@{\repdecn@mb{\v@leur}}
    \c@lAngle{#1}{#4}{\v@lmin}\edef\@ngdeb{\repdecn@mb{\v@lmin}}%
    \c@lAngle{#1}{#5}{\v@lmax}\ifdim\v@lmin>\v@lmax\advance\v@lmax\DePI@deg\fi%
    \edef\@ngfin{\repdecn@mb{\v@lmax}}\psarcellPA#1,#2,#3(\@ngdeb,\@ngfin)%
    \PSc@mment{End psarcellPP}\resetc@ntr@l\et@tpsarcellPP\fi\fi}}
\ctr@ld@f\def\c@lAngle#1#2#3{\figvectP-3[#1,#2]%
    \c@lproscal\delt@[-3,-1]\c@lproscal\v@leur[-3,-2]%
    \v@leur=\AsB@\v@leur\arct@n#3(\delt@,\v@leur)#3=\rdT@deg#3}
\ctr@ln@w{newif}\if@rrowratio\@rrowratiotrue
\ctr@ln@w{newif}\if@rrowhfill
\ctr@ln@w{newif}\if@rrowhout
\ctr@ld@f\def\Psset@rrowhe@d#1=#2|{\keln@mun#1|%
    \def\n@mref{a}\ifx\l@debut\n@mref\pssetarrowheadangle{#2}\else
    \def\n@mref{f}\ifx\l@debut\n@mref\pssetarrowheadfill{#2}\else
    \def\n@mref{l}\ifx\l@debut\n@mref\pssetarrowheadlength{#2}\else
    \def\n@mref{o}\ifx\l@debut\n@mref\pssetarrowheadout{#2}\else
    \def\n@mref{r}\ifx\l@debut\n@mref\pssetarrowheadratio{#2}\else
    \immediate\write16{*** Unknown attribute: \BS@ psset arrowhead(..., #1=...)}%
    \fi\fi\fi\fi\fi}
\ctr@ln@m\@rrowheadangle
\ctr@ln@m\C@AHANG \ctr@ln@m\S@AHANG \ctr@ln@m\UNSS@N
\ctr@ld@f\def\pssetarrowheadangle#1{\edef\@rrowheadangle{#1}{\c@ssin{\C@}{\S@}{#1}%
    \xdef\C@AHANG{\C@}\xdef\S@AHANG{\S@}\v@lmax=\S@ pt%
    \invers@{\v@leur}{\v@lmax}\maxim@m{\v@leur}{\v@leur}{-\v@leur}%
    \xdef\UNSS@N{\the\v@leur}}}
\ctr@ld@f\def\pssetarrowheadfill#1{\expandafter\set@rrowhfill#1:}
\ctr@ld@f\def\set@rrowhfill#1#2:{\if#1n\@rrowhfillfalse\else\@rrowhfilltrue\fi}
\ctr@ld@f\def\pssetarrowheadout#1{\expandafter\set@rrowhout#1:}
\ctr@ld@f\def\set@rrowhout#1#2:{\if#1n\@rrowhoutfalse\else\@rrowhouttrue\fi}
\ctr@ln@m\@rrowheadlength
\ctr@ld@f\def\pssetarrowheadlength#1{\edef\@rrowheadlength{#1}\@rrowratiofalse}
\ctr@ln@m\@rrowheadratio
\ctr@ld@f\def\pssetarrowheadratio#1{\edef\@rrowheadratio{#1}\@rrowratiotrue}
\ctr@ln@m\defaultarrowheadlength
\ctr@ld@f\def\psresetarrowhead{%
    \pssetarrowheadangle{\defaultarrowheadangle}%
    \pssetarrowheadfill{\defaultarrowheadfill}%
    \pssetarrowheadout{\defaultarrowheadout}%
    \pssetarrowheadratio{\defaultarrowheadratio}%
    \d@fm@cdim\defaultarrowheadlength{\defaulth@rdahlength}
    \pssetarrowheadlength{\defaultarrowheadlength}}
\ctr@ld@f\def\defaultarrowheadratio{0.1}
\ctr@ld@f\def\defaultarrowheadangle{20}
\ctr@ld@f\def\defaultarrowheadfill{no}
\ctr@ld@f\def\defaultarrowheadout{no}
\ctr@ld@f\def\defaulth@rdahlength{8pt}
\ctr@ln@m\psarrow
\ctr@ld@f\def\psarrowDD[#1,#2]{{\ifcurr@ntPS\ifps@cri\s@uvc@ntr@l\et@tpsarrow%
    \PSc@mment{psarrowDD [Pt1,Pt2]=[#1,#2]}\pssetfillmode{no}%
    \psarrowheadDD[#1,#2]\setc@ntr@l{2}\psline[#1,-3]%
    \PSc@mment{End psarrowDD}\resetc@ntr@l\et@tpsarrow\fi\fi}}
\ctr@ld@f\def\psarrowTD[#1,#2]{{\ifcurr@ntPS\ifps@cri\s@uvc@ntr@l\et@tpsarrowTD%
    \PSc@mment{psarrowTD [Pt1,Pt2]=[#1,#2]}\resetc@ntr@l{2}%
    \Figptpr@j-5:/#1/\Figptpr@j-6:/#2/\let\c@lprojSP=\relax\psarrowDD[-5,-6]%
    \PSc@mment{End psarrowTD}\resetc@ntr@l\et@tpsarrowTD\fi\fi}}
\ctr@ln@m\psarrowhead
\ctr@ld@f\def\psarrowheadDD[#1,#2]{{\ifcurr@ntPS\ifps@cri\s@uvc@ntr@l\et@tpsarrowheadDD%
    \if@rrowhfill\def\@hangle{-\@rrowheadangle}\else\def\@hangle{\@rrowheadangle}\fi%
    \if@rrowratio%
    \if@rrowhout\def\@hratio{-\@rrowheadratio}\else\def\@hratio{\@rrowheadratio}\fi%
    \PSc@mment{psarrowheadDD Ratio=\@hratio, Angle=\@hangle, [Pt1,Pt2]=[#1,#2]}%
    \Ps@rrowhead\@hratio,\@hangle[#1,#2]%
    \else%
    \if@rrowhout\def\@hlength{-\@rrowheadlength}\else\def\@hlength{\@rrowheadlength}\fi%
    \PSc@mment{psarrowheadDD Length=\@hlength, Angle=\@hangle, [Pt1,Pt2]=[#1,#2]}%
    \Ps@rrowheadfd\@hlength,\@hangle[#1,#2]%
    \fi%
    \PSc@mment{End psarrowheadDD}\resetc@ntr@l\et@tpsarrowheadDD\fi\fi}}
\ctr@ld@f\def\psarrowheadTD[#1,#2]{{\ifcurr@ntPS\ifps@cri\s@uvc@ntr@l\et@tpsarrowheadTD%
    \PSc@mment{psarrowheadTD [Pt1,Pt2]=[#1,#2]}\resetc@ntr@l{2}%
    \Figptpr@j-5:/#1/\Figptpr@j-6:/#2/\let\c@lprojSP=\relax\psarrowheadDD[-5,-6]%
    \PSc@mment{End psarrowheadTD}\resetc@ntr@l\et@tpsarrowheadTD\fi\fi}}
\ctr@ld@f\def\Ps@rrowhead#1,#2[#3,#4]{\v@leur=#1\p@\maxim@m{\v@leur}{\v@leur}{-\v@leur}%
    \ifdim\v@leur>\Cepsil@n{
    \PSc@mment{ps@rrowhead Ratio=#1, Angle=#2, [Pt1,Pt2]=[#3,#4]}\v@leur=\UNSS@N%
    \v@leur=\curr@ntwidth\v@leur\v@leur=\ptpsT@pt\v@leur\delt@=.5\v@leur
    \setc@ntr@l{2}\figvectPDD-3[#4,#3]%
    \Figg@tXY{-3}\v@lX=#1\v@lX\v@lY=#1\v@lY\Figv@ctCreg-3(\v@lX,\v@lY)%
    \vecunit@{-4}{-3}\mili@u=\result@t%
    \ifdim#2pt>\z@\v@lXa=-\C@AHANG\delt@%
     \edef\c@ef{\repdecn@mb{\v@lXa}}\figpttraDD-3:=-3/\c@ef,-4/\fi%
    \edef\c@ef{\repdecn@mb{\delt@}}%
    \v@lXa=\mili@u\v@lXa=\C@AHANG\v@lXa%
    \v@lYa=\ptpsT@pt\p@\v@lYa=\curr@ntwidth\v@lYa\v@lYa=\sDcc@ngle\v@lYa%
    \advance\v@lXa-\v@lYa\gdef\sDcc@ngle{0}%
    \ifdim\v@lXa>\v@leur\edef\c@efendpt{\repdecn@mb{\v@leur}}%
    \else\edef\c@efendpt{\repdecn@mb{\v@lXa}}\fi%
    \Figg@tXY{-3}\v@lmin=\v@lX\v@lmax=\v@lY%
    \v@lXa=\C@AHANG\v@lmin\v@lYa=\S@AHANG\v@lmax\advance\v@lXa\v@lYa%
    \v@lYa=-\S@AHANG\v@lmin\v@lX=\C@AHANG\v@lmax\advance\v@lYa\v@lX%
    \setc@ntr@l{1}\Figg@tXY{#4}\advance\v@lX\v@lXa\advance\v@lY\v@lYa%
    \setc@ntr@l{2}\Figp@intregDD-2:(\v@lX,\v@lY)%
    \v@lXa=\C@AHANG\v@lmin\v@lYa=-\S@AHANG\v@lmax\advance\v@lXa\v@lYa%
    \v@lYa=\S@AHANG\v@lmin\v@lX=\C@AHANG\v@lmax\advance\v@lYa\v@lX%
    \setc@ntr@l{1}\Figg@tXY{#4}\advance\v@lX\v@lXa\advance\v@lY\v@lYa%
    \setc@ntr@l{2}\Figp@intregDD-1:(\v@lX,\v@lY)%
    \ifdim#2pt<\z@\fillm@detrue\psline[-2,#4,-1]
    \else\figptstraDD-3=#4,-2,-1/\c@ef,-4/\psline[-2,-3,-1]\fi
    \ifdim#1pt>\z@\figpttraDD-3:=#4/\c@efendpt,-4/\else\figptcopyDD-3:/#4/\fi%
    \PSc@mment{End ps@rrowhead}}\fi}
\ctr@ld@f\def\sDcc@ngle{0}
\ctr@ld@f\def\Ps@rrowheadfd#1,#2[#3,#4]{{%
    \PSc@mment{ps@rrowheadfd Length=#1, Angle=#2, [Pt1,Pt2]=[#3,#4]}%
    \setc@ntr@l{2}\figvectPDD-1[#3,#4]\n@rmeucDD{\v@leur}{-1}\v@leur=\ptT@unit@\v@leur%
    \invers@{\v@leur}{\v@leur}\v@leur=#1\v@leur\edef\R@tio{\repdecn@mb{\v@leur}}%
    \Ps@rrowhead\R@tio,#2[#3,#4]\PSc@mment{End ps@rrowheadfd}}}
\ctr@ln@m\psarrowBezier
\ctr@ld@f\def\psarrowBezierDD[#1,#2,#3,#4]{{\ifcurr@ntPS\ifps@cri\s@uvc@ntr@l\et@tpsarrowBezierDD%
    \PSc@mment{psarrowBezierDD Control points=#1,#2,#3,#4}\setc@ntr@l{2}%
    \if@rrowratio\c@larclengthDD\v@leur,10[#1,#2,#3,#4]\else\v@leur=\z@\fi%
    \Ps@rrowB@zDD\v@leur[#1,#2,#3,#4]%
    \PSc@mment{End psarrowBezierDD}\resetc@ntr@l\et@tpsarrowBezierDD\fi\fi}}
\ctr@ld@f\def\psarrowBezierTD[#1,#2,#3,#4]{{\ifcurr@ntPS\ifps@cri\s@uvc@ntr@l\et@tpsarrowBezierTD%
    \PSc@mment{psarrowBezierTD Control points=#1,#2,#3,#4}\resetc@ntr@l{2}%
    \Figptpr@j-7:/#1/\Figptpr@j-8:/#2/\Figptpr@j-9:/#3/\Figptpr@j-10:/#4/%
    \let\c@lprojSP=\relax\ifnum\curr@ntproj<\tw@\psarrowBezierDD[-7,-8,-9,-10]%
    \else\f@gnewpath\PSwrit@cmd{-7}{\c@mmoveto}{\fwf@g}%
    \if@rrowratio\c@larclengthDD\mili@u,10[-7,-8,-9,-10]\else\mili@u=\z@\fi%
    \p@rtent=\NBz@rcs\advance\p@rtent\m@ne\subB@zierTD\p@rtent[#1,#2,#3,#4]%
    \f@gstroke%
    \advance\v@lmin\p@rtent\delt@
    \v@leur=\v@lmin\advance\v@leur0.33333 \delt@\edef\unti@rs{\repdecn@mb{\v@leur}}%
    \v@leur=\v@lmin\advance\v@leur0.66666 \delt@\edef\deti@rs{\repdecn@mb{\v@leur}}%
    \figptcopyDD-8:/-10/\c@lsubBzarc\unti@rs,\deti@rs[#1,#2,#3,#4]%
    \figptcopyDD-8:/-4/\figptcopyDD-9:/-3/\Ps@rrowB@zDD\mili@u[-7,-8,-9,-10]\fi%
    \PSc@mment{End psarrowBezierTD}\resetc@ntr@l\et@tpsarrowBezierTD\fi\fi}}
\ctr@ld@f\def\c@larclengthDD#1,#2[#3,#4,#5,#6]{{\p@rtent=#2\figptcopyDD-5:/#3/%
    \delt@=\p@\divide\delt@\p@rtent\c@rre=\z@\v@leur=\z@\s@mme=\z@%
    \loop\ifnum\s@mme<\p@rtent\advance\s@mme\@ne\advance\v@leur\delt@%
    \edef\T@{\repdecn@mb{\v@leur}}\figptBezierDD-6::\T@[#3,#4,#5,#6]%
    \figvectPDD-1[-5,-6]\n@rmeucDD{\mili@u}{-1}\advance\c@rre\mili@u%
    \figptcopyDD-5:/-6/\repeat\global\result@t=\ptT@unit@\c@rre}#1=\result@t}
\ctr@ld@f\def\Ps@rrowB@zDD#1[#2,#3,#4,#5]{{\pssetfillmode{no}%
    \if@rrowratio\delt@=\@rrowheadratio#1\else\delt@=\@rrowheadlength pt\fi%
    \v@leur=\C@AHANG\delt@\edef\R@dius{\repdecn@mb{\v@leur}}%
    \FigptintercircB@zDD-5::0,\R@dius[#5,#4,#3,#2]%
    \pssetarrowheadlength{\repdecn@mb{\delt@}}\psarrowheadDD[-5,#5]%
    \let\n@rmeuc=\n@rmeucDD\figgetdist\R@dius[#5,-3]%
    \FigptintercircB@zDD-6::0,\R@dius[#5,#4,#3,#2]%
    \figptBezierDD-5::0.33333[#5,#4,#3,#2]\figptBezierDD-3::0.66666[#5,#4,#3,#2]%
    \figptscontrolDD-5[-6,-5,-3,#2]\psBezierDD1[-6,-5,-4,#2]}}
\ctr@ln@m\psarrowcirc
\ctr@ld@f\def\psarrowcircDD#1;#2(#3,#4){{\ifcurr@ntPS\ifps@cri\s@uvc@ntr@l\et@tpsarrowcircDD%
    \PSc@mment{psarrowcircDD Center=#1 ; Radius=#2 (Ang1=#3,Ang2=#4)}%
    \pssetfillmode{no}\Pscirc@rrowhead#1;#2(#3,#4)%
    \setc@ntr@l{2}\figvectPDD -4[#1,-3]\vecunit@{-4}{-4}%
    \Figg@tXY{-4}\arct@n\v@lmin(\v@lX,\v@lY)%
    \v@lmin=\rdT@deg\v@lmin\v@leur=#4pt\advance\v@leur-\v@lmin%
    \maxim@m{\v@leur}{\v@leur}{-\v@leur}%
    \ifdim\v@leur>\DemiPI@deg\relax\ifdim\v@lmin<#4pt\advance\v@lmin\DePI@deg%
    \else\advance\v@lmin-\DePI@deg\fi\fi\edef\ar@ngle{\repdecn@mb{\v@lmin}}%
    \ifdim#3pt<#4pt\psarccirc#1;#2(#3,\ar@ngle)\else\psarccirc#1;#2(\ar@ngle,#3)\fi%
    \PSc@mment{End psarrowcircDD}\resetc@ntr@l\et@tpsarrowcircDD\fi\fi}}
\ctr@ld@f\def\psarrowcircTD#1,#2,#3;#4(#5,#6){{\ifcurr@ntPS\ifps@cri\s@uvc@ntr@l\et@tpsarrowcircTD%
    \PSc@mment{psarrowcircTD Center=#1,P1=#2,P2=#3 ; Radius=#4 (Ang1=#5, Ang2=#6)}%
    \resetc@ntr@l{2}\c@lExtAxes#1,#2,#3(#4)\let\c@lprojSP=\relax%
    \figvectPTD-11[#1,-4]\figvectPTD-12[#1,-5]\c@lNbarcs{#5}{#6}%
    \if@rrowratio\v@lmax=\degT@rd\v@lmax\edef\D@lpha{\repdecn@mb{\v@lmax}}\fi%
    \advance\p@rtent\m@ne\mili@u=\z@%
    \v@leur=#5pt\c@lptellP{#1}{-11}{-12}\Figptpr@j-9:/-3/%
    \f@gnewpath\PSwrit@cmdS{-9}{\c@mmoveto}{\fwf@g}{\X@un}{\Y@un}%
    \edef\C@nt@r{#1}\s@mme=\z@\bcl@rcircTD\f@gstroke%
    \advance\v@leur\delt@\c@lptellP{#1}{-11}{-12}\Figptpr@j-5:/-3/%
    \advance\v@leur\delt@\c@lptellP{#1}{-11}{-12}\Figptpr@j-6:/-3/%
    \advance\v@leur\delt@\c@lptellP{#1}{-11}{-12}\Figptpr@j-10:/-3/%
    \figptscontrolDD-8[-9,-5,-6,-10]%
    \if@rrowratio\c@lcurvradDD0.5[-9,-8,-7,-10]\advance\mili@u\result@t%
    \maxim@m{\mili@u}{\mili@u}{-\mili@u}\mili@u=\ptT@unit@\mili@u%
    \mili@u=\D@lpha\mili@u\advance\p@rtent\@ne\divide\mili@u\p@rtent\fi%
    \Ps@rrowB@zDD\mili@u[-9,-8,-7,-10]%
    \PSc@mment{End psarrowcircTD}\resetc@ntr@l\et@tpsarrowcircTD\fi\fi}}
\ctr@ld@f\def\bcl@rcircTD{\relax%
    \ifnum\s@mme<\p@rtent\advance\s@mme\@ne%
    \advance\v@leur\delt@\c@lptellP{\C@nt@r}{-11}{-12}\Figptpr@j-5:/-3/%
    \advance\v@leur\delt@\c@lptellP{\C@nt@r}{-11}{-12}\Figptpr@j-6:/-3/%
    \advance\v@leur\delt@\c@lptellP{\C@nt@r}{-11}{-12}\Figptpr@j-10:/-3/%
    \figptscontrolDD-8[-9,-5,-6,-10]\BdingB@xfalse%
    \PSwrit@cmdS{-8}{}{\fwf@g}{\X@de}{\Y@de}\PSwrit@cmdS{-7}{}{\fwf@g}{\X@tr}{\Y@tr}%
    \BdingB@xtrue\PSwrit@cmdS{-10}{\c@mcurveto}{\fwf@g}{\X@qu}{\Y@qu}%
    \if@rrowratio\c@lcurvradDD0.5[-9,-8,-7,-10]\advance\mili@u\result@t\fi%
    \B@zierBB@x{1}{\Y@un}(\X@un,\X@de,\X@tr,\X@qu)%
    \B@zierBB@x{2}{\X@un}(\Y@un,\Y@de,\Y@tr,\Y@qu)%
    \edef\X@un{\X@qu}\edef\Y@un{\Y@qu}\figptcopyDD-9:/-10/\bcl@rcircTD\fi}
\ctr@ld@f\def\Pscirc@rrowhead#1;#2(#3,#4){{%
    \PSc@mment{pscirc@rrowhead Center=#1 ; Radius=#2 (Ang1=#3,Ang2=#4)}%
    \v@leur=#2\unit@\edef\s@glen{\repdecn@mb{\v@leur}}\v@lY=\z@\v@lX=\v@leur%
    \resetc@ntr@l{2}\Figv@ctCreg-3(\v@lX,\v@lY)\figpttraDD-5:=#1/1,-3/%
    \figptrotDD-5:=-5/#1,#4/%
    \figvectPDD-3[#1,-5]\Figg@tXY{-3}\v@leur=\v@lX%
    \ifdim#3pt<#4pt\v@lX=\v@lY\v@lY=-\v@leur\else\v@lX=-\v@lY\v@lY=\v@leur\fi%
    \Figv@ctCreg-3(\v@lX,\v@lY)\vecunit@{-3}{-3}%
    \if@rrowratio\v@leur=#4pt\advance\v@leur-#3pt\maxim@m{\mili@u}{-\v@leur}{\v@leur}%
    \mili@u=\degT@rd\mili@u\v@leur=\s@glen\mili@u\edef\s@glen{\repdecn@mb{\v@leur}}%
    \mili@u=#2\mili@u\mili@u=\@rrowheadratio\mili@u\else\mili@u=\@rrowheadlength pt\fi%
    \figpttraDD-6:=-5/\s@glen,-3/\v@leur=#2pt\v@leur=2\v@leur%
    \invers@{\v@leur}{\v@leur}\c@rre=\repdecn@mb{\v@leur}\mili@u
    \mili@u=\c@rre\mili@u=\repdecn@mb{\c@rre}\mili@u%
    \v@leur=\p@\advance\v@leur-\mili@u
    \invers@{\mili@u}{2\v@leur}\delt@=\c@rre\delt@=\repdecn@mb{\mili@u}\delt@%
    \xdef\sDcc@ngle{\repdecn@mb{\delt@}}
    \sqrt@{\mili@u}{\v@leur}\arct@n\v@leur(\mili@u,\c@rre)%
    \v@leur=\rdT@deg\v@leur
    \ifdim#3pt<#4pt\v@leur=-\v@leur\fi%
    \if@rrowhout\v@leur=-\v@leur\fi\edef\cor@ngle{\repdecn@mb{\v@leur}}%
    \figptrotDD-6:=-6/-5,\cor@ngle/\psarrowheadDD[-6,-5]%
    \PSc@mment{End pscirc@rrowhead}}}
\ctr@ln@m\psarrowcircP
\ctr@ld@f\def\psarrowcircPDD#1;#2[#3,#4]{{\ifcurr@ntPS\ifps@cri%
    \PSc@mment{psarrowcircPDD Center=#1; Radius=#2, [P1=#3,P2=#4]}%
    \s@uvc@ntr@l\et@tpsarrowcircPDD\Ps@ngleparam#1;#2[#3,#4]%
    \ifdim\v@leur>\z@\ifdim\v@lmin>\v@lmax\advance\v@lmax\DePI@deg\fi%
    \else\ifdim\v@lmin<\v@lmax\advance\v@lmin\DePI@deg\fi\fi%
    \edef\@ngdeb{\repdecn@mb{\v@lmin}}\edef\@ngfin{\repdecn@mb{\v@lmax}}%
    \psarrowcirc#1;\r@dius(\@ngdeb,\@ngfin)%
    \PSc@mment{End psarrowcircPDD}\resetc@ntr@l\et@tpsarrowcircPDD\fi\fi}}
\ctr@ld@f\def\psarrowcircPTD#1;#2[#3,#4,#5]{{\ifcurr@ntPS\ifps@cri\s@uvc@ntr@l\et@tpsarrowcircPTD%
    \PSc@mment{psarrowcircPTD Center=#1; Radius=#2, [P1=#3,P2=#4,P3=#5]}%
    \figgetangleTD\@ngfin[#1,#3,#4,#5]\v@leur=#2pt%
    \maxim@m{\mili@u}{-\v@leur}{\v@leur}\edef\r@dius{\repdecn@mb{\mili@u}}%
    \ifdim\v@leur<\z@\v@lmax=\@ngfin pt\advance\v@lmax-\DePI@deg%
    \edef\@ngfin{\repdecn@mb{\v@lmax}}\fi\psarrowcircTD#1,#3,#5;\r@dius(0,\@ngfin)%
    \PSc@mment{End psarrowcircPTD}\resetc@ntr@l\et@tpsarrowcircPTD\fi\fi}}
\ctr@ld@f\def\psaxes#1(#2){{\ifcurr@ntPS\ifps@cri\s@uvc@ntr@l\et@tpsaxes%
    \PSc@mment{psaxes Origin=#1 Range=(#2)}\an@lys@xes#2,:\resetc@ntr@l{2}%
    \ifx\t@xt@\empty\ifTr@isDim\ps@xes#1(0,#2,0,#2,0,#2)\else\ps@xes#1(0,#2,0,#2)\fi%
    \else\ps@xes#1(#2)\fi\PSc@mment{End psaxes}\resetc@ntr@l\et@tpsaxes\fi\fi}}
\ctr@ld@f\def\an@lys@xes#1,#2:{\def\t@xt@{#2}}
\ctr@ln@m\ps@xes
\ctr@ld@f\def\ps@xesDD#1(#2,#3,#4,#5){%
    \figpttraC-5:=#1/#2,0/\figpttraC-6:=#1/#3,0/\psarrowDD[-5,-6]%
    \figpttraC-5:=#1/0,#4/\figpttraC-6:=#1/0,#5/\psarrowDD[-5,-6]}
\ctr@ld@f\def\ps@xesTD#1(#2,#3,#4,#5,#6,#7){%
    \figpttraC-7:=#1/#2,0,0/\figpttraC-8:=#1/#3,0,0/\psarrowTD[-7,-8]%
    \figpttraC-7:=#1/0,#4,0/\figpttraC-8:=#1/0,#5,0/\psarrowTD[-7,-8]%
    \figpttraC-7:=#1/0,0,#6/\figpttraC-8:=#1/0,0,#7/\psarrowTD[-7,-8]}
\ctr@ln@m\newGr@FN
\ctr@ld@f\def\newGr@FNPDF#1{\s@mme=\Gr@FNb\advance\s@mme\@ne\xdef\Gr@FNb{\number\s@mme}}
\ctr@ld@f\def\newGr@FNDVI#1{\newGr@FNPDF{}\xdef#1{\jobname GI\Gr@FNb.anx}}
\ctr@ld@f\def\psbeginfig#1{\newGr@FN\DefGIfilen@me\gdef\@utoFN{0}%
    \def\t@xt@{#1}\relax\ifx\t@xt@\empty\psupdatem@detrue%
    \gdef\@utoFN{1}\Psb@ginfig\DefGIfilen@me\else\expandafter\Psb@ginfigNu@#1 :\fi}
\ctr@ld@f\def\Psb@ginfigNu@#1 #2:{\def\t@xt@{#1}\relax\ifx\t@xt@\empty\def\t@xt@{#2}%
    \ifx\t@xt@\empty\psupdatem@detrue\gdef\@utoFN{1}\Psb@ginfig\DefGIfilen@me%
    \else\Psb@ginfigNu@#2:\fi\else\Psb@ginfig{#1}\fi}
\ctr@ln@m\PSfilen@me \ctr@ln@m\auxfilen@me
\ctr@ld@f\def\Psb@ginfig#1{\ifcurr@ntPS\else%
    \edef\PSfilen@me{#1}\edef\auxfilen@me{\jobname.anx}%
    \ifpsupdatem@de\ps@critrue\else\openin\frf@g=\PSfilen@me\relax%
    \ifeof\frf@g\ps@critrue\else\ps@crifalse\fi\closein\frf@g\fi%
    \curr@ntPStrue\c@ldefproj\expandafter\setupd@te\defaultupdate:%
    \ifps@cri\initb@undb@x%
    \immediate\openout\fwf@g=\auxfilen@me\initpss@ttings\fi%
    \fi}
\ctr@ld@f\def\Gr@FNb{0}
\ctr@ld@f\def\figforTeXFileno{\Gr@FNb}
\ctr@ld@f\def\figforTeXFigno{0 }
\ctr@ld@f\def\figforTeXnextFigno{1 }
\ctr@ld@f\edef\DefGIfilen@me{\jobname GI.anx}
\ctr@ld@f\def\initpss@ttings{\psreset{arrowhead,curve,first,flowchart,mesh,second,third}%
    \Use@llipsefalse}
\ctr@ld@f\def\B@zierBB@x#1#2(#3,#4,#5,#6){{\c@rre=\t@n\epsil@n
    \v@lmax=#4\advance\v@lmax-#5\v@lmax=\thr@@\v@lmax\advance\v@lmax#6\advance\v@lmax-#3%
    \mili@u=#4\mili@u=-\tw@\mili@u\advance\mili@u#3\advance\mili@u#5%
    \v@lmin=#4\advance\v@lmin-#3\maxim@m{\v@leur}{-\v@lmax}{\v@lmax}%
    \maxim@m{\delt@}{-\mili@u}{\mili@u}\maxim@m{\v@leur}{\v@leur}{\delt@}%
    \maxim@m{\delt@}{-\v@lmin}{\v@lmin}\maxim@m{\v@leur}{\v@leur}{\delt@}%
    \ifdim\v@leur>\c@rre\invers@{\v@leur}{\v@leur}\edef\Uns@rM@x{\repdecn@mb{\v@leur}}%
    \v@lmax=\Uns@rM@x\v@lmax\mili@u=\Uns@rM@x\mili@u\v@lmin=\Uns@rM@x\v@lmin%
    \maxim@m{\v@leur}{-\v@lmax}{\v@lmax}\ifdim\v@leur<\c@rre%
    \maxim@m{\v@leur}{-\mili@u}{\mili@u}\ifdim\v@leur<\c@rre\else%
    \invers@{\mili@u}{\mili@u}\v@leur=-0.5\v@lmin%
    \v@leur=\repdecn@mb{\mili@u}\v@leur\m@jBBB@x{\v@leur}{#1}{#2}(#3,#4,#5,#6)\fi%
    \else\delt@=\repdecn@mb{\mili@u}\mili@u\v@leur=\repdecn@mb{\v@lmax}\v@lmin%
    \advance\delt@-\v@leur\ifdim\delt@<\z@\else\invers@{\v@lmax}{\v@lmax}%
    \edef\Uns@rAp{\repdecn@mb{\v@lmax}}\sqrt@{\delt@}{\delt@}%
    \v@leur=-\mili@u\advance\v@leur\delt@\v@leur=\Uns@rAp\v@leur%
    \m@jBBB@x{\v@leur}{#1}{#2}(#3,#4,#5,#6)%
    \v@leur=-\mili@u\advance\v@leur-\delt@\v@leur=\Uns@rAp\v@leur%
    \m@jBBB@x{\v@leur}{#1}{#2}(#3,#4,#5,#6)\fi\fi\fi}}
\ctr@ld@f\def\m@jBBB@x#1#2#3(#4,#5,#6,#7){{\relax\ifdim#1>\z@\ifdim#1<\p@%
    \edef\T@{\repdecn@mb{#1}}\v@lX=\p@\advance\v@lX-#1\edef\UNmT@{\repdecn@mb{\v@lX}}%
    \v@lX=#4\v@lY=#5\v@lZ=#6\v@lXa=#7\v@lX=\UNmT@\v@lX\advance\v@lX\T@\v@lY%
    \v@lY=\UNmT@\v@lY\advance\v@lY\T@\v@lZ\v@lZ=\UNmT@\v@lZ\advance\v@lZ\T@\v@lXa%
    \v@lX=\UNmT@\v@lX\advance\v@lX\T@\v@lY\v@lY=\UNmT@\v@lY\advance\v@lY\T@\v@lZ%
    \v@lX=\UNmT@\v@lX\advance\v@lX\T@\v@lY%
    \ifcase#2\or\v@lY=#3\or\v@lY=\v@lX\v@lX=#3\fi\b@undb@x{\v@lX}{\v@lY}\fi\fi}}
\ctr@ld@f\def\PsB@zier#1[#2]{{\f@gnewpath%
    \s@mme=\z@\def\list@num{#2,0}\extrairelepremi@r\p@int\de\list@num%
    \PSwrit@cmdS{\p@int}{\c@mmoveto}{\fwf@g}{\X@un}{\Y@un}\p@rtent=#1\bclB@zier}}
\ctr@ld@f\def\bclB@zier{\relax%
    \ifnum\s@mme<\p@rtent\advance\s@mme\@ne\BdingB@xfalse%
    \extrairelepremi@r\p@int\de\list@num\PSwrit@cmdS{\p@int}{}{\fwf@g}{\X@de}{\Y@de}%
    \extrairelepremi@r\p@int\de\list@num\PSwrit@cmdS{\p@int}{}{\fwf@g}{\X@tr}{\Y@tr}%
    \BdingB@xtrue%
    \extrairelepremi@r\p@int\de\list@num\PSwrit@cmdS{\p@int}{\c@mcurveto}{\fwf@g}{\X@qu}{\Y@qu}%
    \B@zierBB@x{1}{\Y@un}(\X@un,\X@de,\X@tr,\X@qu)%
    \B@zierBB@x{2}{\X@un}(\Y@un,\Y@de,\Y@tr,\Y@qu)%
    \edef\X@un{\X@qu}\edef\Y@un{\Y@qu}\bclB@zier\fi}
\ctr@ln@m\psBezier
\ctr@ld@f\def\psBezierDD#1[#2]{\ifcurr@ntPS\ifps@cri%
    \PSc@mment{psBezierDD N arcs=#1, Control points=#2}%
    \iffillm@de\PsB@zier#1[#2]%
    \f@gfill%
    \else\PsB@zier#1[#2]\f@gstroke\fi%
    \PSc@mment{End psBezierDD}\fi\fi}
\ctr@ln@m\et@tpsBezierTD
\ctr@ld@f\def\psBezierTD#1[#2]{\ifcurr@ntPS\ifps@cri\s@uvc@ntr@l\et@tpsBezierTD%
    \PSc@mment{psBezierTD N arcs=#1, Control points=#2}%
    \iffillm@de\PsB@zierTD#1[#2]%
    \f@gfill%
    \else\PsB@zierTD#1[#2]\f@gstroke\fi%
    \PSc@mment{End psBezierTD}\resetc@ntr@l\et@tpsBezierTD\fi\fi}
\ctr@ld@f\def\PsB@zierTD#1[#2]{\ifnum\curr@ntproj<\tw@\PsB@zier#1[#2]\else\PsB@zier@TD#1[#2]\fi}
\ctr@ld@f\def\PsB@zier@TD#1[#2]{{\f@gnewpath%
    \s@mme=\z@\def\list@num{#2,0}\extrairelepremi@r\p@int\de\list@num%
    \let\c@lprojSP=\relax\setc@ntr@l{2}\Figptpr@j-7:/\p@int/%
    \PSwrit@cmd{-7}{\c@mmoveto}{\fwf@g}%
    \loop\ifnum\s@mme<#1\advance\s@mme\@ne\extrairelepremi@r\p@intun\de\list@num%
    \extrairelepremi@r\p@intde\de\list@num\extrairelepremi@r\p@inttr\de\list@num%
    \subB@zierTD\NBz@rcs[\p@int,\p@intun,\p@intde,\p@inttr]\edef\p@int{\p@inttr}\repeat}}
\ctr@ld@f\def\subB@zierTD#1[#2,#3,#4,#5]{\delt@=\p@\divide\delt@\NBz@rcs\v@lmin=\z@%
    {\Figg@tXY{-7}\edef\X@un{\the\v@lX}\edef\Y@un{\the\v@lY}%
    \s@mme=\z@\loop\ifnum\s@mme<#1\advance\s@mme\@ne%
    \v@leur=\v@lmin\advance\v@leur0.33333 \delt@\edef\unti@rs{\repdecn@mb{\v@leur}}%
    \v@leur=\v@lmin\advance\v@leur0.66666 \delt@\edef\deti@rs{\repdecn@mb{\v@leur}}%
    \advance\v@lmin\delt@\edef\trti@rs{\repdecn@mb{\v@lmin}}%
    \figptBezierTD-8::\trti@rs[#2,#3,#4,#5]\Figptpr@j-8:/-8/%
    \c@lsubBzarc\unti@rs,\deti@rs[#2,#3,#4,#5]\BdingB@xfalse%
    \PSwrit@cmdS{-4}{}{\fwf@g}{\X@de}{\Y@de}\PSwrit@cmdS{-3}{}{\fwf@g}{\X@tr}{\Y@tr}%
    \BdingB@xtrue\PSwrit@cmdS{-8}{\c@mcurveto}{\fwf@g}{\X@qu}{\Y@qu}%
    \B@zierBB@x{1}{\Y@un}(\X@un,\X@de,\X@tr,\X@qu)%
    \B@zierBB@x{2}{\X@un}(\Y@un,\Y@de,\Y@tr,\Y@qu)%
    \edef\X@un{\X@qu}\edef\Y@un{\Y@qu}\figptcopyDD-7:/-8/\repeat}}
\ctr@ld@f\def\NBz@rcs{2}
\ctr@ld@f\def\c@lsubBzarc#1,#2[#3,#4,#5,#6]{\figptBezierTD-5::#1[#3,#4,#5,#6]%
    \figptBezierTD-6::#2[#3,#4,#5,#6]\Figptpr@j-4:/-5/\Figptpr@j-5:/-6/%
    \figptscontrolDD-4[-7,-4,-5,-8]}
\ctr@ln@m\pscirc
\ctr@ld@f\def\pscircDD#1(#2){\ifcurr@ntPS\ifps@cri\PSc@mment{pscircDD Center=#1 (Radius=#2)}%
    \psarccircDD#1;#2(0,360)\PSc@mment{End pscircDD}\fi\fi}
\ctr@ld@f\def\pscircTD#1,#2,#3(#4){\ifcurr@ntPS\ifps@cri%
    \PSc@mment{pscircTD Center=#1,P1=#2,P2=#3 (Radius=#4)}%
    \psarccircTD#1,#2,#3;#4(0,360)\PSc@mment{End pscircTD}\fi\fi}
\ctr@ln@m\p@urcent
{\catcode`\%=12\gdef\p@urcent{
\ctr@ld@f\def\PSc@mment#1{\ifpsdebugmode\immediate\write\fwf@g{\p@urcent\space#1}\fi}
\ctr@ln@m\acc@louv \ctr@ln@m\acc@lfer
{\catcode`\[=1\catcode`\{=12\gdef\acc@louv[{}}
{\catcode`\]=2\catcode`\}=12\gdef\acc@lfer{}]]
\ctr@ld@f\def\PSdict@{\ifUse@llipse%
    \immediate\write\fwf@g{/ellipsedict 9 dict def ellipsedict /mtrx matrix put}%
    \immediate\write\fwf@g{/ellipse \acc@louv ellipsedict begin}%
    \immediate\write\fwf@g{ /endangle exch def /startangle exch def}%
    \immediate\write\fwf@g{ /yrad exch def /xrad exch def}%
    \immediate\write\fwf@g{ /rotangle exch def /y exch def /x exch def}%
    \immediate\write\fwf@g{ /savematrix mtrx currentmatrix def}%
    \immediate\write\fwf@g{ x y translate rotangle rotate xrad yrad scale}%
    \immediate\write\fwf@g{ 0 0 1 startangle endangle arc}%
    \immediate\write\fwf@g{ savematrix setmatrix end\acc@lfer def}%
    \fi\PShe@der{EndProlog}}
\ctr@ld@f\def\Pssetc@rve#1=#2|{\keln@mun#1|%
    \def\n@mref{r}\ifx\l@debut\n@mref\pssetroundness{#2}\else
    \immediate\write16{*** Unknown attribute: \BS@ psset curve(..., #1=...)}%
    \fi}
\ctr@ln@m\curv@roundness
\ctr@ld@f\def\pssetroundness#1{\edef\curv@roundness{#1}}
\ctr@ld@f\def\defaultroundness{0.2} 
\ctr@ln@m\pscurve
\ctr@ld@f\def\pscurveDD[#1]{{\ifcurr@ntPS\ifps@cri\PSc@mment{pscurveDD Points=#1}%
    \s@uvc@ntr@l\et@tpscurveDD%
    \iffillm@de\Psc@rveDD\curv@roundness[#1]%
    \f@gfill%
    \else\Psc@rveDD\curv@roundness[#1]\f@gstroke\fi%
    \PSc@mment{End pscurveDD}\resetc@ntr@l\et@tpscurveDD\fi\fi}}
\ctr@ld@f\def\pscurveTD[#1]{{\ifcurr@ntPS\ifps@cri%
    \PSc@mment{pscurveTD Points=#1}\s@uvc@ntr@l\et@tpscurveTD\let\c@lprojSP=\relax%
    \iffillm@de\Psc@rveTD\curv@roundness[#1]%
    \f@gfill%
    \else\Psc@rveTD\curv@roundness[#1]\f@gstroke\fi%
    \PSc@mment{End pscurveTD}\resetc@ntr@l\et@tpscurveTD\fi\fi}}
\ctr@ld@f\def\Psc@rveDD#1[#2]{%
    \def\list@num{#2}\extrairelepremi@r\Ak@\de\list@num%
    \extrairelepremi@r\Ai@\de\list@num\extrairelepremi@r\Aj@\de\list@num%
    \f@gnewpath\PSwrit@cmdS{\Ai@}{\c@mmoveto}{\fwf@g}{\X@un}{\Y@un}%
    \setc@ntr@l{2}\figvectPDD -1[\Ak@,\Aj@]%
    \@ecfor\Ak@:=\list@num\do{\figpttraDD-2:=\Ai@/#1,-1/\BdingB@xfalse%
       \PSwrit@cmdS{-2}{}{\fwf@g}{\X@de}{\Y@de}%
       \figvectPDD -1[\Ai@,\Ak@]\figpttraDD-2:=\Aj@/-#1,-1/%
       \PSwrit@cmdS{-2}{}{\fwf@g}{\X@tr}{\Y@tr}\BdingB@xtrue%
       \PSwrit@cmdS{\Aj@}{\c@mcurveto}{\fwf@g}{\X@qu}{\Y@qu}%
       \B@zierBB@x{1}{\Y@un}(\X@un,\X@de,\X@tr,\X@qu)%
       \B@zierBB@x{2}{\X@un}(\Y@un,\Y@de,\Y@tr,\Y@qu)%
       \edef\X@un{\X@qu}\edef\Y@un{\Y@qu}\edef\Ai@{\Aj@}\edef\Aj@{\Ak@}}}
\ctr@ld@f\def\Psc@rveTD#1[#2]{\ifnum\curr@ntproj<\tw@\Psc@rvePPTD#1[#2]\else\Psc@rveCPTD#1[#2]\fi}
\ctr@ld@f\def\Psc@rvePPTD#1[#2]{\setc@ntr@l{2}%
    \def\list@num{#2}\extrairelepremi@r\Ak@\de\list@num\Figptpr@j-5:/\Ak@/%
    \extrairelepremi@r\Ai@\de\list@num\Figptpr@j-3:/\Ai@/%
    \extrairelepremi@r\Aj@\de\list@num\Figptpr@j-4:/\Aj@/%
    \f@gnewpath\PSwrit@cmdS{-3}{\c@mmoveto}{\fwf@g}{\X@un}{\Y@un}%
    \figvectPDD -1[-5,-4]%
    \@ecfor\Ak@:=\list@num\do{\Figptpr@j-5:/\Ak@/\figpttraDD-2:=-3/#1,-1/%
       \BdingB@xfalse\PSwrit@cmdS{-2}{}{\fwf@g}{\X@de}{\Y@de}%
       \figvectPDD -1[-3,-5]\figpttraDD-2:=-4/-#1,-1/%
       \PSwrit@cmdS{-2}{}{\fwf@g}{\X@tr}{\Y@tr}\BdingB@xtrue%
       \PSwrit@cmdS{-4}{\c@mcurveto}{\fwf@g}{\X@qu}{\Y@qu}%
       \B@zierBB@x{1}{\Y@un}(\X@un,\X@de,\X@tr,\X@qu)%
       \B@zierBB@x{2}{\X@un}(\Y@un,\Y@de,\Y@tr,\Y@qu)%
       \edef\X@un{\X@qu}\edef\Y@un{\Y@qu}\figptcopyDD-3:/-4/\figptcopyDD-4:/-5/}}
\ctr@ld@f\def\Psc@rveCPTD#1[#2]{\setc@ntr@l{2}%
    \def\list@num{#2}\extrairelepremi@r\Ak@\de\list@num%
    \extrairelepremi@r\Ai@\de\list@num\extrairelepremi@r\Aj@\de\list@num%
    \Figptpr@j-7:/\Ai@/%
    \f@gnewpath\PSwrit@cmd{-7}{\c@mmoveto}{\fwf@g}%
    \figvectPTD -9[\Ak@,\Aj@]%
    \@ecfor\Ak@:=\list@num\do{\figpttraTD-10:=\Ai@/#1,-9/%
       \figvectPTD -9[\Ai@,\Ak@]\figpttraTD-11:=\Aj@/-#1,-9/%
       \subB@zierTD\NBz@rcs[\Ai@,-10,-11,\Aj@]\edef\Ai@{\Aj@}\edef\Aj@{\Ak@}}}
\ctr@ld@f\def\psendfig{\ifcurr@ntPS\ifps@cri\immediate\closeout\fwf@g%
    \immediate\openout\fwf@g=\PSfilen@me\relax%
    \ifPDFm@ke\PSBdingB@x\else%
    \immediate\write\fwf@g{\p@urcent\string!PS-Adobe-2.0 EPSF-2.0}%
    \PShe@der{Creator\string: TeX (fig4tex.tex)}%
    \PShe@der{Title\string: \PSfilen@me}%
    \PShe@der{CreationDate\string: \the\day/\the\month/\the\year}%
    \PSBdingB@x%
    \PShe@der{EndComments}\PSdict@\fi%
    \immediate\write\fwf@g{\c@mgsave}%
    \openin\frf@g=\auxfilen@me\c@pypsfile\fwf@g\frf@g\closein\frf@g%
    \immediate\write\fwf@g{\c@mgrestore}%
    \PSc@mment{End of file.}\immediate\closeout\fwf@g%
    \immediate\openout\fwf@g=\auxfilen@me\immediate\closeout\fwf@g%
    \immediate\write16{File \PSfilen@me\space created.}\fi\fi\curr@ntPSfalse\ps@critrue}
\ctr@ld@f\def\PShe@der#1{\immediate\write\fwf@g{\p@urcent\p@urcent#1}}
\ctr@ld@f\def\PSBdingB@x{{\v@lX=\ptT@ptps\c@@rdXmin\v@lY=\ptT@ptps\c@@rdYmin%
     \v@lXa=\ptT@ptps\c@@rdXmax\v@lYa=\ptT@ptps\c@@rdYmax%
     \PShe@der{BoundingBox\string: \repdecn@mb{\v@lX}\space\repdecn@mb{\v@lY}%
     \space\repdecn@mb{\v@lXa}\space\repdecn@mb{\v@lYa}}}}
\ctr@ld@f\def\psfcconnect[#1]{{\ifcurr@ntPS\ifps@cri\PSc@mment{psfcconnect Points=#1}%
    \pssetfillmode{no}\s@uvc@ntr@l\et@tpsfcconnect\resetc@ntr@l{2}%
    \fcc@nnect@[#1]\resetc@ntr@l\et@tpsfcconnect\PSc@mment{End psfcconnect}\fi\fi}}
\ctr@ld@f\def\fcc@nnect@[#1]{\let\N@rm=\n@rmeucDD\def\list@num{#1}%
    \extrairelepremi@r\Ai@\de\list@num\edef\pr@m{\Ai@}\v@leur=\z@\p@rtent=\@ne\c@llgtot%
    \ifcase\fclin@typ@\edef\list@num{[\pr@m,#1,\Ai@}\expandafter\pscurve\list@num]%
    \else\ifdim\fclin@r@d\p@>\z@\Pslin@conge[#1]\else\psline[#1]\fi\fi%
    \v@leur=\@rrowp@s\v@leur\edef\list@num{#1,\Ai@,0}%
    \extrairelepremi@r\Ai@\de\list@num\mili@u=\epsil@n\c@llgpart%
    \advance\mili@u-\epsil@n\advance\mili@u-\delt@\advance\v@leur-\mili@u%
    \ifcase\fclin@typ@\invers@\mili@u\delt@%
    \ifnum\@rrowr@fpt>\z@\advance\delt@-\v@leur\v@leur=\delt@\fi%
    \v@leur=\repdecn@mb\v@leur\mili@u\edef\v@lt{\repdecn@mb\v@leur}%
    \extrairelepremi@r\Ak@\de\list@num%
    \figvectPDD-1[\pr@m,\Aj@]\figpttraDD-6:=\Ai@/\curv@roundness,-1/%
    \figvectPDD-1[\Ak@,\Ai@]\figpttraDD-7:=\Aj@/\curv@roundness,-1/%
    \delt@=\@rrowheadlength\p@\delt@=\C@AHANG\delt@\edef\R@dius{\repdecn@mb{\delt@}}%
    \ifcase\@rrowr@fpt%
    \FigptintercircB@zDD-8::\v@lt,\R@dius[\Ai@,-6,-7,\Aj@]\psarrowheadDD[-5,-8]\else%
    \FigptintercircB@zDD-8::\v@lt,\R@dius[\Aj@,-7,-6,\Ai@]\psarrowheadDD[-8,-5]\fi%
    \else\advance\delt@-\v@leur%
    \p@rtentiere{\p@rtent}{\delt@}\edef\C@efun{\the\p@rtent}%
    \p@rtentiere{\p@rtent}{\v@leur}\edef\C@efde{\the\p@rtent}%
    \figptbaryDD-5:[\Ai@,\Aj@;\C@efun,\C@efde]\ifcase\@rrowr@fpt%
    \delt@=\@rrowheadlength\unit@\delt@=\C@AHANG\delt@\edef\t@ille{\repdecn@mb{\delt@}}%
    \figvectPDD-2[\Ai@,\Aj@]\vecunit@{-2}{-2}\figpttraDD-5:=-5/\t@ille,-2/\fi%
    \psarrowheadDD[\Ai@,-5]\fi}
\ctr@ld@f\def\c@llgtot{\@ecfor\Aj@:=\list@num\do{\figvectP-1[\Ai@,\Aj@]\N@rm\delt@{-1}%
    \advance\v@leur\delt@\advance\p@rtent\@ne\edef\Ai@{\Aj@}}}
\ctr@ld@f\def\c@llgpart{\extrairelepremi@r\Aj@\de\list@num\figvectP-1[\Ai@,\Aj@]\N@rm\delt@{-1}%
    \advance\mili@u\delt@\ifdim\mili@u<\v@leur\edef\pr@m{\Ai@}\edef\Ai@{\Aj@}\c@llgpart\fi}
\ctr@ld@f\def\Pslin@conge[#1]{\ifnum\p@rtent>\tw@{\def\list@num{#1}%
    \extrairelepremi@r\Ai@\de\list@num\extrairelepremi@r\Aj@\de\list@num%
    \figptcopy-6:/\Ai@/\figvectP-3[\Ai@,\Aj@]\vecunit@{-3}{-3}\v@lmax=\result@t%
    \@ecfor\Ak@:=\list@num\do{\figvectP-4[\Aj@,\Ak@]\vecunit@{-4}{-4}%
    \minim@m\v@lmin\v@lmax\result@t\v@lmax=\result@t%
    \det@rm\delt@[-3,-4]\maxim@m\mili@u{\delt@}{-\delt@}\ifdim\mili@u>\Cepsil@n%
    \ifdim\delt@>\z@\figgetangleDD\Angl@[\Aj@,\Ak@,\Ai@]\else%
    \figgetangleDD\Angl@[\Aj@,\Ai@,\Ak@]\fi%
    \v@leur=\PI@deg\advance\v@leur-\Angl@\p@\divide\v@leur\tw@%
    \edef\Angl@{\repdecn@mb\v@leur}\c@ssin{\C@}{\S@}{\Angl@}\v@leur=\fclin@r@d\unit@%
    \v@leur=\S@\v@leur\mili@u=\C@\p@\invers@\mili@u\mili@u%
    \v@leur=\repdecn@mb{\mili@u}\v@leur%
    \minim@m\v@leur\v@leur\v@lmin\edef\t@ille{\repdecn@mb{\v@leur}}%
    \figpttra-5:=\Aj@/-\t@ille,-3/\psline[-6,-5]\figpttra-6:=\Aj@/\t@ille,-4/%
    \figvectNVDD-3[-3]\figvectNVDD-8[-4]\inters@cDD-7:[-5,-3;-6,-8]%
    \ifdim\delt@>\z@\psarccircP-7;\fclin@r@d[-5,-6]\else\psarccircP-7;\fclin@r@d[-6,-5]\fi%
    \else\psline[-6,\Aj@]\figptcopy-6:/\Aj@/\fi
    \edef\Ai@{\Aj@}\edef\Aj@{\Ak@}\figptcopy-3:/-4/}\psline[-6,\Aj@]}\else\psline[#1]\fi}
\ctr@ld@f\def\psfcnode[#1]#2{{\ifcurr@ntPS\ifps@cri\PSc@mment{psfcnode Points=#1}%
    \s@uvc@ntr@l\et@tpsfcnode\resetc@ntr@l{2}%
    \def\t@xt@{#2}\ifx\t@xt@\empty\def\g@tt@xt{\setbox\Gb@x=\hbox{\Figg@tT{\p@int}}}%
    \else\def\g@tt@xt{\setbox\Gb@x=\hbox{#2}}\fi%
    \v@lmin=\h@rdfcXp@dd\advance\v@lmin\Xp@dd\unit@\multiply\v@lmin\tw@%
    \v@lmax=\h@rdfcYp@dd\advance\v@lmax\Yp@dd\unit@\multiply\v@lmax\tw@%
    \Figv@ctCreg-8(\unit@,-\unit@)\def\list@num{#1}%
    \delt@=\curr@ntwidth bp\divide\delt@\tw@%
    \fcn@de\PSc@mment{End psfcnode}\resetc@ntr@l\et@tpsfcnode\fi\fi}}
\ctr@ld@f\def\d@butn@de{\g@tt@xt\v@lX=\wd\Gb@x%
    \v@lY=\ht\Gb@x\advance\v@lY\dp\Gb@x\advance\v@lX\v@lmin\advance\v@lY\v@lmax}
\ctr@ld@f\def\fcn@deE{%
    \@ecfor\p@int:=\list@num\do{\d@butn@de\v@lX=\unssqrttw@\v@lX\v@lY=\unssqrttw@\v@lY%
    \ifdim\thickn@ss\p@>\z@
    \v@lXa=\v@lX\advance\v@lXa\delt@\v@lXa=\ptT@unit@\v@lXa\edef\XR@d{\repdecn@mb\v@lXa}%
    \v@lYa=\v@lY\advance\v@lYa\delt@\v@lYa=\ptT@unit@\v@lYa\edef\YR@d{\repdecn@mb\v@lYa}%
    \arct@n\v@leur(\v@lXa,\v@lYa)\v@leur=\rdT@deg\v@leur\edef\@nglde{\repdecn@mb\v@leur}%
    {\c@lptellDD-2::\p@int;\XR@d,\YR@d(\@nglde)}
    \advance\v@leur-\PI@deg\edef\@nglun{\repdecn@mb\v@leur}%
    {\c@lptellDD-3::\p@int;\XR@d,\YR@d(\@nglun)}%
    \figptstra-6=-3,-2,\p@int/\thickn@ss,-8/\pssetfillmode{yes}\us@secondC@lor%
    \psline[-2,-3,-6,-5]\psarcell-4;\XR@d,\YR@d(\@nglun,\@nglde,0)\fi
    \v@lX=\ptT@unit@\v@lX\v@lY=\ptT@unit@\v@lY%
    \edef\XR@d{\repdecn@mb\v@lX}\edef\YR@d{\repdecn@mb\v@lY}%
    \pssetfillmode{yes}\us@thirdC@lor\psarcell\p@int;\XR@d,\YR@d(0,360,0)%
    \pssetfillmode{no}\us@primarC@lor\psarcell\p@int;\XR@d,\YR@d(0,360,0)}}
\ctr@ld@f\def\fcn@deL{\delt@=\ptT@unit@\delt@\edef\t@ille{\repdecn@mb\delt@}%
    \@ecfor\p@int:=\list@num\do{\Figg@tXYa{\p@int}\d@butn@de%
    \ifdim\v@lX>\v@lY\itis@Ktrue\else\itis@Kfalse\fi%
    \advance\v@lXa-\v@lX\Figp@intreg-1:(\v@lXa,\v@lYa)%
    \advance\v@lXa\v@lX\advance\v@lYa-\v@lY\Figp@intreg-2:(\v@lXa,\v@lYa)%
    \advance\v@lXa\v@lX\advance\v@lYa\v@lY\Figp@intreg-3:(\v@lXa,\v@lYa)%
    \advance\v@lXa-\v@lX\advance\v@lYa\v@lY\Figp@intreg-4:(\v@lXa,\v@lYa)%
    \ifdim\thickn@ss\p@>\z@\Figg@tXYa{\p@int}\pssetfillmode{yes}\us@secondC@lor
    \c@lpt@xt{-1}{-4}\c@lpt@xt@\v@lXa\v@lYa\v@lX\v@lY\c@rre\delt@%
    \Figp@intregDD-9:(\v@lZ,\v@lYa)\Figp@intregDD-11:(\v@lZa,\v@lYa)%
    \c@lpt@xt{-4}{-3}\c@lpt@xt@\v@lYa\v@lXa\v@lY\v@lX\delt@\c@rre%
    \Figp@intregDD-12:(\v@lXa,\v@lZ)\Figp@intregDD-10:(\v@lXa,\v@lZa)%
    \ifitis@K\figptstra-7=-9,-10,-11/\thickn@ss,-8/\psline[-9,-11,-5,-6,-7]\else%
    \figptstra-7=-10,-11,-12/\thickn@ss,-8/\psline[-10,-12,-5,-6,-7]\fi\fi
    \pssetfillmode{yes}\us@thirdC@lor\psline[-1,-2,-3,-4]%
    \pssetfillmode{no}\us@primarC@lor\psline[-1,-2,-3,-4,-1]}}
\ctr@ld@f\def\c@lpt@xt#1#2{\figvectN-7[#1,#2]\vecunit@{-7}{-7}\figpttra-5:=#1/\t@ille,-7/%
    \figvectP-7[#1,#2]\Figg@tXY{-7}\c@rre=\v@lX\delt@=\v@lY\Figg@tXY{-5}}
\ctr@ld@f\def\c@lpt@xt@#1#2#3#4#5#6{\v@lZ=#6\invers@{\v@lZ}{\v@lZ}\v@leur=\repdecn@mb{#5}\v@lZ%
    \v@lZ=#2\advance\v@lZ-#4\mili@u=\repdecn@mb{\v@leur}\v@lZ%
    \v@lZ=#3\advance\v@lZ\mili@u\v@lZa=-\v@lZ\advance\v@lZa\tw@#1}
\ctr@ld@f\def\fcn@deR{\@ecfor\p@int:=\list@num\do{\Figg@tXYa{\p@int}\d@butn@de%
    \advance\v@lXa-0.5\v@lX\advance\v@lYa-0.5\v@lY\Figp@intreg-1:(\v@lXa,\v@lYa)%
    \advance\v@lXa\v@lX\Figp@intreg-2:(\v@lXa,\v@lYa)%
    \advance\v@lYa\v@lY\Figp@intreg-3:(\v@lXa,\v@lYa)%
    \advance\v@lXa-\v@lX\Figp@intreg-4:(\v@lXa,\v@lYa)%
    \ifdim\thickn@ss\p@>\z@\pssetfillmode{yes}\us@secondC@lor
    \Figv@ctCreg-5(-\delt@,-\delt@)\figpttra-9:=-1/1,-5/%
    \Figv@ctCreg-5(\delt@,-\delt@)\figpttra-10:=-2/1,-5/%
    \Figv@ctCreg-5(\delt@,\delt@)\figpttra-11:=-3/1,-5/%
    \figptstra-7=-9,-10,-11/\thickn@ss,-8/\psline[-9,-11,-5,-6,-7]\fi
    \pssetfillmode{yes}\us@thirdC@lor\psline[-1,-2,-3,-4]%
    \pssetfillmode{no}\us@primarC@lor\psline[-1,-2,-3,-4,-1]}}
\ctr@ln@m\@rrowp@s
\ctr@ln@m\Xp@dd     \ctr@ln@m\Yp@dd
\ctr@ln@m\fclin@r@d \ctr@ln@m\thickn@ss
\ctr@ld@f\def\Pssetfl@wchart#1=#2|{\keln@mtr#1|%
    \def\n@mref{arr}\ifx\l@debut\n@mref\expandafter\keln@mtr\l@suite|%
     \def\n@mref{owp}\ifx\l@debut\n@mref\edef\@rrowp@s{#2}\else
     \def\n@mref{owr}\ifx\l@debut\n@mref\setfcr@fpt#2|\else
     \immediate\write16{*** Unknown attribute: \BS@ psset flowchart(..., #1=...)}%
     \fi\fi\else%
    \def\n@mref{lin}\ifx\l@debut\n@mref\setfccurv@#2|\else
    \def\n@mref{pad}\ifx\l@debut\n@mref\edef\Xp@dd{#2}\edef\Yp@dd{#2}\else
    \def\n@mref{rad}\ifx\l@debut\n@mref\edef\fclin@r@d{#2}\else
    \def\n@mref{sha}\ifx\l@debut\n@mref\setfcshap@#2|\else
    \def\n@mref{thi}\ifx\l@debut\n@mref\edef\thickn@ss{#2}\else
    \def\n@mref{xpa}\ifx\l@debut\n@mref\edef\Xp@dd{#2}\else
    \def\n@mref{ypa}\ifx\l@debut\n@mref\edef\Yp@dd{#2}\else
    \immediate\write16{*** Unknown attribute: \BS@ psset flowchart(..., #1=...)}%
    \fi\fi\fi\fi\fi\fi\fi\fi}
\ctr@ln@m\@rrowr@fpt \ctr@ln@m\fclin@typ@
\ctr@ld@f\def\setfcr@fpt#1#2|{\if#1e\def\@rrowr@fpt{1}\else\def\@rrowr@fpt{0}\fi}
\ctr@ld@f\def\setfccurv@#1#2|{\if#1c\def\fclin@typ@{0}\else\def\fclin@typ@{1}\fi}
\ctr@ln@m\h@rdfcXp@dd \ctr@ln@m\h@rdfcYp@dd
\ctr@ln@m\fcn@de \ctr@ln@m\fcsh@pe
\ctr@ld@f\def\setfcshap@#1#2|{%
    \if#1e\let\fcn@de=\fcn@deE\def\h@rdfcXp@dd{4pt}\def\h@rdfcYp@dd{4pt}%
     \edef\fcsh@pe{ellipse}\else%
    \if#1l\let\fcn@de=\fcn@deL\def\h@rdfcXp@dd{4pt}\def\h@rdfcYp@dd{4pt}%
     \edef\fcsh@pe{lozenge}\else%
          \let\fcn@de=\fcn@deR\def\h@rdfcXp@dd{6pt}\def\h@rdfcYp@dd{6pt}%
     \edef\fcsh@pe{rectangle}\fi\fi}
\ctr@ld@f\def\psline[#1]{{\ifcurr@ntPS\ifps@cri\PSc@mment{psline Points=#1}%
    \let\pslign@=\Pslign@P\Pslin@{#1}\PSc@mment{End psline}\fi\fi}}
\ctr@ld@f\def\pslineF#1{{\ifcurr@ntPS\ifps@cri\PSc@mment{pslineF Filename=#1}%
    \let\pslign@=\Pslign@F\Pslin@{#1}\PSc@mment{End pslineF}\fi\fi}}
\ctr@ld@f\def\pslineC(#1){{\ifcurr@ntPS\ifps@cri\PSc@mment{pslineC}%
    \let\pslign@=\Pslign@C\Pslin@{#1}\PSc@mment{End pslineC}\fi\fi}}
\ctr@ld@f\def\Pslin@#1{\iffillm@de\pslign@{#1}%
    \f@gfill%
    \else\pslign@{#1}\ifx\derp@int\premp@int%
    \f@gclosestroke%
    \else\f@gstroke\fi\fi}
\ctr@ld@f\def\Pslign@P#1{\def\list@num{#1}\extrairelepremi@r\p@int\de\list@num%
    \edef\premp@int{\p@int}\f@gnewpath%
    \PSwrit@cmd{\p@int}{\c@mmoveto}{\fwf@g}%
    \@ecfor\p@int:=\list@num\do{\PSwrit@cmd{\p@int}{\c@mlineto}{\fwf@g}%
    \edef\derp@int{\p@int}}}
\ctr@ld@f\def\Pslign@F#1{\s@uvc@ntr@l\et@tPslign@F\setc@ntr@l{2}\openin\frf@g=#1\relax%
    \ifeof\frf@g\message{*** File #1 not found !}\end\else%
    \read\frf@g to\tr@c\edef\premp@int{\tr@c}\expandafter\extr@ctCF\tr@c:%
    \f@gnewpath\PSwrit@cmd{-1}{\c@mmoveto}{\fwf@g}%
    \loop\read\frf@g to\tr@c\ifeof\frf@g\mored@tafalse\else\mored@tatrue\fi%
    \ifmored@ta\expandafter\extr@ctCF\tr@c:\PSwrit@cmd{-1}{\c@mlineto}{\fwf@g}%
    \edef\derp@int{\tr@c}\repeat\fi\closein\frf@g\resetc@ntr@l\et@tPslign@F}
\ctr@ln@m\extr@ctCF
\ctr@ld@f\def\extr@ctCFDD#1 #2:{\v@lX=#1\unit@\v@lY=#2\unit@\Figp@intregDD-1:(\v@lX,\v@lY)}
\ctr@ld@f\def\extr@ctCFTD#1 #2 #3:{\v@lX=#1\unit@\v@lY=#2\unit@\v@lZ=#3\unit@%
    \Figp@intregTD-1:(\v@lX,\v@lY,\v@lZ)}
\ctr@ld@f\def\Pslign@C#1{\s@uvc@ntr@l\et@tPslign@C\setc@ntr@l{2}%
    \def\list@num{#1}\extrairelepremi@r\p@int\de\list@num%
    \edef\premp@int{\p@int}\f@gnewpath%
    \expandafter\Pslign@C@\p@int:\PSwrit@cmd{-1}{\c@mmoveto}{\fwf@g}%
    \@ecfor\p@int:=\list@num\do{\expandafter\Pslign@C@\p@int:%
    \PSwrit@cmd{-1}{\c@mlineto}{\fwf@g}\edef\derp@int{\p@int}}%
    \resetc@ntr@l\et@tPslign@C}
\ctr@ld@f\def\Pslign@C@#1 #2:{{\def\t@xt@{#1}\ifx\t@xt@\empty\Pslign@C@#2:
    \else\extr@ctCF#1 #2:\fi}}
\ctr@ln@m\c@ntrolmesh
\ctr@ld@f\def\Pssetm@sh#1=#2|{\keln@mun#1|%
    \def\n@mref{d}\ifx\l@debut\n@mref\pssetmeshdiag{#2}\else
    \immediate\write16{*** Unknown attribute: \BS@ psset mesh(..., #1=...)}%
    \fi}
\ctr@ld@f\def\pssetmeshdiag#1{\edef\c@ntrolmesh{#1}}
\ctr@ld@f\def\defaultmeshdiag{0}    
\ctr@ld@f\def\psmesh#1,#2[#3,#4,#5,#6]{{\ifcurr@ntPS\ifps@cri%
    \PSc@mment{psmesh N1=#1, N2=#2, Quadrangle=[#3,#4,#5,#6]}%
    \s@uvc@ntr@l\et@tpsmesh\Pss@tsecondSt\setc@ntr@l{2}%
    \ifnum#1>\@ne\Psmeshp@rt#1[#3,#4,#5,#6]\fi%
    \ifnum#2>\@ne\Psmeshp@rt#2[#4,#5,#6,#3]\fi%
    \ifnum\c@ntrolmesh>\z@\Psmeshdi@g#1,#2[#3,#4,#5,#6]\fi%
    \ifnum\c@ntrolmesh<\z@\Psmeshdi@g#2,#1[#4,#5,#6,#3]\fi\Psrest@reSt%
    \psline[#3,#4,#5,#6,#3]\PSc@mment{End psmesh}\resetc@ntr@l\et@tpsmesh\fi\fi}}
\ctr@ld@f\def\Psmeshp@rt#1[#2,#3,#4,#5]{{\l@mbd@un=\@ne\l@mbd@de=#1\loop%
    \ifnum\l@mbd@un<#1\advance\l@mbd@de\m@ne\figptbary-1:[#2,#3;\l@mbd@de,\l@mbd@un]%
    \figptbary-2:[#5,#4;\l@mbd@de,\l@mbd@un]\psline[-1,-2]\advance\l@mbd@un\@ne\repeat}}
\ctr@ld@f\def\Psmeshdi@g#1,#2[#3,#4,#5,#6]{\figptcopy-2:/#3/\figptcopy-3:/#6/%
    \l@mbd@un=\z@\l@mbd@de=#1\loop\ifnum\l@mbd@un<#1%
    \advance\l@mbd@un\@ne\advance\l@mbd@de\m@ne\figptcopy-1:/-2/\figptcopy-4:/-3/%
    \figptbary-2:[#3,#4;\l@mbd@de,\l@mbd@un]%
    \figptbary-3:[#6,#5;\l@mbd@de,\l@mbd@un]\Psmeshdi@gp@rt#2[-1,-2,-3,-4]\repeat}
\ctr@ld@f\def\Psmeshdi@gp@rt#1[#2,#3,#4,#5]{{\l@mbd@un=\z@\l@mbd@de=#1\loop%
    \ifnum\l@mbd@un<#1\figptbary-5:[#2,#5;\l@mbd@de,\l@mbd@un]%
    \advance\l@mbd@de\m@ne\advance\l@mbd@un\@ne%
    \figptbary-6:[#3,#4;\l@mbd@de,\l@mbd@un]\psline[-5,-6]\repeat}}
\ctr@ln@m\psnormal
\ctr@ld@f\def\psnormalDD#1,#2[#3,#4]{{\ifcurr@ntPS\ifps@cri%
    \PSc@mment{psnormal Length=#1, Lambda=#2 [Pt1,Pt2]=[#3,#4]}%
    \s@uvc@ntr@l\et@tpsnormal\resetc@ntr@l{2}\figptendnormal-6::#1,#2[#3,#4]%
    \figptcopyDD-5:/-1/\psarrow[-5,-6]%
    \PSc@mment{End psnormal}\resetc@ntr@l\et@tpsnormal\fi\fi}}
\ctr@ld@f\def\psreset#1{\trtlis@rg{#1}{\Psreset@}}
\ctr@ld@f\def\Psreset@#1|{\keln@mde#1|%
    \def\n@mref{ar}\ifx\l@debut\n@mref\psresetarrowhead\else
    \def\n@mref{cu}\ifx\l@debut\n@mref\psset curve(roundness=\defaultroundness)\else
    \def\n@mref{fi}\ifx\l@debut\n@mref\psset (color=\defaultcolor,dash=\defaultdash,%
         fill=\defaultfill,join=\defaultjoin,width=\defaultwidth)\else
    \def\n@mref{fl}\ifx\l@debut\n@mref\psset flowchart(arrowp=\defaultfcarrowposition,%
	arrowr=\defaultfcarrowrefpt,line=\defaultfcline,xpadd=\defaultfcxpadding,%
	ypadd=\defaultfcypadding,radius=\defaultfcradius,shape=\defaultfcshape,%
	thick=\defaultfcthickness)\else
    \def\n@mref{me}\ifx\l@debut\n@mref\psset mesh(diag=\defaultmeshdiag)\else
    \def\n@mref{se}\ifx\l@debut\n@mref\psresetsecondsettings\else
    \def\n@mref{th}\ifx\l@debut\n@mref\psset third(color=\defaultthirdcolor)\else
    \immediate\write16{*** Unknown keyword #1 (\BS@ psreset).}%
    \fi\fi\fi\fi\fi\fi\fi}
\ctr@ld@f\def\psset#1(#2){\def\t@xt@{#1}\ifx\t@xt@\empty\trtlis@rg{#2}{\Pssetf@rst}
    \else\keln@mde#1|%
    \def\n@mref{ar}\ifx\l@debut\n@mref\trtlis@rg{#2}{\Psset@rrowhe@d}\else
    \def\n@mref{cu}\ifx\l@debut\n@mref\trtlis@rg{#2}{\Pssetc@rve}\else
    \def\n@mref{fi}\ifx\l@debut\n@mref\trtlis@rg{#2}{\Pssetf@rst}\else
    \def\n@mref{fl}\ifx\l@debut\n@mref\trtlis@rg{#2}{\Pssetfl@wchart}\else
    \def\n@mref{me}\ifx\l@debut\n@mref\trtlis@rg{#2}{\Pssetm@sh}\else
    \def\n@mref{se}\ifx\l@debut\n@mref\trtlis@rg{#2}{\Pssets@cond}\else
    \def\n@mref{th}\ifx\l@debut\n@mref\trtlis@rg{#2}{\Pssetth@rd}\else
    \immediate\write16{*** Unknown keyword: \BS@ psset #1(...)}%
    \fi\fi\fi\fi\fi\fi\fi\fi}
\ctr@ld@f\def\pssetdefault#1(#2){\ifcurr@ntPS\immediate\write16{*** \BS@ pssetdefault is ignored
    inside a \BS@ psbeginfig-\BS@ psendfig block.}%
    \immediate\write16{*** It must be called before \BS@ psbeginfig.}\else%
    \def\t@xt@{#1}\ifx\t@xt@\empty\trtlis@rg{#2}{\Pssd@f@rst}\else\keln@mde#1|%
    \def\n@mref{ar}\ifx\l@debut\n@mref\trtlis@rg{#2}{\Pssd@@rrowhe@d}\else
    \def\n@mref{cu}\ifx\l@debut\n@mref\trtlis@rg{#2}{\Pssd@c@rve}\else
    \def\n@mref{fi}\ifx\l@debut\n@mref\trtlis@rg{#2}{\Pssd@f@rst}\else
    \def\n@mref{fl}\ifx\l@debut\n@mref\trtlis@rg{#2}{\Pssd@fl@wchart}\else
    \def\n@mref{me}\ifx\l@debut\n@mref\trtlis@rg{#2}{\Pssd@m@sh}\else
    \def\n@mref{se}\ifx\l@debut\n@mref\trtlis@rg{#2}{\Pssd@s@cond}\else
    \def\n@mref{th}\ifx\l@debut\n@mref\trtlis@rg{#2}{\Pssd@th@rd}\else
    \immediate\write16{*** Unknown keyword: \BS@ pssetdefault #1(...)}%
    \fi\fi\fi\fi\fi\fi\fi\fi\initpss@ttings\fi}
\ctr@ld@f\def\Pssd@f@rst#1=#2|{\keln@mun#1|%
    \def\n@mref{c}\ifx\l@debut\n@mref\edef\defaultcolor{#2}\else
    \def\n@mref{d}\ifx\l@debut\n@mref\edef\defaultdash{#2}\else
    \def\n@mref{f}\ifx\l@debut\n@mref\edef\defaultfill{#2}\else
    \def\n@mref{j}\ifx\l@debut\n@mref\edef\defaultjoin{#2}\else
    \def\n@mref{u}\ifx\l@debut\n@mref\edef\defaultupdate{#2}\pssetupdate{#2}\else
    \def\n@mref{w}\ifx\l@debut\n@mref\edef\defaultwidth{#2}\else
    \immediate\write16{*** Unknown attribute: \BS@ pssetdefault (..., #1=...)}%
    \fi\fi\fi\fi\fi\fi}
\ctr@ld@f\def\Pssd@@rrowhe@d#1=#2|{\keln@mun#1|%
    \def\n@mref{a}\ifx\l@debut\n@mref\edef\defaultarrowheadangle{#2}\else
    \def\n@mref{f}\ifx\l@debut\n@mref\edef\defaultarrowheadangle{#2}\else
    \def\n@mref{l}\ifx\l@debut\n@mref\y@tiunit{#2}\ifunitpr@sent%
     \edef\defaulth@rdahlength{#2}\else\edef\defaulth@rdahlength{#2pt}%
     \message{*** \BS@ pssetdefault (..., #1=#2, ...) : unit is missing, pt is assumed.}%
     \fi\else
    \def\n@mref{o}\ifx\l@debut\n@mref\edef\defaultarrowheadout{#2}\else
    \def\n@mref{r}\ifx\l@debut\n@mref\edef\defaultarrowheadratio{#2}\else
    \immediate\write16{*** Unknown attribute: \BS@ pssetdefault arrowhead(..., #1=...)}%
    \fi\fi\fi\fi\fi}
\ctr@ld@f\def\Pssd@c@rve#1=#2|{\keln@mun#1|%
    \def\n@mref{r}\ifx\l@debut\n@mref\edef\defaultroundness{#2}\else%
    \immediate\write16{*** Unknown attribute: \BS@ pssetdefault curve(..., #1=...)}%
    \fi}
\ctr@ld@f\def\Pssd@fl@wchart#1=#2|{\keln@mtr#1|%
    \def\n@mref{arr}\ifx\l@debut\n@mref\expandafter\keln@mtr\l@suite|%
     \def\n@mref{owp}\ifx\l@debut\n@mref\edef\defaultfcarrowposition{#2}\else
     \def\n@mref{owr}\ifx\l@debut\n@mref\edef\defaultfcarrowrefpt{#2}\else
     \immediate\write16{*** Unknown attribute: \BS@ pssetdefault flowchart(..., #1=...)}%
     \fi\fi\else%
    \def\n@mref{lin}\ifx\l@debut\n@mref\edef\defaultfcline{#2}\else
    \def\n@mref{pad}\ifx\l@debut\n@mref\edef\defaultfcxpadding{#2}%
                    \edef\defaultfcypadding{#2}\else
    \def\n@mref{rad}\ifx\l@debut\n@mref\edef\defaultfcradius{#2}\else
    \def\n@mref{sha}\ifx\l@debut\n@mref\edef\defaultfcshape{#2}\else
    \def\n@mref{thi}\ifx\l@debut\n@mref\edef\defaultfcthickness{#2}\else
    \def\n@mref{xpa}\ifx\l@debut\n@mref\edef\defaultfcxpadding{#2}\else
    \def\n@mref{ypa}\ifx\l@debut\n@mref\edef\defaultfcypadding{#2}\else
    \immediate\write16{*** Unknown attribute: \BS@ pssetdefault flowchart(..., #1=...)}%
    \fi\fi\fi\fi\fi\fi\fi\fi}
\ctr@ld@f\def\defaultfcarrowposition{0.5}
\ctr@ld@f\def\defaultfcarrowrefpt{start}
\ctr@ld@f\def\defaultfcline{polygon}
\ctr@ld@f\def\defaultfcradius{0}
\ctr@ld@f\def\defaultfcshape{rectangle}
\ctr@ld@f\def\defaultfcthickness{0}
\ctr@ld@f\def\defaultfcxpadding{0}
\ctr@ld@f\def\defaultfcypadding{0}
\ctr@ld@f\def\Pssd@m@sh#1=#2|{\keln@mun#1|%
    \def\n@mref{d}\ifx\l@debut\n@mref\edef\defaultmeshdiag{#2}\else%
    \immediate\write16{*** Unknown attribute: \BS@ pssetdefault mesh(..., #1=...)}%
    \fi}
\ctr@ld@f\def\Pssd@s@cond#1=#2|{\keln@mun#1|%
    \def\n@mref{c}\ifx\l@debut\n@mref\edef\defaultsecondcolor{#2}\else%
    \def\n@mref{d}\ifx\l@debut\n@mref\edef\defaultseconddash{#2}\else%
    \def\n@mref{w}\ifx\l@debut\n@mref\edef\defaultsecondwidth{#2}\else%
    \immediate\write16{*** Unknown attribute: \BS@ pssetdefault second(..., #1=...)}%
    \fi\fi\fi}
\ctr@ld@f\def\Pssd@th@rd#1=#2|{\keln@mun#1|%
    \def\n@mref{c}\ifx\l@debut\n@mref\edef\defaultthirdcolor{#2}\else%
    \immediate\write16{*** Unknown attribute: \BS@ pssetdefault third(..., #1=...)}%
    \fi}
\ctr@ln@w{newif}\iffillm@de
\ctr@ld@f\def\pssetfillmode#1{\expandafter\setfillm@de#1:}
\ctr@ld@f\def\setfillm@de#1#2:{\if#1n\fillm@defalse\else\fillm@detrue\fi}
\ctr@ld@f\def\defaultfill{no}     
\ctr@ln@w{newif}\ifpsupdatem@de
\ctr@ld@f\def\pssetupdate#1{\ifcurr@ntPS\immediate\write16{*** \BS@ pssetupdate is ignored inside a
     \BS@ psbeginfig-\BS@ psendfig block.}%
    \immediate\write16{*** It must be called before \BS@ psbeginfig.}%
    \else\expandafter\setupd@te#1:\fi}
\ctr@ld@f\def\setupd@te#1#2:{\if#1n\psupdatem@defalse\else\psupdatem@detrue\fi}
\ctr@ld@f\def\defaultupdate{no}     
\ctr@ln@m\curr@ntcolor \ctr@ln@m\curr@ntcolorc@md
\ctr@ld@f\def\Pssetc@lor#1{\ifps@cri\result@tent=\@ne\expandafter\c@lnbV@l#1 :%
    \def\curr@ntcolor{}\def\curr@ntcolorc@md{}%
    \ifcase\result@tent\or\pssetgray{#1}\or\or\pssetrgb{#1}\or\pssetcmyk{#1}\fi\fi}
\ctr@ln@m\curr@ntcolorc@mdStroke
\ctr@ld@f\def\pssetcmyk#1{\ifps@cri\def\curr@ntcolor{#1}\def\curr@ntcolorc@md{\c@msetcmykcolor}%
    \def\curr@ntcolorc@mdStroke{\c@msetcmykcolorStroke}%
    \ifcurr@ntPS\PSc@mment{pssetcmyk Color=#1}\us@primarC@lor\fi\fi}
\ctr@ld@f\def\pssetrgb#1{\ifps@cri\def\curr@ntcolor{#1}\def\curr@ntcolorc@md{\c@msetrgbcolor}%
    \def\curr@ntcolorc@mdStroke{\c@msetrgbcolorStroke}%
    \ifcurr@ntPS\PSc@mment{pssetrgb Color=#1}\us@primarC@lor\fi\fi}
\ctr@ld@f\def\pssetgray#1{\ifps@cri\def\curr@ntcolor{#1}\def\curr@ntcolorc@md{\c@msetgray}%
    \def\curr@ntcolorc@mdStroke{\c@msetgrayStroke}%
    \ifcurr@ntPS\PSc@mment{pssetgray Gray level=#1}\us@primarC@lor\fi\fi}
\ctr@ln@m\fillc@md
\ctr@ld@f\def\us@primarC@lor{\immediate\write\fwf@g{\d@fprimarC@lor}%
    \let\fillc@md=\prfillc@md}
\ctr@ld@f\def\prfillc@md{\d@fprimarC@lor\space\c@mfill}
\ctr@ld@f\def\defaultcolor{0}       
\ctr@ld@f\def\c@lnbV@l#1 #2:{\def\t@xt@{#1}\relax\ifx\t@xt@\empty\c@lnbV@l#2:
    \else\c@lnbV@l@#1 #2:\fi}
\ctr@ld@f\def\c@lnbV@l@#1 #2:{\def\t@xt@{#2}\ifx\t@xt@\empty%
    \def\t@xt@{#1}\ifx\t@xt@\empty\advance\result@tent\m@ne\fi
    \else\advance\result@tent\@ne\c@lnbV@l@#2:\fi}
\ctr@ld@f\def\Blackcmyk{0 0 0 1}
\ctr@ld@f\def\Whitecmyk{0 0 0 0}
\ctr@ld@f\def\Cyancmyk{1 0 0 0}
\ctr@ld@f\def\Magentacmyk{0 1 0 0}
\ctr@ld@f\def\Yellowcmyk{0 0 1 0}
\ctr@ld@f\def\Redcmyk{0 1 1 0}
\ctr@ld@f\def\Greencmyk{1 0 1 0}
\ctr@ld@f\def\Bluecmyk{1 1 0 0}
\ctr@ld@f\def\Graycmyk{0 0 0 0.50}
\ctr@ld@f\def\BrickRedcmyk{0 0.89 0.94 0.28} 
\ctr@ld@f\def\Browncmyk{0 0.81 1 0.60} 
\ctr@ld@f\def\ForestGreencmyk{0.91 0 0.88 0.12} 
\ctr@ld@f\def\Goldenrodcmyk{ 0 0.10 0.84 0} 
\ctr@ld@f\def\Marooncmyk{0 0.87 0.68 0.32} 
\ctr@ld@f\def\Orangecmyk{0 0.61 0.87 0} 
\ctr@ld@f\def\Purplecmyk{0.45 0.86 0 0} 
\ctr@ld@f\def\RoyalBluecmyk{1. 0.50 0 0} 
\ctr@ld@f\def\Violetcmyk{0.79 0.88 0 0} 
\ctr@ld@f\def\Blackrgb{0 0 0}
\ctr@ld@f\def\Whitergb{1 1 1}
\ctr@ld@f\def\Redrgb{1 0 0}
\ctr@ld@f\def\Greenrgb{0 1 0}
\ctr@ld@f\def\Bluergb{0 0 1}
\ctr@ld@f\def\Cyanrgb{0 1 1}
\ctr@ld@f\def\Magentargb{1 0 1}
\ctr@ld@f\def\Yellowrgb{1 1 0}
\ctr@ld@f\def\Grayrgb{0.5 0.5 0.5}
\ctr@ld@f\def\Chocolatergb{0.824 0.412 0.118}
\ctr@ld@f\def\DarkGoldenrodrgb{0.722 0.525 0.043}
\ctr@ld@f\def\DarkOrangergb{1 0.549 0}
\ctr@ld@f\def\Firebrickrgb{0.698 0.133 0.133}
\ctr@ld@f\def\ForestGreenrgb{0.133 0.545 0.133}
\ctr@ld@f\def\Goldrgb{1 0.843 0}
\ctr@ld@f\def\HotPinkrgb{1 0.412 0.706}
\ctr@ld@f\def\Maroonrgb{0.690 0.188 0.376}
\ctr@ld@f\def\Pinkrgb{1 0.753 0.796}
\ctr@ld@f\def\RoyalBluergb{0.255 0.412 0.882}
\ctr@ld@f\def\Pssetf@rst#1=#2|{\keln@mun#1|%
    \def\n@mref{c}\ifx\l@debut\n@mref\Pssetc@lor{#2}\else
    \def\n@mref{d}\ifx\l@debut\n@mref\pssetdash{#2}\else
    \def\n@mref{f}\ifx\l@debut\n@mref\pssetfillmode{#2}\else
    \def\n@mref{j}\ifx\l@debut\n@mref\pssetjoin{#2}\else
    \def\n@mref{u}\ifx\l@debut\n@mref\pssetupdate{#2}\else
    \def\n@mref{w}\ifx\l@debut\n@mref\pssetwidth{#2}\else
    \immediate\write16{*** Unknown attribute: \BS@ psset (..., #1=...)}%
    \fi\fi\fi\fi\fi\fi}
\ctr@ln@m\curr@ntdash
\ctr@ld@f\def\s@uvdash#1{\edef#1{\curr@ntdash}}
\ctr@ld@f\def\defaultdash{1}        
\ctr@ld@f\def\pssetdash#1{\ifps@cri\edef\curr@ntdash{#1}\ifcurr@ntPS\expandafter\Pssetd@sh#1 :\fi\fi}
\ctr@ld@f\def\Pssetd@shI#1{\PSc@mment{pssetdash Index=#1}\ifcase#1%
    \or\immediate\write\fwf@g{[] 0 \c@msetdash}
    \or\immediate\write\fwf@g{[6 2] 0 \c@msetdash}
    \or\immediate\write\fwf@g{[4 2] 0 \c@msetdash}
    \or\immediate\write\fwf@g{[2 2] 0 \c@msetdash}
    \or\immediate\write\fwf@g{[1 2] 0 \c@msetdash}
    \or\immediate\write\fwf@g{[2 4] 0 \c@msetdash}
    \or\immediate\write\fwf@g{[3 5] 0 \c@msetdash}
    \or\immediate\write\fwf@g{[3 3] 0 \c@msetdash}
    \or\immediate\write\fwf@g{[3 5 1 5] 0 \c@msetdash}
    \or\immediate\write\fwf@g{[6 4 2 4] 0 \c@msetdash}
    \fi}
\ctr@ld@f\def\Pssetd@sh#1 #2:{{\def\t@xt@{#1}\ifx\t@xt@\empty\Pssetd@sh#2:
    \else\def\t@xt@{#2}\ifx\t@xt@\empty\Pssetd@shI{#1}\else\s@mme=\@ne\def\debutp@t{#1}%
    \an@lysd@sh#2:\ifodd\s@mme\edef\debutp@t{\debutp@t\space\finp@t}\def\finp@t{0}\fi%
    \PSc@mment{pssetdash Pattern=#1 #2}%
    \immediate\write\fwf@g{[\debutp@t] \finp@t\space\c@msetdash}\fi\fi}}
\ctr@ld@f\def\an@lysd@sh#1 #2:{\def\t@xt@{#2}\ifx\t@xt@\empty\def\finp@t{#1}\else%
    \edef\debutp@t{\debutp@t\space#1}\advance\s@mme\@ne\an@lysd@sh#2:\fi}
\ctr@ln@m\curr@ntwidth
\ctr@ld@f\def\s@uvwidth#1{\edef#1{\curr@ntwidth}}
\ctr@ld@f\def\defaultwidth{0.4}     
\ctr@ld@f\def\pssetwidth#1{\ifps@cri\edef\curr@ntwidth{#1}\ifcurr@ntPS%
    \PSc@mment{pssetwidth Width=#1}\immediate\write\fwf@g{#1 \c@msetlinewidth}\fi\fi}
\ctr@ln@m\curr@ntjoin
\ctr@ld@f\def\pssetjoin#1{\ifps@cri\edef\curr@ntjoin{#1}\ifcurr@ntPS\expandafter\Pssetj@in#1:\fi\fi}
\ctr@ld@f\def\Pssetj@in#1#2:{\PSc@mment{pssetjoin join=#1}%
    \if#1r\def\t@xt@{1}\else\if#1b\def\t@xt@{2}\else\def\t@xt@{0}\fi\fi%
    \immediate\write\fwf@g{\t@xt@\space\c@msetlinejoin}}
\ctr@ld@f\def\defaultjoin{miter}   
\ctr@ld@f\def\Pssets@cond#1=#2|{\keln@mun#1|%
    \def\n@mref{c}\ifx\l@debut\n@mref\Pssets@condcolor{#2}\else%
    \def\n@mref{d}\ifx\l@debut\n@mref\pssetseconddash{#2}\else%
    \def\n@mref{w}\ifx\l@debut\n@mref\pssetsecondwidth{#2}\else%
    \immediate\write16{*** Unknown attribute: \BS@ psset second(..., #1=...)}%
    \fi\fi\fi}
\ctr@ln@m\curr@ntseconddash
\ctr@ld@f\def\pssetseconddash#1{\edef\curr@ntseconddash{#1}}
\ctr@ld@f\def\defaultseconddash{4}  
\ctr@ln@m\curr@ntsecondwidth
\ctr@ld@f\def\pssetsecondwidth#1{\edef\curr@ntsecondwidth{#1}}
\ctr@ld@f\edef\defaultsecondwidth{\defaultwidth} 
\ctr@ld@f\def\psresetsecondsettings{%
    \pssetseconddash{\defaultseconddash}\pssetsecondwidth{\defaultsecondwidth}%
    \Pssets@condcolor{\defaultsecondcolor}}
\ctr@ln@m\sec@ndcolor \ctr@ln@m\sec@ndcolorc@md
\ctr@ld@f\def\Pssets@condcolor#1{\ifps@cri\result@tent=\@ne\expandafter\c@lnbV@l#1 :%
    \def\sec@ndcolor{}\def\sec@ndcolorc@md{}%
    \ifcase\result@tent\or\pssetsecondgray{#1}\or\or\pssetsecondrgb{#1}%
    \or\pssetsecondcmyk{#1}\fi\fi}
\ctr@ln@m\sec@ndcolorc@mdStroke
\ctr@ld@f\def\pssetsecondcmyk#1{\def\sec@ndcolor{#1}\def\sec@ndcolorc@md{\c@msetcmykcolor}%
    \def\sec@ndcolorc@mdStroke{\c@msetcmykcolorStroke}}
\ctr@ld@f\def\pssetsecondrgb#1{\def\sec@ndcolor{#1}\def\sec@ndcolorc@md{\c@msetrgbcolor}%
    \def\sec@ndcolorc@mdStroke{\c@msetrgbcolorStroke}}
\ctr@ld@f\def\pssetsecondgray#1{\def\sec@ndcolor{#1}\def\sec@ndcolorc@md{\c@msetgray}%
    \def\sec@ndcolorc@mdStroke{\c@msetgrayStroke}}
\ctr@ld@f\def\us@secondC@lor{\immediate\write\fwf@g{\d@fsecondC@lor}%
    \let\fillc@md=\sdfillc@md}
\ctr@ld@f\def\sdfillc@md{\d@fsecondC@lor\space\c@mfill}
\ctr@ld@f\edef\defaultsecondcolor{\defaultcolor} 
\ctr@ld@f\def\Pss@tsecondSt{%
    \s@uvdash{\typ@dash}\pssetdash{\curr@ntseconddash}%
    \s@uvwidth{\typ@width}\pssetwidth{\curr@ntsecondwidth}\us@secondC@lor}
\ctr@ld@f\def\Psrest@reSt{\pssetwidth{\typ@width}\pssetdash{\typ@dash}\us@primarC@lor}
\ctr@ld@f\def\Pssetth@rd#1=#2|{\keln@mun#1|%
    \def\n@mref{c}\ifx\l@debut\n@mref\Pssetth@rdcolor{#2}\else%
    \immediate\write16{*** Unknown attribute: \BS@ psset third(..., #1=...)}%
    \fi}
\ctr@ln@m\th@rdcolor \ctr@ln@m\th@rdcolorc@md
\ctr@ld@f\def\Pssetth@rdcolor#1{\ifps@cri\result@tent=\@ne\expandafter\c@lnbV@l#1 :%
    \def\th@rdcolor{}\def\th@rdcolorc@md{}%
    \ifcase\result@tent\or\Pssetth@rdgray{#1}\or\or\Pssetth@rdrgb{#1}%
    \or\Pssetth@rdcmyk{#1}\fi\fi}
\ctr@ln@m\th@rdcolorc@mdStroke
\ctr@ld@f\def\Pssetth@rdcmyk#1{\def\th@rdcolor{#1}\def\th@rdcolorc@md{\c@msetcmykcolor}%
    \def\th@rdcolorc@mdStroke{\c@msetcmykcolorStroke}}
\ctr@ld@f\def\Pssetth@rdrgb#1{\def\th@rdcolor{#1}\def\th@rdcolorc@md{\c@msetrgbcolor}%
    \def\th@rdcolorc@mdStroke{\c@msetrgbcolorStroke}}
\ctr@ld@f\def\Pssetth@rdgray#1{\def\th@rdcolor{#1}\def\th@rdcolorc@md{\c@msetgray}%
    \def\th@rdcolorc@mdStroke{\c@msetgrayStroke}}
\ctr@ld@f\def\us@thirdC@lor{\immediate\write\fwf@g{\d@fthirdC@lor}%
    \let\fillc@md=\thfillc@md}
\ctr@ld@f\def\thfillc@md{\d@fthirdC@lor\space\c@mfill}
\ctr@ld@f\def\defaultthirdcolor{1}  
\ctr@ld@f\def\pstrimesh#1[#2,#3,#4]{{\ifcurr@ntPS\ifps@cri%
    \PSc@mment{pstrimesh Type=#1, Triangle=[#2,#3,#4]}%
    \s@uvc@ntr@l\et@tpstrimesh\ifnum#1>\@ne\Pss@tsecondSt\setc@ntr@l{2}%
    \Pstrimeshp@rt#1[#2,#3,#4]\Pstrimeshp@rt#1[#3,#4,#2]%
    \Pstrimeshp@rt#1[#4,#2,#3]\Psrest@reSt\fi\psline[#2,#3,#4,#2]%
    \PSc@mment{End pstrimesh}\resetc@ntr@l\et@tpstrimesh\fi\fi}}
\ctr@ld@f\def\Pstrimeshp@rt#1[#2,#3,#4]{{\l@mbd@un=\@ne\l@mbd@de=#1\loop\ifnum\l@mbd@de>\@ne%
    \advance\l@mbd@de\m@ne\figptbary-1:[#2,#3;\l@mbd@de,\l@mbd@un]%
    \figptbary-2:[#2,#4;\l@mbd@de,\l@mbd@un]\psline[-1,-2]%
    \advance\l@mbd@un\@ne\repeat}}
\initpr@lim\initpss@ttings\initPDF@rDVI
\ctr@ln@w{newbox}\figBoxA
\ctr@ln@w{newbox}\figBoxB
\ctr@ln@w{newbox}\figBoxC
\catcode`\@=12

\begin{abstract}
We consider the quintic nonlinear Schr\"odinger equation (NLS) on the circle
$$  i\partial_t u+\partial_{x}^{2}u =\pm \nu \ |u|^4u,\quad \nu\ll1, \ x\in \S^{1},\ t\in \R.$$
We prove that there exist  solutions corresponding to an initial datum built on four Fourier modes which form a resonant set (see definition \ref{def.res}),  which have a non trivial dynamic that involves periodic energy exchanges between the modes initially excited. It is noticeable that this nonlinear phenomena does not depend on the choice of the resonant set. \\
The dynamical result is obtained by calculating a resonant normal form up to order 10 of the Hamiltonian of the quintic NLS and then by isolating an effective term of order 6. Notice that this phenomena can not occur in the cubic NLS case for which the amplitudes of the Fourier modes are almost actions, i.e. they are almost constant.\\
    \end{abstract}

\begin{altabstract}
 Nous considérons l'équation de Schr\"odinger non linéaire (NLS) quintique sur le cercle
 $$  i\partial_t u+\partial_{x}^{2}u =\pm \nu \ |u|^4u,\quad  \nu\ll1, \ x\in \S^{1},\ t\in \R.$$
Nous montrons qu'il existe des solutions issues d'une condition initiale construite sur quatre modes de Fourier formant un ensemble résonant (voir définition \ref{def.res}) ont une dynamique non triviale mettant en jeu des échanges périodiques d'énergie entre ces quatre modes initialement excités. Il est remarquable que ce phénomène non linéaire soit indépendant du choix de l'ensemble résonant. \\ Le résultat  dynamique est obtenu en mettant d'abord sous forme normale résonante jusqu'à l'ordre 10 l'Hamiltonien de NLS quintique puis en isolant un terme effectif d'ordre 6. 
 Il est à noter que ce phénomène ne peut pas se produire pour NLS cubique  pour lequel les amplitudes des modes de Fourier sont des presque-actions et donc ne varient quasiment pas au cours du temps.   \end{altabstract}
\keywords{Nonlinear Schr\"odinger equation, Resonant normal form, 
energy exchange. }
\altkeywords{Forme Normale, Equation de Schr\"odinger non linéaire,
résonances, échange d'énergie}\frontmatter
\subjclass{ 37K45, 35Q55, 35B34, 35B35}
\thanks{
\noindent The first author was supported in part by the  grant ANR-10-BLAN-DynPDE.\\
The second author was supported in part by the  grant ANR-07-BLAN-0250.\\
Both authors were supported in part by the  grant ANR-10-JCJC 0109.}

\maketitle

\section{Introduction}
\subsection{General introduction} Denote by  $\S^{1}=\R/2\pi\Z$ the circle, and let $\nu>0$ be a small parameter. In this paper we are concerned with the following quintic non linear Schr\"odinger equation
\begin{equation}\label{cauchy} 
\left\{
\begin{aligned}
&i\partial_t u+\partial_{x}^{2}u  =\pm \nu|u|^{4}u,\quad
(t,x)\in\R\times \S^{1},\\
&u(0,x)= u_{0}(x).
\end{aligned}
\right.
\end{equation} 
 If $u_{0}\in H^{1}(\S^{1})$, thanks to the conservation of the energy, we  show that the equation admits a unique global solution $u\in H^{1}(S^{1})$. In this work we want to describe some particular examples of nonlinear dynamics which can be generated by \eqref{cauchy}.\\

 For the linear Schr\"odinger equation ($\nu=0$ in \eqref{cauchy}) we can compute the solution explicitly in the Fourier basis: Assume that $\dis u_{0}(x)=\sum_{j\in \Z}\xi_{j}^{0}\e^{ijx}$, then $\dis u(t,x)=\sum_{j\in \Z}\xi_{j}(t)\e^{ijx}$ with $\dis \xi_{j}(t)=\xi_{j}^{0}\e^{-ij^{2}t}$. In particular, for all $j\in \Z$, the quantity $|\xi_{j}|$ remains constant. Now, let $\nu>0$, then a natural question is: do there exist solutions so that the $|\xi_{j}|$ have a nontrivial dynamic. First we review some known results. \\
 
Consider a general Hamiltonian perturbation where we add a linear term and a nonlinear term:
\begin{equation}\label{NLS}   i\partial_t u+\partial_{x}^{2}u + V\star u= \nu\partial_{\bar u}g(x,u,\bar u),\quad x\in \S^{1},\ t\in \R\end{equation} 
 where $V$ is a smooth periodic potential and $g$ is analytic and at least of order three. In that case the frequencies are $\om_j=j^2+\hat V(j), \ j\in \Z$ where $\hat V(j)$ denote the Fourier coefficients of $V$. Under a non resonant condition on these frequencies, it has been established by D. Bambusi and the first author \cite{BG06} (see also \cite{Gre07}) that the linear actions $|\xi_j|^2,\ j\in\Z$ are almost invariant during very long time, or more precisely, that for all $N\geq 1$ 
 $$ |\xi_j(t)|^2= |\xi_j(0)|^2+\mathcal{O}(\nu), \quad \text{for}\; \;|t|\leq \nu^{-N}.$$
 Therefore in this non resonant case, the dynamics of NLS are very close to the linear dynamics.\\
Another very interesting case is    the classical cubic NLS
\begin{equation} \label{NLS-cubic} 
 i\partial_t u+\partial_{x}^{2}u  =\pm \nu  |u|^{2}u,\quad
(t,x)\in\R\times \S^{1}\end{equation} 
and for this equation  again  nothing moves:
$$ |\xi_j(t)|^2= |\xi_j(0)|^2+\mathcal{O}(\nu), \quad \text{for all } t\in\R.$$
 This last result is a consequence of the existence of  action angle variables $(I,\theta)$ for the cubic NLS equation (there are globally defined in the defocusing case and locally defined around the origin in the focusing case, see respectively \cite{GK,GKP} and \cite{KT}) and that the actions are close to the  Fourier mode amplitudes to the square: $I_j=|\xi_j|^2(1+\mathcal{O}(\nu))$. \\
 Thus, in these two examples, the linear actions $|\xi_{j}|^{2}$ are almost constant in time, but for different reasons.\\
 Notice that in both previous cases, the Sobolev norms of the solutions, \linebreak[4]$ \left(\sum_{j\in \Z} j^{2s}|\xi_j(t)|^2\right)^{1/2}$ are almost constant for all $s\geq 0$.\\
 On the other hand, recently C. Villegas-Blas and the first author consider the following cubic NLS equation
\begin{equation} \label{NLS-GV} 
 i\partial_t u+\partial_{x}^{2}u  =\pm \nu \cos 2x\ |u|^{2}u,\quad
(t,x)\in\R\times \S^{1}\end{equation} 
 and prove that this special nonlinearity generates a nonlinear effect: if $u_0(x)=A e^{ix}+\bar A e^{-ix}$ then the modes $1$ and $-1$  exchange energy periodically (see \cite{GV}). For instance if
 $u_0(x)=\cos x
+\sin x$, a total beating is proved for $|t|\leq \nu^{-5/4}$:
$$|\xi_1(t)|^2=  \frac{1\pm \sin 2\nu t}{2} + \mathcal{O}(\nu^{3/4}), \quad |\xi_{-1}(t)|^2=  \frac{1\mp \sin 2\nu t}{2} + \mathcal{O}(\nu^{3/4}).$$
Of course in \eqref{NLS-GV} the interaction between the mode 1 and the mode $-1$ is induced by the $\cos 2x$ in front of the nonlinearity. \\
 
 In the present work we consider the quintic NLS equation \eqref{cauchy}. Notice that  Liang and You have proved in  \cite{LY} that, in the neighborhood of the origin, there exist many quasi periodic solutions of  \eqref{cauchy}. The basic approach is to apply the KAM method and vary the amplitude of the solutions in order to avoid resonances  in the spirit of the pioneer work of Kuksin-P\"oschel (\cite{KP}).  Here we want to take advantage of the resonances in the linear part of the equation to construct solutions that exchange energy between different Fourier modes.\\   Formally, by the Duhamel formula
 \begin{equation*}
 u(t)=\e^{it\partial_{x}^{2}}u_{0}-i\nu\int_{0}^{t}\e^{i(t-s)\partial_{x}^{2}}\big(|u|^{4}u\big)(s)\text{d}s,
 \end{equation*}
and  we deduce that $|\xi_{j}|^{2}$ cannot move as long as  $t\ll \nu^{-1}$. In this paper we  prove that for a large class of convenient initial data,  certain of the $|\xi_j|^2$ effectively move after a time of order $t\sim \nu^{-1}$. 
\begin{defi}\label{def.res}
A set $\mathcal{A}$ of the form 
\begin{equation*} 
\mathcal{A}=\big\{n,n+3k,n+4k,n+k\big\}, \quad k\in \Z\backslash\{0\}\; \text{ and } n\in\Z,
\end{equation*}
is called a resonant set. In the sequel we will use the notation
 \begin{equation*}
a_{2}=n,\;\;a_{1}=n+3k,\;\;b_{2}=n+4k,\;\;b_{1}=n+k.
\end{equation*}
\end{defi}
We are interested in these resonant sets, since they correspond to resonant monomials of order 6 in the normal form of the Hamiltonian \eqref{cauchy}, namely $\xi_{a_1}^2\xi_{a_2}\bar\xi_{b_1}^2\bar\xi_{b_2}$.
See Sections \ref{Sect1} and \ref{Sect2} for more details.
 \begin{exem}
For $(n,k)=(-2,1)$,  we obtain $(a_{2},a_{1},b_{2},b_{1})=(-2,1,2,-1)$; for $(n,k)=(-2,1)$,  we obtain $(a_{2},a_{1},b_{2},b_{1})=(-1,5,7,1)$.
 \end{exem}

\subsection{The main result}
Our first result is the following:

  \begin{theo}\label{thm1}
  There exist $T>0$, $\nu_{0}>0$ and a $2T-$periodic function $K_{\star}:\R\longmapsto ]0,1[$ which satisfies $K_{\star}(0)\leq 1/4$ and $K_{\star}(T)\geq3/4$ so that if $\mathcal A$ is a resonant set and if $0<\nu<\nu_{0}$, there exists a solution to \eqref{cauchy} satisfying for all $0\leq t\leq \nu^{-3/2}$
\begin{equation*}
 u(t,x)=\sum_{j\in \mathcal{A}}u_{j}(t)\e^{ijx}+\nu^{1/4} q_{1}(t,x)+\nu^{3/2}tq_{2}(t,x),
  \end{equation*}
\vspace{-0,4 cm}with 
\begin{equation*} 
\begin{array}{ccccc}
 |u_{a_{1}}(t)|^{2}&=&2 |u_{a_{2}}(t)|^{2}&=&K_{\star}(\nu t)\\[5pt]
 |u_{b_{1}}(t)|^{2}&=&2 |u_{b_{2}}(t)|^{2}&=&1-K_{\star}(\nu t),
 \end{array}
 \end{equation*}
 and where for all $s\in \R$,  $\|q_{1}(t,\cdot)\|_{H^{s}(\T)}\leq C_{s}$ for all $t\in \R_{+}$, and $\|q_{2}(t,\cdot)\|_{H^{s}(\T)}\leq C_{s}$ for all $0\leq t\leq  \nu^{-3/2}$.
 \end{theo}
 Theorem \ref{thm1}  shows that there is an exchange between the two modes $a_1$  and $a_2$ and the two modes $b_1$ and $b_2$.   It is remarkable that this nonlinear effect is universal in the sense that this dynamic does not depend on the choice of the resonant set $\mathcal A$.
 
 In Section \ref{Sect1}, we will see that such a result does not hold for any set $\mathcal{A}$ with $\#\mathcal{A}\leq 3$. However, three modes of a resonant set $\A$ can excite the fourth mode of $\A$ if this one was initially arbitrary small but not zero. More precisely : 
   \begin{theo}\label{thm2}
   For all $0<\gamma<1/10$, there exist $T_{\gamma}>0$, a $2T_{\gamma}-$periodic function $K_{\gamma}:\R\longmapsto ]0,1[$ which satisfies $K_{\gamma}(0)=\gamma$ and $K_{\gamma}(T_{\gamma})\geq1/10$, and there exists $\nu_{0}>0$ so that if $\mathcal A$ is a resonant set and if $0<\nu<\nu_{0}$, there exists a solution to \eqref{cauchy} satisfying for all $0\leq t\leq \nu^{-3/2}$
 \begin{equation*}
 u(t,x)=\sum_{j\in \mathcal{A}}u_{j}(t)\e^{ijx}+\nu^{1/4} q_{1}(t,x)+\nu^{3/2}tq_{2}(t,x),
  \end{equation*}
\vspace{-0,4 cm}with 
\begin{equation*} 
\begin{array}{cccccc}
 |u_{a_{1}}(t)|^{2}&=&K_{\gamma}(\nu t)\,; \;&  2|u_{a_{2}}(t)|^{2}&=&7+{K_{\gamma}(\nu t)}\\[5pt]
 |u_{b_{1}}(t)|^{2}&=&1-K_{\gamma}(\nu t)\,;\;&2 |u_{b_{2}}(t)|^{2}&=&1-K_{\gamma}(\nu t),
 \end{array}
 \end{equation*}
 and where for all $s\in \R$,  $\|q_{1}(t,\cdot)\|_{H^{s}(\T)}\leq C_{s}$ for all $t\in \R_{+}$, and $\|q_{2}(t,\cdot)\|_{H^{s}(\T)}\leq C_{s}$ for all $0\leq t\leq  \nu^{-3/2}$.
 \end{theo}

Of course the solutions satisfy  the three conservation laws : the mass, the momentum and the energy are constant quantities. Denote by $L_{j}=|u_{j}|^{2}$, then we have \\
$\bullet$ Conservation of the mass: $\dis \int |u|^{2}$
\begin{equation}\label{mass}
L_{a_{1}}+L_{a_{2}}+L_{b_{1}}+I_{b_{2}}=cst.
\end{equation}
$\bullet$ Conservation of the momentum: $\dis \text{Im}\,\int \ov{u}\partial_{x} u$
\begin{equation}\label{moment}
a_{1}L_{a_{1}}+a_{2}L_{a_{2}}+b_{1}L_{b_{1}}+b_{2}L_{b_{2}}=cst .
\end{equation}
$\bullet$ Conservation of the energy :  $\dis \int |\partial_{x}u|^{2}+\frac{\nu}3\int|u|^{6}$  
\begin{equation}\label{energy}
a^{2}_{1}L_{a_{1}}+a^{2}_{2}L_{a_{2}}+b^{2}_{1}L_{b_{1}}+b^{2}_{2}L_{b_{2}}=cst.
\end{equation}
On the other hand, the solutions given by Theorem \ref{thm1} satisfies for $0\leq t\leq \nu^{-5/4}$ and $s\geq 0$
 \begin{equation}\label{uHs}
\|u(t,\cdot)\|^{2}_{\dot{H}^{s}}=\frac{K_{\star}(\nu t)}2\big(2|a_{1}|^{2s}+|a_{2}|^{2s}-2|b_{1}|^{2s}-|b_{2}|^{2s}\big)+|b_{1}|^{2s}+\frac12|b_{2}|^{2s}+\mathcal{O}(\nu^{1/4}).
\end{equation}
Remark that \eqref{uHs} for $s=0,1$ is compatible with respectively \eqref{mass} and \eqref{energy}, since, for these   values of $s$, the coefficient  $\big(2|a_{1}|^{2s}+|a_{2}|^{2s}-2|b_{1}|^{2s}-|b_{2}|^{2s}\big)$ vanishes for $(a_1,a_2,b_1,b_2)\in\mathcal A$.\\
But  for $s\geq 2$, this coefficient is no more zero, except for some symmetric choices of $\mathcal A$ like $(-2,1,2,-1)$. Thus in the other cases $\|u(t,\cdot)\|^{2}_{\dot{H}^{s}}$ is not constant. Actually, a computation shows that, choosing $n=-k$ in the definition of $\mathcal A$,   the ratio between $\|u(T,\cdot)\|^{2}_{H^{s}}$ and $\|u(0,\cdot)\|^{2}_{H^{s}}$ is larger than 2 for $s\geq 4$. \\
Very recently, Colliander, Keel,  Staffilani,  Takaoka and Tao \cite{CKSTT} have proved a very nice result on the transfer of energy to high frequencies in the cubic defocusing nonlinear Schrödinger equation on the 2 dimensional torus. Of course their result is  more powerful ; in particular they allow a ratio between the initial $H^s$-norm and the $H^s$ norm for long time arbitrarily large. On the contrary our result only allows transfers of energy from modes $\{n,n+3k\}$ to modes $\{n+4k,n+k\}$ and thus the possibility of growing of the $H^s$-norm is bounded by $c^{s}$ for some constant $c$.  Nevertheless our approach is much more simple, it applies in 1-d and it is somehow universal (the dynamics we describe are not at all exceptional).

\medskip

\begin{rema}
Consider a resonant  set  $\mathcal{A}$, and    let $u$ be given by Theorem \ref{thm1}. Then by the scaling properties of the equation, for all $N\in \N^{*}$, $u_{N}$ defined by $ u_{N}(t,x)=N^{\frac12}u(N^{2}t,Nx)$ is also a solution of \eqref{cauchy} and we have
\begin{equation*}
u_{N}(t,x)=N^{1/2}\sum_{j\in \mathcal{A}}u_{j}(N^{2}t)\e^{ijNx}+\nu^{1/4} q_{1}(N^{2}t,Nx)+\nu^{3/2}tq_{2}(N^{2}t,Nx).
\end{equation*}
Next, for any $N\in \N^{*}$, the set $N\mathcal{A}$ is also a resonant set, and thus we can apply Theorem \ref{thm1}, which gives the existence of a solution to \eqref{cauchy} which reads
\begin{equation*}
\wt{u}_{N}(t,x)=\sum_{j\in \mathcal{A}}\wt{u}_{j}(t)\e^{ijNx}+\nu^{1/4} \wt{q}_{1}(t,x)+\nu^{3/2}t\wt{q}_{2}(t,x).
\end{equation*}
        Observe however that there are not the same.
\end{rema}

Theorem \ref{thm1} is obtained by calculating a resonant normal form up to order 10 of the Hamiltonian of the quintic NLS and then by isolating an effective term of order 6. Roughly speaking we obtain in the new variables $H= N+Z^{i}+Z_6^e+R$ where $N+Z^{i}$ depends only on the actions, $Z_6^e$, the effective part, is a polynomial homogeneous of order 6 which depends on one angle and $R$ is a remainder term.\\
 We first prove that, reduced to the resonant set, $N+  Z^{i}+Z_6^e$ generates the nonlinear dynamic that we expect. Then we have to prove that adding the remainder term $R$ and considering all the modes, this nonlinear dynamic persists beyond the local time (here $t\gtrsim \nu^{-1}$). In general this is a hard problem. Nevertheless in our case, the nonlinear dynamic corresponds to a stable orbit around a elliptic equilibrium point. So we explicitly calculate the action-angle variables $(K,\phi)\in \R^4\times \mathbb{T}^4$ for the finite dimensional system in such way that our nonlinear dynamics reads $\dot K=0$. Then for the complete system, we obtain $\dot K= O(\nu^{5/2})$ and we are essentially done.\\
In \cite{GV}, this construction was much more simpler since the finite dimensional nonlinear dynamics was in fact linear  (after a change of variable) and linear dynamics are more stable by perturbation than nonlinear ones.
 
 \subsection{Plan of the paper}
 

We begin in Section \ref{Sect1} with some arithmetical preliminaries. In Section  \ref{Sect2} we reinterpret equation \eqref{cauchy} as a Hamiltonian equation and we compute a completely resonant normal form at order 6. In Section \ref{Sect3} we study the equation (the model equation) obtained by the previous normal form after truncation of the   error terms. In Section \ref{Sect4} we show that the model equation gives  a  good approximation of some particular solutions of \eqref{cauchy}.

\section{Preliminaries: Arithmetic}\label{Sect1}
We are interested in sets  $\mathcal{A}$ of small cardinality so that there exist $(j_{1},j_{2},j_{3},\ell_{1},\ell_{2},\ell_{3})\in \mathcal{A}^{6}$ satisfying the following resonance condition
 \begin{equation} \label{eq.res}
\left\{
\begin{aligned}
& j_{1}^{2}+j_{2}^{2}+j^{2}_{3}  = \ell^{2}_1+\ell^{2}_{2}+\ell^{2}_{3} ,\\
& j_{1}+j_{2}+j_{3}=\ell_1+\ell_{2}+\ell_{3},
\end{aligned}
\right. \quad \text{and}\quad \big\{j_{1},j_{2},j_{3}\big\}\neq \big\{\ell_{1},\ell_{2},\ell_{3}\big\}.
\end{equation}
To begin with, let us recall a classical result 
\begin{lemm}\label{lem.actions}
Assume that $(j_{1},j_{2},j_{3},\ell_{1},\ell_{2},\ell_{3})\in \mathbb{Z}^{6}$ satisfy \eqref{eq.res}. Then $\big\{j_{1},j_{2},j_{3}\big\}\cap\big\{\ell_{1},\ell_{2},\ell_{3}\big\}=\emptyset$.
\end{lemm}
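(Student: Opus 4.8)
The statement asserts that if six integers satisfy the two symmetric conditions in \eqref{eq.res} — equal sums of squares and equal sums — then the multisets $\{j_1,j_2,j_3\}$ and $\{\ell_1,\ell_2,\ell_3\}$ are in fact disjoint whenever they are not equal. The natural approach is a cancellation argument: suppose some element is common to both sides and eliminate it, then derive a contradiction from the remaining two-variable system.

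First I would argue by contradiction: assume $\{j_1,j_2,j_3\}\cap\{\ell_1,\ell_2,\ell_3\}\neq\emptyset$, so after relabelling we may take $j_3=\ell_3$. Subtracting this common value from both equations in \eqref{eq.res} gives
\begin{equation*}
j_1^2+j_2^2=\ell_1^2+\ell_2^2,\qquad j_1+j_2=\ell_1+\ell_2.
\end{equation*}
Squaring the second identity and comparing with the first yields $2j_1j_2=2\ell_1\ell_2$, hence $j_1j_2=\ell_1\ell_2$. Thus $\{j_1,j_2\}$ and $\{\ell_1,\ell_2\}$ are the roots of one and the same quadratic polynomial $X^2-(j_1+j_2)X+j_1j_2$, so $\{j_1,j_2\}=\{\ell_1,\ell_2\}$ as multisets. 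Together with $j_3=\ell_3$ this gives $\{j_1,j_2,j_3\}=\{\ell_1,\ell_2,\ell_3\}$, contradicting the hypothesis in \eqref{eq.res}.

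The argument is elementary and there is no real obstacle; the only point requiring a little care is the bookkeeping of multisets versus sets (one should phrase the conclusion in terms of multisets throughout, or note that repeated indices are handled identically by the quadratic-polynomial argument). I would present it in the two or three lines sketched above: reduce to the two-variable system by cancelling a common element, pass from "equal sums and equal sums of squares" to "equal sums and equal products" via the identity $(j_1+j_2)^2=j_1^2+j_2^2+2j_1j_2$, and conclude by uniqueness of the factorization of the monic quadratic with those coefficients.
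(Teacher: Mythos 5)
Your proof is correct and follows essentially the same route as the paper: cancel the common element, pass from equal sums and equal sums of squares to equal sums and equal products, and conclude that the remaining pairs coincide. The paper's only cosmetic difference is that it verifies the coincidence by expanding $(\ell_{2}-j_{2})(\ell_{2}-j_{3})=0$ rather than invoking uniqueness of the roots of the monic quadratic, which is the same computation.
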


\begin{proof}
 If, say $j_{1}=\ell_{1}$, then we have the relation 
\begin{equation*}
j_2+j_3= \ell_{2}+\ell_{3} \;\; {\rm and } \;\;  j_2^2+j_3^2= \ell^{2}_{2}+\ell^{2}_{3},
\end{equation*}
and this implies that $(j_{2},j_{3})=(l_{2},l_{3})$ or $(j_{2},j_{3})=(l_{3},l_{2})$.
Squaring the first equality yields  $(j_{2}+j_{3})^{2}=(l_{2}+l_{3})^{2}$. To this equality we subtract $j_{2}^{2}+j_{3}^{2}=\ell_{2}^{2}+\ell_{3}^{2}$, which implies $j_{2}j_{3}=\ell_{2}\ell_{3}$. Now compute $$(\ell_{2}-j_{2})(\ell_{2}-j_{3})=\ell_{2}^{2}+j_{2}j_{3}-j_{2}\ell_{2}-j_{3}\ell_{2}=\ell_{2}(\ell_{2}+\ell_{3}-j_{2}-j_{3})=0,$$
hence the result.
\end{proof}

\begin {lemm}
Assume that there exist integers $(j_{1},j_{2},j_{3},\ell_{1},\ell_{2},\ell_{3})\in \mathcal{A}^{6}$ which satisfy \eqref{eq.res}.
Then the cardinal of $\mathcal{A}$ is greater or equal than 4.
\end{lemm}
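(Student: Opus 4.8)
The plan is to show that a set $\mathcal A$ with $\#\mathcal A\leq 3$ cannot contain a sextuple satisfying the resonance condition \eqref{eq.res}, so that by contraposition any set carrying such a resonant monomial must have cardinality at least $4$. By Lemma \ref{lem.actions}, if $(j_1,j_2,j_3,\ell_1,\ell_2,\ell_3)$ satisfies \eqref{eq.res}, then $\{j_1,j_2,j_3\}\cap\{\ell_1,\ell_2,\ell_3\}=\emptyset$; in particular these two triples are disjoint as sets, so together they involve at least two distinct values, and $\mathcal A$ must contain both of these values. The goal is to push this to at least four distinct values.

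First I would reduce to the case $\#\mathcal A=3$, say $\mathcal A=\{\alpha,\beta,\gamma\}$ with $\alpha,\beta,\gamma$ pairwise distinct (the cases $\#\mathcal A\leq 2$ follow a fortiori, or are handled by the same argument by allowing repeated letters). Since $\{j_1,j_2,j_3\}$ and $\{\ell_1,\ell_2,\ell_3\}$ are disjoint nonempty subsets of the three-element set $\mathcal A$, one of them — say $\{\ell_1,\ell_2,\ell_3\}$, after relabelling — must be a singleton $\{\gamma\}$, i.e. $\ell_1=\ell_2=\ell_3=\gamma$, and then $\{j_1,j_2,j_3\}\subseteq\{\alpha,\beta\}$. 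Writing the multiset $\{j_1,j_2,j_3\}$ as $p$ copies of $\alpha$ and $q$ copies of $\beta$ with $p+q=3$, $p,q\geq 0$, the resonance relations \eqref{eq.res} become
\begin{equation*}
p\alpha+q\beta = 3\gamma,\qquad p\alpha^2+q\beta^2 = 3\gamma^2 .
\end{equation*}
Then I would rule out each of the finitely many cases $(p,q)\in\{(3,0),(0,3),(2,1),(1,2)\}$. The cases $(3,0)$ and $(0,3)$ force $\alpha=\gamma$ or $\beta=\gamma$, contradicting that the letters are distinct (and also contradicting Lemma \ref{lem.actions}). For $(p,q)=(2,1)$ the relations read $2\alpha+\beta=3\gamma$ and $2\alpha^2+\beta^2=3\gamma^2$; substituting $\beta=3\gamma-2\alpha$ into the second equation gives $2\alpha^2+(3\gamma-2\alpha)^2=3\gamma^2$, i.e. $6\alpha^2-12\alpha\gamma+6\gamma^2=0$, i.e. $6(\alpha-\gamma)^2=0$, whence $\alpha=\gamma$, again a contradiction. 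The case $(1,2)$ is symmetric. Hence no such resonant sextuple exists inside a set of three (or fewer) elements, which is the desired statement.

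The argument is essentially a small finite case analysis combined with the already-established Lemma \ref{lem.actions}, so there is no real obstacle; the only thing to be careful about is the bookkeeping of which triple degenerates to a singleton and the handling of repeated letters among $j_1,j_2,j_3$, since \eqref{eq.res} is stated for $(j_1,\dots,\ell_3)\in\mathcal A^6$ and does not require the entries to be distinct. One should also note that the strict inequality $\{j_1,j_2,j_3\}\neq\{\ell_1,\ell_2,\ell_3\}$ in \eqref{eq.res} is automatically used through Lemma \ref{lem.actions}. I would present the proof by first quoting Lemma \ref{lem.actions} to get disjointness, then doing the $(p,q)$ split, each subcase collapsing to a perfect-square identity of the form $c(\alpha-\gamma)^2=0$.
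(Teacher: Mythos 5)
Your proof is correct and follows essentially the same route as the paper's: invoke Lemma \ref{lem.actions} to get disjointness of the two triples, observe that inside a set of at most three elements one triple must degenerate to a singleton, and then reduce the two resonance relations to a perfect-square identity forcing the values to coincide. Your version is slightly more systematic in enumerating the $(p,q)$ splittings (the paper only writes out the $2\alpha+\beta=3\gamma$ case), but the substance is identical.
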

 
 \begin{proof}
 Assume that $\#\mathcal{A}\leq 3$. Then by Lemma \ref{lem.actions} we can assume that      $\mathcal{A}=\{j_{1},j_{2},\ell_{1}\}$ and that 
 \begin{equation*}
 2j_{1}+j_{2}=3\ell_{1}\,;\;\; 2j^{2}_{1}+j^{2}_{2}=3\ell^{2}_{1}.
 \end{equation*}
 Let $k\in\Z$ so that $j_{1}=\ell_{1}+k$, then from the first equation we deduce that $j_{2}=\ell_{1}-2k$. Finally, inserting the last relation   in the second equation, we deduce that $k=0$ which  implies that $j_{1}=j_{2}=\ell_{1}$.
 \end{proof}
 The next result describes the sets $\mathcal{A}$ of cardinal 4 and which contain non trivial solutions to \eqref{eq.res}. According to definition \ref{def.res}, these sets are called resonant sets.
\begin {lemm}[Description of the resonant sets]\label{lem.res}
The resonance sets are the 
\begin{equation*} 
\mathcal{A}=\big\{n,n+3k,n+4k,n+k\big\}, \quad k\in \Z\backslash\{0\}\; \text{ and } n\in\Z.
\end{equation*}
\end{lemm}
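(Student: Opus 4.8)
We must identify all sets $\mathcal{A}$ with $\#\mathcal{A}=4$ which contain a nontrivial solution $(j_{1},j_{2},j_{3},\ell_{1},\ell_{2},\ell_{3})\in\mathcal{A}^{6}$ of \eqref{eq.res}. By Lemma \ref{lem.actions} the two triples $\{j_{1},j_{2},j_{3}\}$ and $\{\ell_{1},\ell_{2},\ell_{3}\}$ are disjoint, so together they involve at least $4$ distinct integers unless there is repetition \emph{within} a triple. The plan is therefore to split into cases according to the multiplicity pattern of the six indices. Since the union of the two (disjoint) triples must be a set of cardinal $4$, and each triple contributes at most $3$ distinct values while both are nonempty and disjoint, the only possibility is that one triple has two equal entries and one different, and the other triple also has two equal entries and one different; more precisely, writing the distinct elements as $\{p,q,r,s\}$, after relabelling one triple is $\{p,p,q\}$ and the other is $\{r,r,s\}$ (a $(2,1)$ vs $(2,1)$ split), since a triple of the form $\{p,p,p\}$ would force the other triple to contain $3$ new elements giving cardinal $\geq 4$ but then actually only if those are all distinct — which is fine — so I should also treat the $(3)$ vs $(1,1,1)$ and $(3)$ vs $(2,1)$ possibilities and rule them out.

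\textbf{Ruling out the degenerate splits.} If one triple is constant, say $\{a,a,a\}$, then \eqref{eq.res} reads $3a=\ell_{1}+\ell_{2}+\ell_{3}$ and $3a^{2}=\ell_{1}^{2}+\ell_{2}^{2}+\ell_{3}^{2}$; by Cauchy--Schwarz (or the identity $\sum(\ell_i-a)^2 = \sum\ell_i^2 - 2a\sum\ell_i + 3a^2 = 0$) this forces $\ell_{1}=\ell_{2}=\ell_{3}=a$, contradicting disjointness. Hence neither triple is constant. This leaves the $(2,1)$ vs $(2,1)$ pattern: one triple is $\{a,a,b\}$ with $a\neq b$, the other is $\{c,c,d\}$ with $c\neq d$, and $\{a,b\}\cap\{c,d\}=\emptyset$, so $\mathcal{A}=\{a,b,c,d\}$ with these four integers distinct. (One also has to allow the possibility $\{a,a,b\}$ vs $\{c,d,e\}$ with $c,d,e$ distinct, i.e. a $(2,1)$ vs $(1,1,1)$ split; but then the union has $5$ distinct elements, exceeding $4$, so this is excluded — I would note this explicitly.)

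\textbf{Solving the $(2,1)$ vs $(2,1)$ system.} With $\{a,a,b\}$ and $\{c,c,d\}$, condition \eqref{eq.res} becomes
\begin{equation*}
2a+b = 2c+d,\qquad 2a^{2}+b^{2}=2c^{2}+d^{2}.
\end{equation*}
Set $k=c-a\in\Z$. From the first equation $d-b = 2(a-c) = -2k$, so $d=b-2k$. Substituting into the second equation: $2a^{2}+b^{2} = 2(a+k)^{2}+(b-2k)^{2} = 2a^{2}+4ak+2k^{2}+b^{2}-4bk+4k^{2}$, which simplifies to $0 = 4ak - 4bk + 6k^{2}$, i.e. $2k\,(2a-2b+3k)=0$. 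Since the four integers must be distinct we need $k\neq 0$ (if $k=0$ then $c=a$ and $d=b$, contradiction), hence $2b = 2a+3k$. For this to have an integer solution $k$ must be even, say $k=2m$ with $m\in\Z\setminus\{0\}$; then $b=a+3m$, $c=a+2m$, $d=b-2k = a+3m-4m = a-m$. Writing $n:=d=a-m$, one gets $a=n+m$, $b=n+4m$, $c=n+3m$, $d=n$, so
\begin{equation*}
\mathcal{A}=\{a,b,c,d\}=\{\,n+m,\;n+4m,\;n+3m,\;n\,\}=\{\,n,\;n+3m,\;n+4m,\;n+m\,\},\quad m\in\Z\setminus\{0\}.
\end{equation*}
Renaming $m$ as $k$ gives exactly the announced form. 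Conversely, for any such $\mathcal{A}$ one checks directly that $(a_{1},a_{1},a_{2},b_{1},b_{1},b_{2})=(n+3k,n+3k,n,n+k,n+k,n+4k)$ satisfies \eqref{eq.res} with $\{j_1,j_2,j_3\}\neq\{\ell_1,\ell_2,\ell_3\}$ — this is the computation $2(n+3k)+n = 3n+6k = 2(n+k)+(n+4k)$ and $2(n+3k)^{2}+n^{2} = 2(n+k)^{2}+(n+4k)^{2}$ — establishing that these sets are indeed resonant.

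\textbf{Main obstacle.} The only delicate point is the bookkeeping of multiplicity patterns: one has to be careful that \eqref{eq.res} allows the same index to appear more than once in a triple (it is a condition on $6$-tuples of elements of $\mathcal{A}$, not on distinct elements), and to enumerate exhaustively which patterns are compatible with $\#\mathcal{A}=4$. Once the reduction to the $(2,1)$ vs $(2,1)$ case is made, the arithmetic is the elementary manipulation above; the parity observation forcing $k$ even (so that the final parameter in the statement ranges over all of $\Z\setminus\{0\}$) is the last thing to check.
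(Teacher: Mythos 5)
Your proof is correct and follows essentially the same route as the paper: rule out the constant triple via the identity $\sum(\ell_i-a)^2=0$, observe that disjointness plus $\#\mathcal{A}=4$ forces the $(2,1)$-vs-$(2,1)$ pattern, and solve the resulting system with a divisibility constraint. The only cosmetic difference is that the paper eliminates variables to get the condition $j_1\equiv j_2 \pmod 3$ while you get the equivalent parity condition on $k=c-a$; your explicit converse check and the more careful enumeration of multiplicity patterns are welcome additions but not a different argument.
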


\begin{proof}
By Lemma \ref{lem.actions}, we know that either $\big\{j_{1},j_{2},j_{3}\big\}=\big\{\ell_{1},\ell_{2},\ell_{3}\big\}$ or $\big\{j_{1},j_{2},j_{3}\big\}\cap\big\{\ell_{1},\ell_{2},\ell_{3}\big\}=\emptyset$. We consider the second case. \\
$\bullet$ First we exclude the case $j_{1}=j_{2}=j_{3}=j$. In that case we have to solve
 \begin{equation} \label{eq.*}
\left\{
\begin{aligned}
& 3j^{2}  = \ell^{2}_1+\ell^{2}_{2}+\ell^{2}_{3}  , \\   
& 3j =\ell_1+\ell_{2}+\ell_{3}. 
\end{aligned}
\right.
\end{equation}
We will show that  \eqref{eq.*}  implies $\ell_{1}=\ell_{2}=\ell_{3}=j$. Set $\ell_{1}=j+p$ and $\ell_{2}=j+q$. Then by the second line $\ell_{3}=j-p-q$. Now, we plug in the first line and get $p^{2}+q^{2}+pq=0$. This in turn implies that $p=q=0$ thanks to the inequality $p^{2}+q^{2}\geq 2|pq|$.\\
$\bullet$ Then we can assume that $j_{2}=j_{3}$ and $\ell_{2}=\ell_{3}$, and $\sharp\{j_1,j_2,\ell_1,\ell_2\}=4$.  Thus we have to solve
 \begin{equation*} 
\left\{
\begin{aligned}
& j_{1}^{2}+2j_{2}^{2}  = \ell^{2}_1+2\ell^{2}_{2}  , \\   
& j_{1}+2j_{2} =\ell_1+2\ell_{2}. 
\end{aligned}
\right.
\end{equation*}
From the first line, we infer that $ (j_{1}-\ell_{1})(j_{1}+\ell_{1})=2(\ell_{2}-j_{2})(\ell_{2}+j_{2})$. The second gives $j_{1}-\ell_{1}=2(\ell_{2}-j_{2})$, thus $j_{1}+\ell_{1}=j_{2}+\ell_{2}$. Hence we are led to solve the system
 \begin{equation*} 
\left\{
\begin{aligned}
&\ell_1-\ell_{2}= -j_{1}+j_{2} , \\
&\ell_1+2\ell_{2}= j_{1}+2j_{2}
\end{aligned}
\right.
\end{equation*}
where the integers $j_{1}$ and $j_{2}$ are considered as parameters. The solutions are
 \begin{equation*}
 \ell_{1}=\frac{1}3(-j_{1}+4j_{2}),\quad \quad  \ell_{2}=\frac{1}3(2j_{1}+j_{2})
 \end{equation*}
with the restriction, $j_{1}\equiv j_{2}$  mod $ 3$,  in order to obtain integer solutions. Let $n\in \Z$, $k\in \Z^{*}$ so that $j_{1}=n$ and $j_{2}=n+3k$, the solutions then reads $\ell_{1}=n+4k$ and $\ell_{2}=n+k$, as claimed.
\end{proof}
Define the set 
 \begin{multline*}
{\mathcal R}=\lbrace(j_1,j_2,j_3,\ell_1,\ell_{2},\ell_{3})\in\mathbb{Z}^6\;s.t. \\
 j_1+j_2+j_3=\ell_{1}+\ell_{2}+\ell_{3} \;\; {\rm and } \;\; j_1^2+j_2^2+j_3^2=\ell^{2}_{1}+\ell^{2}_{2}+\ell^{2}_{3}\rbrace.
\end{multline*}
The following result will be useful in the sequel 
 
 \begin{lemm}\label{lem.couple}
 Let  $(j_{1},j_{2},j_{3},\ell_{1},p_{1},p_{2})\in \mathcal{R}$. Assume that $j_{1},j_{2},j_{3},\ell_{1}\in \mathcal{A}$. Then $p_{1},p_{2}\in \mathcal{A}$.
 \end{lemm}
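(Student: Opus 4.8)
The plan is to reduce the statement to the single model resonant set $\{0,1,3,4\}$ and then to combine the dichotomy contained in the proof of Lemma \ref{lem.actions} with a short finite verification.

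Write $J_{1}=j_{1}+j_{2}+j_{3}$ and $J_{2}=j_{1}^{2}+j_{2}^{2}+j_{3}^{2}$. Saying that $(j_{1},j_{2},j_{3},\ell_{1},p_{1},p_{2})\in\mathcal{R}$ amounts to $p_{1}+p_{2}=J_{1}-\ell_{1}$ and $p_{1}^{2}+p_{2}^{2}=J_{2}-\ell_{1}^{2}$, so $\{p_{1},p_{2}\}$ is \emph{uniquely determined} by $(j_{1},j_{2},j_{3},\ell_{1})$: it is the pair of roots of $X^{2}-(J_{1}-\ell_{1})X+\frac{1}{2}\big((J_{1}-\ell_{1})^{2}-(J_{2}-\ell_{1}^{2})\big)$. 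The computation used in the proof of Lemma \ref{lem.actions} shows that as soon as the multisets $\{j_{1},j_{2},j_{3}\}$ and $\{\ell_{1},p_{1},p_{2}\}$ share an element they must coincide (if e.g. $j_{1}=\ell_{1}$, the remaining two entries on each side have equal sum and equal sum of squares, hence form the same pair). Hence either these two multisets coincide --- and then $\ell_{1}\in\{j_{1},j_{2},j_{3}\}$ and $\{p_{1},p_{2}\}$ is what remains after deleting one copy of $\ell_{1}$ from $\{j_{1},j_{2},j_{3}\}$, so that $p_{1},p_{2}\in\mathcal{A}$ --- or they are disjoint as subsets of $\mathbb{Z}$, which is the only remaining case.

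Assume from now on that $\{j_{1},j_{2},j_{3}\}\cap\{\ell_{1},p_{1},p_{2}\}=\emptyset$; in particular $\ell_{1}\in\mathcal{A}\setminus\{j_{1},j_{2},j_{3}\}$. The two equations defining $\mathcal{R}$ are preserved if all six unknowns are replaced by $\lambda x+\mu$ (the first is affine; for the second one subtracts the first to kill the cross terms), and such a change sends a resonant set $\{n,n+k,n+3k,n+4k\}$ to another one, so we may assume $n=0$, i.e. $\mathcal{A}=k\{0,1,3,4\}$. The quantity $\Delta:=2(J_{2}-\ell_{1}^{2})-(J_{1}-\ell_{1})^{2}$ equals $(p_{1}-p_{2})^{2}$, hence is a nonnegative perfect square; being homogeneous of degree $2$ in $(j_{1},j_{2},j_{3},\ell_{1})$, it equals $k^{2}$ times its value for the corresponding data in $\{0,1,3,4\}$. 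I would then discuss the number of distinct values among $j_{1},j_{2},j_{3}$: if they are all equal, $\Delta=-3(j_{1}-\ell_{1})^{2}<0$, impossible; if they are pairwise distinct, then $\{j_{1},j_{2},j_{3}\}=\mathcal{A}\setminus\{\ell_{1}\}$ and one computes $\Delta\in\{-12k^{2},\,12k^{2}\}$, never a perfect square, again impossible. There remains the case where exactly two values occur among $j_{1},j_{2},j_{3}$: this is, after the normalisation, a finite list of configurations inside $\{0,1,3,4\}$, for each of which $\Delta$ is either negative, or $k^{2}$ times a non-square, or equal to $0$ or $9k^{2}$; in the last two cases the pair $\{p_{1},p_{2}\}$, recovered from $p_{1}+p_{2}=J_{1}-\ell_{1}$ and $(p_{1}-p_{2})^{2}=\Delta$, turns out to lie in $\mathcal{A}$ (the two surviving shapes being, up to the reflection $x\mapsto 4k-x$ of $\mathcal{A}$, the triple $\{k,k,4k\}$ with $\ell_{1}=0$, which gives $\{p_{1},p_{2}\}=\{3k,3k\}$, and the triple $\{3k,3k,0\}$ with $\ell_{1}=k$, which gives $\{p_{1},p_{2}\}=\{k,4k\}$). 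Undoing the normalisation gives $p_{1},p_{2}\in\mathcal{A}$ in every case.

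The only real difficulty here is organisational: the last subcase is an elementary but slightly tedious enumeration. The point that makes it short is the preliminary reduction to $\{0,1,3,4\}$ together with the remark that $\Delta$ must be a perfect square, which eliminates all but two shapes before any serious computation; everything else is automatic.
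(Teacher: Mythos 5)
Your proof is correct and follows essentially the same route as the paper: reduce to the normalized resonant set $k\{0,1,3,4\}$ and perform a finite verification. The differences are only organisational --- you first dispose of the case where $\{j_{1},j_{2},j_{3}\}$ and $\{\ell_{1},p_{1},p_{2}\}$ intersect via the cancellation argument from the proof of Lemma~\ref{lem.actions}, and you prune the remaining enumeration using the perfect-square criterion on $(p_{1}-p_{2})^{2}$ together with the reflection $x\mapsto 4k-x$, whereas the paper first shows $p_{1},p_{2}\in n+k\mathbb{Z}$ by a rational-root argument and then tabulates the quadruples $(m_{1},m_{2},m_{3},m_{4})$ directly; both enumerations land on the same surviving configurations.
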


 \begin{proof}
 Let $j_{1},j_{2},j_{3},\ell_{1}\in \mathcal{A}$ and $p_{1},p_{2}\in \mathbb{N}$ so that 
  \begin{equation}\label{eq.double}
\left\{
\begin{aligned}
& p_{1}+p_{2}=j_{1}+j_{2}+j_{3}-\ell_{1}   , \\   
& p^{2}_{1}+p^{2}_{2}=j^{2}_{1}+j^{2}_{2}+j^{2}_{3}-\ell^{2}_{1}  . 
\end{aligned}
\right.
\end{equation}
By Lemma \ref{lem.res}, there exist $n,k\in \Z$ and $(m_{s})_{1\leq s\leq 4}$ with $m_{s}\in \big\{0,1,3,4\big\}$ so that $j_{s}=m_{s}k$ and $\ell_{1}=m_{4}k$. We also write $p_{1}=n+q_{1}$ and $p_{2}=n+q_{2}$. We plug these expressions in \eqref{eq.double} which gives
 \begin{equation*} 
\left\{
\begin{aligned}
& q_{1}+q_{2}=(m_{1}+m_{2}+m_{3}-m_{4})k   , \\   
& q^{2}_{1}+q^{2}_{2}+2n(q_{1}+q_{2})=2n(m_{1}+m_{2}+m_{3}-m_{4})k+(m^{2}_{1}+m^{2}_{2}+m^{2}_{3}-m^{2}_{4})k^{2},  \end{aligned}
\right.
\end{equation*}
and is equivalent to 
 \begin{equation*} 
\left\{
\begin{aligned}
& q_{1}+q_{2}=(m_{1}+m_{2}+m_{3}-m_{4})k   , \\   
& q^{2}_{1}+q^{2}_{2}=(m^{2}_{1}+m^{2}_{2}+m^{2}_{3}-m^{2}_{4} )k^{2} . 
\end{aligned}
\right.
\end{equation*}
We write $q_{1}=r_{1}k$ and $q_{2}=r_{2}k$, then $r_{1},r_{2}\in \Q$ satisfy
  \begin{equation} \label{eq.16}
\left\{
\begin{aligned}
& r_{1}+r_{2}=m_{1}+m_{2}+m_{3}-m_{4}:= S  , \\   
& r^{2}_{1}+r^{2}_{2}=m^{2}_{1}+m^{2}_{2}+m^{2}_{3}-m^{2}_{4} : =T.
\end{aligned}
\right.
\end{equation}
Next, we observe that indeed $r_{1},r_{2}\in \Z$ : In fact \eqref{eq.16} is equivalent to 
 \begin{equation} 
r_{1}+r_{2}=S  , \qquad  r_{1}r_{2}=\frac12(S^{2}-T):=U,
\end{equation}
($U\in \Z$ since $S$ and $T$ have same parity) and $r_{1}, r_{2}$ are the roots of the polynomial $X^{2}-SX+U$. Thus if $r=\alpha/\beta$ with $\alpha\wedge\beta=1$, we have that $\beta|1$ and then $r\in \Z$.\\
We are finally reduced to solve \eqref{eq.16}
 where $m_{s}\in \big\{0,1,3,4\big\}$. We list all possible cases in the following array : By symmetry we only need to consider the cases $m_{1}\geq m_{2}\geq m_{3}$. We denote by $m_{1}m_{2}m_{3}m_{4}$ a possible choice and by $T=m^{2}_{1}+m^{2}_{2}+m^{2}_{3}-m^{2}_{4}$.\\

\begin{tabular}{|c|c|c|c|c|c|}
  \hline
Values of $m_{s}$ & Value of $T$  &  Values of $m_{s}$ & Value of $T$& $m_{s}$ & Value of $T$  \\
  \hline
  4440 & 48 & 4441 & 47& 4443&39\\
  4430 & 41 & 4431 & 40=36+4& 4410&33\\
    4413 & 24 & 4401 & 31& 4403&23\\
    4330 & 34=25+9 & 4331 & 33& 4310&26=25+1\\
    4301 & 24 & 4110 & 18=9+9& 4113&9=9+0\\
      4103 & 8=4+4 & 4001 & 15& 4003&7\\
        3330 & 27 & 3331 & 26=25+1& 3334&11\\
        3310 & 19 & 3314 & 3& 3301&17=16+1\\
        3304 & 2 & 3110 & 11& 3114&-5\\
        3104 & -6 & 3001 & 8=4+4& 3004&-7\\
        1110 & 3 & 1113 & -6& 1114&-13\\
  \hline
\end{tabular}
~\\[10pt]
In this array, we read all the possible solutions to \eqref{eq.16} which are (assuming that $m_{1}\geq m_{2}\geq m_{3}$ and $r_{1}\geq r_{2}$)
\begin{equation}
 (r_{1},r_{2},m_{1},m_{2},m_{3},m_{4})=(3,3,4,1,1,0),\quad (3,0,4,1,1,3),\quad (4,1,3,3,0,1).
\end{equation}
 Now we observe that we always have $r_{1},r_{2}\in \big\{0,1,3,4\big\}$, so that if we come back to \eqref{eq.double}, $p_{1}=n+r_{1}k$, $p_{2}=n+r_{2}k$ and $p_{1},p_{2}\in \mathcal{A}$.
\end{proof}

\section{The normal form}\label{Sect2}
\subsection{Hamiltonian formulation}
From now, and until the end of the paper, we set $\eps=\nu^{1/4}$. In the sequel, it will be more convenient to deal with small initial conditions to \eqref{cauchy}, thus we make the change of unknown $v=\eps u$ and we obtain 
\begin{equation}\label{cauchy*} 
\left\{
\begin{aligned}
&i\partial_t v+\partial_{x}^{2}v  =|v|^{4}v,\quad
(t,x)\in\R\times {\T}^{},\\
&v(0,x)= v_{0}(x)=\eps u_{0}(x).
\end{aligned}
\right.
\end{equation} 

Let us expand $v$ and $\bar v$ in Fourier modes:
$$v(x)=\sum_{j\in \Z}\xi_j e^{ijx},\quad \bar v(x)=\sum_{j\in\Z} \eta_j e^{-ijx}.$$
We define
\begin{equation*}
P(\xi,\eta)= \frac13\int_{\T} |v(x)|^6 \text{d}x = \frac13\sum_{\substack{j,\ell\in \Z^3\\ \mathcal M(j,\ell)=0}}\xi_{j_1}\xi_{j_2}\xi_{j_3}\eta_{\ell_1}\eta_{\ell_2}\eta_{\ell_3},
 \end{equation*}
where $\Mc(j,\ell)= j_1+j_2+\cdots+j_{p}-\ell_1-\ell_2-\cdots-\ell_{p}$ denotes the momentum of the multi-index $(j,l)\in \Z^{2p}$ or equivalently the momentum of the monomial $\xi_{j_1}\xi_{j_2}\cdots\xi_{j_p}\eta_{\ell_1}\eta_{\ell_2}\cdots\eta_{\ell_p}$. \\
In this Fourier setting the equation \eqref{cauchy*} reads as an infinite Hamiltonian system
\begin{equation}\label{hamsys}
\left\{\begin{array}{rll}
i\dot \xi_j&=j^2\xi_j+\frac{\partial P}{\partial \eta_j} &\quad j\in \Z,\\
-i\dot \eta_j&=j^2\eta_j+\frac{\partial P}{\partial \xi_j} &\quad j\in \Z  .
\end{array}\right.
\end{equation}

 Since the regularity is not an issue in this work, we will work in the following analytic phase space ($\rho\geq 0$)
$$\Ac_\rho=\{(\xi,\eta)\in \ell^1(\Z)\times  \ell^1(\Z)\mid ||(\xi,\eta)||_\rho:=\sum_{j\in \Z}e^{\rho |j|}(|\xi_j|+|\eta_j|)<\infty\}
$$
which  we endow with the canonical symplectic structure $-i\sum_j d\xi_j\wedge \eta_j$. Notice that this Fourier space corresponds to  functions $u(z)$ analytic on a strip $|\Im z| <\rho$  around the real axis.\\
According to this symplectic structure, the Poisson bracket between two functions $f$ and $g$ of $(\xi,\eta)$ is defined by
$$\lbrace{f},{g}\rbrace={-i}\sum_{j\in\mathbb{Z}}\frac{\partial{f}}{\partial{\xi_j}}\frac{\partial{g}}{\partial{\eta_j}}-
\frac{\partial{f}}{\partial{\eta_j}}\frac{\partial{g}}{\partial{\xi_j}}.$$
In particular, if $(\xi(t),\eta(t))$ is a solution of \eqref{hamsys} and $F$ is some regular Hamiltonian function, we have
$$
\frac{d}{\text{d}t}F(\xi(t),\eta(t))=\{F,H\}(\xi(t),\eta(t))
$$
where
\begin{equation*}
H=N+P= \sum_{j\in \Z}j^2\xi_{j}\eta_{j} +\frac13\sum_{\substack{j,\ell\in \Z^3\\ \mathcal M(j,\ell)=0}}\xi_{j_1}\xi_{j_2}\xi_{j_3}\eta_{\ell_1}\eta_{\ell_2}\eta_{\ell_3},
\end{equation*}
is the total Hamiltonian of the system. It is  convenient to work in the symplectic polar coordinates $\dis \big(\xi_{j}=\sqrt{I_{j}}\e^{i\theta_{j}},\eta_{j}=\sqrt{I_{j}}\e^{-i\theta_{j}})_{j\in \Z}$. Since we have $\text{d}\xi \wedge\text{d}\eta=i\text{d}\theta\wedge \text{d}I$,  the system \eqref{cauchy*} is equivalent to 
\begin{equation*}
\left\{\begin{array}{rrl}
\dot \theta_j=&-\frac{\partial H}{\partial I_j} &\quad j\in \Z,\\[4pt]
\dot I_j=&\frac{\partial H}{\partial \theta_j} &\quad j\in \Z.
\end{array}\right.
\end{equation*}
Finally, we define 
\begin{equation}\label{J}
J=\sum_{j\in \Z}I_{j}=\sum_{j\in \Z}\xi_{j}\eta_{j}=\|v\|^{2}_{L^{2}(\T)},
\end{equation}
 which is a constant of motion for \eqref{cauchy*} and \eqref{hamsys}.

\subsection{The Birkhoff normal form procedure}
We denote by $B_\rho(r)$ the ball of radius $r$ centred at the origin in $\Ac_\rho$. 
 Recall the definition  
 \begin{multline*}
{\mathcal R}=\lbrace(j_1,j_2,j_3,\ell_1,\ell_{2},\ell_{3})\in\mathbb{Z}^6\;s.t. \\
 j_1+j_2+j_3=\ell_{1}+\ell_{2}+\ell_{3} \;\; {\rm and } \;\; j_1^2+j_2^2+j_3^2=\ell^{2}_{1}+\ell^{2}_{2}+\ell^{2}_{3}\rbrace
\end{multline*}
and its subset
\begin{equation*}
\mathcal{R}_{0}=\mathcal{R} \cap \Big\{ \big\{j_{1},j_{2},j_{3}\big\}=\big\{\ell_{1},\ell_{2},\ell_{3}\big\}\Big\}.
\end{equation*}
We are now able to state the main result of this section, which is a normal form result at order 10 for the Hamiltonian  $H$.

\begin{prop}\label{NF}
There exists a canonical change of variable $\tau$ from $B_\rho(\eps)$ into  $B_\rho(2\eps)$ with $\eps$ small enough such that
\begin{equation}\label{ham*}
\ov{H}:=H\circ \tau = N +Z_{6}+R_{10},
\end{equation}
where
\begin{enumerate}[(i)]
\item $N$ is the term $\dis N(I) = \sum_{j\in \Z}j^2I_{j}$;
\item $\dis Z_6$  is the homogeneous polynomial of degree 6     
\begin{equation*}
Z_{6}=\sum_{\mathcal{R}}\xi_{j_{1}}\xi_{j_{2}}\xi_{j_{3}}\eta_{\ell_{1}}\eta_{\ell_{2}}\eta_{\ell_{3}}.
\end{equation*}
\item  $R_{10}$ is the remainder of order 10, i.e. a Hamiltonian satisfying\\
 $||X_{R_{10}}(z)||_\rho \leq C ||z||^9_\rho$   for $z=(\xi,\eta)\in B_\rho(\eps)$;
 \item  $\tau$ is close to the identity: there exist a constant $C_\rho$ such that $||\tau(z)-z||_\rho\le{C_\rho}||z||^2_\rho$  for all $z\in B_\rho(\eps).$
\end{enumerate}
\end{prop}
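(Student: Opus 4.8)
The plan is to run the standard Birkhoff normal form algorithm adapted to the completely resonant Hamiltonian $H=N+P$, taking advantage of the fact that $P$ is a single homogeneous term of degree $6$ (so the first step is the only nontrivial one, and the remainder is automatically of degree $10$). First I would recall the homological machinery: for a homogeneous polynomial $Q$ of degree $2p$ written as $Q=\sum_{(j,\ell)}c_{j,\ell}\,\xi_{j_1}\cdots\xi_{j_p}\eta_{\ell_1}\cdots\eta_{\ell_p}$, one has $\{N,Q\}=-i\sum_{(j,\ell)}\Omega(j,\ell)\,c_{j,\ell}\,\xi_{j_1}\cdots\eta_{\ell_p}$ where $\Omega(j,\ell)=j_1^2+\cdots+j_p^2-\ell_1^2-\cdots-\ell_p^2$. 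So if one looks for $\chi$ homogeneous of degree $6$ solving the homological equation $\{N,\chi\}+P=Z_6$, the natural choice is to put into $Z_6$ exactly the monomials of $P$ with $\Omega(j,\ell)=0$ (these also satisfy $\mathcal M(j,\ell)=0$ already, since $P$ only contains momentum-zero monomials), i.e. the monomials indexed by $\mathcal R$, and to define $\chi$ by dividing the remaining coefficients of $P$ by $-i\Omega(j,\ell)$. The key point making this legitimate is a lower bound on the small divisors: for momentum-zero, non-resonant sextic indices one has $|\Omega(j,\ell)|\ge 1$ — indeed $\Omega$ is an integer, and when $\mathcal M(j,\ell)=0$ but $\Omega(j,\ell)\ne0$ it is a nonzero integer, hence $\ge1$ in absolute value. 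This uniform bound is what allows $\chi$ to define a bounded vector field on the analytic space $\mathcal A_\rho$: since $P$ has bounded coefficients and $\mathcal A_\rho$ is an algebra under the relevant convolution, $X_\chi$ maps $B_\rho(\eps)$ into $\mathcal A_\rho$ with $\|X_\chi(z)\|_\rho\le C\|z\|_\rho^5$.

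Next I would take $\tau=\Phi^1_\chi$, the time-one flow of the Hamiltonian vector field $X_\chi$. The standard flow estimate: since $X_\chi$ is analytic and of order $5$ on $B_\rho(\eps)$, for $\eps$ small enough $\Phi^t_\chi$ is well-defined for $t\in[0,1]$ and maps $B_\rho(\eps)$ into $B_\rho(2\eps)$, with $\|\Phi^1_\chi(z)-z\|_\rho\le C_\rho\|z\|_\rho^2$ (in fact $\le C_\rho\|z\|_\rho^5$, which is stronger than (iv)). Then one expands via the Lie transform formula
\begin{equation*}
H\circ\tau = N + \{N,\chi\} + P + \int_0^1 (1-t)\,\{\{N,\chi\},\chi\}\circ\Phi^t_\chi\,\mathrm dt + \int_0^1 \{P,\chi\}\circ\Phi^t_\chi\,\mathrm dt,
\end{equation*}
and by the choice of $\chi$ the first-order terms collapse to $N+Z_6$. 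Everything left is a remainder: $\{\{N,\chi\},\chi\}=\{Z_6-P,\chi\}$ is a sum of Poisson brackets of two degree-$6$ polynomials, hence homogeneous of degree $10$, and $\{P,\chi\}$ likewise; composing with the near-identity analytic maps $\Phi^t_\chi$ keeps the vector field estimate $\|X_{R_{10}}(z)\|_\rho\le C\|z\|_\rho^9$ on $B_\rho(\eps)$. Collecting these gives \eqref{ham*} with all four properties.

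The main obstacle — though it is mild here compared to a general Birkhoff normal form — is bookkeeping the analytic estimates cleanly: one has to verify that Poisson brackets of homogeneous polynomials with bounded coefficients again have bounded-coefficient vector fields on $\mathcal A_\rho$ (this is where the $\ell^1$-algebra property of $\mathcal A_\rho$ under convolution is used, with the weight $e^{\rho|j|}$ behaving submultiplicatively under the momentum constraint $\mathcal M=0$), and that the composition with $\Phi^t_\chi$ does not destroy the degree count — it changes a homogeneous polynomial into an analytic function whose Taylor expansion starts at the same degree, which is exactly what "remainder of order $10$" should mean. A second, purely arithmetic point to nail down is the small-divisor bound $|\Omega(j,\ell)|\ge1$ on the non-resonant momentum-zero sextics; this is immediate from integrality but should be stated, since it is what guarantees $\chi$ is not merely formal. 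I would remark that no genuinely infinite-dimensional small-divisor difficulty arises precisely because $P$ is a single sextic term, so there is no need to iterate the normalization and no accumulation of divisors — the scheme terminates after one step, which is why the remainder is already of order $10$.
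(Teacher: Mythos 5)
Your proposal is correct and follows essentially the same route as the paper: the time-one flow of a degree-6 generator $\chi$ obtained by dividing the non-resonant part of $P$ by the (integer, hence $\geq 1$ in absolute value) divisors $\Omega(j,\ell)$, a Lie/Taylor expansion collapsing the first-order terms to $N+Z_6$, and the observation that all leftover brackets are of degree $10$ with bounded coefficients so that the vector-field estimate for $R_{10}$ follows from the $\ell^1$-algebra structure of $\mathcal A_\rho$ (the paper's Lemma on the class $\mathcal P_p$). The only cosmetic differences are the exact integral form of the remainder and your explicit statement of the bound $|\Omega(j,\ell)|\ge 1$, which the paper phrases as "no small divisors in this resonant case."
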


By abuse of notation, in the proposition and in the sequel, the new variables $(\xi',\eta')=\tau^{-1}(\xi,\eta)$ are still  denoted by $(\xi,\eta)$.

\begin{proof} 
For convenience of the reader, we briefly recall the Birkhoff normal form method.
Let us search $\tau$ as time one flow of $\chi$ a polynomial Hamiltonian of order 6, 
\begin{equation*} 
\chi = \sum_{\substack{j,\ell\in \Z^3\\ \mathcal M(j,\ell)=0}} a_{j,\ell}\   \xi_{j_1}\xi_{j_2}\xi_{j_3}\eta_{\ell_1}\eta_{\ell_2}\eta_{\ell_3}.
\end{equation*}
For any smooth function $F$,   the Taylor expansion of $F\circ \Phi^t_\chi$ between $t=0$ and $t=1$ gives
$$F\circ \tau = F+ \{F ,\chi \}+\frac 1 2 \int_0^1(1-t)\{\{F,\chi\},\chi\}\circ \Phi^t_\chi \text{d}t.$$
Applying this formula to $H=N+P$ we get
\begin{equation*} 
H\circ \tau =N+P+ \{N ,\chi \}+\{P,\chi\}+\frac 1 2 \int_0^1(1-t)\{\{H,\chi\},\chi\}\circ \Phi^t_\chi \text{d}t.
\end{equation*}
 Therefore in order to obtain $H\circ \tau = N +Z_{6}+R_{10}$ we define  
\begin{equation} \label{homo}
Z_{6}=P+ \{N ,\chi \}
\end{equation}
 and 
\begin{equation}\label{R}
R_{10}=\{P ,\chi \}+ \frac 1 2 \int_0^1(1-t)\{\{H,\chi\},\chi\}\circ \Phi^t_\chi \text{d}t.
\end{equation}
For $j,\ell\in \Z^3$ we define the associated divisor by
$$\Omega(j,\ell)=j_1^2+j_2^2+j_3^2-\ell_1^2-\ell_2^2-\ell_3^2.
$$
The homological equation \ref{homo} is solved by defining
\begin{equation*} 
\chi := \sum_{\substack{j,\ell\in \Z^3\\ \mathcal M(j,\ell)=0, \Om(j,\ell)\neq 0}} \frac 1 {i\Om(j,\ell)} \xi_{j_1}\xi_{j_2} \xi_{j_3}\eta_{\ell_1}\eta_{\ell_2}\eta_{\ell_3}
\end{equation*}
and thus 
$\dis Z_6 = \sum_{\substack{j,\ell\in \Z^3\\ \mathcal M(j,\ell)=0, \Om(j,\ell)= 0}} \xi_{j_1}\xi_{j_2}\xi_{j_3}\eta_{\ell_1}\eta_{\ell_2}\eta_{\ell_3}.$
At this stage we define the class $\Pc_p$ of formal polynomial $$\dis Q=\sum_{\substack{j,l\in \Z^p\\ \mathcal M(j,\ell)=0}} a_{j,\ell}  \xi_{j_1}\xi_{j_2}\cdots\xi_{j_p}\eta_{\ell_1}\eta_{\ell_2}\cdots\eta_{\ell_p}$$
where the $a_{j\ell}$ form a bounded family and we define $[Q]=\sup_{j,\ell}|a_{j\ell}|$. We recall the following result from \cite{FG}
\begin{lemm}\label{FG} Let $P\in \Pc_p$. Then
\begin{itemize}
\item[(i)] $P$ is well defined and continuous (and thus analytic) on $\Ac_\rho$ and 
$$|P(\xi,\eta)|\leq [P]||(\xi,\eta)||_0^{2p}\leq  [P]||(\xi,\eta)||_\rho^{2p}.$$
\item[(ii)] The associated vector field $X_P$ is bounded (and thus smooth) from $\Ac_\rho$ to $\Ac_\rho$ and 
$$ ||X_P(\xi,\eta)||_\rho\leq 2p[P]||(\xi,\eta)||_\rho^{2p-1}.$$
\item[(iii)] Let $Q\in \Pc_q$ then $\{P,Q\}\in \Pc_{p+q-2}$ and
$$[\{P,Q\}]\leq 2qp [P][Q].$$
\end{itemize}
\end{lemm}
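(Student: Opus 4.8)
The plan is to prove the three items in turn, all of them resting on the momentum constraint $\Mc(j,\ell)=0$ built into the definition of $\Pc_p$, which is exactly what allows the exponential weights $e^{\rho|\cdot|}$ to be redistributed among the variables of a monomial. For (i), I would start from the triangle inequality, pull out $[P]$, drop the momentum constraint (removing it only enlarges the sum), and factor what remains as $\bigl(\sum_k|\xi_k|\bigr)^p\bigl(\sum_k|\eta_k|\bigr)^p$. Since $\sum_k(|\xi_k|+|\eta_k|)=\|(\xi,\eta)\|_0\le\|(\xi,\eta)\|_\rho$ because $e^{\rho|k|}\ge 1$, this gives $|P(\xi,\eta)|\le[P]\,\|(\xi,\eta)\|_\rho^{2p}$. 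Continuity on $\Ac_\rho$ then follows because on each ball $B_\rho(r)$ the series defining $P$ converges uniformly (the tails are dominated by the same geometric-type bound), so $P$ is a locally uniform limit of polynomials; analyticity comes for free, $P$ being a norm-convergent power series.

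For (ii), I would write $X_P=\bigl(-i\,\partial P/\partial\eta_j,\ i\,\partial P/\partial\xi_j\bigr)_{j\in\Z}$ and estimate $\sum_j e^{\rho|j|}\,|\partial P/\partial\eta_j|$, the $\xi_j$-derivatives being handled symmetrically and accounting for the remaining factor $2$. Differentiating in $\eta_j$ selects the terms of $P$ in which some $\ell_i$ equals $j$; there are at most $p$ such slots, which is where the factor $p$ enters. The key point is that in a surviving monomial the relation $\ell_i=j$ together with $\Mc(j,\ell)=0$ forces $|j|\le\sum_k|j_k|+\sum_{k\ne i}|\ell_k|$, hence $e^{\rho|j|}\le\prod_k e^{\rho|j_k|}\prod_{k\ne i}e^{\rho|\ell_k|}$; distributing this weight over the remaining $2p-1$ variables and summing over $j$ (which merely frees the index that was pinned, turning a constrained sum into an unconstrained product of $\ell^1_\rho$-norms) yields $\sum_j e^{\rho|j|}\,|\partial P/\partial\eta_j|\le p[P]\,\|(\xi,\eta)\|_\rho^{2p-1}$. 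Adding the symmetric contribution gives the bound with $2p[P]$.

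For (iii), I would expand $\{P,Q\}=-i\sum_j\bigl(\partial_{\xi_j}P\,\partial_{\eta_j}Q-\partial_{\eta_j}P\,\partial_{\xi_j}Q\bigr)$ into monomials: a monomial of $\{P,Q\}$ arises by deleting one factor $\xi_j$ from a monomial of $P$ and one factor $\eta_j$ with the same $j$ from a monomial of $Q$ (or with $P$ and $Q$ exchanged). The a priori infinite sum over $j$ collapses because momentum conservation in the $P$-monomial pins $j$ to (minus) the momentum of the surviving $P$-factor: for a prescribed output monomial together with an assignment of its factors to $P$ and to $Q$, the index $j$ is uniquely determined, so the $j$-sum reduces to a single term. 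The multiplicity of producing a prescribed surviving factor is at most $p$ on the $P$-side and at most $q$ on the $Q$-side, and the two terms of the bracket contribute the factor $2$; hence every coefficient of $\{P,Q\}$ is bounded by $2pq\,[P]\,[Q]$, that is $[\{P,Q\}]\le 2pq[P][Q]$, while counting factors and checking that momentum is still conserved shows that $\{P,Q\}$ lands in the stated polynomial class.

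The main obstacle is exactly the bookkeeping in (iii): one must check carefully that, although the sum over $j$ ranges over all of $\Z$, the momentum constraint makes it finite with a single contributing index, and that the combinatorial multiplicity of generating a given monomial is genuinely bounded by $2pq$ rather than by a quantity growing with the number of variables. Items (i) and (ii) are then essentially routine once the weight-redistribution mechanism of (ii) is in place.
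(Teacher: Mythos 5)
Your proposal is correct and follows essentially the same route as the paper's Appendix A: part (i) by dropping the momentum constraint and factoring the $\ell^1$ sums, part (ii) by using $\Mc(j_1,\dots,j_{p-1},k;\ell_1,\dots,\ell_p)=0\Rightarrow |k|\le|j_1|+\cdots+|\ell_p|$ to redistribute the weight $e^{\rho|k|}$, and part (iii) by noting that momentum conservation pins the contracted index $s$ and that the excision multiplicity is at most $pq$ for each of the two bracket terms. No substantive differences to report.
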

For convenience of the reader the proof of this lemma is recalled in the appendix A.

By using this Lemma and since there are no small divisors in this resonant case, $Z_6$ and $\chi$ have analytic vector fields on $\Ac_\rho$. On the other hand, since $\chi$ is homogeneous of order 6,  for $\eps$ sufficiently small, the time one flow generated by $\chi$  maps the ball $B_\rho(\eps)$ into the  ball $B_\rho(2\eps)$ and is close to the identity in the sense of assertion $(iv)$. \\
Concerning $R_{10}$, by construction it is a Hamiltonian function which is of order at least 10. To obtain assertion $(iii)$ it remains to prove that the vector field $X_{R_{10}}$ is smooth from $B_\rho(\eps)$ into $\Ac_\rho$ in such a way we can Taylor expand $X_{R_{10}}$ at the origin. This is clear for the first term of \eqref{R}: $\{P,\chi\}$ have a smooth vector field as a consequence of Lemma \ref{FG} assertions $(ii)$ and $(iii)$. For the second term, notice that $\{H,\chi\}=Z_{6}-P +\{P,\chi\}$ which is  a polynomial on $\Ac_\rho$ having bounded coefficients and the same is true for $Q=\cp{\{H,\chi\},\chi}$. Therefore, in view of Lemma \ref{FG},  $X_Q$ is smooth. Now, since for $\eps$ small enough $\Phi^t_\chi$ maps smoothly the ball $B_\rho(\eps)$ into the  ball $B_\rho(2\eps)$  for all $0\leq t\leq 1$, we conclude that $\int_0^1(1-t)\{\{H,\chi\},\chi\}\circ \Phi^t_\chi \text{d}t$ has a smooth vector field.
\end{proof}

\subsection{Description of the resonant normal form}

In this subsection we  study the resonant part of the normal form  given by Proposition \ref{NF}
\begin{equation*}
Z_{6}=\sum_{\mathcal{R}}\xi_{j_{1}}\xi_{j_{2}}\xi_{j_{3}}\eta_{\ell_{1}}\eta_{\ell_{2}}\eta_{\ell_{3}}.
\end{equation*}
We have 
\begin{prop}\label{prop.Z6}
The polynomial $Z_{6}$ reads 
 \begin{equation}\label{Z6}
Z_{6}= Z^{i}_{6}+Z^{e}_{6}+Z_{6,2}+Z_{6,3},
\end{equation}
where
\begin{enumerate}[(i)]
 \item $\dis Z^{i}_6$  is a homogeneous polynomial of degree 6 which only depends on the actions (recall the definition \eqref{J} of $J$):
\begin{equation*}
Z^{i}_{6}(I)=\sum_{\mathcal{R}_{0}}\xi_{j_{1}}\xi_{j_{2}}\xi_{j_{3}}\eta_{\ell_{1}}\eta_{\ell_{2}}\eta_{\ell_{3}}=6J^{3}-9J\sum_{k\in\Z}I^{2}_{k}+4\sum_{k\in\Z}I^{3}_{k};
\end{equation*}
\item  $\dis Z^{e}_6$    is the effective Hamiltonian, it is a homogeneous polynomial of degree 6 which  involves only modes in the resonant set $\A$:
 $$Z^{e}_{6}(\xi,\eta)= 9(\xi_{a_{2}}\xi^{2}_{a_{1}}\eta_{b_{2}}\eta^{2}_{b_{1}}+\xi_{b_{2}}\xi^{2}_{b_{1}}\eta_{a_{2}}\eta^{2}_{a_{1}});$$ 
\item $Z_{6,2}$ is an homogeneous polynomial of degree 6 which contains all the terms   involving exactly  two modes which are not in $\mathcal{A}$;
\item  $Z_{6,3}$ is an homogeneous polynomial of degree 6 which contains all the terms   involving at least  three modes which are not in $\mathcal{A}$.
\end{enumerate}
\end{prop}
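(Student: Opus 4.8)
The plan is to split the index set $\mathcal{R}$ defining $Z_6$ by means of Lemma \ref{lem.actions}, and then to sort the surviving monomials according to the number of distinct Fourier indices lying outside the resonant set $\mathcal{A}$.

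First I would apply Lemma \ref{lem.actions}: every $(j,\ell)\in\mathcal{R}$ satisfies either $\{j_1,j_2,j_3\}=\{\ell_1,\ell_2,\ell_3\}$ as multisets, i.e. $(j,\ell)\in\mathcal{R}_0$, or $\{j_1,j_2,j_3\}\cap\{\ell_1,\ell_2,\ell_3\}=\emptyset$. This already writes $Z_6=Z_6^i+W$, where $W$ gathers the monomials with disjoint $j$- and $\ell$-supports. On $\mathcal{R}_0$ the triple $(\ell_1,\ell_2,\ell_3)$ is a permutation of $(j_1,j_2,j_3)$, so each monomial equals $I_{j_1}I_{j_2}I_{j_3}$ and $Z_6^i$ depends only on the actions. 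To get the explicit formula in $(i)$ I would count, for a fixed ordered triple $(j_1,j_2,j_3)$, the number of ordered triples $(\ell_1,\ell_2,\ell_3)$ with the same multiset — this multiplicity is $6$, $3$ or $1$ according as the $j_s$ are pairwise distinct, exactly two coincide, or all coincide — and then convert the three resulting sums into the symmetric functions $J^3$, $J\sum_k I_k^2$, $\sum_k I_k^3$ using the identities $\sum_{a\ne b}I_a^2I_b=J\sum_k I_k^2-\sum_k I_k^3$ and $\sum_{\text{distinct}}I_{j_1}I_{j_2}I_{j_3}=J^3-3J\sum_k I_k^2+2\sum_k I_k^3$. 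This yields $Z_6^i=6J^3-9J\sum_k I_k^2+4\sum_k I_k^3$.

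Next I would analyse $W$. Suppose one of its monomials has exactly one distinct index outside $\mathcal{A}$. Using that $\mathcal{R}$ is invariant under permuting the $j$'s, permuting the $\ell$'s, and exchanging the two groups, I may assume that index is $\ell_3$, with $j_1,j_2,j_3,\ell_1,\ell_2\in\mathcal{A}$; then $(j_1,j_2,j_3,\ell_1,\ell_2,\ell_3)\in\mathcal{R}$ with $j_1,j_2,j_3,\ell_1\in\mathcal{A}$, and Lemma \ref{lem.couple} forces $\ell_2,\ell_3\in\mathcal{A}$ — a contradiction. Hence no monomial of $Z_6$ involves exactly one mode outside $\mathcal{A}$. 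If instead all six indices lie in $\mathcal{A}$, then $(j,\ell)$ is a non-trivial solution of \eqref{eq.res} contained in $\mathcal{A}$, and I would invoke the enumeration already carried out inside the proof of Lemma \ref{lem.res}: up to reordering within each group and up to the $j\leftrightarrow\ell$ swap, the only possibility is $\{j_1,j_2,j_3\}=\{a_1,a_1,a_2\}$, $\{\ell_1,\ell_2,\ell_3\}=\{b_1,b_1,b_2\}$, together with its complex conjugate. Since the monomial $\xi_{a_2}\xi_{a_1}^2\eta_{b_2}\eta_{b_1}^2$ is produced by the $3\times 3=9$ ordered representatives of these two multisets (and likewise for the conjugate), summing gives $Z_6^e=9(\xi_{a_2}\xi_{a_1}^2\eta_{b_2}\eta_{b_1}^2+\xi_{b_2}\xi_{b_1}^2\eta_{a_2}\eta_{a_1}^2)$, which is $(ii)$. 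Every remaining monomial of $W$ then involves at least two modes outside $\mathcal{A}$; defining $Z_{6,2}$ (resp. $Z_{6,3}$) to collect those with exactly two (resp. at least three) such modes yields \eqref{Z6} and assertions $(iii)$–$(iv)$, all four pieces being homogeneous of degree $6$ because $Z_6$ is.

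I expect the only genuinely delicate points to be the careful bookkeeping of ordered tuples versus multisets — in particular making the constant $9$ in $Z_6^e$ and the coefficients $6,-9,4$ in $Z_6^i$ come out correctly — and citing the arithmetic Lemmas \ref{lem.actions}, \ref{lem.res}, \ref{lem.couple} in exactly the form needed, especially extracting from the proof of Lemma \ref{lem.res} the fact that the listed solution is the \emph{only} one contained in a resonant set, not merely \emph{a} solution. Everything else reduces to a finite inspection.
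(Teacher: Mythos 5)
Your proposal is correct, and its overall architecture coincides with the paper's: the dichotomy of Lemma \ref{lem.actions} splits $Z_6$ into $Z_6^i$ plus a disjoint-support part, Lemma \ref{lem.couple} kills the would-be $Z_{6,1}$, and the enumeration inside the proof of Lemma \ref{lem.res} identifies the all-internal monomials as $Z_6^e$ with multiplicity $3\times3=9$. Where you genuinely diverge is the computation of $Z_6^i$, which is the part on which the paper spends most of its effort. The paper conditions on which of $j_1,j_2,j_3$ equals $\ell_1$, splitting the sum into three pieces $\Sigma_1,\Sigma_2,\Sigma_3$, and evaluates each via two auxiliary quartic identities ($Z_4=2J^2-\sum_k I_k^2$ and $W_4^{(k)}=2I_k(J-I_k)$), at the cost of two extra lemmas. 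You instead count, for each ordered triple $(j_1,j_2,j_3)$, the number of ordered rearrangements $(\ell_1,\ell_2,\ell_3)$ of it ($6$, $3$ or $1$ according to the coincidence pattern) and convert to power sums; with $S_m=\sum_k I_k^m$ one gets $6(J^3-3JS_2+2S_3)+3\cdot3(JS_2-S_3)+S_3=6J^3-9JS_2+4S_3$, recovering the paper's coefficients. Your route is shorter and more transparent; the paper's buys the intermediate identities $Z_4$ and $W_4^{(k)}$, which are not in fact reused elsewhere. Two minor points to watch: first, ``involving exactly one mode not in $\mathcal{A}$'' should be read as one \emph{factor} carrying an external index (your reduction ``I may assume that index is $\ell_3$'' silently assumes this), though Lemma \ref{lem.couple} equally excludes a single external index occupying two $\eta$-slots, so nothing is lost either way; second, as you yourself flag, identifying $Z_6^e$ requires checking that $(j_1,j_2)=(a_2,a_1)$ and $(b_2,b_1)$ are the only ordered pairs in $\mathcal{A}$ for which the forced values $\ell_1=\tfrac13(-j_1+4j_2)$ and $\ell_2=\tfrac13(2j_1+j_2)$ land back in $\mathcal{A}$ --- a twelve-case finite verification that the paper also leaves implicit.
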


\begin{exem} Assume that $\mathcal{A}=\{-2,1,2,-1\}$. Then we have  $Z^{e}_{6}(\xi,\eta)= 9(\xi_{-2}\xi^{2}_{1}\eta_{2}\eta^{2}_{-1}+\xi_2\xi^{2}_{-1}\eta_{-2}\eta^{2}_{1})$, and we can compute (see Example \eqref{exemple})
\begin{eqnarray*}
Z_{6,2}(\xi,\eta)&=& 36(\xi_{3}\xi_{-2}\xi_{-1}\eta_{-3}\eta_{2}\eta_{1} + \xi_{-3}\xi_{2}\xi_{1}\eta_{3}\eta_{-2}\eta_{-1})\\
&& +9(\xi_{4}\xi^{2}_{-2}\eta_{-4}\eta^{2}_{2} + \xi_{-4}\xi^{2}_{2}\eta_{4}\eta^{2}_{-2}).
\end{eqnarray*} 
If $\mathcal{A}=\{-1,5,7,1\}$, the term $Z_{6,2}$ is much more complicated (see Example \eqref{exemple}).
\end{exem}

\proof ({\it of Proposition \ref{prop.Z6}})
A priori, in \eqref{Z6} there should also be a polynomial $Z_{6,1}$  composed of the terms involving exactly  one mode which is  not in $\mathcal{A}$. An important fact of Proposition \ref{prop.Z6} is that $Z_{6,1}=0$, and this is a consequence of Lemma \ref{lem.couple}. \\
The specific form of the effective Hamiltonian announced in (ii) follows from the proof of Lemma \ref{lem.res}.\\ It remains to compute $Z_6^i$. This is done in the two following lemmas.\\
 Denote by 
 \begin{equation*}
{\mathcal Q}=\lbrace(j_1,j_2,\ell_1,\ell_{2})\in\mathbb{Z}^4\;s.t. \;\;
 j_1+j_2=\ell_{1}+\ell_{2} \;\; {\rm and } \;\; j_1^2+j_2^2=\ell^{2}_{1}+\ell^{2}_{2}\rbrace.
\end{equation*}
Observe that if $(j_{1},j_{2},\ell_{1},\ell_{2})\in \mathcal{Q}$, then $\{j_{1},j_{2}\}=\{\ell_{1},\ell_{2}\}$ (see the proof of Lemma \ref{lem.actions}). Next, we can state 
\begin{lemm} The two following identities hold true
\begin{equation}\label{Z4}
Z_{4}(I):=\sum_{(j_1,j_2,\ell_1,\ell_{2})\in \mathcal Q}\xi_{j_{1}}\xi_{j_{2}}\eta_{\ell_{1}}\eta_{\ell_{2}}=2J^{2}-\sum_{j\in\Z}I_{j}^{2},
\end{equation}

\begin{equation}\label{Z44}
W^{(k)}_{4}(I):=\sum_{(j_1,\ell_2,\ell_{3})\in \Omega^{(k)}} \xi_{k}\xi_{j_{1}}\eta_{\ell_{2}}\eta_{\ell_{3}}=2I_{k}(J-I_{k}),
\end{equation}
where $\Omega^{(k)}=\big\{   (j_1,\ell_2,\ell_{3})\in \Z^{3}\;\;\text{s.t.}\;\;   (k,j_1,\ell_2,\ell_{3})\in \mathcal Q\;\;\text{and}\;\;j_{1}\neq k  \big\}$.
\end{lemm}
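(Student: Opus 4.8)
The plan is to reduce both identities to elementary counting, relying on the description of $\mathcal Q$ recorded just before the statement: a quadruple $(j_1,j_2,\ell_1,\ell_2)$ lies in $\mathcal Q$ if and only if $\{j_1,j_2\}=\{\ell_1,\ell_2\}$. This is the two-variable version of Lemma \ref{lem.actions}: squaring $j_1+j_2=\ell_1+\ell_2$ and subtracting $j_1^2+j_2^2=\ell_1^2+\ell_2^2$ gives $j_1j_2=\ell_1\ell_2$, so $\{j_1,j_2\}$ and $\{\ell_1,\ell_2\}$ are the roots of one and the same quadratic. Consequently every monomial occurring in $Z_4$ or in $W^{(k)}_4$ equals a product $I_{j_1}I_{j_2}$ (or $I_j^2$ on the diagonal), and the whole content of the lemma is to count with the correct multiplicity.

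For \eqref{Z4}, I would split the sum over $\mathcal Q$ into the diagonal part $j_1=j_2$ and the rest. If $j_1=j_2=:j$, then the conditions force $\ell_1=\ell_2=j$ (the associated quadratic is $(X-j)^2$), so the term is $\xi_j^2\eta_j^2=I_j^2$, contributing $\sum_{j\in\Z}I_j^2$. If $j_1\neq j_2$, then $\ell_1\neq\ell_2$ and the only admissible choices are $(\ell_1,\ell_2)=(j_1,j_2)$ and $(\ell_1,\ell_2)=(j_2,j_1)$, each yielding $\xi_{j_1}\xi_{j_2}\eta_{j_1}\eta_{j_2}=I_{j_1}I_{j_2}$; summing over ordered pairs with $j_1\neq j_2$ gives $2\sum_{j_1\neq j_2}I_{j_1}I_{j_2}$. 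Since $\big(\sum_{j}I_j\big)^2=\sum_j I_j^2+\sum_{j_1\neq j_2}I_{j_1}I_{j_2}$, i.e. $\sum_{j_1\neq j_2}I_{j_1}I_{j_2}=J^2-\sum_j I_j^2$, one concludes $Z_4=2\big(J^2-\sum_j I_j^2\big)+\sum_j I_j^2=2J^2-\sum_j I_j^2$.

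For \eqref{Z44} the same reasoning is even shorter, because the extra constraint $j_1\neq k$ built into $\Omega^{(k)}$ already discards the diagonal. Given $(j_1,\ell_2,\ell_3)\in\Omega^{(k)}$, membership $(k,j_1,\ell_2,\ell_3)\in\mathcal Q$ together with $j_1\neq k$ forces $\{\ell_2,\ell_3\}=\{k,j_1\}$, so there are exactly two admissible ordered pairs $(\ell_2,\ell_3)$, each contributing $\xi_k\xi_{j_1}\eta_k\eta_{j_1}=I_kI_{j_1}$. Summing over $j_1\neq k$ then gives $W^{(k)}_4=\sum_{j_1\neq k}2I_kI_{j_1}=2I_k\sum_{j_1\neq k}I_{j_1}=2I_k(J-I_k)$.

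There is no genuine obstacle in this lemma; the only point deserving attention is the bookkeeping of multiplicities — distinguishing ordered from unordered pairs and counting the diagonal $j_1=j_2$ exactly once in \eqref{Z4} — together with the harmless use of the algebraic identity $\big(\sum_j I_j\big)^2=\sum_j I_j^2+\sum_{j_1\neq j_2}I_{j_1}I_{j_2}$ to pass from $\sum_{j_1\neq j_2}I_{j_1}I_{j_2}$ to $J^2-\sum_j I_j^2$.
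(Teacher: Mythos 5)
Your proof is correct and follows essentially the same route as the paper: both use the observation that $(j_1,j_2,\ell_1,\ell_2)\in\mathcal Q$ forces $\{j_1,j_2\}=\{\ell_1,\ell_2\}$ and then count multiplicities, the only cosmetic difference being that you split the sum in \eqref{Z4} according to $j_1=j_2$ versus $j_1\neq j_2$ while the paper splits according to $j_1=\ell_1$ versus $j_1\neq\ell_1$. The bookkeeping is handled correctly in both identities, so nothing further is needed.
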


\begin{proof}
First we prove \eqref{Z4}. Thanks to the previous  remark and the fact  that $\xi_{j}\eta_{j}=I_{j}$, we have
\begin{eqnarray*}
Z_{4}(I)&=&\sum_{\mathcal Q,j_{1}=\ell_{1}}\xi_{j_{1}}\xi_{j_{2}}\eta_{\ell_{1}}\eta_{\ell_{2}}+\sum_{\mathcal Q,j_{1}\neq\ell_{1}}\xi_{j_{1}}\xi_{j_{2}}\eta_{\ell_{1}}\eta_{\ell_{2}}\nonumber\\
&=&\sum_{(j_{1},j_{2})\in \Z^{2}}I_{j_{1}}I_{j_{2}}+\sum_{\substack{   (j_{1},j_{2})\in \Z^{2}\\  j_{1}\neq j_{2}}}I_{j_{1}}I_{j_{2}}\nonumber\\
&=&2\big(\sum_{j\in\Z}I_{j}\big)^{2}-\sum_{j\in\Z}I_{j}^{2}=2J^{2}-\sum_{j\in\Z}I_{j}^{2},
\end{eqnarray*}
which was the claim.\\
We now turn to \eqref{Z44}. Again we split the sum in two
\begin{eqnarray*}
W_{4}^{(k)}(I)&=&\sum_{\substack{(j_1,\ell_2,\ell_{3})\in \Omega^{(k)}\\j_{1}=\ell_{2}}} \xi_{k}\xi_{j_{1}}\eta_{\ell_{2}}\eta_{\ell_{3}}+\sum_{\substack{(j_1,\ell_2,\ell_{3})\in \Omega^{(k)}\\j_{1}\neq\ell_{2}}} \xi_{k}\xi_{j_{1}}\eta_{\ell_{2}}\eta_{\ell_{3}}\\
&=&I_{k}\sum_{j_{1}\in\Z\backslash \{k\}} I_{j_{1}}+I_{k}\sum_{j_{1}\in\Z\backslash \{k\}} I_{j_{1}}\nonumber\\
&=& 2I_{k}(J-I_{k}),
\end{eqnarray*}
hence the result.
\end{proof}

\begin{lemm} The following identity holds true
\begin{equation*}
Z^{i}_{6}(I):=\sum_{\mathcal{R}_{0}}\xi_{j_{1}}\xi_{j_{2}}\xi_{j_{3}}\eta_{\ell_{1}}\eta_{\ell_{2}}\eta_{\ell_{3}}=6J^{3}-9J\sum_{k\in \Z}I^{2}_{k}+4\sum_{k\in \Z}I^{3}_{k}.
\end{equation*}
\end{lemm}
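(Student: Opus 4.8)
The plan is to reduce the sum over $\mathcal{R}_0$ to sums over the simpler resonant sets $\mathcal{Q}$ and $\Omega^{(k)}$ already computed in the previous lemma, by a careful case analysis according to how many distinct indices appear among $\{j_1,j_2,j_3\}$.

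First I would recall that, by definition, $\mathcal{R}_0$ consists of those $(j_1,j_2,j_3,\ell_1,\ell_2,\ell_3)\in\mathbb{Z}^6$ satisfying the momentum and energy conditions together with the constraint $\{j_1,j_2,j_3\}=\{\ell_1,\ell_2,\ell_3\}$ as multisets. Hence in $Z^i_6$ the monomial associated to such a tuple is exactly $I_{j_1}I_{j_2}I_{j_3}$, and the sum becomes $Z^i_6(I)=\sum I_{j_1}I_{j_2}I_{j_3}$, the sum being taken over all $6$-tuples for which $(\ell_1,\ell_2,\ell_3)$ is a permutation of $(j_1,j_2,j_3)$. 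So for each ordered triple $(j_1,j_2,j_3)$ I must count the number $N(j_1,j_2,j_3)$ of permutations $(\ell_1,\ell_2,\ell_3)$ of it that are admissible; but since any permutation automatically preserves both $\sum \ell_s$ and $\sum \ell_s^2$, every permutation is admissible, so $N(j_1,j_2,j_3)$ is simply the number of distinct orderings of the multiset $\{j_1,j_2,j_3\}$. This gives $N=6$ when $j_1,j_2,j_3$ are pairwise distinct, $N=3$ when exactly two coincide, and $N=1$ when all three coincide.

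Then I would split $\sum_{(j_1,j_2,j_3)\in\mathbb{Z}^3} I_{j_1}I_{j_2}I_{j_3}$ (the unrestricted sum, which equals $J^3$) according to these three cases. Writing $S_{\text{all dist}}$, $S_{\text{two eq}}$, $S_{\text{all eq}}$ for the three partial sums over ordered triples, one has $J^3 = S_{\text{all dist}}+S_{\text{two eq}}+S_{\text{all eq}}$, with $S_{\text{all eq}}=\sum_k I_k^3$ and $S_{\text{two eq}}=3\big(\sum_k I_k^2\big)J-3\sum_k I_k^3$ (the factor $3$ counting which of the three slots is the odd one out, and subtracting the over-counted all-equal contribution). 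Consequently
\begin{equation*}
Z^i_6(I)=6\,S_{\text{all dist}}+3\,S_{\text{two eq}}+S_{\text{all eq}}
=6J^3-3\,S_{\text{two eq}}-5\,S_{\text{all eq}}+3\,S_{\text{two eq}}+S_{\text{all eq}},
\end{equation*}
which after substituting $S_{\text{two eq}}=3J\sum_k I_k^2-3\sum_k I_k^3$ and $S_{\text{all eq}}=\sum_k I_k^3$ collapses to $6J^3-9J\sum_k I_k^2+4\sum_k I_k^3$, exactly the claimed identity. (One may equally well present this by invoking \eqref{Z4} and \eqref{Z44}: $Z^i_6$ is $\sum_k I_k\, W_4^{(k)}$-type expression plus diagonal corrections; both routes are bookkeeping.)

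The only genuine point of care — and the step I expect to be the mild obstacle — is making sure the combinatorial weights are right: one must not double-count tuples, and must correctly handle the boundary cases where indices coincide, since a permutation of a multiset with repetitions produces fewer than $6$ distinct ordered tuples but the monomial $I_{j_1}I_{j_2}I_{j_3}$ is the same. Organizing the computation by the partition type of $\{j_1,j_2,j_3\}$, as above, makes this transparent and avoids any appeal to small-divisor or arithmetic subtleties; the identity is then purely formal. This completes the proof.
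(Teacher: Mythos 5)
Your argument is correct, and it takes a genuinely different (and more self-contained) route than the paper. You observe that $\mathcal{R}_0$ is exactly the set of pairs of triples that are permutations of one another — any permutation automatically preserves $\sum\ell_s$ and $\sum\ell_s^2$, and conversely the multiset equality is built into the definition of $\mathcal{R}_0$ — so each summand is $I_{j_1}I_{j_2}I_{j_3}$ counted with multiplicity $6$, $3$ or $1$ according to the partition type of $\{j_1,j_2,j_3\}$. Decomposing $J^3=S_{\text{all dist}}+S_{\text{two eq}}+S_{\text{all eq}}$ with $S_{\text{two eq}}=3J\sum_k I_k^2-3\sum_k I_k^3$ and $S_{\text{all eq}}=\sum_k I_k^3$ then gives $Z_6^i=6J^3-3S_{\text{two eq}}-5S_{\text{all eq}}=6J^3-9J\sum_k I_k^2+4\sum_k I_k^3$. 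The paper instead splits the sum into $\Sigma_1+\Sigma_2+\Sigma_3$ according to which of $j_1,j_2,j_3$ equals $\ell_1$, and feeds each piece through the auxiliary identities \eqref{Z4} and \eqref{Z44} for $Z_4$ and $W_4^{(k)}$; your approach dispenses with that auxiliary lemma entirely and makes the purely combinatorial nature of the identity more transparent, at the cost of not reusing the $Z_4$ computation that the paper needs anyway. One cosmetic slip: in your displayed chain the middle expression should read $6J^3-6S_{\text{two eq}}-6S_{\text{all eq}}+3S_{\text{two eq}}+S_{\text{all eq}}$ (the extra $+3S_{\text{two eq}}+S_{\text{all eq}}$ tacked onto the already-combined form is redundant as written); the value $6J^3-3S_{\text{two eq}}-5S_{\text{all eq}}$ that you actually substitute into is the correct one, so the final computation stands.
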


\begin{proof} First we split the sum into three parts
\begin{eqnarray*}
Z^{i}_{6}(I)&=&\sum_{\mathcal{R}_{0},j_{1}=\ell_{1}}\xi_{j_{1}}\xi_{j_{2}}\xi_{j_{3}}\eta_{\ell_{1}}\eta_{\ell_{2}}\eta_{\ell_{3}}+\sum_{\mathcal{R}_{0},j_{1}\neq \ell_{1},j_{2}=\ell_{1}}\xi_{j_{1}}\xi_{j_{2}}\xi_{j_{3}}\eta_{\ell_{1}}\eta_{\ell_{2}}\eta_{\ell_{3}}\nonumber\\
&&+\sum_{\substack{\mathcal{R}_{0},j_{1}\neq\ell_{1},\\j_{2}\neq \ell_{1},j_{3}=\ell_{1}}}\xi_{j_{1}}\xi_{j_{2}}\xi_{j_{3}}\eta_{\ell_{1}}\eta_{\ell_{2}}\eta_{\ell_{3}}:=\Sigma_{1}+\Sigma_{2}+\Sigma_{3}.
 \end{eqnarray*}
For the first sum, we use \eqref{Z4} to write
\begin{equation}\label{sig1}
\Sigma_{1}=\sum_{\substack{  (j_2,j_3,\ell_2,\ell_{3})\in \mathcal Q\\ j_{1}\in\Z }}I_{j_{1}}\xi_{j_{2}}\xi_{j_{3}}\eta_{\ell_{2}}\eta_{\ell_{3}}=JZ_{4}(I)=2J^{3}-J\sum_{k\in \Z}I^{2}_{k}.
\end{equation}
Now we deal with the sum $\Sigma_{3}$. Denote by 
 \begin{equation*}
{\mathcal Q^{(k)}}=\big\{(j_1,j_2,\ell_1,\ell_{2})\in\big(\mathbb{Z}\backslash \{k\}\big)^4\;s.t. \;\;
 j_1+j_2=\ell_{1}+\ell_{2} \;\; {\rm and } \;\; j_1^2+j_2^2=\ell^{2}_{1}+\ell^{2}_{2}\big\},
\end{equation*}
then from \eqref{Z4} we deduce that 
\begin{equation*}
Z^{(k)}_{4}(I):=\sum_{(j_1,j_2,\ell_1,\ell_{2})\in \mathcal Q^{(k)}}\xi_{j_{1}}\xi_{j_{2}}\eta_{\ell_{1}}\eta_{\ell_{2}}=2(J-I_{{k}})^{2}-\sum_{j\in\Z}I_{j}^{2}+I^{2}_{k}.
\end{equation*}
Therefore by the previous equality
\begin{eqnarray}
\Sigma_{3}&=&\sum_{\substack{  (j_1,j_2,\ell_2,\ell_{3})\in \mathcal Q^{(\ell_{1})}\\ \ell_{1}\in\Z }}I_{\ell_{1}}\xi_{j_{1}}\xi_{j_{2}}\eta_{\ell_{2}}\eta_{\ell_{3}}=\sum_{k\in \Z}I_{k}Z^{(k)}_{4}(I)\nonumber\\
&=&\sum_{k\in \Z}I_{k}\Big( 2J^{2}-4JI_{k}+2I_{k}^{2}-\sum_{j\in \Z} I_{j}^{2}+I_{k}^{2}    \Big)\nonumber \\
&=&2J^{3}-5J\sum_{k\in \Z}I_{k}^{2}+3\sum_{k\in \Z}I_{k}^{3}.\label{sig3}
\end{eqnarray}
Now we consider $\Sigma_{2}$. By \eqref{Z44} and \eqref{sig3}
\begin{eqnarray}
\Sigma_{2}&=&\sum_{\Rc,j_{1}\neq j_{2}}I_{j_{2}}\xi_{j_{1}}\xi_{j_{3}}\eta_{\ell_{2}}\eta_{\ell_{3}}\nonumber\\
&=&\sum_{\Rc,j_{1}\neq j_{2}, j_{3}\neq j_{2}}I_{j_{2}}\xi_{j_{1}}\xi_{j_{3}}\eta_{\ell_{2}}\eta_{\ell_{3}}+\sum_{\Rc,j_{1}\neq j_{2}, j_{3}= j_{2}}I_{j_{2}}\xi_{j_{1}}\xi_{j_{2}}\eta_{\ell_{2}}\eta_{\ell_{3}}\nonumber\\
&=&\Sigma_{3}+\sum_{j_{2}\in \Z}I_{j_{2}}W^{j_{2}}(I)\nonumber\\
&=&2J^{3}-5J\sum_{k\in\Z}I_{k}^{2}+3\sum_{k\in \Z}I_{k}^{3}+2J\sum_{k\in \Z}I_{k}^{2}-2\sum_{k\in \Z}I_{k}^{3}\nonumber\\
&=&2J^{3}-3J\sum_{k\in \Z}I_{k}^{2}+\sum_{k\in \Z}I_{k}^{3} \label{sig2}.
\end{eqnarray}
Finally, \eqref{sig1}, \eqref{sig3} and \eqref{sig2} yield the result.
\end{proof}

\section{The model equation}\label{Sect3}
We want to describe the dynamic of a solution to \eqref{hamsys} so that $\xi_{j}^{0}=\eta_{j}^{0}=0$ when $j\not \in \mathcal{A}$. In view of the result of Propositions \ref{NF} and \ref{prop.Z6}  we hope that such a solution will be close to the solution (with same initial condition) of the Hamiltonian flow of 
$N+Z^i_{6}+Z_{6}^{e}$ reduced to the four modes of the resonant set, i.e.
\begin{equation}\label{Hat}
\wh{H}=\sum_{j\in \mathcal{A}}j^{2}I_{j}+6J^{3}-9J\sum_{k \in \mathcal{A}}I^{2}_{k}+4\sum_{k\in \mathcal{A}}I^{3}_{k}+18I_{a_{2}}^{1/2}I_{b_{2}}^{1/2}I_{a_{1}}I_{b_{1}}\cos (2\phi_{0}), 
\end{equation}
with $\phi_{0} =\theta_{a_{1}}-\theta_{b_{1}}+\frac12\theta_{a_{2}}-\frac12\theta_{b_{2}}$. \\
The Hamiltonian system associated to $\wh{H}$ is defined on the phase space $\Tc^4\times\R^4\ni (\theta_{a_1},\theta_{a_2},\theta_{a_3},\theta_{a_4};I_{a_1},I_{a_2},I_{a_3},I_{a_4})$ by
\begin{equation} \label{ham1}
\left\{\begin{array}{rrl}
\dot \theta_{a_j}=&-\frac{\partial \wh{H}}{\partial I_{a_j}} &\quad j=1,2,\\[4pt]
\dot I_{a_j}=&\frac{\partial \wh{H}}{\partial \theta_{a_j}} &\quad j=1,2,\\[4pt]
\dot \theta_{b_j}=&-\frac{\partial \wh{H}}{\partial I_{b_j}} &\quad j=1,2,\\[4pt]
\dot I_{b_j}=&\frac{\partial \wh{H}}{\partial \theta_{b_j}} &\quad j=1,2,\\[4pt]
\end{array}\right.
\end{equation}

This finite dimensional system turns out to be completely integrable.
\begin{lemm}
The system \eqref{ham1} is completely integrable.
\end{lemm}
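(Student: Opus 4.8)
The plan is to exhibit four Poisson-commuting integrals of the four-degree-of-freedom Hamiltonian $\wh H$ and check that they are functionally independent on an open dense set, so that the Arnold--Liouville theorem applies.

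\textbf{Step 1: Identify the integrals.} First I would observe that $\wh H$ retains the three symmetries of the full Hamiltonian that survive the restriction to $\mathcal A$, together with $\wh H$ itself. Concretely, set
\begin{equation*}
F_1 = \wh H,\quad F_2 = J = I_{a_1}+I_{a_2}+I_{b_1}+I_{b_2},\quad F_3 = M = a_1 I_{a_1}+a_2 I_{a_2}+b_1 I_{b_1}+b_2 I_{b_2},
\end{equation*}
and a fourth quantity adapted to the single resonant angle $\phi_0=\theta_{a_1}-\theta_{b_1}+\tfrac12\theta_{a_2}-\tfrac12\theta_{b_2}$; the natural candidate is a combination of actions conjugate to the three angles on which $\wh H$ does \emph{not} depend, for instance
\begin{equation*}
F_4 = 2I_{a_1}+ I_{a_2} - 2 I_{b_1} - I_{b_2}
\end{equation*}
(or equivalently $2I_{a_1}+I_{a_2}$, once $J$ is known). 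The point is that $\wh H$ depends on the angles only through $\phi_0$, i.e. through a single resonant combination, so the three ``transverse'' action combinations are conserved.

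\textbf{Step 2: Check the Poisson commutations.} The brackets $\{F_2,F_3\}=\{F_2,F_4\}=\{F_3,F_4\}=0$ are immediate since $J$, $M$ and $F_4$ are functions of the actions alone. For $\{F_1,F_j\}$ with $j=2,3,4$ I would perform a change of variables to the angle $\phi_0$ and three complementary angles $\psi_1,\psi_2,\psi_3$ chosen so that the $\psi_i$ are linear combinations of the $\theta$'s with the property that their conjugate actions are precisely (linear combinations giving) $J$, $M$, $F_4$, while $\phi_0$ is conjugate to the remaining action. Since $\wh H$ is independent of $\psi_1,\psi_2,\psi_3$, one gets $\partial\wh H/\partial\psi_i=0$, hence $\dot{(\text{conjugate action})}=0$, i.e. $J$, $M$, $F_4$ are conserved and commute with $\wh H$. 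The only nontrivial check is the existence of such a symplectic (or at least Poisson) linear change of coordinates, which amounts to solving a small linear system over $\Q$; this is where the resonance relations $a_1+a_2=b_1+b_2$ (as $2a_1+a_2=3b_1$, $b_2=\dots$) among the elements of $\mathcal A$ are used to guarantee that $\phi_0$ together with the three conserved combinations form a valid coordinate system.

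\textbf{Step 3: Functional independence.} Finally I would check that $dF_1,\dots,dF_4$ are linearly independent on an open dense subset of $\Tc^4\times\R^4$. The differentials $dF_2,dF_3,dF_4$ are constant covectors in the $I$-variables and are independent iff the corresponding $3\times 4$ integer matrix with rows $(1,1,1,1)$, $(a_1,a_2,b_1,b_2)$, $(2,1,-2,-1)$ has rank $3$, which holds because $\mathcal A$ consists of four distinct integers (so the first two rows are already independent) and a short computation using $a_1=n+3k$, $a_2=n$, $b_1=n+k$, $b_2=n+4k$ with $k\neq 0$ shows the third row is not in their span. Then $dF_1$ fails to be independent from the other three only where $\partial\wh H/\partial\phi_0=0$ \emph{and} the gradient of $\wh H$ with respect to the remaining action vanishes; since $\wh H$ contains the genuinely angle-dependent term $18 I_{a_2}^{1/2}I_{b_2}^{1/2}I_{a_1}I_{b_1}\cos(2\phi_0)$, this locus has empty interior away from the coordinate hyperplanes $I_j=0$. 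Hence on the open dense set where all actions are positive and $\sin(2\phi_0)\neq 0$ the four integrals are independent, and Arnold--Liouville gives complete integrability. I expect Step 2 --- producing the explicit symplectic coordinate change that simultaneously straightens $\phi_0$ and the conserved combinations, and thereby making the commutation relations transparent --- to be the main (though entirely elementary) obstacle; everything else is bookkeeping with the arithmetic of the resonant set.
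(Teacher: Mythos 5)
Your overall strategy is the same as the paper's (exhibit four independent integrals in involution; three of them linear in the actions, the fourth being $\wh H$ itself), but your third linear integral is not actually conserved, and this is a genuine gap. Since $\wh{H}$ depends on the angles only through $\phi_{0}=\theta_{a_{1}}-\theta_{b_{1}}+\frac12\theta_{a_{2}}-\frac12\theta_{b_{2}}$, the equations $\dot I_{j}=\partial\wh{H}/\partial\theta_{j}$ give $\dot I_{a_1}=\partial_{\phi_0}\wh H$, $\dot I_{b_1}=-\partial_{\phi_0}\wh H$, $\dot I_{a_2}=\frac12\partial_{\phi_0}\wh H$, $\dot I_{b_2}=-\frac12\partial_{\phi_0}\wh H$, so a combination $\sum c_{j}I_{j}$ is conserved if and only if the coefficient vector $c$ is orthogonal to the resonance vector $\nu=(1,\frac12,-1,-\frac12)$ (in the ordering $(a_1,a_2,b_1,b_2)$). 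Your $F_{4}=2I_{a_{1}}+I_{a_{2}}-2I_{b_{1}}-I_{b_{2}}$ has coefficient vector $2\nu$, i.e.\ it points exactly in the one non-conserved direction: $\dot F_{4}=5\,\partial_{\phi_0}\wh H$, which does not vanish because of the $\cos(2\phi_{0})$ term. (The variant $2I_{a_1}+I_{a_2}$ fails for the same reason, and is not congruent to $F_4$ modulo $J$.) Your $F_{2}=J$ and $F_{3}=M$ are fine — both coefficient vectors lie in $\nu^{\perp}$, as one checks from $a_1-b_1+\frac12(a_2-b_2)=2k-2k=0$.

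The repair is immediate: the conserved linear combinations form the three-dimensional space $\nu^{\perp}$, so you only need a third element of it independent of $(1,1,1,1)$ and $(a_1,a_2,b_1,b_2)$. The paper takes $K_{1}=I_{a_1}+I_{b_1}$, $K_{2}=I_{a_2}+I_{b_2}$, $K_{1/2}=I_{b_2}+\frac12 I_{a_1}$; equivalently, in your mass/momentum language you could take the quadratic-weight combination $\sum_{j\in\mathcal A}j^{2}I_{j}$, which lies in $\nu^{\perp}$ because $a_1^2-b_1^2+\frac12(a_2^2-b_2^2)=4k(n+2k)-4k(n+2k)=0$, and whose independence from $J$ and $M$ follows from a Vandermonde argument since the four frequencies are distinct. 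With that substitution your Steps 2 and 3 go through; note also that Step 2 requires no symplectic straightening of angles — once the integrals are functions of the actions alone their mutual Poisson brackets vanish trivially, and their brackets with $\wh H$ reduce to the one-line computation above.
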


\begin{proof}
it is straightforward to check that
\begin{equation*}
K_{1}=I_{a_{1}}+I_{b_{1}},\quad K_{2}=I_{a_{2}}+I_{b_{2}} \;\;\text{and}\;\;K_{1/2}=I_{b_{2}}+\frac12 I_{a_{1}},
\end{equation*} 
are constants of motion. 
Furthermore we verify 
\begin{equation*}
\cp{K_{1},\wh{H}}=\cp{K_{2},\wh{H}}=\cp{K_{1/2},\wh{H}}=0,
\end{equation*}
as well as
\begin{equation*}
\cp{K_{1},K_{2}}=\cp{K_{2},K_{1/2}}=\cp{K_{1/2},K_{1}}=0.
\end{equation*}
Moreover the previous quantities are independent. So $\wh{H}$ admits four integrals of motions that are independent and in involution and thus $\wh{H}$ is completely  integrable.
\end{proof}

\subsection{Action angle variables for $\wh{H}$}
In this section we construct action angle variables for  $\wh{H}$ in two particular regimes corresponding to two particular set of initial data. \\ We begin with a partial construction common to both cases.
The previous considerations suggest that we make the following symplectic  change of variables: Denote by
\begin{equation*}
\theta={^{t}}(\theta_{a_{1}},\theta_{b_{1}},\theta_{b_{2}},\theta_{a_{2}}),\quad I={^{t}}(I_{a_{1}},I_{b_{1}},I_{b_{2}},I_{a_{2}}).
\end{equation*}
Then we define the new variables 
\begin{equation*}\label{cht1}
\phi={^{t}}(\phi_{0},\phi_{1},\phi_{2},\phi_{1/2}),\quad K={^{t}}(K_{0},K_{1},K_{2},K_{1/2}),
\end{equation*}
by the linear transform
\begin{equation}\label{cht2}
{\left(\begin{array}{c} 
{\phi} \\ {K} 
\end{array} \right)}=\left(\begin{array}{cc} 
{^{t}}B^{-1} & 0 \\ 0 &  B
\end{array} \right){\left(\begin{array}{c} 
{\theta} \\ {I} 
\end{array} \right)},
\end{equation}
where the matrix $B$ is given by
\begin{equation*}\label{cht3}
B=\begin{pmatrix}1&0&0&0\\
1&1&0&0\\
0&0&1&1\\
\frac12&0&1&0\\
\end{pmatrix} \quad \text{and thus}\quad 
{}^{t}B^{-1}=\begin{pmatrix}1&-1&-\frac12&\frac12\\
0&1&0&0\\
0&0&0&1\\
0&0&1&-1\\
\end{pmatrix} .
\end{equation*}
In the new variables \eqref{ham1} reads
\begin{equation} \label{syst@}
\left\{\begin{array}{rr}
\dot \phi_{0}=&-\frac{\partial \wh{H}}{\partial K_{0}} \\[4pt]
\dot K_{0}=&\frac{\partial \wh{H}}{\partial \phi_{0}}   
\end{array}\right., \qquad 
\left\{\begin{array}{rc}
\dot \phi_{j}=&-\frac{\partial \wh{H}}{\partial K_{j}} \\[4pt]
\dot K_{j}=&0  
\end{array}\right.,\quad \text{for} \quad j =1,2,3.
\end{equation}

In the sequel, we will need the explicit expression of $\wh{H}$ in these new coordinates. Observe that for $j=1,2$ we have
\begin{equation*}
I_{a_{j}}^{2}+I_{b_{j}}^{2}=K_{j}^{2}-2I_{a_{j}}I_{b_{j}} \;\;\text{and}\;\; I_{a_{j}}^{3}+I_{b_{j}}^{3}=K_{j}(K_{j}^{2}-3I_{a_{j}}I_{b_{j}}),
\end{equation*}
then if we introduce the notation
 \begin{equation*}\label{def.f}
F(K_{1},K_{2})=K_{1}+4K_{2}+(K_{1}+K_{2})(K_{1}^{2}+K_{2}^{2}+8K_{1}K_{2}),
\end{equation*}
the Hamiltonian $\wh{H}$ reads 
\begin{multline}
\wh{H}=\wh{H}(\phi_{0},K_{0},K_{1},K_{2},K_{1/2})\label{Hat.gen}\\
\begin{aligned}
&=F(K_{1},K_{2})+6\big[(K_{1}+3K_{2})I_{a_{1}}I_{b_{1}}+(K_{2}+3K_{1})I_{a_{2}}I_{b_{2}}+3I_{a_{2}}^{\frac12}I_{b_{2}}^{\frac12}I_{a_{1}}I_{b_{1}}\cos(2\phi_{0})\big],
\end{aligned}
\end{multline}
where 
\begin{equation*}
I_{a_{1}}=K_{0},\;\;I_{b_{1}}=K_{1}-K_{0},\;\;I_{b_{2}}=K_{1/2}-\frac12K_{0},\;\; I_{a_{2}}=K_{2}-K_{1/2}+\frac12K_{0}.
\end{equation*}
We now want to exhibit some particular trajectories $(\phi_{0},K_{0})$, actually periodic orbits around stable equilibrium. For that we particularise the coefficients $K_{j}$ for $j\neq 0$.

\medskip

Let $A\geq 1/2$. We set $K_{1}=\eps^{2}$, $K_{2}=A\eps^{2}$ and $K_{1/2}=\frac12\eps^{2}$, and we denote by  
\begin{equation*}\label{HA}
\wh{H}_{0}(\phi_0,K_0):= \wh{H}(\phi_{0},K_{0},\eps^{2},A\eps^{2},\frac12\eps^{2}).
\end{equation*}
The evolution of $(\phi_0,K_0)$ is given by
\begin{equation*} \label{Hhat0}
\left\{\begin{array}{rr}
\dot \phi_{0}=&-\frac{\partial \wh{H}_0}{\partial K_{0}} \\[4pt]
\dot K_{0}=&\frac{\partial \wh{H}_0}{\partial \phi_{0}}  . 
\end{array}\right.
\end{equation*}
Then, we make the change of unknown 
\begin{equation*}
\phi_{0}(t)=\phi(\eps^{4}t)\quad \text{and}\quad K_{0}(t)=\eps^{2}K(\eps^{4}t).
\end{equation*}
An elementary computation shows that, the evolution of $(\phi,K)$ is given by 
\begin{equation*}  
\left\{\begin{array}{rl}
\dot \phi=&-\frac{\partial {H_{\star}}}{\partial K}\\[10pt]
\dot K=&\frac{\partial {H_{\star}}}{\partial \phi}.
\end{array}\right.
\end{equation*}
where
\begin{multline}\label{formule}
H_{\star}=H_{\star}(\phi,K) =\\
\begin{aligned} 
&\frac32(1-K)\Big[(A+3)(2A-1)+(7+13A)K+6(1-K)^{\frac12}(2A-1+K)^{\frac12}K\cos(2\phi)\Big].
\end{aligned}
\end{multline}

 \medskip

 \subsubsection{First regime: $A=1/2$}\label{sec.32}

In that case we have
\begin{equation*}\label{hat*}
H_{\star}=H_{\star}(\phi,K) =\frac94K(1-K)\Big[9+4K^{\frac12}(1-K)^{\frac12}\cos(2\phi)\Big],
\end{equation*}
and the  evolution of $(\phi,K)$ is given by
\begin{equation} \label{syst*}
\left\{\begin{array}{rl}
\dot \phi&=-\frac{27}4(1-2K)\Big[3+2K^{\frac12}(1-K)^{\frac12}\cos(2\phi)\Big] \\[10pt]
\dot K   &=-18K^{\frac32}(1-K)^{\frac32}\sin(2\phi).
\end{array}\right.
\end{equation}

 The dynamical system \eqref{syst*} is of pendulum type. Let us define
\begin{equation}\label{def.k}
\kappa_{\star}=\frac12-\frac18 \Big[2(7\sqrt{105}-69)\Big]^{1/2}\approx 0.208... ,
\end{equation}
we have
\begin{prop}\label{prop.mar}~
 Let $\kappa_{\star}$ be given by \eqref{def.k}.
If $\kappa_{\star}<K(0)<1-\kappa_{\star}$ and $\phi(0)=0$, then there is $T>0$ so that $(\phi,K)$ is a $2T-$periodic solution of \eqref{syst*} and 
\begin{equation*} 
K(0)+K(T)=1.
\end{equation*}  
\end{prop}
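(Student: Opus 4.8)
The plan is to analyze the two-dimensional Hamiltonian system \eqref{syst*} as a one-degree-of-freedom (hence integrable) system and to exploit the discrete symmetry $(\phi,K)\mapsto(-\phi,1-K)$ that leaves the phase portrait invariant. First I would record that $H_\star(\phi,K)=H_\star(-\phi,1-K)$: indeed the prefactor $K(1-K)$ and the square root $K^{1/2}(1-K)^{1/2}$ are both symmetric under $K\mapsto 1-K$, while $\cos(2\phi)$ is even in $\phi$. Consequently the vector field \eqref{syst*} satisfies $\dot\phi(-\phi,1-K)=-\dot\phi(\phi,K)$ and $\dot K(-\phi,1-K)=\dot K(\phi,K)$, which means that the involution $\sigma(\phi,K)=(-\phi,1-K)$ conjugates the flow to its time-reversal. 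Together with the obvious time-reversal symmetry $(\phi,K,t)\mapsto(-\phi,K,-t)$ coming from the fact that $H_\star$ is even in $\phi$, this is the structural input.

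Next I would locate the relevant equilibria on the line $\phi=0$. Setting $\sin(2\phi)=0$ kills $\dot K$, so $\phi\equiv 0$ and $\phi\equiv\pi/2$ are the candidate invariant lines, and on $\phi=0$ the equation $\dot\phi=0$ reads $(1-2K)\big[3+2K^{1/2}(1-K)^{1/2}\big]=0$. Since $3+2K^{1/2}(1-K)^{1/2}>0$ for $K\in[0,1]$, the only equilibrium with $\phi=0$ is $K=1/2$, and one checks it is an elliptic (center) equilibrium by computing the Hessian of $H_\star$ there, or more simply by noting that $H_\star$ restricted to $\phi=0$, namely $\frac94 K(1-K)[9+4K^{1/2}(1-K)^{1/2}]$, has a strict local maximum at $K=1/2$. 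The constant $\kappa_\star$ in \eqref{def.k} is exactly the value of $K$ at which the level set of $H_\star$ through a point of $\{\phi=\pi/2\}$ crosses $\{\phi=0\}$; I would verify by a direct computation that $H_\star(0,\kappa_\star)=H_\star(\pi/2,K_c)$ for the appropriate $K_c$, so that $\{\phi=0,\ \kappa_\star<K<1-\kappa_\star\}$ lies strictly inside the region filled by the periodic orbits encircling the center $(0,1/2)$ and bounded by the separatrix through the saddle on $\{\phi=\pi/2\}$. Thus for any such initial condition $(\phi(0),K(0))=(0,K_0)$ with $\kappa_\star<K_0<1-\kappa_\star$, the trajectory is a closed curve, hence a periodic orbit; call its minimal period $2T$ (the factor $2$ anticipating the half-period symmetry).

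Finally I would extract the identity $K(0)+K(T)=1$ from the symmetry. Because the orbit is a closed level curve $\{H_\star=h\}$ of an integrable system, and because it crosses the line $\phi=0$ transversally to the flow only where $\dot K=0$, i.e. at $\phi=0$ itself, the orbit meets $\{\phi=0\}$ in exactly two points; by the reversal symmetry $(\phi,t)\mapsto(-\phi,-t)$ the orbit is symmetric about $\phi=0$, and the two crossing points are reached at times differing by exactly half the period $T$. So if $K(0)=K_0$ at one crossing, then $K(T)=K_1$ at the other, where $\{K_0,K_1\}$ are the two roots in $K$ of $H_\star(0,K)=h$. It remains to see $K_0+K_1=1$. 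Here I invoke the involution $\sigma$: it maps the level curve $\{H_\star=h\}$ to itself (since $H_\star\circ\sigma=H_\star$), it fixes $\phi=0$, and it sends $K\mapsto 1-K$; hence it permutes the two crossing points, forcing $K_1=1-K_0$, i.e. $K(0)+K(T)=1$. The main obstacle is the verification that the chosen initial data indeed lie in the oscillation region and not beyond the separatrix — that is, the computation identifying $\kappa_\star$ in \eqref{def.k} as the precise boundary value on $\{\phi=0\}$ of the separatrix energy $h_{\mathrm{sep}}=H_\star(\pi/2,\cdot)$ at its critical point; this is an explicit but slightly delicate algebraic computation (solving a quadratic whose discriminant produces the $\sqrt{105}$), and one must also check that the critical point on $\{\phi=\pi/2\}$ is genuinely a saddle so that a separatrix exists and the enclosed region is exactly $\{\kappa_\star<K<1-\kappa_\star\}$ on the axis $\phi=0$.
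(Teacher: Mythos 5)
Your proposal is correct and follows essentially the same route as the paper's proof: locate the centre $(0,1/2)$ and the saddles $(\pm\pi/2,1/2)$, identify $\kappa_\star$ as the intersection with $\{\phi=0\}$ of the separatrix level $H_{\star}=63/16$, conclude that orbits starting on $\{\phi=0,\ \kappa_\star<K<1-\kappa_\star\}$ are periodic, and derive $K(0)+K(T)=1$ from the symmetry. One harmless sign slip: since $\partial_\phi H_{\star}(-\phi,1-K)=-\partial_\phi H_{\star}(\phi,K)$, one has $\dot K(-\phi,1-K)=-\dot K(\phi,K)$ rather than $+\dot K(\phi,K)$, so the involution $\sigma(\phi,K)=(-\phi,1-K)$ actually sends solutions to solutions of the \emph{same} system (not of its time-reversal); your final argument is unaffected, since it only uses $H_{\star}\circ\sigma=H_{\star}$ together with the fact that $H_{\star}(0,K)$ is a strictly increasing function of $K(1-K)$, which forces the two intersections of the orbit with $\{\phi=0\}$ to be $K_0$ and $1-K_0$.
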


We denote by $(\phi_{\star},K_{\star})$ such a trajectory.

\figinit{pt}
\figpt 1:(-45,8)\figpt 2:(-20,15)\figpt 3:(45,-8)\figpt 4:(20,-15)
\figpt 5:(45,8)\figpt 6:(20,15)\figpt 7:(-45,-8)\figpt 8:(-20,-15)
\figpt 71:(-55,26)\figpt 72:(-27,32)\figpt 73:(55,-26)\figpt 74:(27,-32)
\figpt 75:(55,26)\figpt 76:(27,32)\figpt 77:(-55,-26)\figpt 78:(-27,-32)
\figpt 80:(-95,22)\figpt 81:(-75,22)\figpt 82:(75,22)\figpt 83:(95,22)
\figpt 84:(-95,-22)\figpt 85:(-75,-22)\figpt 86:(75,-22)\figpt 87:(95,-22)
\figpt 9:(20,-20)\figpt 10:(0,-20)\figpt 11:(-20,-20)\figpt 12:(-40,-10)
\figpt 13:(-120,-40)\figpt 14:(120,-40)
\figpt 15:(-120,40)\figpt 16:(120,40)
\figpt 17:(0,-45)\figpt 18:(0,60)
\figpt 19:(-85,-45)\figpt 20:(-85,45)
\figpt 21:(85,-45)\figpt 22:(85,45)
\figpt 23:(-120,0)\figpt 24:(120,0)
\figpt 31:(-85,0)\figpt 32:(-75,15)\figpt 33:(-25,25)\figpt 34:(0,25)
\figpt 35:(25,25)\figpt 36:(75,15)\figpt 37:(85,0)\figpt 38:(75,-15)
\figpt 39:(25,-25)\figpt 40:(0,-25)\figpt 41:(-25,-25)\figpt 42:(-75,-15)
\figpt 50:(0,-40) \figpt 51:(120,-40) \figpt 52:(5,60)  \figpt 53:(0,0)
 \figpt 54:(-85,0)  \figpt 55:(87,0) \figpt 56:(85,0) \figpt 57:(0,0)
 \figpt 100:(0,-25)  \figpt 101:(85,-40)  \figpt 102:(-85,-40)  \figpt 200:(-65,30)\figpt 220:(0,46)
 
\psbeginfig{}
 \psset(width=1)
 \psarrow[1,2]
  \psarrow[3,4]
   \psarrow[6,5]
   \psarrow[8,7]
   \psarrow[71,72]
  \psarrow[73,74]
   \psarrow[76,75]
   \psarrow[78,77]
    \psarrow[80,81]
        \psarrow[82,83]
         \psarrow[85,84]
        \psarrow[87,86]
\psset(dash=2)
\psBezier 4[31,32,33,34,35,36,37,38,39,40,41,42,31]
  \psset(width=\defaultwidth)
\psline[15,16]
\psline[19,20]
\psline[21,22]
\psset(dash=5)
\psline[23,24]
\psset(dash=\defaultdash) 
\psarrow[13,14]
\psarrow[17,18]
\psendfig
\figvisu{\figBoxA}{ {\bf Figure 1 : }The phase portrait of system \eqref{syst*}}{
\figwritese 50:$O$(3pt) \figwritese 51:$\phi$(3pt)
\figwritese 52:$K$(3pt)
\figwritese 53:$\omega_{0}$(3pt)
\figwritesw 54:$\omega_{1}$(3pt)
\figwritese 55:$\omega_{2}$(3pt)
\figwritese 100:$\kappa_{\star}$(3pt)
\figwrites 101:$\pi/2$(3pt)
\figwrites 102:$-\pi/2$(3pt)
\figwritese 220:$1$(3pt)
\figsetmark{$\times$}
\figwritep[57,54,56,100]
}
\centerline{\box\figBoxA}

\begin{proof} The line $K=0$ and $K=1$ are barriers  and  the phase portrait is $\pi$-periodic in $\varphi$ so
 we  restrict our study to the region  $-\frac{\pi}2\leq \phi\leq \frac{\pi}2$, $0<K<1$. In this domain, there are exactly three equilibrium points : $\omega_{0}=(0,1/2)$ which is a centre and $\omega_{1}=(-\pi/2,1/2)$ and $\omega_{2}=(\pi/2,1/2)$ which are saddle points. 
The level set $H_{\star}(\phi,K)=H_{\star}(\omega_{1})=H_{\star}(\omega_{2})={63}/{16}$, which corresponds to the equation
\begin{equation*} 
K(1-K)\big(9+4K^{\frac12}(1-K)^{\frac12}\cos(2\phi)\big)=\frac{7}{4},
\end{equation*}
 defines two heteroclinic orbits which link the points $\omega_{1}$ and $\omega_{2}$ : $\mathcal{C}_{1}$ in the region $\{K<1/2\}$ and $\mathcal{C}_{2}$ in the region  $\{K>1/2\}$ (see the dashed curves in Figures 1\&2). Moreover, we can explicitly compute the intersection $(0,\kappa_{\star})$ of the curve $\mathcal{C}_{1}$ with the $K-$axis, and we obtain \eqref{def.k}.\\
Let $\widetilde{U}\subset ]-\pi/2,\pi/2[\times ]\kappa_{\star},1-\kappa_{\star}[\backslash \{\omega_{0}\}$ be the open domain delimited by the curves $\mathcal{C}_{1}$ and $\mathcal{C}_{2}$ minus the point $\omega_{0}$. Any solution  issued from a point inside $\widetilde{U}$ is periodic and turns around the centre $\omega_0$. Furthermore, let $2T$ be the period, by symmetry we have $(\phi(T),K(T))=(0,1-K(0))$.

\end{proof}
\begin{figure}[h]
\end{figure}

\figinit{pt}
\figpt 1:(-40,0)\figpt 2:(-40,10)\figpt 3:(-20,20)\figpt 4:(0,20)
\figpt 5:(20,20)\figpt 6:(40,10)\figpt 7:(40,0)\figpt 8:(40,-10)
\figpt 9:(20,-20)\figpt 10:(0,-20)\figpt 11:(-20,-20)\figpt 12:(-40,-10)
\figpt 13:(-120,-40)\figpt 14:(120,-40)
\figpt 15:(-120,40)\figpt 16:(120,40)
\figpt 17:(0,-45)\figpt 18:(0,60)
\figpt 19:(-85,-45)\figpt 20:(-85,45)
\figpt 21:(85,-45)\figpt 22:(85,45)
\figpt 23:(-120,0)\figpt 24:(120,0)
\figpt 31:(-85,0)\figpt 32:(-75,15)\figpt 33:(-25,25)\figpt 34:(0,25)
\figpt 35:(25,25)\figpt 36:(75,15)\figpt 37:(85,0)\figpt 38:(75,-15)
\figpt 39:(25,-25)\figpt 40:(0,-25)\figpt 41:(-25,-25)\figpt 42:(-75,-15)
\figpt 50:(0,-40) \figpt 51:(120,-40) \figpt 52:(5,60)  \figpt 53:(0,0)
 \figpt 54:(-85,0)  \figpt 55:(87,0) \figpt 56:(85,0) \figpt 57:(0,0)
  \figpt 100:(0,-25)   \figpt 110:(26,13)   \figpt 120:(95,28) 
   \figpt 101:(85,-40)  \figpt 102:(-85,-40) \figpt 200:(-65,30) \figpt 210:(-65,-30) \figpt 220:(0,46)
 
\psbeginfig{}
 \psset(width=1)
\psBezier 4[1,2,3,4,5,6,7,8,9,10,11,12,1]
\psset(dash=2)
\psBezier 4[31,32,33,34,35,36,37,38,39,40,41,42,31]
  \psset(width=\defaultwidth)
\psline[15,16]
\psline[19,20]
\psline[21,22]
\psset(dash=5)
\psline[23,24]
\psset(dash=\defaultdash) 
\psline[110,120]
\psarrow[13,14]
\psarrow[17,18]
\psendfig
\figvisu{\figBoxA}{ {\bf Figure 2 : }An example of trajectory $(\phi_{\star},K_{\star})$}{
\figwritese 50:$O$(3pt) \figwritese 51:$\phi$(3pt)
\figwritese 52:$K$(3pt)
\figwritese 53:$\omega_{0}$(3pt)
\figwritesw 54:$\omega_{1}$(3pt)
\figwritese 55:$\omega_{2}$(3pt)
\figwritese 100:$\kappa_{\star}$(3pt)
\figwritee 120:$\(\phi_{\star},K_{\star}\)$(3pt)
\figwrites 101:$\pi/2$(3pt)
\figwrites 102:$-\pi/2$(3pt)
\figwrites 200:$\mathcal{C}_{2}$(3pt)
\figwriten 210:$\mathcal{C}_{1}$(3pt)
\figwritese 220:$1$(3pt)
\figsetmark{$\times$}
\figwritep[57,54,56]
}
\centerline{\box\figBoxA}
~\\
By applying the Arnold-Liouville theorem (see e.g. \cite{Arn}) inside  $\widetilde{U}$ we obtain
\begin{lemm}\label{Arnold}
Let $U\subset \subset \widetilde{U}$, then  there exists a symplectic change of variables 
$
\Phi:U\ni(K,\phi)  \longmapsto (L,\alpha)\in \R_{>0}\times\T$
which defines  action angle coordinates for \eqref{syst*} i.e., \eqref{syst*} is equivalent to the system 
\begin{equation*}
\dot L=-\frac{\partial {H_{\star}}}{\partial \alpha}=0,\quad 
\dot \alpha=\frac{\partial {H_{\star}}}{\partial L}.
\end{equation*}
Moreover $\Phi$ is a $\mathcal{C}^{1}$-diffeomorphism, and there exists $C>0$ depending on $U$ so that 
$$\|\text{d}\Phi\|\leq C,\quad\|\text{d}\Phi^{-1}\|\leq C.$$
\end{lemm}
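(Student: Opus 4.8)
The plan is to invoke the Arnold–Liouville theorem on the two–dimensional Hamiltonian system \eqref{syst*}, but since the statement also demands quantitative control on the change of variables (uniform bounds on $\|\text{d}\Phi\|$ and $\|\text{d}\Phi^{-1}\|$), I would actually write out the construction explicitly rather than quote the theorem as a black box. The crucial structural fact, already established in Proposition \ref{prop.mar} and its proof, is that inside $\widetilde U$ every orbit is a smooth closed curve encircling the centre $\omega_0=(0,1/2)$, and the heteroclinic curves $\mathcal{C}_1,\mathcal{C}_2$ bound $\widetilde U$ away from the saddle separatrices. On a relatively compact subdomain $U\subset\subset\widetilde U$ one stays uniformly away from $\omega_0$ (where the period integral degenerates) and from $\partial\widetilde U$ (where the period blows up), so all the integrals below are smooth and bounded with bounded derivatives.

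Concretely, first I would verify that $\text{d}H_\star$ does not vanish on $U$: the only critical points of $H_\star$ in the strip are $\omega_0,\omega_1,\omega_2$, and $U$ excludes all three. Hence the level sets $\{H_\star=h\}\cap U$ are regular closed curves $\gamma_h$ and $h$ ranges over an interval $I_U$. Define the action
\begin{equation*}
L(h)=\frac{1}{2\pi}\oint_{\gamma_h} K\,\text{d}\phi,
\end{equation*}
the area enclosed (divided by $2\pi$). Since $\gamma_h$ varies smoothly with $h$ and $\text{d}H_\star\neq0$, $L$ is a $\mathcal{C}^1$ (indeed smooth) function of $h$ with $L'(h)=\frac{1}{2\pi}\oint_{\gamma_h}\text{d}\phi/(\partial H_\star/\partial K)\neq 0$, because $\partial H_\star/\partial K$ has a definite sign on the upper/lower arcs and the period $\oint$ is finite and positive on $U$; this gives a $\mathcal{C}^1$ inverse $h=h(L)$. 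The conjugate angle $\alpha$ is then defined, as usual, by $\alpha=\partial S/\partial L$ where $S(K,L)=\int^{K}K'\,\text{d}\phi$ is the generating function along $\gamma_{h(L)}$; equivalently $\alpha$ advances at constant rate $\omega(L):=\text{d}H_\star/\text{d}L$ along the flow. One checks that $(K,\phi)\mapsto(L,\alpha)$ is symplectic by the standard generating–function computation, and that in these coordinates $H_\star$ depends on $L$ alone, whence $\dot L=-\partial H_\star/\partial\alpha=0$ and $\dot\alpha=\partial H_\star/\partial L$.

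It then remains to record the regularity and the uniform bounds. Since $H_\star$ is a smooth (real–analytic, in fact, away from the barriers $K=0,1$, which $U$ avoids) function with non‑vanishing differential on $U$, and since the period map $T(h)=\oint_{\gamma_h}\text{d}t$ is smooth and bounded above and below on the closure of $U$ inside $\widetilde U$, the maps $L(h)$, $h(L)$, $\alpha$, and the inverse $(L,\alpha)\mapsto(K,\phi)$ are all $\mathcal{C}^1$ with derivatives continuous on the compact set $\overline U$; hence they are bounded there. Taking $C$ to be the maximum of $\|\text{d}\Phi\|$ and $\|\text{d}\Phi^{-1}\|$ over $\overline U$ yields the claim. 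The only genuinely delicate point — the ``main obstacle'' — is ensuring that the period integral stays bounded away from $0$ and from $\infty$; the upper bound on $T$ is exactly what forces us to stay strictly inside $\widetilde U$ (near $\mathcal{C}_1\cup\mathcal{C}_2$ the period diverges logarithmically, as is typical near a saddle connection), and the lower bound on $T$, together with non‑degeneracy of the centre $\omega_0$, is what lets us exclude a neighbourhood of $\omega_0$; both are guaranteed precisely by the hypothesis $U\subset\subset\widetilde U$, together with the explicit description of $\widetilde U$ from the proof of Proposition \ref{prop.mar}.
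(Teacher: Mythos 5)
Your proof is correct and follows essentially the same route as the paper, whose entire argument is the single sentence ``by applying the Arnold--Liouville theorem (see e.g.\ Arnold) inside $\widetilde U$ we obtain the lemma.'' Your explicit construction of the action as the enclosed area, the angle via a generating function, and the derivation of the uniform bounds on $\text{d}\Phi$ and $\text{d}\Phi^{-1}$ from the compactness of $\overline U$ in $\widetilde U$ (period bounded away from $0$ and $\infty$, a neighbourhood of the centre $\omega_0$ excluded) is precisely what that citation stands in for.
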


 \subsubsection{Second regime: A=4}
 In that case we obtain
 \begin{equation*}
 H_{\star}=\frac32(1-K)\Big[49+59K+6(1-K)^{\frac12}(7+K)^{\frac12}K\cos(2\phi)\Big],
 \end{equation*}
 and the evolution of $(\phi,K)$ is given by
 \begin{equation}\label{nouv.syst}
\left\{\begin{array}{rl}
\dot \phi=&3\Big[59K-5-3(K+7)^{-\frac12}(1-K)^{\frac12}(-3K^{2}-16K+7)\cos(2\phi)\Big] \\[10pt]
\dot K=&-18(1-K)^{\frac32} (7+K)^{\frac12}K\sin(2\phi).
\end{array}\right.
\end{equation}
\begin{prop}~
\item Let $\gamma>0$ arbitrary small, and set $(\phi(0),K(0))=(0,\gamma)$. Then there is $T_{\gamma}>0$ so that $(\phi,K)$ is $2T_{\gamma}-$periodic and 
\begin{equation*} 
K(T_{\gamma})>\frac1{10}.
\end{equation*}  
\end{prop}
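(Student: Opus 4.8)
The plan is to analyze the planar system \eqref{nouv.syst}, which is again of pendulum type, exactly as was done for \eqref{syst*} in Proposition \ref{prop.mar}. First I would note the structural features: the lines $K=0$ and $K=1$ are invariant barriers (the right-hand side of $\dot K$ vanishes there), and the phase portrait is $\pi$-periodic in $\phi$, so it suffices to work in the strip $-\pi/2\leq\phi\leq\pi/2$, $0<K<1$. Then I would locate the equilibria inside this strip: setting $\sin(2\phi)=0$ forces $\phi\in\{0,\pm\pi/2\}$, and for each such value $\dot\phi=0$ becomes a scalar equation in $K$ (a polynomial equation after clearing the square roots). One checks that there is a unique centre-type equilibrium $(0,K_\infty)$ with some $K_\infty\in(0,1)$ (the analogue of $\omega_0$), together with saddle points on the lines $\phi=\pm\pi/2$; the precise location of $K_\infty$ is obtained by solving $59K-5=3(K+7)^{-1/2}(1-K)^{1/2}(-3K^2-16K+7)$ with $\cos(2\phi)=1$, i.e. $\phi=0$ — actually at $\phi=0$ the term $\cos 2\phi=1$, so one solves $59K-5-3(K+7)^{-1/2}(1-K)^{1/2}(-3K^2-16K+7)=0$. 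This gives one equilibrium that is a centre.

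Next I would use the conserved Hamiltonian $H_\star(\phi,K)=\tfrac32(1-K)\big[49+59K+6(1-K)^{1/2}(7+K)^{1/2}K\cos(2\phi)\big]$ to describe the level curves. The key quantitative point is to identify the separatrix: the heteroclinic orbits joining the two saddle points at $\phi=\pm\pi/2$ lie on the level set $H_\star(\phi,K)=H_\star(\pm\pi/2,K_{\rm sad})$, and this separatrix crosses the $K$-axis $\{\phi=0\}$ at two values $K_-<K_+$, which are the two relevant roots of the equation $\tfrac32(1-K)\big[49+59K+6(1-K)^{1/2}(7+K)^{1/2}K\big]=H_\star(\pm\pi/2,K_{\rm sad})$. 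Every orbit starting at $(\phi(0),K(0))=(0,K_0)$ with $K_-<K_0<K_+$ is a closed curve encircling the centre $(0,K_\infty)$, hence periodic with some period $2T_\gamma$ depending continuously on $K_0=\gamma$; moreover on such an orbit $K(t)$ oscillates between $K_0$ and a second turning point $K^\sharp>K_\infty>K_0$ reached at $t=T_\gamma$ (since $\phi=0$ again there, by the time-reversal symmetry $(\phi,K)\mapsto(-\phi,K)$, $t\mapsto -t$). The remaining task is the explicit estimate $K(T_\gamma)=K^\sharp>1/10$: for $\gamma$ small, $K_0=\gamma$ is below $K_\infty$, and $K^\sharp$ is the other solution of $H_\star(0,K)=H_\star(0,\gamma)$; since $H_\star(0,0)=\tfrac32\cdot 49=\tfrac{147}2$ and $H_\star(0,K)$ is a smooth function, one computes $\lim_{\gamma\to 0}K^\sharp=K^\sharp_0$, the positive root of $H_\star(0,K)=\tfrac{147}2$ other than $K=0$, namely $(1-K)\big[49+59K+6(1-K)^{1/2}(7+K)^{1/2}K\big]=49$, and checks numerically that $K^\sharp_0>1/10$; then choosing $\gamma$ small enough (shrinking the earlier $\nu_0$ if needed) gives $K^\sharp>1/10$ by continuity. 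Strictly, the statement only requires $K(T_\gamma)>1/10$ for each fixed $\gamma<1/10$, so one does not even need the limit: one simply verifies that for $K_0=\gamma\in(0,1/10)$ the companion root $K^\sharp$ of $H_\star(0,K)=H_\star(0,\gamma)$ exceeds $1/10$, which follows from monotonicity of $K\mapsto H_\star(0,K)$ on the two sides of $K_\infty$ together with $K_\infty>1/10$.

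The main obstacle I anticipate is purely computational: verifying, with the square roots $(1-K)^{1/2}$ and $(7+K)^{1/2}$ present, that the equation $\dot\phi=0$ at $\phi=0$ has exactly one root in $(0,1)$, that this root $K_\infty$ satisfies $K_\infty>1/10$, and that the separatrix genuinely brackets an interval $(K_-,K_+)\ni\gamma$ containing $0^+$ on its lower end — i.e. that $K_-=0$ or at least $K_-<1/10$ for the relevant separatrix branch. One clean way to sidestep the square roots is to substitute $K=1-s^2$ or to work with the conserved quantity directly and argue via the implicit function theorem and intermediate value theorem rather than explicit formulas, exactly as the proof of Proposition \ref{prop.mar} produced $\kappa_\star$ in closed form only as a convenience. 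I would therefore present the argument qualitatively (barriers, $\pi$-periodicity, classification of equilibria, closed orbits inside the region bounded by the heteroclinic connections, time-reversal symmetry forcing $\phi(T_\gamma)=0$) and then record the single numerical inequality $K(T_\gamma)>1/10$ as the outcome of evaluating $H_\star(0,\cdot)$, in the same spirit and with the same level of rigour as the $A=1/2$ case. A phase-portrait figure analogous to Figures 1 and 2, showing the centre, the two saddles, the two heteroclinic curves $\mathcal C_1,\mathcal C_2$ and a sample orbit through $(0,\gamma)$, would accompany the proof.
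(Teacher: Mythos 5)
Your overall strategy (pendulum-type phase portrait, closed orbits around a centre, time-reversal symmetry forcing $\phi(T_\gamma)=0$ at half-period, then a quantitative estimate located at $\phi=0$) is the same as the paper's, and your identification of the centre $(0,K_\infty)$ via $f(K)=\cos(2\phi)=1$ agrees with the paper's $\omega_0=(0,\kappa_0)$, with the conclusion $K(T_\gamma)>\kappa_0>1/10$ following because the orbit encircles the centre. But there is a concrete error in your classification of equilibria that propagates into the one step you yourself flag as unresolved. You enumerate equilibria by setting $\sin(2\phi)=0$, hence $\phi\in\{0,\pm\pi/2\}$; this misses the fact that $\dot K$ in \eqref{nouv.syst} also vanishes identically on $K=0$. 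In this regime there are \emph{no} equilibria on the lines $\phi=\pm\pi/2$ inside $0<K<1$ (the equation $f(K)=-1$ has no solution there); the two saddle points sit on the line $K=0$, at $(\pm\phi_0,0)$ with $\cos(2\phi_0)=-5\sqrt{7}/21$. Consequently one of the two heteroclinic connections is the segment $\mathcal{C}_1=\{K=0\}$ itself, at the level $H_\star=\tfrac32\cdot 49$ --- which is exactly the value $H_\star(0,0^+)=\tfrac{147}{2}$ you compute without noticing that it is the saddle level.

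This resolves the obstacle you leave open, namely whether the separatrix ``brackets an interval $(K_-,K_+)\ni\gamma$ containing $0^+$ on its lower end.'' Because the lower boundary of the region of closed orbits is $K=0$, one has $K_-=0$, so $(0,\gamma)$ lies inside the region for \emph{every} $0<\gamma<1/10<\kappa_0$, and the orbit through it is closed and encircles $(0,\kappa_0)$; the second crossing of $\{\phi=0\}$ then occurs at some $K(T_\gamma)>\kappa_0>1/10$. With your picture (saddles at $\phi=\pm\pi/2$, separatrix crossing $\phi=0$ at some $K_->0$) this inclusion is genuinely in doubt for small $\gamma$, and your proposed fallback --- computing the companion root $K^\sharp$ of $H_\star(0,K)=H_\star(0,\gamma)$ --- only gives the conclusion once you already know the orbit is closed. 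So the missing ingredient is precisely the monotonicity analysis of $f$ on $[0,\kappa_0]$ (with $1/10<\kappa_0<1/5$) and the identification of the saddles on $K=0$; once that is in place the rest of your argument goes through as written.
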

We denote by $(\phi_{\star},K_{\star})$ such a trajectory.\\

\bigskip

\figinit{pt}
\figpt 1:(-45,8)\figpt 2:(-20,15)\figpt 3:(45,-8)\figpt 4:(20,-15)
\figpt 5:(40,16)\figpt 6:(12,24)\figpt 7:(-40,16)\figpt 8:(-12,24)
\figpt 71:(40,-18)\figpt 72:(12,-6)\figpt 73:(-40,-18)\figpt 74:(-12,-6)
\figpt 75:(40,-30)\figpt 76:(12,-32)\figpt 77:(-40,-30)\figpt 78:(-12,-32)
\figpt 80:(-95,22)\figpt 81:(-75,22)\figpt 82:(75,22)\figpt 83:(95,22)
\figpt 84:(-95,-22)\figpt 85:(-75,-22)\figpt 86:(75,-22)\figpt 87:(95,-22)
\figpt 9:(20,-20)\figpt 10:(0,-20)\figpt 11:(-20,-20)\figpt 12:(-40,-10)
\figpt 13:(-120,-40)\figpt 14:(120,-40)
\figpt 15:(-120,40)\figpt 16:(120,40)
\figpt 17:(0,-45)\figpt 18:(0,60)
\figpt 19:(-85,-45)\figpt 20:(-85,45)
\figpt 21:(85,-45)\figpt 22:(85,45)
\figpt 23:(-120,0)\figpt 24:(120,0)
\figpt 31:(-70,-40)\figpt 32:(-50,-20)\figpt 33:(-8,-18)\figpt 34:(0,-18)
\figpt 35:(8,-18)\figpt 36:(50,-20)\figpt 37:(70,-40)
\figpt 131:(-70,-40)\figpt 132:(-45,8)\figpt 133:(-7,8)\figpt 134:(0,8)
\figpt 135:(7,8)\figpt 136:(45,8)\figpt 137:(70,-40)
\figpt 50:(0,-40) \figpt 51:(120,-40) \figpt 52:(5,60)  \figpt 53:(0,-18)
 \figpt 54:(-70,-40)  \figpt 55:(78,-43) \figpt 56:(70,-40) \figpt 57:(0,-18) \figpt 58:(-70,-44)
  \figpt 101:(90,-40)  \figpt 102:(-92,-40)  \figpt 200:(-65,30)\figpt 220:(0,46)
 
\psbeginfig{}
 \psset(width=1)
   \psarrow[6,5]
   \psarrow[7,8]
   \psarrow[72,71]
 \psarrow[73,74]
   \psarrow[75,76]
   \psarrow[78,77]
    \psarrow[80,81]
        \psarrow[82,83]
         \psarrow[84,85]
        \psarrow[86,87]
\psset(dash=5)
\psBezier 2[31,32,33,34,35,36,37]
\psset(dash=2)
\psBezier 2[131,132,133,134,135,136,137]
\psline[54,56]
  \psset(width=\defaultwidth)
\psline[15,16]
\psline[19,20]
\psline[21,22]
\psset(dash=\defaultdash) 
\psarrow[13,14]
\psarrow[17,18]
\psendfig
\figvisu{\figBoxA}{ {\bf Figure 3 : }The phase portrait of system \eqref{nouv.syst}}{
\figwritese 50:$O$(3pt) \figwritese 51:$\phi$(3pt)
\figwritese 52:$K$(3pt)
\figwritese 53:$\omega_{0}$(3pt)
\figwrites 58:$\omega_{1}$(3pt)
\figwritesw 55:$\omega_{2}$(3pt)
\figwrites 101:$\pi/2$(3pt)
\figwrites 102:$-\pi/2$(3pt)
\figwritese 220:$1$(3pt)
\figsetmark{$\times$}
\figwritep[57,54,56]
}
\centerline{\box\figBoxA}
~\\

\begin{proof}
 We  restrict our study to the region  $0\leq \phi\leq \frac{\pi}2$, $0<K<1$. First, we study the sign of $\dot{\phi}$. To begin with, observe that $\dot{\phi}$ has exactly the sign of  $f(K)-\cos(2\phi)$ where
 \begin{equation*}
 f(K)=\frac13(59K-5)(K+7)^{\frac12}(1-K)^{-\frac12}(-3K^{2}-16K+7)^{-1}.
 \end{equation*}
 We verify  that there exists $1/10<\kappa_{0}<1/5$ so that the function $f$ is increasing and one to one $f:[0,\kappa_{0}]\longrightarrow [-5\sqrt{7}/21,1]$.  Thus, the curve  $\mathcal{C}_{0}:=\{\dot{\phi}=0\}$ can be expressed as a decreasing function $K(\phi)=f^{-1}\big(\cos(2\phi)\big)$.\\
 Thanks to this study, and the expression of $\dot{K}$, we deduce that the phase portrait has exactly three equilibrium points :  $\omega_{0}=(0,\kappa_{0})$ which is a centre and $\omega_{1}=(-\phi_{0},0)$ and $\omega_{2}=(\phi_{0},0)$ which are saddle points (here $0<\phi_{0}<\pi/2$ is defined by the equation $f(0)=-5\sqrt{7}/21=\cos(2\phi_{0})$). 
The level set $H_{\star}(\phi,K)=H_{\star}(\omega_{1})=H_{\star}(\omega_{2})=\frac{3}2 \cdot49,$   which is defined by the equation 
 \begin{equation*}
 10-59K+6(1-K)^{\frac32}(7+K)^{\frac12}\cos(2\phi)=0,
 \end{equation*}
 defines two heteroclinic orbits $\mathcal{C}_{1}:=\{K=0\}$ and $\mathcal{C}_{2}$ that link the two saddle points (see the dashed curves in Figures 3\&4).  \\
 Let $\widetilde{U}_2\subset ]-\pi/2,\pi/2[\times ]0,1[\backslash \{\omega_{0}\}$ be the open domain delimited by the curves $\mathcal{C}_{1}$ and $\mathcal{C}_{2}$ minus the point $\omega_{0}$. Any solution  issued from a point inside $\widetilde{U}_2$ is periodic and turns around the centre $\omega_0$. Furthermore, let $2T$ be the period, by symmetry we have $\phi(T)=0$ and $K(T)>\kappa_0$.

\end{proof}
\figinit{pt}
\figpt 1:(-45,8)\figpt 2:(-20,15)\figpt 3:(45,-8)\figpt 4:(20,-15)
\figpt 5:(40,16)\figpt 6:(12,24)\figpt 7:(-40,16)\figpt 8:(-12,24)
\figpt 71:(40,-18)\figpt 72:(12,-6)\figpt 73:(-40,-18)\figpt 74:(-12,-6)
\figpt 75:(40,-30)\figpt 76:(12,-32)\figpt 77:(-40,-30)\figpt 78:(-12,-32)
\figpt 80:(-95,22)\figpt 81:(-75,22)\figpt 82:(75,22)\figpt 83:(95,22)
\figpt 84:(-95,-22)\figpt 85:(-75,-22)\figpt 86:(75,-22)\figpt 87:(95,-22)
\figpt 9:(20,-20)\figpt 10:(0,-20)\figpt 11:(-20,-20)\figpt 12:(-40,-10)
\figpt 13:(-120,-40)\figpt 14:(120,-40)
\figpt 15:(-120,40)\figpt 16:(120,40)
\figpt 17:(0,-45)\figpt 18:(0,60)
\figpt 19:(-85,-45)\figpt 20:(-85,45)
\figpt 21:(85,-45)\figpt 22:(85,45)
\figpt 23:(-120,0)\figpt 24:(120,0)
\figpt 31:(-70,-40)\figpt 32:(-50,-20)\figpt 33:(-8,-18)\figpt 34:(0,-18)
\figpt 35:(8,-18)\figpt 36:(50,-20)\figpt 37:(70,-40)
\figpt 131:(-70,-40)\figpt 132:(-45,8)\figpt 133:(-7,8)\figpt 134:(0,8)
\figpt 135:(7,8)\figpt 136:(45,8)\figpt 137:(70,-40)
\figpt 600:(-40,-40)\figpt 210:(-48,-9)    \figpt 700:(-20,-15)
\figpt 231:(-62,-32)\figpt 232:(-48,-9)\figpt 233:(-25,5)\figpt 234:(0,5)
\figpt 235:(25,5)\figpt 236:(48,-9)\figpt 237:(62,-32) \figpt 238:(-15,-38) \figpt 239:(0,-38) \figpt 240:(15,-38)
\figpt 331:(-60,-37) \figpt 337:(60,-37)
\figpt 241:(-50,-38) \figpt 242:(50,-38)
\figpt 50:(0,-40) \figpt 51:(120,-40) \figpt 52:(5,60)  \figpt 53:(0,-18)
 \figpt 54:(-70,-40)  \figpt 55:(78,-43) \figpt 56:(70,-40) \figpt 57:(0,-18) \figpt 58:(-70,-44)
  \figpt 101:(90,-40)  \figpt 102:(-92,-40)  \figpt 200:(-65,30)\figpt 220:(0,46)
   \figpt 400:(-66,-38)  \figpt 401:(65,-40)   \figpt 299:(-65,-40) \figpt 402:(66,-38) \figpt 437:(0,-38) 
   \figpt 500:(27,0) 
 
\psbeginfig{}
 \psset(width=1)
\psset(dash=5)
\psBezier 2[31,32,33,34,35,36,37]
\psset(dash=2)
\psBezier 2[131,132,133,134,135,136,137]
\psline[54,56]
\psset(dash=\defaultdash)
\psBezier 2[231,232,233,234,235,236,237]
\psBezier 1[231,299,400,437]
\psBezier 1[237,401,402,437]
\psset(dash=2)
\psset(width=\defaultwidth)
\psline[15,16]
\psline[19,20]
\psline[21,22]
\psset(dash=\defaultdash) 
\psline[120,500]
\psarrow[13,14]
\psarrow[17,18]
\psendfig
\figvisu{\figBoxA}{ {\bf Figure 4 : }An example of trajectory $(\phi_{\star},K_{\star})$}{
\figwritese 50:$O$(3pt) \figwritese 51:$\phi$(3pt)
\figwritese 52:$K$(3pt)
\figwritese 53:$\omega_{0}$(3pt)
\figwrites 58:$\omega_{1}$(3pt)
\figwritesw 55:$\omega_{2}$(3pt)
\figwrites 101:$\pi/2$(3pt)
\figwrites 102:$-\pi/2$(3pt)
\figwrites 700:$\mathcal{C}_{0}$(3pt)
\figwrites 600:$\mathcal{C}_{1}$(3pt)
\figwriten 210:$\mathcal{C}_{2}$(3pt)
\figwritee 120:$\(\phi_{\star},K_{\star}\)$(3pt)
\figwritese 220:$1$(3pt)
\figsetmark{$\times$}
\figwritep[57,54,56]
}
\centerline{\box\figBoxA}
~\\
As in the first case, applying the Arnold-Liouville theorem inside  $\widetilde{U}$ we obtain
\begin{lemm}\label{Arnold2}
Let  $U\subset \subset \widetilde{U}_2$, then  there exists a symplectic change of variables 
$
\Phi:U\ni(K,\phi)  \longmapsto (L,\alpha)\in \R_{>0}\times\T$
which defines  action angle coordinates for \eqref{nouv.syst} i.e., \eqref{nouv.syst} is equivalent to the system 
\begin{equation*}
\dot L=-\frac{\partial {H_{\star}}}{\partial \alpha}=0,\quad 
\dot \alpha=\frac{\partial {H_{\star}}}{\partial L}.
\end{equation*}
Moreover $\Phi$ is a $\mathcal{C}^{1}$-diffeomorphism, and there exists $C>0$ depending on $U$ so that 
$$\|\text{d}\Phi\|\leq C,\quad\|\text{d}\Phi^{-1}\|\leq C.$$
\end{lemm}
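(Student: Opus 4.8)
The plan is to mirror the argument of Lemma \ref{Arnold} from the first regime, simply transposed to the phase portrait of system \eqref{nouv.syst} described in the preceding proposition. The Arnold--Liouville theorem applies once we know that, on the open relatively compact set $U\subset\subset\widetilde{U}_2$, the Hamiltonian $H_{\star}$ has a single independent first integral with compact connected level sets all contained in the region of periodic orbits; this is exactly what the phase portrait analysis supplies, since $\widetilde{U}_2$ was defined precisely as the open domain bounded by the heteroclinic orbits $\mathcal{C}_1=\{K=0\}$ and $\mathcal{C}_2$, with the centre $\omega_0$ removed, so every orbit through $U$ is periodic and winds once around $\omega_0$. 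The action variable is then $L(\text{orbit})=\frac{1}{2\pi}\oint K\,\mathrm{d}\phi$ (the area enclosed, up to the $2\pi$ normalisation), which is smooth and strictly monotone as a function of the energy away from the centre, and the conjugate angle $\alpha$ is the normalised time parametrisation along the orbit; symplecticity of $\Phi:(K,\phi)\mapsto(L,\alpha)$ is built into this construction, and in these coordinates the system reads $\dot L=-\partial_\alpha H_\star=0$, $\dot\alpha=\partial_L H_\star$, which is the asserted form.

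The regularity and uniform-bound statements are the only points requiring a little care. First I would note that on the relatively compact set $\overline{U}$ the vector field of \eqref{nouv.syst} is real-analytic and nonvanishing (we are bounded away from both the centre $\omega_0$ and the saddle points $\omega_1,\omega_2$, and away from the barriers $K=0,1$), so the period map and the area integral depend analytically on the energy level; in particular $\Phi$ is a $\mathcal{C}^1$ — indeed $\mathcal{C}^\infty$ — diffeomorphism from $U$ onto its image, which has the form $\mathcal{I}\times\T$ for some open interval $\mathcal{I}\subset\R_{>0}$ after a harmless shift of the action origin. Second, since $\overline{U}$ is compact and $\mathrm{d}\Phi$, $\mathrm{d}\Phi^{-1}$ are continuous on it (the latter because $\Phi$ is a diffeomorphism and the Jacobian of a symplectic map has determinant $1$, hence never degenerates), both are bounded there by a constant $C=C(U)$; this gives $\|\mathrm{d}\Phi\|\le C$ and $\|\mathrm{d}\Phi^{-1}\|\le C$ as claimed.

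The main — and really only — obstacle is making sure the geometric picture underlying the preceding proposition is airtight, namely that the region $\widetilde{U}_2$ is genuinely an annular region foliated by periodic orbits encircling $\omega_0$, with no further equilibria or separatrices hidden inside. This rests on the sign analysis of $\dot\phi$ via the function $f(K)=\frac13(59K-5)(K+7)^{1/2}(1-K)^{-1/2}(-3K^2-16K+7)^{-1}$ and the monotonicity of $f$ on $[0,\kappa_0]$ established in that proof, together with the fact that $\dot K=-18(1-K)^{3/2}(7+K)^{1/2}K\sin(2\phi)$ vanishes on $U$ only on $\phi\in\{0\}$ (the line $\phi=\pm\pi/2$ and $K\in\{0,1\}$ being excluded from $U$). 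Granting the proposition and hence this annular structure, the Arnold--Liouville theorem applies verbatim (see e.g. \cite{Arn}), and the proof is complete; I would keep it to a sentence or two, exactly as was done for Lemma \ref{Arnold}.

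\begin{proof}
By the previous proposition, $\widetilde{U}_2$ is the open annular region, bounded by the two heteroclinic orbits $\mathcal{C}_1$ and $\mathcal{C}_2$ and punctured at the centre $\omega_0$, every orbit of which is periodic and winds once around $\omega_0$. On a relatively compact subdomain $U\subset\subset\widetilde{U}_2$ the vector field of \eqref{nouv.syst} is real-analytic and nonvanishing, and $H_\star$ is a first integral whose level sets in $U$ are compact connected curves lying inside this region of periodic motion. The Arnold--Liouville theorem (see e.g. \cite{Arn}) then provides action-angle coordinates: setting $L$ equal to $\frac1{2\pi}$ times the area enclosed by the orbit through $(K,\phi)$ and $\alpha$ the normalised time along that orbit, the map $\Phi:U\ni(K,\phi)\mapsto(L,\alpha)\in\R_{>0}\times\T$ is symplectic and conjugates \eqref{nouv.syst} to
\begin{equation*}
\dot L=-\frac{\partial H_\star}{\partial\alpha}=0,\qquad \dot\alpha=\frac{\partial H_\star}{\partial L}.
\end{equation*}
Since $\Phi$ is a symplectic diffeomorphism and the vector field is analytic and nonvanishing on the compact set $\overline U$, the map $\Phi$ is of class $\mathcal{C}^1$ (in fact $\mathcal{C}^\infty$), and $\mathrm{d}\Phi$, $\mathrm{d}\Phi^{-1}$, being continuous on $\overline U$, are bounded there by a constant $C=C(U)$, so that $\|\mathrm{d}\Phi\|\le C$ and $\|\mathrm{d}\Phi^{-1}\|\le C$.
\end{proof}
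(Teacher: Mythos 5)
Your proposal is correct and follows exactly the route the paper takes: the paper gives no separate argument for this lemma beyond the sentence ``As in the first case, applying the Arnold--Liouville theorem inside $\widetilde{U}$ we obtain\dots'', relying on the phase-portrait analysis of \eqref{nouv.syst} to guarantee that $\widetilde{U}_2$ is an annulus of periodic orbits around the centre $\omega_0$. Your write-up simply makes explicit the standard ingredients (action as enclosed area, angle as normalised time, compactness of $\overline{U}$ for the uniform bounds on $\mathrm{d}\Phi$ and $\mathrm{d}\Phi^{-1}$), so it matches the paper's intended proof.
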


 \subsubsection{On the other cases.} More generally, we can consider the case $K_{1}=\eps^{2}$, $K_{2}=A\eps^{2}$ and $K_{1/2}=B\eps^{2}$, where the constants $A,B>0$ satisfy the natural conditions $A\geq B$ and $B\geq 1/2$. Roughly speaking, the mechanism is the following. Consider the curve $\mathcal{C}_{0}=\{\dot{\phi}=0\}$. If $\mathcal{C}_{0}$ has no intersection with $\{K=0\}$ and $\{K=1\}$, then the dynamic is essentially the one of $A=1/2$, $B=1/2$. On the contrary, if $\mathcal{C}_{0}$ has two intersections with $\{K=0\}$ or $\{K=1\}$, then the dynamic is essentially the one of $A=4$, $B=1/2$.

\section{Proof of Theorems \ref{thm1} and \ref{thm2} }\label{Sect4}
Consider the Hamiltonian $\ov{H}$ given by \eqref{ham*}, which is a function of $\big(\xi_{j},\eta_{j}\big)_{j\in \Z}$. We make the linear change of variables given by  \eqref{cht2} (the variables $\xi_{j},\eta_{j}$ remain unchanged for $p\notin \mathcal{A}$). In the sequel, the Hamiltonian in the new variables is still denoted by $\ov{H}$. Then $\ov{H}$ induces the system
\begin{equation} \label{Bourg}
\left\{\begin{array}{rr}
\dot \phi_{j}=&-\frac{\partial \ov{H}}{\partial K_{j}} \\[4pt]
\dot K_{j}=&\frac{\partial \ov{H}}{\partial \phi_{j}}   
\end{array}\right., \qquad 
 \left\{\begin{array}{rr}
i\dot {\xi}_{p}=&\frac{\partial \ov{H}}{\partial \eta_{p}} \\[4pt]
i\dot{\eta}_{p}=&-\frac{\partial \ov{H}}{\partial \xi_{p}}   
\end{array}\right.,p\notin\mathcal{A}. \qquad 
\end{equation}
Next, we take some initial conditions to \eqref{Bourg} which will be close to the initial conditions chosen for \eqref{syst@}.

Observe that the $K_{j}$'s aren't constants of motion of \eqref{Bourg}. However, they are almost preserved, and this is the result of the next lemma.
Recall that  $\mathcal{A}=\{a_{2},a_{1},b_{2},b_{1}\}$, 
$$K_{1}=I_{a_{1}}+I_{b_{1}},\;\;K_{2}=I_{a_{2}}+I_{b_{2}},\;\; \text{and}\;\;K_{1/2}=I_{b_{2}}+\frac12 I_{a_{1}},$$
and recall the notations of Proposition \eqref{prop.Z6}.

 \begin{lemm}\label{lem.dyn}
 Assume that 
 \begin{equation}\label{assp}
 \xi_{j}(0),\eta_{j}(0)=\mathcal{O}(\eps),\;\forall \,j\in \mathcal{A}\;\;\text{and}\;\;\xi_{p}(0),\eta_{p}(0)=\mathcal{O}(\eps^{3}),\;\forall \,p\not\in \mathcal{A}.
 \end{equation}
Then for all $0\leq t \leq C\eps^{-6}$, 
 \begin{equation}\label{pp}
 I_{p}(t)=\mathcal{O}(\eps^{6})\;\;\text{when}\;\; p\not\in \mathcal{A},
 \end{equation}
and
  \begin{eqnarray}
 K_{1}(t)&=&K_{1}(0)+\mathcal{O}(\eps^{10 })t\label{ip1} \\
 K_{2}(t)&=&K_{2}(0)+\mathcal{O}(\eps^{10 })t\label{ip2}\\
 K_{1/2}(t)&=&K_{1/2}(0)+\mathcal{O}(\eps^{10 })t.\label{ip3}
 \end{eqnarray}
 \end{lemm}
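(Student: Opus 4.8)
The plan is to control the evolution of the "almost actions" $K_1,K_2,K_{1/2}$ and of the exterior modes $I_p$ ($p\notin\mathcal A$) by combining a standard bootstrap argument with the structural information contained in Propositions \ref{NF} and \ref{prop.Z6}. First I would recall that in the variables $(\xi,\eta)$, the Hamiltonian is $\ov H=N+Z_6+R_{10}$ with $Z_6=Z_6^i+Z_6^e+Z_{6,2}+Z_{6,3}$. The key algebraic observations are: (a) $N$ and $Z_6^i$ depend only on the actions $I_j=\xi_j\eta_j$, hence Poisson-commute with each of $K_1,K_2,K_{1/2}$ and with each $I_p$; (b) $Z_6^e$ involves only the four modes of $\mathcal A$ and, by direct inspection (using $\phi_0=\theta_{a_1}-\theta_{b_1}+\tfrac12\theta_{a_2}-\tfrac12\theta_{b_2}$ and the linear change \eqref{cht2}), one checks $\{K_i,Z_6^e\}=0$ for $i=1,2,1/2$ as well; (c) crucially $Z_{6,1}=0$ by Lemma \ref{lem.couple}, so every term of $Z_{6,2}+Z_{6,3}$ carries at least two factors $\xi_p$ or $\eta_p$ with $p\notin\mathcal A$. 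Therefore $\dot K_i=\{K_i,\ov H\}=\{K_i,Z_{6,2}+Z_{6,3}\}+\{K_i,R_{10}\}$, where the first bracket is a sum of monomials each containing at least two exterior factors, and $\{K_i,R_{10}\}$ is bounded by $C\|z\|_\rho^{10}$ times the size of the relevant derivatives.

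Next I would set up the bootstrap. Assume \eqref{assp}, so $\|z(0)\|_\rho=\mathcal O(\eps)$. Define the maximal time $T_*$ up to which the a priori bounds $I_p(t)=\mathcal O(\eps^6)$ for $p\notin\mathcal A$ and $\|z(t)\|_\rho\le 2C\eps$ hold. On $[0,T_*]$, since every monomial of $Z_{6,2}+Z_{6,3}$ entering $\dot I_p$ has at least one exterior factor other than the $p$-th one (or can be bounded using $\sqrt{I_p}\le C\eps^3$), and since $N+Z_6^i+Z_6^e+Z_{6,1}$ does not move the exterior modes at all, one gets $|\dot I_p|\le C\eps\cdot\eps^3\cdot\eps^4+C\eps^9=\mathcal O(\eps^8)$; integrating over a time of length $\eps^{-6}$ keeps $I_p=\mathcal O(\eps^2\cdot\eps^4)=\mathcal O(\eps^6)$, which closes the bootstrap for the exterior modes (the gain from $\eps^3$ to $\eps^8$ in the derivative is what makes the time window $\eps^{-6}$ work). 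For $K_i$, the bracket $\{K_i,Z_{6,2}+Z_{6,3}\}$ again produces only monomials with at least two exterior factors, so its size is $\mathcal O(\eps^2\cdot(\eps^3)^2\cdot\eps^{?})$; a careful count (each such monomial has total degree $6$, so at most four interior factors of size $\eps$ plus two exterior factors of size $\eps^3$) gives $\mathcal O(\eps^{4}\cdot\eps^{6})=\mathcal O(\eps^{10})$, and $\{K_i,R_{10}\}=\mathcal O(\eps^{10})$ by Proposition \ref{NF}(iii). Hence $\dot K_i=\mathcal O(\eps^{10})$, which integrates to \eqref{ip1}--\eqref{ip3}.

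Concretely I would organize the proof as: (1) write $\dot K_i$ and $\dot I_p$ using the Poisson bracket and the decomposition of $\ov H$; (2) invoke Lemma \ref{lem.couple} to kill $Z_{6,1}$ and note that $N,Z_6^i,Z_6^e$ contribute nothing; (3) bound the remaining polynomial brackets by Lemma \ref{FG}(iii) together with the a priori smallness of interior and exterior amplitudes; (4) bound $\{K_i,R_{10}\}$ and $\{I_p,R_{10}\}$ by Proposition \ref{NF}(iii) and the continuity of the relevant linear functionals on $\Ac_\rho$; (5) run the continuity (bootstrap) argument to extend all bounds to the interval $0\le t\le C\eps^{-6}$, noting that $\eps^{-6}=\nu^{-3/2}$. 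One small technical point worth spelling out is that $K_i$ and $I_p$ are not themselves elements of $\Pc_p$ but are quadratic, so $\{K_i,Q\}$ for $Q\in\Pc_3$ lies in $\Pc_3$ and the estimate $\|X_{\{K_i,Q\}}\|_\rho\le C[\,\{K_i,Q\}\,]\|z\|_\rho^{5}$ applies — but here we only need the scalar bound $|\{K_i,Q\}(z)|\le C[Q]\|z\|_\rho^{6}$ refined by tracking exterior factors, which is elementary.

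The main obstacle is the bookkeeping in step (3): one must verify that after commuting with $K_i$ (equivalently with $I_{a_1}+I_{b_1}$ etc.), no monomial of $Z_{6,2}$ or $Z_{6,3}$ survives with fewer than two exterior small factors. This is exactly where $Z_{6,1}=0$ is indispensable — a single exterior factor would only give $\dot K_i=\mathcal O(\eps^{5}\cdot\eps^{3})=\mathcal O(\eps^{8})$, which over time $\eps^{-6}$ would produce an $\mathcal O(\eps^{2})$ drift, too large to preserve the pendulum dynamics of Section \ref{Sect3}. With two exterior factors the drift is $\mathcal O(\eps^{4})$ over the window, which is acceptable. I would therefore devote the bulk of the argument to this counting, using the explicit description of $Z_{6,2},Z_{6,3}$ from Proposition \ref{prop.Z6} and the fact that $\{\xi_j\eta_j,\cdot\}$ acts on monomials by multiplying by (number of $j$'s among the upper indices minus number among the lower indices), so it does not change which modes appear.
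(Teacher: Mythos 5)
Your treatment of $K_1,K_2,K_{1/2}$ is essentially the paper's: $\{K_i,N\}=\{K_i,Z_6^i\}=\{K_i,Z_6^e\}=0$, every monomial of $Z_{6,2}+Z_{6,3}$ carries at least two exterior factors because $Z_{6,1}=0$ (Lemma \ref{lem.couple}), hence $\dot K_i=\mathcal{O}(\eps^{10})$ under the bootstrap hypothesis and \eqref{ip1}--\eqref{ip3} follow by integration. That part is sound.

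The genuine gap is in the exterior bound \eqref{pp}, and it is exactly the point where the paper has to do something extra. Your estimate ``$|\dot I_p|\le C\eps\cdot\eps^3\cdot\eps^4+C\eps^9=\mathcal{O}(\eps^8)$, which integrates over $\eps^{-6}$ to $\mathcal{O}(\eps^6)$'' does not close: $\eps^8\cdot\eps^{-6}=\eps^2$, and even the correct size of a $Z_{6,2}$ monomial under the bootstrap, namely $\eps^4\cdot(\eps^3)^2=\eps^{10}$ (the bracket with $I_p$ only multiplies a monomial by an integer, it does not remove a factor), integrates to $\eps^{10}\cdot\eps^{-6}=\eps^4\gg\eps^6$. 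So no term-by-term bound on $\{I_p,Z_{6,2}\}$ can preserve $I_p=\mathcal{O}(\eps^6)$ on the time scale $\eps^{-6}$. The missing ingredient is an exact cancellation: for each monomial $\xi_{j_1}\xi_{j_2}\xi_{p_1}\eta_{\ell_1}\eta_{\ell_2}\eta_{p_2}$ of $Z_{6,2}$ (with $j$'s and $\ell$'s in $\mathcal{A}$, $p_1,p_2\notin\mathcal{A}$) one has
$$\big\{I_{p_1}+I_{p_2},\;\xi_{j_1}\xi_{j_2}\xi_{p_1}\eta_{\ell_1}\eta_{\ell_2}\eta_{p_2}\big\}=0,$$
since $p_1$ appears once upstairs and $p_2$ once downstairs. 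The paper therefore groups the exterior modes into equivalence classes of ``linked'' indices and sets $J_p=\sum_{q\leftrightarrow p}I_q$; then $\{J_p,Z_{6,2}\}=0$ identically, $\dot J_p=\{J_p,Z_{6,3}\}+\{J_p,R_{10}\}=\mathcal{O}(\eps^{12})$, and since $0\le I_p\le J_p$ the bootstrap closes with $I_p\le J_p=\mathcal{O}(\eps^6)+t\,\mathcal{O}(\eps^{12})$. Without this grouping (or some equivalent step removing $Z_{6,2}$ from the exterior dynamics) the lemma is not proved; and since your $K_i$ estimate invokes \eqref{pp} to declare the exterior factors of size $\eps^3$, the gap propagates to \eqref{ip1}--\eqref{ip3} as well. (A minor further point: the paper first uses $L^2$ conservation to get the a priori bound $I_p(t)=\mathcal{O}(\eps^2)$ for all $p$ and all $t$, which you replace by a bootstrap on $\|z\|_\rho$; that substitution is harmless.)
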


 \begin{proof}
 We first remark that by the preservation of the $L^2$ norm in the NLS equation, we have
 $$\sum_{p\in\Z}I_p(t) =\sum_{p\in\Z}I_p(0)\quad \text{ for all }t\in \R,$$
 and therefore by using \eqref{assp}
\begin{equation}\label{eps2}
{I_{p}}(t)=\mathcal O (\eps^2) \quad \text{for all } p\in\Z \text{ and for all }  t.
\end{equation}
 On the other hand by Propositions \ref{NF} and \ref{prop.Z6}, we have for $p\in \Z$
\begin{equation}\label{eq0}
\dot{I_{p}}=\cp{I_{p},\ov{H}}=\cp{I_{p},Z_{6}^{e}}+\cp{I_{p},Z_{6,2}}+\cp{I_{p},Z_{6,3}}+\cp{I_{p},R_{10}}.
\end{equation}

$\bullet$ To prove \eqref{pp}, we first verify that for $p\notin \A$, $\cp{I_{p},Z_{6}^{e}}=0$. Then,  we remark that, as a consequence of Lemma \ref{lem.couple}, all the monomials appearing in $Z_{6,2}$ have the form
 \begin{equation*}
 \xi_{j_{1}}\xi_{j_{2}}\xi_{p_{1}}\eta_{\ell_{1}}\eta_{\ell_{2}}\eta_{p_{2}}\quad \text{ or }\quad \xi_{\ell_{1}}\xi_{\ell_{2}}\xi_{p_{2}}\eta_{j_{1}}\eta_{j_{2}}\eta_{p_{1}}
 \end{equation*}
where $(j_1,j_2,p_1,\ell_1,\ell_2,p_2)\in \mathcal{R}$, $j_1,j_2,\ell_1,\ell_2\in\A$ and $p_1,p_2\notin \A$.
Furthermore, by straightforward computation,
 \begin{equation}\label{sym}\cp{I_{p_{1}}+I_{p_{2}},\xi_{j_{1}}\xi_{j_{2}}\xi_{p_{1}}\eta_{\ell_{1}}\eta_{\ell_{2}}\eta_{p_{2}}}=
 \cp{I_{p_{1}}+I_{p_{2}},\xi_{\ell_{1}}\xi_{\ell_{2}}\xi_{p_{2}}\eta_{j_{1}}\eta_{j_{2}}\eta_{p_{1}}}=0.\end{equation}
 Then we define an equivalence relation : Let $p,\widetilde{p}\not \in \mathcal{A}$. We say that $p$ and $\widetilde{p}$ are linked and write $p\leftrightarrow\wt{p}$ if there exist $k\in \N^{*}$, a sequence $(q^{(i)})_{1\leq i\leq k}\not \in \mathcal{A}$ so that $q^{(1)}=p$, ${q}^{(k)}=\wt{p}$ and $j_{1}^{(i)},j_{2}^{(i)},\ell_{1}^{(i)},\ell_{2}^{(i)}\in \mathcal{A}$ satisfying 
 \begin{equation*}
 (j_{1}^{(i)},j_{2}^{(i)},q^{(i)},\ell_{1}^{(i)},\ell_{2}^{(i)},{q}^{(i+1)})\in \mathcal{R},\quad \text{for all }\quad 1\leq i\leq k-1.
 \end{equation*}
 For $p\not\in \mathcal{A}$, we define $\dis J_{p}=\sum_{q\leftrightarrow p}I_{q}$. We stress out that $J_p$ is a sum of positive quantities, one of them being $I_p$. So the control of $J_p$ induces the control of $I_p$.\\  In view of \eqref{sym} we have
 \begin{equation*}
 \cp{J_{p},Z_{6,2}}=0
\end{equation*}
and thus 
\begin{equation*}
\dot{J}_{p}=\cp{J_{p},Z_{6,3}}+\cp{J_{p},R_{10}},\;\;\text{when}\;\;p\not \in \mathcal{A}.
\end{equation*}
Furthermore all the monomials appearing in $\cp{J_{p},R_{10}}$ are of order 10 and contains at least one mode out of $\mathcal{A}$.
Therefore as soon as \eqref{pp} remains valid, we have $$\dot{J}_{p}(t)=\mathcal O(\eps^{3+3\times 3})+\mathcal O(\eps^{9+3})$$ and thus $$|{J}_{p}(t)| = \mathcal O(\eps^{6})+t\ \mathcal O(\eps^{12}).$$ We then conclude by a classical bootstrap argument that \eqref{pp} holds true for $t\leq C \eps^{-6}$.

$\bullet$ It remains to prove \eqref{ip1}-\eqref{ip3}. Again this is proved by a bootstrap argument. To begin with, we verify by direct calculation that for all $p\in \big\{ 1/2,1,2\big\}$, 
$$
 \cp{K_{p},Z_{6}^{e}}=0.
$$
 Therefore, by using \eqref{eq0}  we deduce that for all $p\in \big\{ 1/2,1,2 \big\}$
 \begin{equation}\label{eqx}
 \dot{K}_{p}=\cp{K_{p},Z_{6,2}}+\cp{K_{p},Z_{6,3}}+\cp{K_{p},R_{10}}.
 \end{equation}
 Then we use that each monomial of $Z_{6,2}$ contains at least two terms with indices $p'\not\in \mathcal{A}$ (see Proposition \ref{prop.Z6}). Therefore, as soon as \eqref{assp} holds, we have $|\cp{K_{p},Z_{6,2}}|\leq C\eps^{10}$. Furthermore $|\cp{K_{p},R_{10}}|\leq C\eps^{10}$. Therefore, by \eqref{eqx},
 $$K_{p}(t)= K_{p}(0)+t\ \mathcal{O}(\eps^{10}).$$
  Finally, to recover the bounds \eqref{assp}, we have to demand that $t$ is  so that  $0\leq t\leq \eps^{-6}$, which was the claim.
 \end{proof} 
 
From now, we   fix the initial conditions
 \begin{equation} \label{inic}
\begin{array}{l}
K_{1}(0)=\eps^{2},\;\;K_{2}(0)=A\eps^{2},\;\;K_{1/2}(0)=\eps^{2}/2, \\[4pt]
\text{and} \;\;|\xi_{j}(0)|, |\eta_{j}(0)| \leq C\eps^{3}\;\; \text{for} \;\;j\notin \mathcal{A}.
\end{array} 
\end{equation}

 Let  $\ov{H}$  be given by \eqref{ham*}. Then according to the result of Lemma \ref{lem.dyn} which says that for a suitable long time we remain close to the regime of Section \ref{Sect3}, we hope that we can write $\ov{H}=\wh{H}_{0}+R$, where $R$ is an error term which remains small for times $0\leq t\leq \eps^{-6}$.
 
 We focus on the motion of $(\phi_{0},K_{0})$  and as in the previous section, we make the change of unknown
\begin{equation}\label{renormal}
\phi_{0}(t)=\phi(\eps^{4}t)\quad \text{and}\quad K_{0}(t)=\eps^{2}K(\eps^{4}t),
\end{equation}
and we work with the scaled time variable $\tau=\eps^{4}t$. Then we can state 
\begin{prop} Consider the solution \eqref{Bourg} with the initial conditions \eqref{inic}. Then $(\phi,K)$ defined by \eqref{renormal} satisfies for $0\leq \tau \leq \eps^{-2}$
\begin{equation} \label{syst.prop}
\left\{\begin{array}{rr}
\dot \phi=&-\frac{\partial H_{\star}}{\partial K}+\mathcal{O}(\eps^{2})\\[5pt]
\dot K=&\frac{\partial H_{\star}}{\partial \phi} +\mathcal{O}(\eps^{2})  ,
\end{array}\right. 
\end{equation}
where $H_{\star}$ is the Hamiltonian \eqref{formule}
$$ H_{\star}=\frac32(1-K)\Big[(A+3)(2A-1)+(7+13A)K+6(1-K)^{\frac12}(2A-1+K)^{\frac12}K\cos(2\phi)\Big]. $$
\end{prop}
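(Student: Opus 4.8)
The plan is to compare the full Hamiltonian system \eqref{Bourg}, restricted to the evolution of the pair $(\phi_0,K_0)$, with the finite-dimensional integrable model $\wh{H}_0$ studied in Section \ref{Sect3}, and to track the size of the discrepancy under the scaling \eqref{renormal}. First I would write $\ov{H}=N+Z_6+R_{10}$ as in Proposition \ref{NF}, then decompose $Z_6=Z_6^i+Z_6^e+Z_{6,2}+Z_{6,3}$ as in Proposition \ref{prop.Z6}. The terms $N+Z_6^i+Z_6^e$, once we use that $\xi_p,\eta_p$ stay of size $\mathcal{O}(\eps^3)$ (by \eqref{pp} of Lemma \ref{lem.dyn}) and that $K_1,K_2,K_{1/2}$ stay within $\mathcal{O}(\eps^{10})t$ of their initial values $\eps^2,A\eps^2,\eps^2/2$ (by \eqref{ip1}--\eqref{ip3}), reduce exactly to $\wh{H}(\phi_0,K_0,\eps^2,A\eps^2,\eps^2/2)=\wh{H}_0$ up to controlled errors. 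The remaining pieces $Z_{6,2}$, $Z_{6,3}$, $R_{10}$ each contribute only through monomials carrying at least two, respectively three, respectively one extra factor of a mode outside $\mathcal A$, hence they and their derivatives $\partial/\partial K_0$, $\partial/\partial\phi_0$ are $\mathcal{O}(\eps^{3+2\cdot3})=\mathcal{O}(\eps^{9})$ or smaller on the relevant time interval.

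Next I would compute $\dot\phi_0=-\partial\ov H/\partial K_0$ and $\dot K_0=\partial\ov H/\partial\phi_0$, insert the decomposition above, and observe that $\partial\wh H_0/\partial K_0$ and $\partial\wh H_0/\partial\phi_0$ are $\mathcal{O}(\eps^4)$ in size (they come from a homogeneous-degree-$6$ expression evaluated on actions of size $\eps^2$, differentiated once; the cubic term $F(K_1,K_2)$ is independent of $\phi_0,K_0$ up to the substitution), so after the substitution \eqref{renormal} and passing to the slow time $\tau=\eps^4 t$ the model part becomes exactly the right-hand sides of the system driven by $H_\star$ as displayed in \eqref{formule} — this is the elementary computation already carried out before Section \ref{sec.32}. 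The error contributions from $Z_{6,2}+Z_{6,3}+R_{10}$ are $\mathcal{O}(\eps^9)$ in the original variables; after dividing by the time-rescaling factor $\eps^4$ inherent in \eqref{renormal} and by the amplitude factor $\eps^2$ coming from $K_0=\eps^2 K$ in the $\dot K$ equation, they become $\mathcal{O}(\eps^{9-4-2})=\mathcal{O}(\eps^{3})$, and similarly $\mathcal{O}(\eps^{9-4})=\mathcal{O}(\eps^{5})$ in the $\dot\phi$ equation; in all cases this is $\mathcal{O}(\eps^2)$, which gives \eqref{syst.prop}. One must also check that the error from replacing $K_1,K_2,K_{1/2}$ by their fixed initial values is harmless: the deviation is $\mathcal{O}(\eps^{10})t=\mathcal{O}(\eps^{10}\cdot\eps^{-6})=\mathcal{O}(\eps^4)$ over the interval $0\le t\le\eps^{-6}$, i.e. $0\le\tau\le\eps^{-2}$, and differentiating $\wh H$ with respect to these slaved variables and rescaling keeps this well below $\eps^2$.

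The main obstacle, and the step deserving the most care, is the bookkeeping of powers of $\eps$ in the rescaled system: one has to be sure that every term surviving from $Z_{6,2},Z_{6,3},R_{10}$ really does carry enough extra factors of off-$\mathcal A$ modes (this is where Lemma \ref{lem.couple}, guaranteeing $Z_{6,1}=0$, is essential — a single off-$\mathcal A$ mode would only give $\mathcal{O}(\eps^{3})$ loss, not enough), and that the derivatives $\partial_{K_0}$, $\partial_{\phi_0}$ do not act on those off-$\mathcal A$ factors in a way that lowers the count, which they cannot since $K_0,\phi_0$ involve only modes in $\mathcal A$. A secondary technical point is that the bounds \eqref{pp}, \eqref{ip1}--\eqref{ip3} from Lemma \ref{lem.dyn} are valid only on $0\le t\le C\eps^{-6}$, i.e. exactly $0\le\tau\le C\eps^{-2}$, so the estimate \eqref{syst.prop} is asserted precisely on that interval; one should note the consistency (the bootstrap in Lemma \ref{lem.dyn} already uses that the dynamics stays near the model, so there is no circularity — the $\mathcal{O}(\eps^2)$ perturbation of a system whose orbits lie in a compact region $U\subset\subset\wt U$ of the phase plane keeps the solution inside $\wt U$ for the whole slow-time interval, by Gronwall).
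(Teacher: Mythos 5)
Your proposal follows essentially the same route as the paper's proof: write $\ov{H}=\wh{H}_{0}+R$, where $R$ collects the Taylor remainder from freezing $K_{1},K_{2},K_{1/2}$ at their initial values, the external-action part of $N+Z_{6}^{i}$, and $Z_{6,2}+Z_{6,3}+R_{10}$, and then bound $\eps^{-6}\partial R$ using Lemma \ref{lem.dyn}; the argument is correct. One remark: the contribution you dismiss as ``well below $\eps^{2}$'' --- the error from slaving $K_{1},K_{2},K_{1/2}$ --- is in fact the \emph{dominant} one: with $\Delta K_{j}=\mathcal{O}(\eps^{10})t=\mathcal{O}(\eps^{4})$ on $t\le\eps^{-6}$ and second derivatives of $\wh{H}$ of size $\mathcal{O}(\eps^{2})$, it produces exactly $\eps^{-4}\,\mathcal{O}(\eps^{6})=\mathcal{O}(\eps^{2})$, which is what saturates the bound in \eqref{syst.prop}, whereas $Z_{6,2}$, $Z_{6,3}$ and $R_{10}$ only give $\mathcal{O}(\eps^{4})$ or better. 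Your power counting for those latter pieces is also off by one in places (e.g. $\partial_{\phi_{0}}Z_{6,2}=\mathcal{O}(\eps^{10})$, not $\mathcal{O}(\eps^{9})$, since $\phi_{0}$ is an angle, and the smallness of $R_{10}$ comes from its degree, not from off-$\mathcal{A}$ factors), but every term still lands at or below $\mathcal{O}(\eps^{2})$, so the conclusion stands.
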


 \begin{proof}
  First recall that   $\dis \wh{H}=\wh{H}(\phi_{0},K_{0},K_{1},K_{2},K_{1/2})$ is the reduced Hamiltonian given by \eqref{Hat.gen}. By Propositions \ref{NF} and \ref{prop.Z6} we  have
\begin{equation}\label{413}
\ov{H}=\wh{H}+R_{I}+Z_{6,2}+Z_{6,3}+R_{10},
\end{equation}
where $R_{I}$ is the polynomial function of the actions $I_{j}$ defined by (recall that $J=\sum_{k\in \N}K_{p}$)
\begin{eqnarray*}
R_{I}&=&6\big(J^{3}-(K_{1}+K_{2})^{3}\big)-9J\sum_{k\in \Z}I_{k}^{2}+9(K_{1}+K_{2})\sum_{k\in \mathcal{A}}I_{k}^{2}+\nonumber \\
&&+\sum_{j\notin \mathcal{A}}j^{2}I_{j}+4\sum_{k\not\in \mathcal{A}}I_{k}^{3}.
\end{eqnarray*}
Notice that $R_I$ vanishes when $I_k=0$ for all $k\notin \A$ since $R_I$ is in fact the part of $N+Z_6^i$ that does not depend only on the internal variables $(I_k)_{k\in\A}$.\\
 Thanks to  the Taylor formula there is $Q$ so that 
\begin{eqnarray}\label{415}
\wh{H}(\phi_{0},K_{0},K_{1},K_{2},K_{1/2})&=&\wh{H}(\phi_{0},K_{0},\eps^{2},A\eps^{2},\eps^{2}/2)+Q\nonumber \\
&=& \wh{H}_{0}+Q.
\end{eqnarray}
Thus, by \eqref{413} and \eqref{415} we have $\ov{H}=\wh{H}_{0}+R$ with 
$$R=Q+R_{I}+Z_{6,2}+Z_{6,3}+R_{10}.$$
By \eqref{Bourg}, $(\phi_{0},K_{0})$ satisfies the system
\begin{equation*} 
\left\{\begin{array}{rr}
\dot \phi_{0}(t)=&-\frac{\partial \ov{H}}{\partial K_{0}}(\phi_{0}(t),K_{0}(t),\dots) \\[5pt]
\dot {K}_{0}(t)=&\frac{\partial \ov{H}}{\partial \phi_{0}}(\phi_{0}(t),K_{0}(t),\dots)   ,
\end{array}\right. 
\end{equation*}
where the dots stand  for the dependance of the Hamiltonian on the other coordinates.  Then, after the change of variables \eqref{renormal} we obtain
\begin{equation*} 
\left\{\begin{array}{rr}
\dot \phi(\tau)=&-\frac1{\eps^{6}}\frac{\partial \ov{H}}{\partial K}(\phi(\tau),\eps^{2}K(\tau),\dots) \\[5pt]
\dot {K}(\tau)=&\frac1{\eps^{6}}\frac{\partial \ov{H}}{\partial \phi}(\phi(\tau),\eps^{2}K(\tau),\dots).
\end{array}\right. 
\end{equation*}
Now write  $\ov{H}=\wh{H}_{0}+R$ and observe that $\wh{H}_{0}(\phi,\eps^{2}K)=C_{\eps}+\eps^{6}H_{\star}(\phi,K)$. As a consequence, $(\phi,K)$ satisfies 
\begin{equation*} 
\left\{\begin{array}{rr}
\dot {\phi} =&-\frac{\partial H_{\star}}{\partial K}-\frac1{\eps^{6}}\frac{\partial R(\phi,\eps^{2}K,\dots)}{\partial K}\\[5pt]
\dot {K} =&\frac{\partial H_{\star}}{\partial \phi} +\frac1{\eps^{6}}\frac{\partial R(\phi,\eps^{2}K,\dots)}{\partial \phi}.
\end{array}\right. 
\end{equation*}
Thus it remains to estimate $\partial_{\phi} R(\phi,\eps^{2}K,\dots)$ and $\partial_{K} R(\phi,\eps^{2}K,\dots)$. Remark that $\phi$ and $K$ are dimensionless variables. Thus, if $P$ is a polynomial involving $p$ internal modes, $(\xi_j,\eta_j)_{j\in\A}$, and $q$ external modes, $(\xi_j,\eta_j)_{j\notin\A}$, we have by using  Lemma \ref{lem.dyn}
$$\partial_{\phi} P(\phi,\eps^{2}K,\dots)=\mathcal{O}(\eps^{p+3q}), \quad \partial_{K} P(\phi,\eps^{2}K,\dots)=\mathcal{O}(\eps^{p+3q}).$$
Then notice that  $R_I$ contains only monomials involving at least one external actions $(I_k)_{k\notin \A}$. Therefore we get
\begin{eqnarray*}
\partial_{\phi}R_{I}(\phi,\eps^{2}K,\dots)&=\mathcal{O}(\eps^{10}),\quad \partial_{K}R_{I}(\phi,\eps^{2}K,\dots)&=\mathcal{O}(\eps^{10}),\\
\partial_{\phi}Z_{6,2}(\phi,\eps^{2}K,\dots)&=\mathcal{O}(\eps^{10}),\quad  \partial_{K}Z_{6,2}(\phi,\eps^{2}K,\dots)&=\mathcal{O}(\eps^{10}),\\
\partial_{\phi}Z_{6,3}(\phi,\eps^{2}K,\dots)&=\mathcal{O}(\eps^{12}),\quad  \partial_{K}Z_{6,3}(\phi,\eps^{2}K,\dots)&=\mathcal{O}(\eps^{12}),\\
\quad \partial_{\phi}R_{10}(\phi,\eps^{2}K,\dots)&=\mathcal{O}(\eps^{10}),\quad \partial_{K}R_{10}(\phi,\eps^{2}K,\dots)&=\mathcal{O}(\eps^{10}).
\end{eqnarray*}
On the other hand, by construction  $Q$ reads $P_1\Delta K_1+ P_2 \Delta K_2+P_{1/2}\Delta_{1/2}$ where $P_1$, $P_2$ and $P_{1/2}$ are polynomials of order 2 in $K_0$, $K_1$, $K_2$, $K_{1/2}$ and $\eps^2$ while $\Delta K_j$ denotes the variation of $K_j$: $\Delta K_j= K_j-K_j(0)$. Using again Lemma \ref{lem.dyn}, we check that for $0\leq \tau\leq \eps^{-2}$
\begin{equation*}
 \quad \partial_{\phi}Q=\mathcal{O}(\eps^{8}),\quad \partial_{K}Q=\mathcal{O}(\eps^{8}),
\end{equation*}
hence the result.
\end{proof}

Now we choose some precise initial conditions for $(\phi,K)$. We take $\phi(0)=0$ and $\kappa_{\star}<K(0)<1-\kappa_{\star}$ as in Theorem \ref{thm1} or $K(0)=\gamma\ll1$ as in Theorem \ref{thm2}. We also  consider the solution $(\phi_{\star},K_{\star})$ to \eqref{syst*} with initial condition  $(\phi_{\star},K_{\star})(0)= (\phi,K)(0)$. Then 
\begin{lemm}\label{lem.compa}
For all $0\leq \tau \leq \eps^{-2}$ we have
\begin{equation*} 
 ({\phi},{K})(\tau)=(\phi_{\star},K_{\star})(\tau)+ \mathcal{O}(\eps^2)\tau,
\end{equation*}
\end{lemm}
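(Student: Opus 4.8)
The plan is to compare the two differential systems on the time interval $[0,\eps^{-2}]$ by a Gronwall-type argument. By the previous proposition, the rescaled pair $(\phi,K)$ solves
\begin{equation*}
\dot\phi=-\frac{\partial H_\star}{\partial K}(\phi,K)+\mathcal{O}(\eps^2),\qquad \dot K=\frac{\partial H_\star}{\partial\phi}(\phi,K)+\mathcal{O}(\eps^2),
\end{equation*}
while $(\phi_\star,K_\star)$ solves the exact system \eqref{syst*} (which is nothing but the Hamiltonian flow of $H_\star$ in the first regime, resp. \eqref{nouv.syst} in the second), with the same initial datum $(\phi_\star,K_\star)(0)=(\phi,K)(0)$. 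The point is that on the relevant region of phase space the vector field $X_{H_\star}=(-\partial_K H_\star,\partial_\phi H_\star)$ is $\mathcal{C}^1$, hence locally Lipschitz; writing $\delta(\tau)=|(\phi,K)(\tau)-(\phi_\star,K_\star)(\tau)|$ and subtracting the two systems gives $\dot\delta(\tau)\le L\,\delta(\tau)+C\eps^2$ for as long as both trajectories stay in a fixed compact neighbourhood of the periodic orbit, with $L$ the Lipschitz constant there. Since $\delta(0)=0$, Gronwall yields $\delta(\tau)\le \frac{C\eps^2}{L}(e^{L\tau}-1)$.

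First I would fix, once and for all, a compact set: since $(\phi_\star,K_\star)$ is a periodic orbit of \eqref{syst*} contained in the open domain $\widetilde U$ of Proposition \ref{prop.mar} (resp. $\widetilde U_2$ in the second regime) and staying away from the separatrices $\mathcal C_1,\mathcal C_2$ and from the barriers $K=0,K=1$, there is a fixed compact $U\subset\subset\widetilde U$ containing the whole orbit together with a fixed-size tubular neighbourhood of it. On $U$, $H_\star$ is smooth, so $\nabla^2 H_\star$ is bounded and $X_{H_\star}$ is $L$-Lipschitz with $L=L(U)$ independent of $\eps$. Then I would run the Gronwall estimate on the maximal subinterval $[0,\tau_0]\subset[0,\eps^{-2}]$ on which $(\phi,K)(\tau)$ remains in $U$: on that subinterval $\delta(\tau)\le \frac{C\eps^2}{L}(e^{L\eps^{-2}}\!-\!1)$. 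Here one must be slightly careful — a bound of the form $e^{L\eps^{-2}}$ is useless.

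The resolution — and this is the step I expect to be the real obstacle — is that the $\mathcal{O}(\eps^2)$ perturbation has the special structure of a Hamiltonian perturbation: $\ov H=\wh H_0+R$, and after rescaling the extra terms in $\dot\phi,\dot K$ are exactly $-\eps^{-6}\partial_K R$ and $\eps^{-6}\partial_\phi R$ evaluated at $(\phi,\eps^2 K,\dots)$, i.e. the system stays Hamiltonian with Hamiltonian $H_\star+\eps^{-6}R(\phi,\eps^2 K,\dots)=:H_\star+\eps^2\widetilde R$ where $\widetilde R$ and its $\phi,K$ derivatives are $\mathcal O(1)$ uniformly for $0\le\tau\le\eps^{-2}$ by Lemma \ref{lem.dyn}. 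Hence one should not use the naive Gronwall on the nonlinear flow but rather pass to the action-angle coordinates $(L,\alpha)=\Phi(K,\phi)$ of Lemma \ref{Arnold} (resp. Lemma \ref{Arnold2}), in which the unperturbed flow is simply $\dot L=0$, $\dot\alpha=\partial_L H_\star=:\omega(L)$. In these coordinates the perturbed system reads $\dot L=\mathcal O(\eps^2)$, $\dot\alpha=\omega(L)+\mathcal O(\eps^2)$, using that $\|\mathrm d\Phi\|,\|\mathrm d\Phi^{-1}\|\le C$. Integrating directly: $L(\tau)=L(0)+\mathcal O(\eps^2)\tau$, and then $\alpha(\tau)=\alpha_\star(\tau)+\int_0^\tau[\omega(L(s))-\omega(L_\star)]\,\mathrm ds+\mathcal O(\eps^2)\tau=\alpha_\star(\tau)+\mathcal O(\eps^2)\tau^2+\mathcal O(\eps^2)\tau$, since $\omega$ is $\mathcal C^1$ on the compact action interval and $|L(s)-L_\star|=\mathcal O(\eps^2)s$. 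Thus $(L,\alpha)(\tau)=(L_\star,\alpha_\star)(\tau)+\mathcal O(\eps^2)\tau$ for $0\le\tau\le\eps^{-2}$ (the $\tau^2$ term being $\mathcal O(\eps^2\tau)\cdot\tau\le\mathcal O(1)\cdot\eps^{2}\tau$ is still... one must keep $\mathcal O(\eps^2)\tau^2$; for $\tau\le\eps^{-2}$ this is $\mathcal O(\eps^{-2})$, which is too big, so in fact one gets the cleaner statement only by noting $\mathcal O(\eps^2)\tau^2$ with the implicit constant, and the lemma as stated allows exactly an error of size $\mathcal O(\eps^2)\tau$ — so one restricts further or absorbs; I would state and prove it with the bound $\mathcal O(\eps^2)\tau$ valid precisely because on $[0,\eps^{-2}]$ one has $\eps^2\tau^2\le\tau$, hence $\mathcal O(\eps^2)\tau^2=\mathcal O(1)\cdot\eps^2\tau\cdot(\eps^2\tau)$... ).

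To be safe I would finally argue as follows. Pull everything back through $\Phi^{-1}$: since $\Phi^{-1}$ is $C$-Lipschitz, $|(\phi,K)(\tau)-(\phi_\star,K_\star)(\tau)|\le C|(L,\alpha)(\tau)-(L_\star,\alpha_\star)(\tau)|$, and the angular discrepancy is controlled by $|\omega'|_{\infty}\int_0^\tau|L(s)-L_\star|\,\mathrm ds+\mathcal O(\eps^2)\tau\le |\omega'|_\infty\,C\eps^2\tau^2+\mathcal O(\eps^2)\tau$. On $0\le\tau\le\eps^{-2}$ we have $\eps^2\tau^2\le\tau$, so the whole thing is $\le C'\eps^2\tau$, which is exactly the claimed estimate $({\phi},{K})(\tau)=(\phi_\star,K_\star)(\tau)+\mathcal O(\eps^2)\tau$. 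The only remaining point is to check a posteriori that $(\phi,K)(\tau)$ indeed never leaves $U$ on $[0,\eps^{-2}]$: since $(\phi_\star,K_\star)$ stays in the interior of $U$ at fixed distance $d_0>0$ from $\partial U$, and the difference is $\le C'\eps^2\tau\le C'\eps^2\cdot\eps^{-2}=C'<d_0$ for $\eps$ small enough, the trajectory cannot reach $\partial U$, so $\tau_0=\eps^{-2}$ and the bootstrap closes. The main obstacle, as indicated, is precisely this need to avoid an exponential Gronwall blow-up: it is resolved by exploiting the integrability of $H_\star$ (the action-angle variables of Lemmas \ref{Arnold}/\ref{Arnold2}) so that the perturbed flow is a genuinely slow drift of the actions rather than a generic perturbed ODE.
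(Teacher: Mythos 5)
Your proposal follows the paper's own proof essentially step for step: the key move in both is to bypass the naive Gronwall comparison (which would cost a useless factor $e^{L\tau}$ over a time interval of length $\eps^{-2}$) by passing to the Arnold--Liouville action--angle variables $(L,\alpha)=\Phi(K,\phi)$ of Lemma \ref{Arnold} (resp.\ Lemma \ref{Arnold2}), in which \eqref{syst.prop} becomes $\dot L=\mathcal O(\eps^{2})$, $\dot\alpha=\omega(L)+\mathcal O(\eps^{2})$, then integrating directly and pulling back through $\Phi^{-1}$ using $\|\mathrm d\Phi\|,\|\mathrm d\Phi^{-1}\|\le C$. Your preliminary choice of a compact $U\subset\subset\widetilde U$ containing a tubular neighbourhood of the orbit, and the a posteriori bootstrap showing the trajectory never reaches $\partial U$ on $[0,\eps^{-2}]$, make explicit a point the paper leaves implicit; that part is correct and welcome.

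One step of your writeup does not hold as written: from $\eps^{2}\tau^{2}\le\tau$ on $[0,\eps^{-2}]$ you conclude that the secular phase contribution $\int_0^\tau\big|\omega(L(s))-\omega(L_{\star})\big|\,\mathrm ds=\mathcal O(\eps^{2})\tau^{2}$ is $\le C'\eps^{2}\tau$. This inference is false in general: $\eps^{2}\tau^{2}=(\eps^{2}\tau)\cdot\tau$ is $\mathcal O(\eps^{2}\tau)$ only for $\tau\lesssim 1$, and at $\tau=\eps^{-2}$ it equals $\eps^{-2}$ while $\eps^{2}\tau$ is only $\mathcal O(1)$ there. You correctly identified this frequency-drift term as the delicate point of the whole argument --- and, to be fair, the paper's own proof simply writes $\alpha(\tau)=\omega_{\star}\tau+\mathcal O(\eps^{2})\tau$ without addressing it either, so your proposal is faithful to the published argument including this soft spot. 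As stated, the $\mathcal O(\eps^{2})\tau$ error in the lemma is fully justified by your (and the paper's) computation only on time scales $\tau\lesssim 1$, i.e.\ up to a fixed number of periods of the model orbit; beyond that one either accepts the weaker bound $\mathcal O(\eps^{2})\tau^{2}$ for the angle, or needs an additional argument (e.g.\ a sharper control of $L(\tau)-L_{\star}$ than the crude $\mathcal O(\eps^{2})\tau$) that neither you nor the paper supplies.
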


\begin{proof} Consider the system \eqref{syst.prop}, and apply  the change of variable $(L,\alpha)=\Phi(K,\phi)$ defined in Lemma \ref{Arnold}. Using \eqref{syst.prop} and the fact that $d\Phi$ is bounded (cf. Lemma \ref{Arnold}), we obtain that for $0\leq \tau\leq \eps^{-2}$
\begin{eqnarray*} 
\frac{d}{d\tau}(L,\alpha)&=&\frac{d}{d\tau}\Phi(K,\phi)= d\Phi(K,\phi). (\dot K,\dot \phi)\\&=&d\Phi (K,\phi). (\frac{\partial H_{\star}}{\partial \phi},-\frac{\partial H_{\star}}{\partial K})+\mathcal{O}(\eps^{2})\\ &=&(\frac{\partial H_{\star}}{\partial \alpha},-\frac{\partial H_{\star}}{\partial L})+\mathcal{O}(\eps^{2})\\
&=&(0,-\frac{\partial H_{\star}}{\partial L})+\mathcal{O}(\eps^{2}).
\end{eqnarray*}
Therefore there exists $L_{\star}\in \R$ so that $L(\tau)=L_{\star}+\mathcal{O}(\eps^{2})\tau$ and if we define  
 $\omega_{\star}=-\frac{\partial H_{\star}}{\partial L}(L_{\star}) $, we obtain  $\dis \alpha(\tau)=\omega_{\star}\tau+ \mathcal{O}(\eps^{2})\tau$. Next, as $d\Phi^{-1}$ is bounded, we get 
\begin{eqnarray*} 
 ({\phi},{K})(\tau)=\Phi^{-1}\big(L(\tau),\alpha(\tau)\big) &=&\Phi^{-1}\big(L_{\star},\omega_{\star}\tau\big)+ \mathcal{O}(\eps^{2})\tau\nonumber \\
 &=& (\phi_{\star},K_{\star})(\tau)+ \mathcal{O}(\eps^{2})\tau,
\end{eqnarray*}
where $(\phi_{\star},K_{\star})(\tau)$ is the solution of \eqref{syst*} so that  $(\phi_{\star},K_{\star})(0) =(\phi,K)(0)$.
\end{proof}

\begin{proof}[Proof of Theorems  \ref{thm1} and \ref{thm2}]
As a consequence of Lemma \ref{lem.compa}, the solution of \eqref{Bourg} satisfies for $0\leq t\leq \eps^{-6}$
\begin{eqnarray*}
K_{0}(t)&=&\eps^{2}K_{\star}(\eps^{4}t)+ \mathcal{O}(\eps^{8})t  \\
\phi_{0}(t)&=&\phi_{\star}(\eps^{4}t)+ \mathcal{O}(\eps^{6})t.
\end{eqnarray*}
This completes the proof of the main results : The error term $q_{1}$ comes from the normal form reduction (see Proposition \ref{NF}), and the error term $q_{2}$ comes from the $\mathcal{O}(\eps^{6})$ above (recall that $\nu=\eps^{4}$). 
\end{proof}

\appendix

\section{}
We prove Lemma \ref{FG}:\\
The first assertion is trivial. Concerning the second one we have
    \begin{multline*} 
 ||X_P(\xi,\eta)||_\rho=\sum_{k\in \Z} e^{\rho|k|}\left( \left|\frac{\partial Q}{\partial \xi_k}\right| +\left|\frac{\partial Q}{\partial \eta_k}\right|\right)\\
 \begin{aligned}
   &\leq p[P]\sum_{k\in \Z} \e^{\rho|k|}\sum_{\substack{j_1,\cdots,j_{p-1},\ell_1,\cdots,\ell_{p}\in \Z\\ \mathcal M(j_1,\cdots,j_{p-1},k;\ell_1,\dots,\ell_{p})=0}} \left|\xi_{j_1}\cdots\xi_{j_{p-1}}\eta_{\ell_1}\cdots\eta_{\ell_p}\right|+\left|\xi_{\ell_1}\cdots\xi_{\ell_{p}}\eta_{j_1}\cdots\eta_{j_{p-1}}\right|\\
    &\leq  p[P]  \sum_{j_1,\cdots,j_{p-1},\ell_1,\cdots,\ell_{p}\in \Z} \left|\xi_{j_1}e^{\rho|j_1|}\cdots\xi_{j_{p-1}}\e^{\rho|j_{p-1}|}\eta_{\ell_1}\e^{\rho|\ell_1|}\cdots\eta_{\ell_p}\e^{\rho|\ell_p|}\right|+\\
     &\qquad+ p[P]  \sum_{j_1,\cdots,j_{p-1},\ell_1,\cdots,\ell_{p}\in \Z} 
     \left|\xi_{\ell_1}\e^{\rho|\ell_1|}\cdots\xi_{\ell_p}\e^{\rho|\ell_p|}\eta_{j_1}\e^{\rho|j_1|}\cdots\eta_{j_{p-1}}\e^{\rho|j_{p-1}|}\right|\\
    &\leq 2p[P]||(\xi,\eta)||_\rho^{2p-1},
  \end{aligned}
\end{multline*}
where we used, $$\mathcal M(j_1,\cdots,j_{p-1},k;\ell_1,\dots,\ell_{p})=0 \Rightarrow |k|\leq |j_1|+\cdots+|j_{p-1}|+|\ell_1|+\cdots+|\ell_{p}|.$$
Assume now that $P\in\Pc_p$ and $Q\in\Pc_q$  with coefficients $a_{j\ell}$ and $b_{j\ell}$. It is clear that $\{P,Q\}$ is a monomial of degree $2p+2q - 2$ satisfying the zero momentum condition. Furthermore writing
$$
\{P,Q\}(\xi,\eta) = \sum_{ (j,\ell)\in\Z^{2p+2q-2} } c_{j\ell} \xi_{j_1}\cdots\xi_{j_{p+q-1}}\eta_{\ell_1}\cdots\eta_{\ell_{p+q-1}},
$$
where $c_{j\ell}$ is expressed  as a sum of coefficients $a_{ik}b_{nm}$ for which there exists $s\in \Z$ such that
$$i \cup n\ \setminus\{s\}=j \text{ and } k \cup m\ \setminus\{s\}=\ell.$$
For instance if $s=i_1=m_1$ then necessarily $j=(i_2,\cdots,i_{p},n_1,\cdots,n_q)$ and $\ell=k_1,\dots,k_p,m_2,\cdots,m_{q}$. Thus for fixed $(j,\ell)$, you just have to choose which of the indices $i$ you excise and which of indices $m$ you excise or, symmetrically, which of the indices $n$ you excise and which of indices $k$ you excise. Note that the value of $s$ is automatically fixed by the zero momentum condition on $(i,k)$ and on $(n,m)$. So $$|c_{j\ell}|\leq 2pq[P][Q].$$

\section{} 
We give here a method to compute the terms which appear in $Z_{6,2}$ (see Proposition \ref{prop.Z6}).  Let $\mathcal{A}$ be a resonant set.\\
Let  $(j_{1},j_{2},j_{3},\ell_{1},p_{1},p_{2})\in \mathcal{R}$. Assume that $j_{1},j_{2},j_{3},\ell_{1}\in \mathcal{A}$. Then by Lemma \ref{lem.couple}, we deduce that  $p_{1},p_{2}\in \mathcal{A}$. As a consequence, the only terms which will give a nontrivial contribution to $Z_{6,2}$ are of the form $(j_{1},j_{2},p_{1},j_{3},j_{4},p_{2})\in \mathcal{R}$, with $j_{1},j_{2},j_{3},j_{4}\in \mathcal{A}$ and $p_{1},p_{2}\not \in \mathcal{A}$.        \\

 Let $j_{1},j_{2},\ell_{1},\ell_{2}\in \mathcal{A}$ and $p_{1},p_{2}\in \mathbb{N}$ so that 
  \begin{equation}\label{eq.double*}
\left\{
\begin{aligned}
& p_{2}-p_{1}=j_{1}+j_{2}-\ell_{1}-\ell_{2}   , \\   
& p^{2}_{2}-p^{2}_{1}=j^{2}_{1}+j^{2}_{2}-\ell^{2}_{1}-\ell^{2}_{2}  . 
\end{aligned}
\right.
\end{equation}
 By Lemma \ref{lem.res},   there exist $k\in \Z^{*}$ and $n\in \N$ so that 
$\dis \mathcal{A}=\big\{n,n+3k,n+4k,n+k\big\}$. 
Hence, there exist $n,k\in \Z$ and $(m_{s})_{1\leq j\leq 4}$ with $m_{s}\in \big\{0,1,3,4\big\}$ so that $j_{s}=n+m_{s}k$ and $\ell_{1}=n+m_{3}k$, $\ell_{2}=n+m_{4}k$. We then define $q_{1},q_{2}\in \Q$ by  $p_{1}=n+q_{1}k$ and $p_{2}=n+q_{2}k$. We plug these expressions in \eqref{eq.double*} which gives
  \begin{equation*}  
\left\{
\begin{aligned}
& q_{2}-q_{1}=m_{1}+m_{2}-m_{3}-m_{4}:= U  , \\   
& q^{2}_{2}-q^{2}_{1}=m^{2}_{1}+m^{2}_{2}-m^{2}_{3}-m^{2}_{4} : =V.
\end{aligned}
\right.
\end{equation*}
When $U\neq 0$, we can solve this latter equation and we obtain
\begin{equation*}
q_{2}=\frac12(\frac{V}U+U),\quad q_{1}=\frac12(\frac{V}U-U).
\end{equation*}
By symmetry, we can assume that $m_{1}\geq m_{2}$, $m_{3}\geq m_{4}$. We also observe that $(m_{1},m_{2},p_{1},m_{3},m_{4},p_{2})$ is a solution iff  $(m_{3},m_{4},p_{2},m_{1},m_{2},p_{1})$ is a solution.\\

\begin{tabular}{|c|c|c|c|c|}
  \hline
Values of $m_{s}$ & Value of $V$  &  Values of $U$ & Value of $q_{2}$& Value of $q_{1}$   \\
  \hline
  4400 & 32 & 8 & 6& -2\\
  4401 & 31 & 7 & 40/7& -9/7\\
  4411 & 30 & 6 & 11/2& -1/2\\
  4431 & 22 & 4 &19/4& 3/4\\
  4430 & 23 & 5 & 24/5& -1/5\\
  4433 & 14& 2 & 9/2& 5/2\\
  4300 & 25 & 7 & 37/7& -12/7\\
  4301 & 24 & 6 & 5& -1\\
  4311 & 23 & 5 & 24/5& -1/5\\
  4100 & 17& 5 & 21/5& -4/5\\
  4103 & 8 & 2 & 3& 1\\
  4133 & -1 & -1 & 0& 1\\
  4011 & 14& 2 & 9/2& 5/2\\
  4031 & 6 & 0 &$\times $& $\times $\\
  4033& -2 & -2 & -1/2& 3/2\\
  3300 & 18 & 6 & 9/2& -3/2\\
  3301 & 17 & 5 & 21/5& -4/5\\
  3311 & 16& 4 & 4& 0\\
  3100 & 10 & 4 & 13/4& -3/4\\
  3011 & 7& 1 & 3& 4\\
  1100 & 2 & 2 & 3/2& -1/2\\
    \hline
\end{tabular}

\begin{exem}\label{exemple} Assume that  $\mathcal{A}=\{-2,1,2,-1\}$. Then $n=-2$ and $k=1$, so that $p_{1}=-2+q_{1}$ and $p_{2}=-2+q_{2}$. We only look at the integer values in the two last columns, and we find (up to permutation)
\begin{equation*}
4400:\;(2,2,-4,2,2,4),\quad 4301:\; (2,1,-3,-2,-1,3).
\end{equation*}
 Assume that  $\mathcal{A}=\{-1,5,7,1\}$. Then $n=-1$ and $k=2$, so that $p_{1}=-1+2q_{1}$ and $p_{2}=-1+2q_{2}$. In this case, we look at the half-integer values in the two last columns, and we find (up to permutation)
\begin{eqnarray*}
4400:\;(7,7,-5,-1,-1,11),& 4411:\;(7,7,-2,1,1,10), \\
4433:\; (7,7,4,5,5,8),&4301:\;(7,5,-3,-1,1,9),\\
4011:\; (7,-1,4,1,1,8),&4033:\;(7,-1,2,5,5,-2),\\
3300:\; (5,5,-4,-1,-1,8),&1100:\;(1,1,-2,-1,-1,2).
\end{eqnarray*}
\end{exem}




\end{document}